\newcommand\STATE[1]{\State{#1}}
\newcommand\COMMENT[1]{\Comment{#1}}
\newcommand\REQUIRE[1]{\Require{#1}}
\newcommand\FOR[1]{\For{#1}}
\newcommand\ENDFOR{\EndFor}
\newcommand\WHILE[1]{\While{#1}}
\newcommand\ENDWHILE{\EndWhile}
\newcommand\REPEAT{\Repeat}
\newcommand\UNTIL[1]{\Until{#1}}
\newcommand\IF[1]{\If{#1}}
\newcommand\ELSEIF[1]{\ElsIf{#1}}
\newcommand\ELSE{\Else}
\newcommand\ENDIF{\EndIf}
\newcommand\CONTEXT[1]{\renewcommand\algorithmicrequire{\textbf{Context:}}\REQUIRE{#1}}
\newcommand\INPUT[1]{\renewcommand\algorithmicrequire{\textbf{Input:}}\REQUIRE{#1}}
\newcommand\OUTPUT[1]{\renewcommand\algorithmicrequire{\textbf{Output:}}\REQUIRE{#1}}
\newcommand\RETURN[1]{\STATE{\textbf{return} #1}}
\newcommand\PROCEDURE[2]{\Procedure{#1}{#2}}
\newcommand\ENDPROCEDURE{\EndProcedure}
\newcommand\FUNCTION[2]{\Function{#1}{#2}}
\newcommand\ENDFUNCTION{\EndFunction}
\newcommand\CALL[2]{\Call{#1}{#2}}
\newcommand\BOX[1]{\begin{minipage}{125mm}{#1}\end{minipage}}
\numberwithin{equation}{section}
\theoremstyle{plain}
\newtheorem{prop}[equation]{Proposition}
\newtheorem{coro}[equation]{Corollary}
\newtheorem{lemm}[equation]{Lemma}
\theoremstyle{definition}
\newtheorem{defi}[equation]{Definition}
\newtheorem{exam}[equation]{Example}
\newtheorem{nota}[equation]{Notation}
\newtheorem{rema}[equation]{Remark}
\newtheoremstyle{algorithm}
{10pt}
{10pt}
{\mdseries}
{}
{\bfseries}
{.}
{5pt}
{\thmname{#1}\thmnumber{ #2}\ (\bfseries{#3})}
\theoremstyle{algorithm}
\newtheorem{myalgo}[equation]{Algorithm}
\newenvironment{algo}[1]{\begin{myalgo}[#1]\hspace*{0pt}\vspace{2pt}\par}{\end{myalgo}}
\newcounter{ITEM}
\newcommand\ITEM[1]{\setcounter{ITEM}{#1}\leavevmode\hbox{\rm(\roman{ITEM})}}
\renewcommand\aa{a}
\newcommand\AAA{\mathcal{A}}
\newcommand\AAAh{\widehat\AAA}
\newcommand\bb{b}
\newcommand\BBB{\mathcal{B}}
\newcommand\BBBh{\widehat\BBB}
\newcommand\BP[1]{B^{\scriptscriptstyle+}_{#1}}
\newcommand\Cat{\mathcal{C}\hspace{-0.3ex}a\hspace{-0.1ex}t}
\newcommand\cc{c}
\newcommand\CC{C}
\newcommand\CCC{\mathcal{C}}
\newcommand\CCCi{\CCC^{\!\scriptscriptstyle\times}}
\newcommand\cl[1]{[#1]}
\newcommand\DELTA[1]{\Delta^{\![#1]}}
\newcommand\DELTAA[1]{\Delta^{\!(#1)}}
\newcommand\der{\partial}
\newcommand\derL{\widetilde\partial}
\renewcommand\div{\mathrel{\prec}}
\newcommand\dive{\mathrel{\preccurlyeq\nobreak}}
\newcommand\diveS{\mathrel{\preccurlyeq_{\SSS}}}
\newcommand\divesmall{\scriptstyle\dive}
\newcommand\divS{\mathrel{\prec_{\SSS}}}
\newcommand\ee{e}
\newcommand\EE{E}
\newcommand\Env[1]{\mathcal{E}\!nv(#1)}
\newcommand\eqir{\mathrel{=^{\!\scriptscriptstyle\times}}}
\newcommand\equivp{\mathrel{\equiv^{\scriptscriptstyle+}}}
\newcommand\etc{\pdots}
\newcommand\ew{\varepsilon}
\newcommand\ff{f}
\renewcommand\gcd{\mathbin{\wedge}}
\renewcommand\ge{\mathrel{\geqslant}}
\renewcommand\gg{g}
\newcommand\GGG{\mathcal{G}}
\newcommand\hh{h}
\newcommand\HS[1]{\hspace{#1ex}}
\newcommand\id[1]{1_{#1}}
\newcommand\Id[1]{\boldsymbol{1}_{\!#1}}
\newcommand\ii{i}
\newcommand\inv{^{-1}}
\newcommand\INV[1]{\overline{\VR(2,0)#1}}
\newcommand\isdef{{\downarrow}}
\newcommand\jj{j}
\newcommand\JJJ{\mathcal{J}}
\newcommand\LC{\widetilde\RC}
\newcommand\lcm{\mathbin{\vee}}
\renewcommand\le{\mathrel{\leqslant}}
\newcommand\LG[1]{\Vert#1\Vert}
\newcommand\MM{M}
\newcommand\nn{n}
\newcommand\NNNN{\mathbb{N}}
\newcommand\Obj{\mathcal{O}\HS{-0.15}b\HS{-0.25}j}
\newcommand\OP{\,{\scriptstyle\bullet}\,}
\newcommand\pc{\HS{0.05}\vert\HS{0.05}}
\newcommand\pdots{\HS{0.2}{\cdot}{\cdot}{\cdot}\HS{0.2}}
\newcommand\pp{p}
\newcommand\PRES[2]{\langle#1\,\vert\, #2\rangle}
\newcommand\PRESp[2]{\langle#1\,\vert\, #2\rangle^{\scriptscriptstyle\!+}\!}
\newcommand\qq{q}
\newcommand\RC{\theta}
\newcommand\RCd{\RC'}
\newcommand\RCh{\widehat\RC}
\newcommand\resp{\mbox{\it resp}.\ }
\newcommand\rev{\curvearrowright}
\newcommand{\revLinv}{\mathrel{\raisebox{5pt}{\rotatebox{180}{$\curvearrowright$}}}}
\newcommand\ROP{\mathcal{R}_{\bullet}}
\newcommand\rr{r}
\newcommand\RRR{\mathcal{R}}
\newcommand\Seq[2]{#1^{[#2]}}
\newcommand\seqq[2]{#1\pc#2}
\newcommand\seqqq[3]{#1\pc#2\pc#3}
\newcommand\seqqqq[4]{#1\pc#2\pc#3\pc#4}
\newcommand\seqqqqq[5]{#1\pc#2\pc#3\pc#4\pc#5}
\newcommand\seqqqqqq[6]{#1\pc#2\pc#3\pc#4\pc#5\pc#6}
\newcommand\seqqqqqqq[7]{#1\pc#2\pc#3\pc#4\pc#5\pc#6\pc#7}
\newcommand\seqqqqqqqq[8]{#1\pc#2\pc#3\pc#4\pc#5\pc#6\pc#7\pc#8}
\newcommand\Square{\square}
\newcommand\src[1]{\mathrm{src}(#1)}
\renewcommand{\SS}{S}
\newcommand\SSS{\mathcal{S}}
\newcommand\SSSg{\underline\SSS}
\newcommand\SSSs{\SSS^{\scriptstyle\sharp}}
\newcommand\SW{W}
\newcommand\trg[1]{\mathrm{trg}(#1)}
\newcommand\tta{\mathtt{a}}
\newcommand\ttb{\mathtt{b}}
\newcommand\TV[1]{[\![#1]\!]}
\newcommand\ud{\hbox{-}}
\newcommand\under{\,\backslash\,}
\newcommand\uu{u}
\def\VR(#1,#2){\vrule width0pt height#1mm depth#2mm}
\newcommand\vv{v}
\newcommand\wdots{, ...\HS{0.2},}
\newcommand\wit{\lambda}
\newcommand\ww{w}
\newcommand\xx{x}
\newcommand\yy{y}
\newcommand\zz{z}
\newcommand\ZZZZ{\mathbb{Z}}
\title{Algorithms for Garside calculus}
\author{Patrick DEHORNOY}
\address{Laboratoire de Math\'ematiques Nicolas Oresme,
CNRS UMR 6139, Universit\'e de Caen, 14032 Caen, France}
\email{patrick.dehornoy@unicaen.fr}
\urladdr{www.math.unicaen.fr/\!\hbox{$\sim$}dehornoy}
\thanks{Both authors acknowledge support under Australian Research Council's Discovery Projects funding scheme (project number DP1094072). Volker Gebhardt acknowledges support under the Spanish Project MTM2010-19355.}
\author{Volker GEBHARDT}
\address{School of Computing, Engineering, and Mathematics, University of Western Sydney, Locked bag 1797, Penrith, NSW 2751, Australia}
\email{v.gebhardt@uws.edu.au}
\keywords{normal form, word problem, category presentation, groupoid of fractions, Garside calculus, Garside family, germ}
\subjclass{20F10, 18B40, 20F36, 68Q17}
\begin{document}

\begin{abstract}
Garside calculus is the common mechanism that underlies a certain type of normal form for the elements of a monoid, a group, or a category. Originating from Garside's approach to Artin's braid groups, it has been extended to more and more general contexts, the latest one being that of categories and what are called Garside families. One of the benefits of this theory is to lead to algorithms solving effectively the naturally occurring problems, typically the Word Problem. The aim of this paper is to present and solve these algorithmic questions in the new extended framework.
\end{abstract}

\begin{center}
\tiny Version of \today
\end{center}

\maketitle

In 1969, F.A.\,Garside~\cite{Gar} solved the Word and Conjugacy Problems in Artin's braid group~$B_\nn$ \cite{Art} by describing the latter as a group of fractions and analyzing the involved monoid in terms of its divisibility relation. This approach was continued and extended in several steps, first to Artin-Tits groups of spherical type~\cite{BrS, Dlg, Adj, Thu, Eps, ElM, Cha}, then to a larger family of groups now known as Garside groups~\cite{Dfx, Dgk}. More recently, it was realized that going to a categorical context allows for capturing further examples~\cite{DiM}, and a coherent theory has recently emerged with a central unifying notion called Garside families~\cite{Dif, Garside}: the central notion is a certain way of decomposing the elements of the reference category or its groupoid of fractions and a Garside family is what makes the construction possible.

What we do in this paper is to present and analyse the main algorithms arising in this new, extended context of Garside families, with two main directions, namely recognizing that a candidate family is a Garside family and using a Garside family to compute in the category, typically finding distinguished decompositions and solving the Word Problem along the lines of~\cite{Dhp}. This results in a corpus of about twenty algorithms that are proved to be correct, analysed, and given examples. We do not address the Conjugacy Problem here, as extending the methods of~\cite{Geb, GeG} will require further developments that we keep for a subsequent work.

The paper consists of five sections. Section~\ref{S:Context} is a review of Garside families and the derived notions involved in the approach, together with some basic results that appear in other sources. Next, we address the question of effectively recognizing Garside families and we describe and analyse algorithms doing it: in Section~\ref{S:Pres}, we consider the case when the ambient category is specified using a presentation (of a certain type), whereas, in Section~\ref{S:Germ}, we consider the alternative approach when the category is specified using what is called a germ. Finally, the last two sections are devoted to those computations that can be developed once a Garside family is given. In Section~\ref{S:Pos}, we consider computations taking place in the reference category or monoid (``positive case''), whereas, in Section~\ref{S:Sym}, we address similar questions in the groupoid or group of fractions of the reference category (``signed case'').

\section{The general context}
\label{S:Context}

In this introductory section, we present the background of categories and Garside families, together with some general existence and uniqueness results that will be used and, often, refined in the sequel of the paper. Proofs appear in other sources and they will be omitted in general.

\subsection{Categories}
\label{SS:Cat}

The general context is that of categories, which should be seen here just as monoids with a partial product, that is, one that is not necessarily defined everywhere. A \emph{precategory} (or multigraph) is a family $\AAA$ plus two maps, ``source'' and ``target'', of~$\AAA$ to another family~$\Obj(\AAA)$ (the objects of~$\AAA$), and a \emph{category} is a precategory equipped with a partial multiplication such that $\ff\gg$ exists if and only if the target of~$\ff$, denoted~$\trg\ff$, coincides with the source of~$\gg$, denoted~$\src\gg$. The multiplication is associative whenever defined and, in addition, has a neutral element~$\id\xx$ for each object~$\xx$, that is, $\id\xx \gg = \gg = \gg\id\yy$ holds for every~$\gg$ with source~$\xx$ and target~$\yy$. If $\CCC$ is a category, the family of all neutral elements is denoted by $\Id\CCC$ and, for $\AAA$ included in~$\CCC$ and $\xx, \yy$ in~$\Obj(\CCC)$, the family of elements of $\AAA$ with source~$\xx$ and target~$\yy$ is denoted by $\AAA(\xx, \yy)$. A monoid is the special case of a category when there is only one object, so that the product is always defined. It is convenient to represent the elements of a category using arrows, so that an element~$\gg$ with source~$\xx$ and target~$\yy$ is represented by 
\begin{picture}(13,4)
\put(0,0){$\xx$}
\put(11,0){$\yy$}
\pcline{->}(3,0.5)(10,0.5)
\taput{$\gg$}
\end{picture}.

\subsubsection*{Free categories and paths}

If $\AAA$ is a precategory, the free category generated by~$\AAA$ is the family~$\AAA^*$ of all $\AAA$-paths, that is, all finite sequences~$(\aa_1 \wdots \aa_\pp)$ of elements of~$\AAA$ such that the target of~$\aa_{\ii-1}$ is the source of~$\aa_\ii$ for every~$\ii$, together with, for each object~$\xx$, an empty path~$\ew_\xx$, and equipped with concatenation of paths. When~$\ww$ is an $\AAA$-path, we denote by~$\LG\ww$ the length of~$\ww$, and, for $1 \le \ii \le \LG\ww$, we denote by~$\ww[\ii]$ the $\ii$-th entry in~$\ww$. For $\BBB$ included in~$\AAA$, we denote the family of all $\BBB$-paths of length~$\pp$ by $\Seq{\BBB}\pp$. If $\ww_1, \ww_2$ are two paths, we denote by~$\ww_1 \pc \ww_2$ the concatenation of~$\ww_1$ and~$\ww_2$ when it exists, that is, when the target of~$\ww_1$ (defined to be the target of the last entry in~$\ww_1$) coincides with the source of~$\ww_2$ (defined to be the source of the first entry in~$\ww_2$). We identify a length one path with its unique entry. Then a length~$\pp$ path $(\aa_1 \wdots \aa_\pp)$ is the concatenation of the length one paths made of its successive entries, so that it can be denoted by $\seqqq{\aa_1}\etc{\aa_\pp}$. When $\AAA$ is a set, that is, a precategory with one object only, the condition about source and targets vanishes, and it is usual to say \emph{$\AAA$-word} for~$\AAA$-path.

\subsubsection*{Presentations}

Every category that is generated by a family~$\AAA$ is a quotient of the free category~$\AAA^*$, and, for $\RRR$ a family of pairs of $\AAA$-paths, it is said to admit the presentation $(\AAA ; \RRR)$ if it is isomorphic to~$\AAA^*\!{/}{\equivp_\RRR}$ where $\equivp_\RRR$ is the congruence on~$\AAA^*$ generated by~$\RRR$. The elements of~$\RRR$ are called \emph{relations}, and, in this context, it is customary to write $\uu = \vv$ instead of $(\uu, \vv)$ for a relation and, in concrete examples, to omit the concatenation sign, thus writing $\aa_1 \pdots \aa_\pp = \bb_1 \pdots \bb_\qq $ rather than $\seqqq{\aa_1}\etc{\aa_\pp} = \seqqq{\bb_1}\etc{\bb_\qq}$. If $(\AAA ; \RRR)$ is a presentation, we write $\PRESp\AAA\RRR$ for the category presented by~$(\AAA ; \RRR)$---which is determined only up to isomorphism---and, for $\ww$ an $\AAA$-path, we write~$\cl\ww$ for the $\equivp_\RRR$-class of~$\ww$, that is, for the element of $\PRESp\AAA\RRR$ represented by~$\ww$---which is also the evaluation of the path~$\ww$ in the category~$\PRESp\AAA\RRR$. 

\subsubsection*{Cancellativity}

All categories we shall consider here will have to satisfy some cancellability condition, at least on one side.

\begin{defi}
A category~$\CCC$ is called \emph{left-cancellative} (\resp \emph{right-cancellative}) if $\ff\gg = \ff\gg'$ (\resp $\gg\ff = \gg'\ff$) implies $\gg = \gg'$ for all~$\ff, \gg, \gg'$ in~$\CCC$.
\end{defi}

A category is called \emph{cancellative} if it is both left- and right-cancellative. In a left- (or right-) cancellative category, an element has a left-inverse if and only if it has a right-inverse, and so there is a unique, non-ambiguous notion of \emph{invertible} element. For~$\CCC$ a left-cancellative category, we denote by~$\CCCi$ the subgroupoid of~$\CCC$ consisting of all invertible elements. An invertible element will be called \emph{nontrivial} if it is not an identity-element~$\id\xx$.

\subsubsection*{Divisibility, lcm, lcm-selector}

Associated with every (left-cancellative) category---hence, in particular, every monoid---comes a natural left-divisibility relation.

\begin{defi}
Assume that $\CCC$ is a left-cancellative category. For $\ff, \gg$ in~$\CCC$, we say that $\ff$ is a \emph{left-divisor} of~$\gg$, or, equivalently, that $\gg$ is a \emph{right-multiple} of~$\ff$, denoted $\ff \dive \gg$, if there exists~$\gg'$ in~$\CCC$ so that $\ff \gg' = \gg$ holds.
\end{defi}

The hypothesis that the ambient category is left-cancellative is needed to guarantee that left-divisibility is a partial preordering; the associated equivalence relation is right-multiplication by an invertible element: the conjunction of $\ff \dive \gg$ and $\gg \dive \ff$ is equivalent to the existence of an invertible element~$\ee$ satisfying $\ff\ee = \gg$, which will be denoted by~$\ff \eqir \gg$ hereafter.

Simple derived notions stem from the left-divisibility relation, corresponding to greatest common lower bound and least common upper bound. We say that $\hh$ is a \emph{greatest common left-divisor}, or \emph{left-gcd} of~$\ff$ and~$\gg$ if $\hh$ left-divides~$\ff$ and~$\gg$ and every left-divisor of~$\ff$ and~$\gg$ left-divides~$\hh$. Symmetrically, we say $\hh$ is a \emph{least common right-multiple}, or \emph{right-lcm} of~$\ff$ and~$\gg$ if $\hh$ is a right-multiple of~$\ff$ and~$\gg$ and every right-multiple of~$\ff$ and~$\gg$ is a right-multiple of~$\hh$. Right-lcms and left-gcds, if they exist, are unique up to right-multiplication by an invertible element, hence unique if the ambient category has no nontrivial invertible element. In the latter case, we write $\ff \lcm \gg$ and $\ff \gcd \gg$ for the right-lcm and the left-gcd of~$\ff$ and~$\gg$ (if they exist), and $\ff \under\gg$ for the unique element that satisfies $\ff \lcm \gg = \ff(\ff \under \gg)$. 

\begin{defi}
We say that a left-cancellative category~$\CCC$ \emph{admits right-lcms} (\resp \emph{admits local right-lcms}) if any two elements of~$\CCC$ with the same source (\resp if any two elements of~$\CCC$ that admit a common right-multiple) admit a right-lcm.
\end{defi}

Finally, we shall sometimes need to choose right-lcms explicitly. The following terminology is then natural.

\begin{defi}
\label{D:Selector}
Assume that $\CCC$ is a left-cancellative category, and $\AAA$ is a generating subfamily of~$\CCC$. A \emph{right-lcm selector} on~$\AAA$ is a partial map~$\RC : \AAA \times \AAA \to \AAA^*$ such that, for all~$\aa, \bb$ in~$\AAA$, the elements~$\RC(\aa, \bb)$ and~$\RC(\bb, \aa)$ are defined if and only if~$\aa$ and~$\bb$ admit a right-lcm and, in this case, there exists a right-lcm~$\hh$ of~$\aa$ and~$\bb$ such that both $\aa \pc \RC(\aa, \bb)$ and $\bb \pc \RC(\bb, \aa)$ represent~$\hh$.
\end{defi}

Note that, if $\CCC$ is a left-cancellative category that admits no nontrivial invertible element, then the map $(\ff, \gg) \mapsto \ff \under\gg$ is a right-lcm selector on~$\CCC$.

The notions of a right-divisor and left-multiple are defined symmetrically, and so are the derived notions of a right-gcd, and a left-lcm selector.

\subsubsection*{Noetherianity conditions}

\begin{defi}
A left-cancellative category~$\CCC$ is called \emph{right-Noetherian} if every bounded $\div$-increasing sequence in~$\CCC$ is finite.
\end{defi}

By extension, we say that a (positive) presentation is right-Noetherian if the associated category is right-Noetherian. Standard results (see for instance \cite{Lev}) give the following criterion for establishing Noetherianity conditions.

\begin{lemm}
\label{L:Noeth}
A presentation $(\AAA ; \RRR)$ is right-Noe\-ther\-ian if and only if there exists a map $\wit$ of~$\AAA^*$ to the ordinals that is $\equivp_\RRR$-invariant and satisfies $\wit(\aa) > 0$ for every~$\aa$ in~$\AAA$ and $\wit(\aa \pc \ww) > \wit(\ww)$ for all~$\ww$ in~$\AAA^*$ and~$\aa$ in~$\AAA$.
\end{lemm}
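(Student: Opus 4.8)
The plan is to prove the two implications separately, the common device being the interplay—made possible by left-cancellativity—between the ascending chains of left-divisors that occur in the definition of right-Noetherianity and the descending chains of right-divisors that are naturally controlled by a map of the type~$\wit$. First, observe that, since $\wit$ is $\equivp_\RRR$-invariant, it factors as $\wit(\ww) = \widehat\wit(\cl\ww)$ for a well-defined map $\widehat\wit$ from $\CCC := \PRESp\AAA\RRR$ to the ordinals, and the inequality $\wit(\aa\pc\ww) > \wit(\ww)$ then reads $\widehat\wit(\cl\aa\,\ff) > \widehat\wit(\ff)$ for every $\ff$ in $\CCC$ and every generator $\cl\aa$. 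A straightforward induction on the length of $\uu$ promotes this to $\wit(\uu\pc\ww) > \wit(\ww)$ for every nonempty path $\uu$, i.e.\ to $\widehat\wit(\hh\ff) > \widehat\wit(\ff)$ whenever $\hh$ is a non-identity element of $\CCC$ (a non-identity element has no empty representative, hence admits a nonempty one). So, by the standard correspondence between well-founded relations and strictly decreasing ordinal-valued maps—the normalisation $\wit(\aa) > 0$ coming for free, since $\cl\aa = \cl\aa\,\id{}$—the existence of $\wit$ is equivalent to the relation $R$ on $\CCC$ defined by ``$\ff\,R\,\gg$ iff $\gg = \cl\aa\,\ff$ for some $\aa$ in $\AAA$'' (equivalently, the proper-right-divisor relation) being well-founded. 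The real content is thus to match this well-foundedness with right-Noetherianity.

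For the implication from the existence of $\wit$ to right-Noetherianity I would argue contrapositively. Suppose $\gg_0\div\gg_1\div\cdots$ is an infinite bounded $\div$-increasing sequence, with $\gg_i\dive\gg$ for all $i$; write $\gg_{i+1} = \gg_i\hh_{i+1}$ with $\hh_{i+1}$ a non-identity element, and $\gg = \gg_i\kk_i$. From $\gg_i\kk_i = \gg = \gg_{i+1}\kk_{i+1} = \gg_i\hh_{i+1}\kk_{i+1}$ and left-cancellation of $\gg_i$ we obtain $\kk_i = \hh_{i+1}\kk_{i+1}$, whence $\widehat\wit(\kk_i) = \widehat\wit(\hh_{i+1}\kk_{i+1}) > \widehat\wit(\kk_{i+1})$ by the observation above. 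Then $\widehat\wit(\kk_0) > \widehat\wit(\kk_1) > \cdots$ is an infinite strictly decreasing sequence of ordinals, which is impossible; hence every bounded $\div$-increasing sequence is finite.

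For the converse, I would first check that $R$ is well-founded. If $(\gg_i)$ were an infinite $R$-descending sequence, say $\gg_i = \cl\aa_{i+1}\gg_{i+1}$ with $\aa_{i+1}$ in $\AAA$, then $\gg_0 = \cl\aa_1\cdots\cl\aa_i\,\gg_i$, so the partial products $\ff_i := \cl\aa_1\cdots\cl\aa_i$ (with $\ff_0$ an identity) all left-divide $\gg_0$ and satisfy $\ff_i\div\ff_{i+1}$, the cofactor $\cl\aa_{i+1}$ being a non-identity generator—an infinite bounded $\div$-increasing sequence, contradicting right-Noetherianity. With $R$ well-founded, define $\widehat\wit$ by well-founded recursion, $\widehat\wit(\gg) := \sup\{\widehat\wit(\ff)+1 : \ff\,R\,\gg\}$ (the supremum of the empty set being $0$); this takes ordinal values, $\gg = \cl\aa\,\ff$ forces $\widehat\wit(\gg) > \widehat\wit(\ff)$, and setting $\wit(\ww) := \widehat\wit(\cl\ww)$ yields an $\equivp_\RRR$-invariant map with $\wit(\aa\pc\ww) > \wit(\ww)$ and $\wit(\aa) \ge \widehat\wit(\id{})+1 > 0$, as required.

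The obstacle I anticipate is not depth but the bookkeeping around \emph{properness}: one must use exactly the ``non-identity cofactor'' notion of proper divisor, so that ``$\gg_i\div\gg_{i+1}$'' genuinely delivers a non-identity $\hh_{i+1}$ and, dually, the $\ff_i$ above form a truly strictly $\div$-increasing chain; and one must locate precisely where left-cancellativity enters, namely in the step $\kk_i = \hh_{i+1}\kk_{i+1}$, which converts an ascending chain of left-divisors of the common bound $\gg$ into a descending chain of right-divisors. Everything else is the routine transfinite-recursion and well-ordering boilerplate; as is standard in this setting one tacitly assumes that no generator is an identity, and to avoid the pathologies caused by nontrivial invertible elements one may assume $\CCC$ has none, in which case $R$ is an honest strict partial order.
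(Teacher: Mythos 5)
The paper does not actually prove this lemma: it is invoked as a standard fact with a pointer to \cite{Lev}, so there is no argument of the paper to compare yours against. Taken on its own terms, your proof follows the expected route --- the existence of $\wit$ is translated into well-foundedness of the one-generator-step (equivalently, proper right-divisibility) relation and back via ordinal ranks, and the two chain conditions are matched by converting a bounded $\div$-increasing chain of left-divisors of a fixed element into a $\wit$-decreasing chain of right-complements, and conversely --- and both directions are carried out correctly under the hypotheses you actually use.

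The point that needs fixing is the status of those hypotheses. You use left-cancellativity of $\PRESp\AAA\RRR$ essentially: in the step $\kk_\ii = \hh_{\ii+1}\kk_{\ii+1}$, and again when checking that the partial products $\ff_\ii$ form a \emph{properly} increasing chain (if $\ff_{\ii+1} \dive \ff_\ii$ held, one left-cancels $\ff_\ii$ and obtains a nonempty word representing an identity, impossible for a positive presentation). But the statement of the lemma does not grant left-cancellativity, a positive presentation need not present a left-cancellative category, and the hypothesis cannot be dispensed with: in the monoid $\PRESp{\aa, \cc, \bb_1, \bb_2, \ldots}{\cc = \aa^\nn \bb_\nn,\ \nn \ge 1}$ every descending chain for proper right-divisibility is finite (the number of letters other than $\aa$ in a representative can drop only finitely often, and between drops the difference between the leading block of $\aa$'s and the index of the first $\bb$-letter strictly decreases and, once negative, freezes that block), so the associated rank yields a map $\wit$ with all the stated properties, while $\aa \div \aa^2 \div \cdots$ is an infinite $\div$-increasing sequence bounded by $\cc$. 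What saves the statement is that the paper defines right-Noetherianity only for left-cancellative categories, so left-cancellativity (together with positivity of the presentation, which is what guarantees that generators are not identities and that no nontrivial invertible elements occur) is implicitly built into the hypothesis ``$(\AAA;\RRR)$ is right-Noetherian''. Your closing paragraph treats these as optional conveniences (``one may assume\dots''); they should instead be recorded as standing hypotheses, after which your proof is complete.
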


Note that every presentation~$(\AAA ; \RRR)$ such that $\RRR$ consists of relations of the form~$\uu = \vv$ with $\uu, \vv$ of the same length is right-Noetherian, as one can then define $\wit(\ww)$ to be the length of~$\ww$.

\subsubsection*{Ore category, groupoid of fractions}

Finally, we recall that a groupoid is a category in which every element is invertible, a group corresponding to the special case when there is only one object, that is, the product is always defined. 

We say that a groupoid~$\GGG$ is a \emph{groupoid of left-fractions} for a subcategory~$\CCC$ if every element of~$\GGG$ admits an expression of the form~$\ff\inv \gg$ with $\ff, \gg$ in~$\CCC$. The following result of Ore is classical:

\begin{prop}\cite{ClP}
\label{P:Ore}
Say that a category is \emph{left-Ore} (\resp \emph{right-Ore}) if it is cancellative and any two elements that have the same target (\resp source) admit a common left-multiple (\resp right-multiple). Then a category embeds in a groupoid of left-fractions (\resp of right fractions) if and only if it is left-Ore (\resp right-Ore).
\end{prop}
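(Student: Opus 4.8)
The statement is Ore's classical embedding theorem, and I would prove it by the standard calculus-of-fractions construction, carrying out the ``left'' version in detail and obtaining the ``right'' version by passing to the opposite category, that is, by reversing all arrows. The easy (necessity) direction is short: if $\CCC$ sits inside a groupoid~$\GGG$ all of whose elements have the form $f\inv g$ with $f, g$ in~$\CCC$, then $\CCC$ is cancellative, because a groupoid is cancellative (every arrow being invertible) and cancellativity passes to subcategories; and any $f, g$ in~$\CCC$ with $\trg f = \trg g$ admit a common left-multiple, obtained by forming $fg\inv$ in~$\GGG$, writing it as $u\inv v$ with $u, v$ in~$\CCC$, and reading off the relation $uf = vg$, which then holds in~$\CCC$.

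For the sufficiency direction I would, assuming $\CCC$ is left-Ore, construct~$\GGG$ explicitly. Take $\Obj(\GGG) := \Obj(\CCC)$ and represent an arrow from~$\xx$ to~$\yy$ by a pair $(f, g)$ of elements of~$\CCC$ with $\src f = \src g$, $\trg f = \xx$, and $\trg g = \yy$, the intended meaning being $f\inv g$; then quotient by the relation declaring $(f, g) \equiv (f', g')$ whenever $cf = c'f'$ and $cg = c'g'$ hold for some $c, c'$ in~$\CCC$. Define a partial product on $\equiv$-classes by $[(f,g)] \cdot [(h,k)] := [(uf, vk)]$, where $u, v$ in~$\CCC$ are chosen with $ug = vh$; such $u, v$ exist precisely when $g$ and~$h$ have the same target, which is exactly the composability condition for the fractions $f\inv g$ and $h\inv k$, and this is where the left-Ore hypothesis is used. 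Finally, define $\iota \colon \CCC \to \GGG$ by $\iota(g) := [(\id{\src g}, g)]$.

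The bulk of the work --- and what I expect to be the main obstacle --- is the package of well-definedness checks: that $\equiv$ is transitive (given two chains of witnesses through a common middle pair $(f',g')$, one applies left-Ore to the two left factors standing in front of~$f'$ to merge the chains), and that the product on classes is independent both of the choice of the common left-multiple $(u,v)$ and of the chosen representatives of the two classes. Each of these is a short diagram chase that consumes one application of the left-Ore condition (to dominate two given common left-multiples by a third) together with one use of left-cancellativity (to cancel a common left factor once the relations have been lined up); there is no conceptual difficulty, only the familiar bookkeeping of the Ore calculus. Granting this, associativity is one more chase of the same kind, $[(\id\xx,\id\xx)]$ is the identity at~$\xx$, and $[(f,g)]\inv = [(g,f)]$, since computing $[(f,g)] \cdot [(g,f)]$ with $u = v = \id{\src f}$ yields $[(f,f)]$, which is the identity $[(\id{\trg f},\id{\trg f})]$. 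Thus $\GGG$ is a groupoid; $\iota$ is readily checked to be a functor, and it is injective because $\iota(g) = \iota(g')$ forces $c = c'$ and $cg = c'g'$ for some $c, c'$, hence $g = g'$ by left-cancellativity; and since $[(f,g)] = \iota(f)\inv \iota(g)$ for every admissible pair, $\GGG$ is a groupoid of left-fractions for $\iota(\CCC) \cong \CCC$, which is exactly what the statement asserts.
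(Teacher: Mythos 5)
Your proposal is correct and is the standard Ore localization argument; the paper itself omits the proof of this classical statement and simply cites Clifford--Preston, where essentially the construction you describe is carried out (for semigroups). One point of precision in your bookkeeping: the well-definedness checks do not consume only \emph{left}-cancellativity. For instance, to see that the product is independent of the chosen common left-multiple, one dominates $ug=vh$ and $u'g=v'h$ by a third common left-multiple, say $p(ug)=q(u'g)$, and then passes from $(pu)g=(qu')g$ to $pu=qu'$ by cancelling the common \emph{right} factor $g$ --- a use of right-cancellativity (and likewise $pv=qv'$ by cancelling $h$). Since your hypothesis is full cancellativity this is harmless, but it is worth recording that left-cancellativity together with the common-left-multiple condition would not suffice: a category embeddable in a groupoid is necessarily cancellative on both sides, and the right-cancellation is genuinely spent at exactly this step.
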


A category that is both left- and right-Ore will be called an \emph{Ore category}.

\subsection{Normal decompositions and Garside families}
\label{SS:Gar}

The central idea in our approach consists in introducing distinguished decompositions of a certain type for the elements of the considered category. These decompositions involve a reference subfamily of the ambient category and correspond to the principle of recursively selecting maximal left-divisors lying in the reference family.

\begin{defi}
\label{D:Greedy}
Assume that $\CCC$ is a left-cancellative category. For $\SSS$ included in~$\CCC$, a $\CCC$-path $\seqqq{\gg_1}\etc{\gg_\pp}$ is called \emph{$\SSS$-greedy} (\resp \emph{$\SSS$-normal}) if, for every~$\ii < \pp$, we have
\begin{equation}
\label{E:Greedy}
\forall\aa{\in}\SSS \ \forall\ff{\in}\CCC \ (\aa \dive \ff\gg_\ii \gg_{\ii+1} \Rightarrow \aa \dive \ff\gg_\ii)
\end{equation}
(\resp this and, in addition, every entry~$\aa_\ii$ lies in~$\SSSs$, defined to be $\SSS\CCCi \cup \CCCi$).
\end{defi}

When using diagrams in which the elements of the category are represented by arrows, we shall indicate that a path $\seqq{\gg_1}{\gg_2}$ is $\SSS$-greedy by appending a small arc as in 
\begin{picture}(31,4)(0,0)
\pcline{->}(1,0)(14,0)
\taput{$\gg_1$}
\pcline{->}(16,0)(29,0)
\taput{$\gg_2$}
\psarc[style=thin](14.5,0){3}{0}{180}
\end{picture}.

\begin{exam}\label{X:FreeAbelianMonoid}
Consider the free Abelian monoid $M$ defined by the presentation $(\tta,\ttb;\tta\ttb=\ttb\tta)$ and let $\SSS=\{\tta,\ttb,\tta\ttb\}$, whence $\SSSs=\{1,\tta,\ttb,\tta\ttb\}$. The $\SSS$-normal paths in $M$ are precisely the paths $\seqqq{\gg_1}{\etc}{\gg_\qq}$ that, for some $\pp,\pp'$ in~$\{0 \wdots \qq\}$, satisfy \ITEM1 $\gg_\ii=\tta\ttb$ for $\ii=1 \wdots \pp$;
\ITEM2 either $\gg_\ii=\tta$ for $\ii=\pp+1 \wdots \pp'$, or $\gg_\ii=\ttb$ for $\ii=\pp+1 \wdots \pp'$; and
\ITEM3 $\gg_\ii=1$ for $\ii=\pp'+1 \wdots \qq$.
\end{exam}

If $\gg$ is an element of a category~$\CCC$, a path $\ww$ satisfying $\cl\ww = \gg$ is called a \emph{decomposition} of~$\gg$. What we shall be interested in in the sequel are the (possible) $\SSS$-normal decompositions of the elements of the considered category. 
One of the interests of such decompositions is that they are essentially unique.

\begin{prop}
\label{P:NormalUnique}\cite[Proposition~2.11]{Dif}
Assume that $\CCC$ is a left-cancellative category and $\SSS$ is included in~$\CCC$. Then any two $\SSS$-normal decompositions of an element of~$\CCC$ (if any) are $\CCCi$-deformations of one another, where $\seqqq{\aa_1}\etc{\aa_\pp}$ is said to be a \emph{$\CCCi$-deformation} of $\seqqq{\bb_1}\etc{\bb_\qq}$ if there exist invertible elements~$\ee_0 \wdots \ee_{\rr}$, with $\rr = \max(\pp, \qq)$, such that $\ee_0, \ee_\rr$ are identity-elements and $\bb_\ii \ee_\ii = \ee_{\ii-1} \aa_\ii$ holds for $1 \le \ii \le \rr$, where, for $\pp \not=\qq$, the shorter path is expanded by identity-elements (see Figure~\ref{F:NFDeformation}). 
\end{prop}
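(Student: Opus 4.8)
The plan is to argue by induction on the maximum of the lengths of the two normal decompositions, building the deformation entry by entry from the left and using the left-cancellativity of $\CCC$ together with the characterisation of greediness to control the invertible "correction" elements. First I would observe that it suffices to treat the case of two $\SSS$-normal decompositions $\seqqq{\aa_1}\etc{\aa_\pp}$ and $\seqqq{\bb_1}\etc{\bb_\qq}$ of the same element $\gg$, both padded by identity-elements to a common length $\rr = \max(\pp,\qq)$; since identity-elements are invertible and trivially $\SSS$-greedy in the sense of~\eqref{E:Greedy}, the padded paths are still $\SSS$-normal, and a $\CCCi$-deformation between the padded paths restricts to the asserted relation between the originals.

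The heart of the argument is the following claim about a single greedy step: if $\seqq{\aa_1}{\aa_2}$ and $\seqq{\bb_1}{\bb_2}$ are $\SSS$-greedy length-two paths with $\aa_1\aa_2 = \bb_1\bb_2$ and $\aa_1, \bb_1 \in \SSSs$, then there is an invertible $\ee$ with $\bb_1\ee = \aa_1$ (equivalently $\aa_1 \eqir \bb_1$). The key point is that greediness says $\aa_1$ is, up to $\eqir$, the maximal left-divisor of $\aa_1\aa_2$ lying in $\SSSs$: indeed, if $\cc \in \SSS$ and $\cc \dive \aa_1\aa_2$, then taking $\ff = \id{}$ in~\eqref{E:Greedy} gives $\cc \dive \aa_1$; conversely $\aa_1$ itself lies in $\SSSs$ and left-divides $\aa_1\aa_2$. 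Applying this with $\cc$ ranging over the factor of $\bb_1$ in $\SSS$ (and handling the invertible parts of $\bb_1 \in \SSSs = \SSS\CCCi \cup \CCCi$ separately) yields $\bb_1 \dive \aa_1$, and symmetrically $\aa_1 \dive \bb_1$; since left-divisibility is a preordering whose associated equivalence is exactly $\eqir$, we get $\aa_1 \eqir \bb_1$, i.e. $\bb_1 \ee_1 = \aa_1$ for some invertible $\ee_1$.

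Granting the claim, the induction runs as follows. Set $\ee_0 = \id{\src\gg}$. At stage $\ii$, I would have produced invertible $\ee_0 \wdots \ee_{\ii-1}$ with $\ee_0, \dots$ the identities as required so far and $\bb_\jj \ee_\jj = \ee_{\jj-1}\aa_\jj$ for $\jj < \ii$; left-cancellativity then forces the tails $\aa_\ii\pdots\aa_\rr$ and $(\ee_{\ii-1}\inv\bb_\ii)\,\bb_{\ii+1}\pdots\bb_\rr$ to evaluate to the same element, and the two corresponding paths are again $\SSS$-greedy (multiplying the first entry of a greedy path on the left by an invertible element preserves~\eqref{E:Greedy}). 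Applying the one-step claim to the pair $(\aa_\ii, \aa_{\ii+1}\pdots\aa_\rr)$ and $(\ee_{\ii-1}\inv\bb_\ii, \bb_{\ii+1}\pdots\bb_\rr)$ — more precisely to their length-two reductions obtained by collapsing the tails, which is legitimate because greediness of $\seqqq{\gg_1}\etc{\gg_\pp}$ only constrains consecutive pairs and is inherited after such collapsing — produces an invertible $\ee_\ii$ with $\bb_\ii\ee_\ii = \ee_{\ii-1}\aa_\ii$. At the last index one checks $\ee_\rr$ can be taken to be an identity-element, using that $\aa_\rr$ and $\ee_{\rr-1}\inv\bb_\rr$ represent the same element and cancelling.

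The main obstacle I anticipate is the careful bookkeeping around the invertible elements and the padding: one must verify that "collapse the tail of a greedy path into a single entry" genuinely preserves greediness (this needs that greediness is a condition on consecutive pairs only, so that $\seqq{\gg_\ii}{\gg_{\ii+1}\pdots\gg_\rr}$ inherits~\eqref{E:Greedy} from $\seqq{\gg_\ii}{\gg_{\ii+1}}$ — which in turn requires the implication $\aa\dive\ff\gg_\ii\gg_{\ii+1}\pdots\gg_\rr \Rightarrow \aa \dive \ff\gg_\ii$, obtained by iterating the one-step implication along $\ii+1, \dots, \rr-1$), and that multiplying the first entry on the left by an invertible does not disturb the $\SSSs$-membership condition needed to reapply the claim. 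Everything else is a routine diagram chase in a left-cancellative category, matching the commuting squares of Figure~\ref{F:NFDeformation}.
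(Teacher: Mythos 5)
Your overall strategy is the standard one for this statement (the paper itself only cites~\cite{Dif}): pad both decompositions to a common length by identity-elements, prove a one-step claim saying that the first entries of two $\SSS$-greedy decompositions of the same element with first entries in~$\SSSs$ are $\eqir$-equivalent, and then induct by cancelling on the left. The one-step claim and its proof via the two directions of left-divisibility are correct, as is the observation that greediness of the consecutive pairs propagates to the collapsed pair $\seqq{\gg_\ii}{\gg_{\ii+1}\pdots\gg_\rr}$, and the endpoint argument forcing $\ee_\rr$ to be an identity-element is fine.

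There is, however, one step that fails as literally written: you propose to re-apply the one-step claim to the pair with first entry $\ee_{\ii-1}\inv\bb_\ii$, and you list as a point to verify that left-multiplying by an invertible element ``does not disturb the $\SSSs$-membership condition''. That closure property is false in general: $\SSSs = \SSS\CCCi \cup \CCCi$ is closed under \emph{right}-multiplication by invertible elements, but $\CCCi\SSS$ need not be contained in $\SSS\CCCi$, so $\ee_{\ii-1}\inv\bb_\ii$ need not lie in~$\SSSs$ and your claim does not apply to it. The repair stays entirely within your argument and uses the universal quantification over~$\ff$ in~\eqref{E:Greedy} to absorb the stray invertible element, rather than pushing it into the path. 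Concretely, work directly with the equality $\ee_{\ii-1}\aa_\ii\pdots\aa_\rr = \bb_\ii\pdots\bb_\rr$: writing $\bb_\ii = \cc\ee'$ with $\cc$ in~$\SSS$ and $\ee'$ invertible (or $\bb_\ii$ invertible, a trivial case), one has $\cc \dive \ee_{\ii-1}\aa_\ii(\aa_{\ii+1}\pdots\aa_\rr)$, so~\eqref{E:Greedy} for $\seqq{\aa_\ii}{\aa_{\ii+1}\pdots\aa_\rr}$ with $\ff = \ee_{\ii-1}$ gives $\cc \dive \ee_{\ii-1}\aa_\ii$, whence $\bb_\ii \dive \ee_{\ii-1}\aa_\ii$; symmetrically, writing $\aa_\ii = \dd\ee''$ with $\dd$ in~$\SSS$, one has $\dd \dive \ee_{\ii-1}\inv\bb_\ii(\bb_{\ii+1}\pdots\bb_\rr)$, and~\eqref{E:Greedy} for $\seqq{\bb_\ii}{\bb_{\ii+1}\pdots\bb_\rr}$ with $\ff = \ee_{\ii-1}\inv$ gives $\dd \dive \ee_{\ii-1}\inv\bb_\ii$, hence $\ee_{\ii-1}\aa_\ii \dive \bb_\ii$. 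The two divisibilities yield an invertible $\ee_\ii$ with $\bb_\ii\ee_\ii = \ee_{\ii-1}\aa_\ii$, and the induction closes as you describe.
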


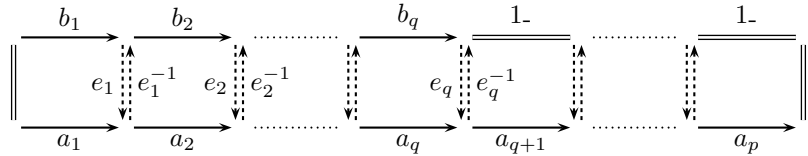
\begin{figure}[htb]
\begin{picture}(105,14)(0,-1)
\pcline{->}(1,0)(14,0)
\tbput{$\aa_1$}
\pcline{->}(1,12)(14,12)
\taput{$\bb_1$}
\pcline{->}(16,0)(29,0)
\tbput{$\aa_2$}
\pcline{->}(16,12)(29,12)
\taput{$\bb_2$}
\pcline[style=etc](32,0)(43,0)
\pcline[style=etc](32,12)(43,12)
\pcline{->}(46,0)(59,0)
\tbput{$\aa_\qq$}
\pcline{->}(46,12)(59,12)
\taput{$\bb_\qq$}
\pcline{->}(61,0)(74,0)
\tbput{$\aa_{\qq+1}$}
\pcline[style=double](61,12)(74,12)
\taput{$\id\ud$}
\pcline[style=etc](77,0)(88,0)
\pcline[style=etc](77,12)(88,12)
\pcline{->}(91,0)(104,0)
\tbput{$\aa_\pp$}
\pcline[style=double](91,12)(104,12)
\taput{$\id\ud$}
\psline[style=double,style=thin](0,1)(0,11)
\pcline[style=exist]{<-}(14.5,1)(14.5,11)
\tlput{$\ee_1$}
\pcline[style=exist]{->}(15.5,1)(15.5,11)
\trput{$\ee_1\inv$}
\pcline[style=exist]{<-}(29.5,1)(29.5,11)
\tlput{$\ee_2$}
\pcline[style=exist]{->}(30.5,1)(30.5,11)
\trput{$\ee_2\inv$}
\pcline[style=exist]{<-}(44.5,1)(44.5,11)
\pcline[style=exist]{->}(45.5,1)(45.5,11)
\pcline[style=exist]{<-}(59.5,1)(59.5,11)
\tlput{$\ee_\qq$}
\pcline[style=exist]{->}(60.5,1)(60.5,11)
\trput{$\ee_\qq\inv$}
\pcline[style=exist]{<-}(74.5,1)(74.5,11)
\pcline[style=exist]{->}(75.5,1)(75.5,11)
\pcline[style=exist]{<-}(89.5,1)(89.5,11)
\pcline[style=exist]{->}(90.5,1)(90.5,11)
\psline[style=double](105,1)(105,11)
\end{picture}
\caption[]{\sf\smaller Deformation by invertible elements: invertible elements connect the corresponding entries; if one path is shorter than the other (here we are in the case $\qq < \pp$), it is extended by identity-elements.}
\label{F:NFDeformation}
\end{figure}

Note that, if $\CCC$ contains no nontrivial invertible element, that is, the only invertible elements are the identity-elements, then Proposition~\ref{P:NormalUnique} provides a genuine uniqueness result provided one discards the $\SSS$-normal paths that finish with an identity-element.

As for the existence of $\SSS$-normal decompositions, it naturally depends on the family~$\SSS$. Here is where Garside families appear:

\begin{defi}
A subfamily~$\SSS$ of a left-cancellative category~$\CCC$ is called a \emph{Garside family} if every element of~$\CCC$ admits an $\SSS$-normal decomposition.
\end{defi}

Every left-cancellative category is a Garside family in itself, so every left-cancell\-at\-ive category contains a Garside family. Practically recognizing whether a given family is a Garside family will be one of the main tasks of Sections~\ref{S:Pres} and~\ref{S:Germ} below. We shall appeal to the following simple characterization which is valid whenever the ambient category satisfies special assumptions:

\begin{lemm}\cite[Corollary~IV.2.18]{Garside}
\label{L:RecGar}
Assume that $\CCC$ is a left-cancellative category that is right-Noetherian and admits unique local right-lcms. Then a subfamily~$\SSS$ of~$\CCC$ is a Garside family in~$\CCC$ if and only if $\SSS$ generates~$\CCC$ and, for all $\aa, \bb$ in~$\SSS$ admitting a common right-multiple, $\aa \lcm \bb$ and $\aa \under\bb$ lie in~$\SSS \cup \Id\CCC$.
\end{lemm}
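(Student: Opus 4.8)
The plan is to reduce the statement to the more primitive characterisation of Garside families: in a left-cancellative category~$\CCC$, a subfamily~$\SSS$ is a Garside family if and only if $\SSSs$ generates~$\CCC$ and every element of~$\CCC$ (it even suffices to consider products of two elements of~$\SSSs$) admits an \emph{$\SSS$-head}, that is, a $\dive$-greatest left-divisor lying in~$\SSSs$. This is available in~\cite{Garside}, and it already carries the ``domino'' machinery that upgrades the local form of the greedy condition~\eqref{E:Greedy} to its full form. Note first that the very use of the notation $\lcm$ and $\under$ presupposes that $\CCC$ has no nontrivial invertible element, so that $\SSSs = \SSS \cup \Id\CCC$; this is what lets one write ``$\SSS \cup \Id\CCC$'' for ``$\SSSs$'' throughout. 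The task thus becomes: under right-Noetherianity and unique local right-lcms, prove that ``$\SSS$ generates~$\CCC$ and every element has an $\SSS$-head'' is equivalent to the stated lcm condition, the real content being the re-expression of the existence of $\SSS$-heads.

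For the ``only if'' direction, assume $\SSS$ is a Garside family. That $\SSS$ generates~$\CCC$ is immediate, as every element is the product of the entries of an $\SSS$-normal decomposition and those entries lie in $\SSS \cup \Id\CCC$. Now fix $\aa, \bb$ in~$\SSS$ with a common right-multiple; then $\cc := \aa \lcm \bb$ exists, and I look at its $\SSS$-head~$\hh$. Since $\aa$ and $\bb$ lie in~$\SSS$ and left-divide~$\cc$, the head property gives $\aa \dive \hh$ and $\bb \dive \hh$, so $\hh$ is a common right-multiple of $\aa$ and $\bb$, whence $\cc = \aa \lcm \bb \dive \hh$; as $\hh \dive \cc$ as well and $\CCC$ has no nontrivial invertible element, $\cc = \hh \in \SSS \cup \Id\CCC$. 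As for $\aa \under \bb$, it is a right-divisor of $\cc = \aa(\aa \under \bb) \in \SSS \cup \Id\CCC$, so it is enough to invoke the fact that $\SSS \cup \Id\CCC$ is closed under right-divisor for a Garside family. The latter can be quoted from~\cite{Garside}, or be recovered here from the uniqueness of normal decompositions (Proposition~\ref{P:NormalUnique}): prolonging a normal decomposition of $\aa \under \bb$ by~$\aa$ on the left and normalising must produce the length-one normal decomposition of~$\cc$, which forces that decomposition of $\aa \under \bb$ to have length at most one.

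For the ``if'' direction, assume $\SSS$ generates~$\CCC$ and that $\aa \lcm \bb$ and $\aa \under \bb$ lie in $\SSS \cup \Id\CCC$ whenever $\aa, \bb$ in~$\SSS$ admit a common right-multiple, and exhibit an $\SSS$-head for an arbitrary $\gg$ in~$\CCC$. Let $D$ consist of all $\aa$ in~$\SSS$ with $\aa \dive \gg$, together with $\id{\src\gg}$. Any two members of~$D$ have $\gg$ as a common right-multiple, so their right-lcm exists by the unique local right-lcm hypothesis, still left-divides~$\gg$, and by the stated hypothesis again lies in $\SSS \cup \Id\CCC$, hence back in~$D$; thus $D$ is directed under~$\dive$ and bounded above by~$\gg$. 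Right-Noetherianity then forbids an infinite $\div$-increasing sequence inside~$D$, so~$D$ has a $\dive$-greatest element~$\hh$, which lies in $\SSS \cup \Id\CCC$, left-divides~$\gg$, and, by construction, is left-divided by every element of~$\SSS$ that left-divides~$\gg$: it is the sought $\SSS$-head. (The point that no ``infinite right-lcm'' is needed is exactly where the hypothesis and right-Noetherianity combine: assembling~$\hh$ one member of~$D$ at a time is a finite, $\div$-increasing, bounded process, each partial right-lcm lying in $\SSS \cup \Id\CCC$.) Every element of~$\CCC$ thus has an $\SSS$-head, so $\SSS$ is a Garside family by the characterisation recalled above; concretely, iterating $\gg \mapsto \hh \under \gg$ yields an $\SSS$-normal decomposition, the iteration halting since the partial products form a bounded $\div$-increasing sequence of left-divisors of~$\gg$ and, $\SSS$ being a generating family, it can stop only at an identity element.

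The step I expect to be the main obstacle is precisely the one I have offloaded onto~\cite{Garside}: the passage from the local version of~\eqref{E:Greedy} (where $\ff$ is an identity, which is all that the head construction supplies directly) to its full ``$\forall\ff$'' form --- equivalently, the closure of $\SSS \cup \Id\CCC$ under right-divisor and its compatibility with right-lcms, used above for $\aa \under \bb$ and, implicitly, in checking that head-decompositions are genuinely greedy. All the rest is routine: the existence of $\SSS$-heads from a directed, bounded family of divisors is a clean consequence of right-Noetherianity, and their membership in $\SSS \cup \Id\CCC$ is a finite iteration of the hypothesis $\aa \lcm \bb \in \SSS \cup \Id\CCC$. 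I would therefore isolate the right-divisor/domino statement as a preliminary lemma (surely already present in~\cite{Garside}) and keep the proof of Lemma~\ref{L:RecGar} itself to the two short translations above.
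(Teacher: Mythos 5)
The paper itself offers no proof of this lemma (it quotes it from \cite[Corollary~IV.2.18]{Garside}), so your proposal has to stand on its own, and its ``if'' direction has a genuine gap, located exactly at the step you flagged. The characterisation you lean on --- ``$\SSS$ is a Garside family if and only if $\SSSs$ generates $\CCC$ and every element (or every element of $(\SSSs)^2$) admits a $\dive$-greatest left-divisor lying in $\SSSs$'' --- is false as stated, so it cannot be quoted from~\cite{Garside} in that form. Take $\CCC$ to be the free monoid on $\tta,\ttb$ (left-cancellative, right-Noetherian, with unique local right-lcms) and $\SSS=\{\tta,\ttb,\tta\tta\ttb\}$. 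Everything your ``if'' argument actually uses or establishes holds there: $\SSS$ generates, every element admits an $\SSS$-head (left-divisors are prefixes, hence totally ordered), and even the right-lcm half of the closure hypothesis holds (the only pair of elements of~$\SSS$ with a common right-multiple is $\tta,\tta\tta\ttb$, whose right-lcm $\tta\tta\ttb$ lies in~$\SSS$). Yet $\SSS$ is not a Garside family: $\tta\ttb$ admits no $\SSS$-normal decomposition, since \eqref{E:Greedy} fails for $\seqq\tta\ttb$ with $\aa=\tta\tta\ttb$ and $\ff=\tta$ (the element $\tta\tta\ttb$ left-divides $\tta\cdot\tta\cdot\ttb$ but does not left-divide $\tta\tta$), and inserting identity entries fails for the same reason. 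Consistently, the lemma's hypothesis fails here only through its second half, $\tta\under(\tta\tta\ttb)=\tta\ttb\notin\SSS\cup\Id\CCC$ --- a hypothesis your ``if'' argument never invokes, which is the tell-tale sign of the gap. Your fallback, quoting closure of $\SSS\cup\Id\CCC$ under right-divisor, does not repair it: that closure is a property of Garside families, i.e.\ of the conclusion, and using it in the ``if'' direction is circular unless it is derived from the stated hypotheses, which is essentially the missing work. (Your ``only if'' direction is fine; it amounts to Lemma~\ref{L:GarClosed} together with that closure property, legitimately quotable there since $\SSS$ is assumed to be a Garside family.)

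The missing step can be filled directly from the lemma's hypotheses, and doing so shows exactly where $\aa\under\bb$ enters: one must upgrade the head property, which is the instance of \eqref{E:Greedy} with $\ff$ an identity, to the fully quantified condition. Suppose $\hh$ is a $\dive$-greatest left-divisor of $\hh\gg'$ lying in $\SSS\cup\Id\CCC$; to see that $\seqq\hh{\gg'}$ is $\SSS$-greedy, argue by induction on the length~$k$ of a decomposition $\bb_1\pdots\bb_k$ of~$\ff$ with entries in $\SSS\cup\Id\CCC$. Given $\aa$ in~$\SSS$ with $\aa\dive\ff\hh\gg'$, the elements $\aa$ and $\bb_1$ admit a common right-multiple, hence a right-lcm; if $\bb_1\under\aa$ is an identity, then $\aa\dive\bb_1\dive\ff\hh$; otherwise $\bb_1\under\aa$ lies in~$\SSS$ (by the hypothesis of the lemma, or trivially if $\bb_1$ is an identity), it left-divides $\bb_2\pdots\bb_k\hh\gg'$ after cancelling~$\bb_1$, hence left-divides $\bb_2\pdots\bb_k\hh$ by the induction hypothesis, and $\aa\dive\bb_1(\bb_1\under\aa)\dive\ff\hh$ follows; the base case $k=0$ is the head property. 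Combined with your (correct) construction of heads via the directed, lcm-closed family~$D$ and with right-Noetherianity to terminate the iterated extraction of heads, this yields a complete proof along the lines you intended; without it, your proposal only establishes head existence, which the example above shows is strictly weaker than being a Garside family.
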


Also, we shall use the following closure result, which in some sense extends Lemma~\ref{L:RecGar}, but need not characterize Garside families in general.

\begin{lemm}
\label{L:GarClosed}
\cite[Proposition~3.9]{Dif} If $\SSS$ is a Garside family in a left-cancellative category~$\CCC$ and $\cc$ is a common right-multiple of two elements~$\aa, \bb$ of~$\SSSs$, there exists a common right-multiple~$\cc'$ of~$\aa$ and~$\bb$ such that $\cc$ is a right-multiple of~$\cc'$ and $\cc'$, together with~$\aa'$ and~$\bb'$ defined by $\aa \bb' = \bb \aa' = \cc'$, lie in~$\SSSs$.
\end{lemm}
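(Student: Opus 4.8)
The plan is to take for~$\cc'$ the first factor of an $\SSS$-normal decomposition of~$\cc$---its ``$\SSS$-head''---and to verify the three requirements in turn. The basic tool is the following extension of the greediness condition~\eqref{E:Greedy}: if $\seqqq{\gg_1}\etc{\gg_\qq}$ is $\SSS$-greedy, then $\rr \dive \ff\gg_1\pdots\gg_\qq$ implies $\rr \dive \ff\gg_1$ for all~$\rr$ in~$\SSS$ and~$\ff$ in~$\CCC$. I would prove this by induction on~$\qq$: the case $\qq = 1$ is trivial; for $\qq \ge 2$ one applies the induction hypothesis to the $\SSS$-greedy subpath $\seqqq{\gg_2}\etc{\gg_\qq}$ with $\ff\gg_1$ in the role of~$\ff$, obtaining $\rr \dive \ff\gg_1\gg_2$, and then applies~\eqref{E:Greedy} to the pair~$(\gg_1, \gg_2)$. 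An immediate consequence: if $\seqqq{\hh_1}\etc{\hh_\qq}$ is $\SSS$-greedy and $\dd$ lies in~$\SSSs$ and left-divides $\hh_1\pdots\hh_\qq$, then $\dd \dive \hh_1$ (write $\dd = \dd_0\ee$ with $\dd_0$ in~$\SSS\cup\Id\CCC$ and~$\ee$ invertible, apply the extended greediness with $\ff$ an identity, and adjust by~$\ee$); in particular, if $\hh_1\pdots\hh_\qq$ itself lies in~$\SSSs$, writing it as $\rr\ee$ with $\rr$ in~$\SSS\cup\Id\CCC$ and~$\ee$ invertible gives $\rr \dive \hh_1$, and left-cancelling~$\rr$ then forces $\hh_2\wdots\hh_\qq$ to be invertible.

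Fix an $\SSS$-normal decomposition $\seqqq{\cc_1}\etc{\cc_\pp}$ of~$\cc$ and set $\cc' := \cc_1$. Then $\cc'$ lies in~$\SSSs$ (being an entry of an $\SSS$-normal path) and $\cc$ is a right-multiple of~$\cc'$ (as $\cc = \cc_1\cc_2\pdots\cc_\pp$). Since $\aa$ and~$\bb$ lie in~$\SSSs$ and left-divide~$\cc$, the consequence above, applied to the $\SSS$-greedy path $\seqqq{\cc_1}\etc{\cc_\pp}$, gives $\aa \dive \cc'$ and $\bb \dive \cc'$, so $\cc'$ is a common right-multiple of~$\aa$ and~$\bb$, and by left-cancellativity there are unique $\bb'$ and~$\aa'$ with $\aa\bb' = \bb\aa' = \cc'$. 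Applied instead to the $\SSS$-greedy path $\seqqqq{\ee\inv\gg_1}{\gg_2}\etc{\gg_\pp}$ extracted from an $\SSS$-normal decomposition $\seqqq{\gg_1}\etc{\gg_\pp}$ of~$\ee\ss$ (with $\ee$ invertible and~$\ss$ in~$\SSS$; prefixing by~$\ee\inv$ preserves greediness, since in~\eqref{E:Greedy} one may replace~$\ff$ by~$\ff\ee\inv$), the same consequence forces $\gg_2\wdots\gg_\pp$ to be invertible, so $\ee\ss$ equals~$\gg_1$ times an invertible element; as $\gg_1$ lies in~$\SSSs$, this gives $\CCCi\SSS\subseteq\SSSs$, hence $\CCCi\SSSs = \SSSs = \SSSs\CCCi$.

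It remains to show that $\bb'$ (and, symmetrically, $\aa'$) lies in~$\SSSs$. Using $\CCCi\SSSs = \SSSs = \SSSs\CCCi$, one strips the invertible parts off the left of~$\aa$ and off the right of~$\cc'$ (the sub-cases where $\aa$ or~$\cc'$ is invertible being immediate), which reduces the claim to: \emph{for~$\aa$ and~$\cc'$ in~$\SSS$, the element~$\bb'$ with $\aa\bb' = \cc'$ lies in~$\SSSs$}; equivalently, $\SSSs$ is closed under taking the right-complement of an $\SSS$-left-divisor of an element of~$\SSS$. I expect this to be the main obstacle. The naive attack---prefixing~$\aa$ to an $\SSS$-normal decomposition of~$\bb'$, normalising, and comparing with the length-one $\SSS$-normal decomposition of~$\cc'$ via Proposition~\ref{P:NormalUnique}---merely reduces the statement for~$\cc'$ to the same statement for some element $\eqir$-equivalent to~$\cc'$, and so loops. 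Breaking the loop needs the recursive recognition of $\SSS$-normal paths (the ``domino rule'') together with the attendant operation of left-multiplying a normal decomposition by an element of~$\SSSs$, which is exactly the structural machinery supplied by~\cite{Dif} (see also~\cite{Garside}). As a check, under the additional hypotheses of Lemma~\ref{L:RecGar} the statement is immediate for $\aa, \bb$ in~$\SSS$: take $\cc' = \aa\lcm\bb$, which exists because $\aa$ and~$\bb$ have the common right-multiple~$\cc$ and which left-divides~$\cc$, and invoke Lemma~\ref{L:RecGar} to conclude $\aa\lcm\bb,\ \aa\under\bb,\ \bb\under\aa \in \SSS\cup\Id\CCC \subseteq \SSSs$.
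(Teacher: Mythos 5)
Your construction and the preparatory lemmas are sound: taking $\cc'$ to be the head $\cc_1$ of an $\SSS$-normal decomposition of~$\cc$, your extended greediness statement (correctly proved by induction) gives $\aa\dive\cc_1$ and $\bb\dive\cc_1$, and your verification that $\CCCi\SSSs=\SSSs=\SSSs\CCCi$ and the ensuing stripping reduction are correct. But the point you leave unproved---that $\aa'$ and $\bb'$ lie in~$\SSSs$---is exactly the substantive content of the lemma (the paper itself gives no proof and refers to \cite[Proposition~3.9]{Dif} for precisely this closure property); everything before it is routine greediness bookkeeping. Declaring that this step ``needs the structural machinery supplied by \cite{Dif}'' amounts to invoking the result to be proved, and your check under the hypotheses of Lemma~\ref{L:RecGar} covers only the right-Noetherian case with unique local right-lcms. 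So, as written, the proposal has a genuine gap.

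The gap is closable with the tools you already installed, with no domino rule and no left-multiplication algorithm; the loop you describe arises only because you normalise $\aa\bb'$ instead of $\bb'$ itself. Prove directly that $\SSSs$ is closed under right-divisor: assume $\ff\gg=\hh$ with $\hh$ in~$\SSSs$. If $\hh$ is invertible, then $\ff$ is right-invertible, hence invertible, and $\gg=\ff\inv\hh$ lies in $\CCCi\subseteq\SSSs$. Otherwise write $\hh=\hh_0\ee$ with $\hh_0$ in~$\SSS$ and $\ee$ in~$\CCCi$, and choose an $\SSS$-normal decomposition $\seqqq{t_1}\etc{t_q}$ of~$\gg$. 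From $\hh_0\dive\hh=\ff t_1\pdots t_q$, your extended greediness gives $\hh_0\dive\ff t_1$, say $\ff t_1=\hh_0 x$; left-cancelling $\hh_0$ in $\hh_0\ee=\hh_0 x\,t_2\pdots t_q$ yields $\ee=x\,t_2\pdots t_q$, so $x$ is right-invertible, hence invertible, and $t_2\pdots t_q=x\inv\ee$ is invertible; therefore $\gg=t_1(t_2\pdots t_q)\in\SSSs\CCCi=\SSSs$. Applying this with $\hh=\cc'$ and $\ff=\aa$ (\resp $\ff=\bb$) gives $\bb'\in\SSSs$ (\resp $\aa'\in\SSSs$)---note that at this stage you do not even need $\aa,\bb\in\SSSs$, nor the stripping reduction---which completes the proof along the lines you set up.
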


An application of Lemma~\ref{L:GarClosed} is that every Garside family gives rise to a simple presentation of the ambient category. For our current purpose it will be sufficient to state the result in the particular case when no nontrivial invertible element exists, so that a right-lcm is unique when it exists.

\begin{prop}
\label{P:Gar2Pres}
\cite[Proposition~IV.3.6]{Garside} Assume that $\SSS$ is a Garside family in a left-cancell\-at\-ive category~$\CCC$ that contains no nontrivial invertible element and admits local right-lcms. Let $\RRR$ consist of all relations $\aa (\aa \under\bb) = \bb(\bb \under \aa)$ for $\aa, \bb$ in~$\SSS$ admitting a common right-multiple. Then $(\SSS ; \RRR)$ is a presentation of~$\CCC$.
\end{prop}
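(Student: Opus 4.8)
I want to show that $(\SSS;\RRR)$ presents $\CCC$, where $\RRR$ collects the relations $\aa(\aa\under\bb)=\bb(\bb\under\aa)$ for $\aa,\bb\in\SSS$ with a common right-multiple. Write $\CCC'$ for $\PRESp\SSS\RRR$. Since $\SSS$ generates $\CCC$ (being a Garside family) and each relation in $\RRR$ holds in $\CCC$ (both sides equal the right-lcm $\aa\lcm\bb$, which exists by hypothesis), there is a canonical surjective functor $\pi\colon\CCC'\to\CCC$ that is the identity on $\SSS$. The task is to prove $\pi$ is injective, i.e.\ that two $\SSS$-paths representing the same element of $\CCC$ already represent the same element of $\CCC'$.

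First I would establish that the $\SSS$-normal paths in $\CCC$ behave well modulo $\equivp_\RRR$. Concretely: for an arbitrary $\SSS$-path $\ww=\seqqq{\gg_1}{\etc}{\gg_\pp}$ in $\CCC$, I claim that the ``first greedy step'' can be performed inside $\CCC'$ using only the relations of $\RRR$. That is, if $\hh$ is the maximal left-divisor of $\cl\ww$ lying in $\SSSs=\SSS\cup\Id\CCC$ (which here, absent nontrivial invertibles, is just $\SSS\cup\{1_\xx\}$), then there is an $\SSS$-path $\ww'$ with $\ww\equivp_\RRR \hh\pc\ww'$ and $\cl{\ww'}=\hh\inv\cl\ww$. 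The point is that passing from $\ww$ to its normalisation proceeds by local moves on adjacent pairs $\seqq{\gg_\ii}{\gg_{\ii+1}}$, and each such move replaces $\seqq{\gg_\ii}{\gg_{\ii+1}}$ by $\seqq{\gg_\ii'}{\gg_{\ii+1}'}$ with the same product, where $\gg_\ii'$ is the maximal left-divisor of $\gg_\ii\gg_{\ii+1}$ in $\SSSs$; using Lemma~\ref{L:GarClosed} repeatedly one checks this local replacement is a consequence of the relations in $\RRR$ (each step decomposes as a finite chain of elementary $\aa\lcm\bb$-type moves among elements of $\SSSs$). This is the technical heart of the argument.

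Granting that, I would argue as follows. Every $\SSS$-path $\ww$ is $\equivp_\RRR$-equivalent to an $\SSS$-normal path: apply the first-greedy-step claim, peel off the leading entry, and recurse on the shorter remainder; right-Noetherianity (inherited through Proposition~\ref{P:Ore}-style considerations, or simply because a Garside family over a right-Noetherian-free situation forces the normal form to terminate — in any case the length of the normal form is controlled) guarantees this terminates. So it suffices to show that two $\SSS$-normal paths representing the same element of $\CCC$ are $\equivp_\RRR$-equivalent. But by Proposition~\ref{P:NormalUnique}, since $\CCC$ has no nontrivial invertible element, two $\SSS$-normal decompositions of a given element of $\CCC$, once trailing identities are discarded, are literally equal as $\SSS$-paths (the $\CCCi$-deformation collapses to the identity deformation). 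Trailing identity entries $1_\xx$ are themselves $\equivp_\RRR$-trivial. Hence the two normal paths are $\equivp_\RRR$-equivalent, giving injectivity of $\pi$ and therefore $\CCC'\cong\CCC$.

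**The main obstacle** is the first-greedy-step claim: showing the normalisation procedure in $\CCC$ can be carried out purely by rewriting with relations from $\RRR$. The subtlety is that an elementary normalisation move involves the maximal $\SSSs$-left-divisor of a product $\gg_\ii\gg_{\ii+1}$, and one must verify — via Lemma~\ref{L:GarClosed}, which controls right-multiples of pairs from $\SSSs$ and keeps the relevant data inside $\SSSs$ — that the corresponding equality $\gg_\ii\gg_{\ii+1}=\gg_\ii'\gg_{\ii+1}'$ factors through a finite sequence of instances of the defining relations, rather than being merely a consequence in $\CCC$. Handling invertible elements would complicate this, but here they are excluded by hypothesis, which is exactly why the statement is phrased in the ``no nontrivial invertible element'' case; this also lets us invoke uniqueness of right-lcms freely and makes the appeal to Proposition~\ref{P:NormalUnique} clean. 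I would expect the detailed verification of the local move to be where one invokes the more refined results of~\cite{Garside}, and the rest to be routine bookkeeping.
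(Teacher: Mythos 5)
Your overall architecture is sound, and since the paper itself gives no proof of this proposition (it only cites \cite{Garside}), your proposal has to stand on its own; judged that way, it identifies the correct key claim but does not prove it. The step you yourself call the technical heart---that the elementary normalisation move, replacing an adjacent pair $\seqq{\gg_\ii}{\gg_{\ii+1}}$ by the length-two $\SSS$-normal form of its product, is a consequence of the relations of~$\RRR$---is essentially the whole content of the proposition beyond bookkeeping, and asserting that ``using Lemma~\ref{L:GarClosed} repeatedly one checks'' it, or deferring to unspecified refined results of~\cite{Garside}, leaves a genuine gap. The missing derivation is in fact short and you should write it out: given $\aa,\bb$ in~$\SSSs$ with $\aa\bb$ defined, let $\seqq{\aa'}{\bb'}$ be an $\SSS$-normal decomposition of~$\aa\bb$ of length two (Lemma~\ref{L:LengthTwo}); greediness applied with $\ff$ an identity gives $\aa \dive \aa'$, say $\aa'=\aa\cc$; applying Lemma~\ref{L:GarClosed} to the common right-multiple $\aa\lcm\aa'=\aa'$ (local right-lcms plus the absence of nontrivial invertible elements force the element~$\cc'$ of that lemma to coincide with the lcm) shows $\cc=\aa\under\aa'$ lies in~$\SSSs$, and the $\RRR$-relation for the pair $(\aa,\aa')$ reads $\aa\pc\cc=\aa'$; left-cancelling $\aa$ in $\aa\bb=\aa\cc\bb'$ gives $\bb=\cc\bb'$, so the $\RRR$-relation for the pair $(\cc,\bb)$ reads $\cc\pc\bb'=\bb$; hence $\seqq{\aa}{\bb}\equivp_\RRR\seqqq{\aa}{\cc}{\bb'}\equivp_\RRR\seqq{\aa'}{\bb'}$, the degenerate cases where $\cc$, $\bb'$ or $\aa'$ is an identity being trivial.

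Two further remarks. Your appeal to right-Noetherianity (``inherited through Proposition~\ref{P:Ore}-style considerations'') is both unjustified and unnecessary: it is not among the hypotheses, does not follow from them, and is not needed, because Proposition~\ref{P:Normal} (Algorithm~\ref{A:Normal}, whose correctness rests on the first domino rule, Lemma~\ref{L:Domino1}) converts any $\SSSs$-path of length~$\pp$ into an $\SSS$-normal path by a bounded number, $\pp(\pp-1)/2$, of the local moves above, so induction on path length suffices. Granting the local move, your endgame is correct: by Proposition~\ref{P:NormalUnique}, with no nontrivial invertible elements two $\SSS$-normal decompositions agree up to trailing identity entries, which are represented by empty paths. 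Finally, note that the same closure argument yields a shortcut: every relation of~$\RRR$ splits into two germ relations of the form $\aa\pc\cc=\aa'$ (both sides of an $\RRR$-relation evaluate to $\aa\lcm\bb$, which lies in~$\SSSs$), and conversely every germ relation is the $\RRR$-relation of the pair $(\aa,\aa\bb)$, so the congruences generated by~$\RRR$ and by the germ relations coincide and Lemma~\ref{L:Enough} concludes directly.
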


Finally, we shall use the following simple diagrammatic rule about $\SSS$-greedy paths.

\begin{lemm}
\label{L:Domino1}
\rightskip40mm
\cite[Lemma 3.3]{Dif} (first domino rule) Assume that $\CCC$ is a left-cancellative category, $\SSS$ is included in~$\CCC$, and we have a commutative diagram with edges in~$\CCC$ as on the right. If $\seqq{\gg_1}{\gg_2}$ and $\seqq{\gg'_1}{\ff_1}$ are $\SSS$-greedy, then $\seqq{\gg'_1}{\gg'_2}$ is $\SSS$-greedy as well.
\hfill\begin{picture}(0,0)(-8,-1)
\psarc[style=thin](15,0){3}{180}{360}
\psarc[style=thin](15,12){3.5}{180}{270}
\psarc[style=thinexist](15,12){3}{0}{180}
\pcline{->}(1,0)(14,0)
\tbput{$\gg_1$}
\pcline{->}(16,0)(29,0)
\tbput{$\gg_2$}
\pcline{->}(1,12)(14,12)
\taput{$\gg'_1$}
\pcline{->}(16,12)(29,12)
\taput{$\gg'_2$}
\pcline{->}(0,11)(0,1)
\trput{$\ff_0$}
\pcline{->}(15,11)(15,1)
\trput{$\ff_1$}
\pcline{->}(30,11)(30,1)
\tlput{$\ff_2$}
\end{picture}
\end{lemm}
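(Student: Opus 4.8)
The plan is to unwind the definition of an $\SSS$-greedy path of length two and then run a short divisibility chase. Recall that $\seqq{\hh_1}{\hh_2}$ is $\SSS$-greedy precisely when, for every $\aa\in\SSS$ and every $\ff\in\CCC$ with $\trg\ff=\src{\hh_1}$, the relation $\aa\dive\ff\hh_1\hh_2$ forces $\aa\dive\ff\hh_1$. Commutativity of the two squares in the diagram amounts to the two identities $\ff_0\gg_1=\gg'_1\ff_1$ and $\ff_1\gg_2=\gg'_2\ff_2$, which are the only facts about the diagram I shall use. So fix $\aa\in\SSS$ and $\ff\in\CCC$ with $\trg\ff=\src{\gg'_1}$ and $\aa\dive\ff\gg'_1\gg'_2$; the goal is to prove $\aa\dive\ff\gg'_1$.

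First I would multiply on the right by~$\ff_2$: from $\aa\dive\ff\gg'_1\gg'_2$ I get $\aa\dive\ff\gg'_1\gg'_2\ff_2$, and the two commutativity relations rewrite the right-hand side as $\ff\gg'_1\gg'_2\ff_2=\ff\gg'_1\ff_1\gg_2=\ff\ff_0\gg_1\gg_2$, where the product $\ff\ff_0$ is legitimate since $\src{\ff_0}=\src{\gg'_1}=\trg\ff$. Hence $\aa\dive(\ff\ff_0)\gg_1\gg_2$, and applying the $\SSS$-greediness of $\seqq{\gg_1}{\gg_2}$ to the element~$\aa$ and the ``prefix'' $\ff\ff_0$ yields $\aa\dive(\ff\ff_0)\gg_1$. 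Using the first commutativity relation once more, $(\ff\ff_0)\gg_1=\ff(\ff_0\gg_1)=\ff(\gg'_1\ff_1)=(\ff\gg'_1)\ff_1$, so in fact $\aa\dive(\ff\gg'_1)\ff_1$. Now I would invoke the $\SSS$-greediness of $\seqq{\gg'_1}{\ff_1}$, applied to the element~$\aa$ and the prefix~$\ff$: this gives exactly $\aa\dive\ff\gg'_1$. Since $\aa$ and $\ff$ were arbitrary, $\seqq{\gg'_1}{\gg'_2}$ is $\SSS$-greedy.

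I do not expect a genuine obstacle here: the whole argument consists in feeding the two greediness hypotheses, in turn, with the cleverly chosen prefixes $\ff\ff_0$ and~$\ff$, the link between them being the commutativity of the left square. The only points needing (routine) attention are checking that the auxiliary products $\ff\ff_0$ and $\ff\gg'_1$ are indeed defined, and applying the relations of the two squares with the correct orientation. Note in passing that left-cancellativity of~$\CCC$, although part of the standing hypotheses, is not actually used in this particular proof.
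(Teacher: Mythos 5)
Your proof is correct and is essentially the standard argument for the first domino rule (the paper omits the proof, citing \cite{Dif}, where exactly this divisibility chase appears): multiply by~$\ff_2$, rewrite via the two commuting squares to reach $\aa\dive(\ff\ff_0)\gg_1\gg_2$, apply greediness of $\seqq{\gg_1}{\gg_2}$, rewrite again to $\aa\dive(\ff\gg'_1)\ff_1$, and finish with greediness of $\seqq{\gg'_1}{\ff_1}$. Your side remarks — that the only delicate points are the well-definedness of the auxiliary products and that left-cancellativity is not actually invoked — are also accurate.
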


A second, symmetric domino rule will be mentioned in Lemma~\ref{L:Domino2} below.

\subsection{Symmetric normal decompositions and strong Garside families}
\label{SS:Strong}

If $\CCC$ is a left-Ore category, then, by Proposition~\ref{P:Ore}, it embeds in the groupoid of left-fractions that we shall denote by~$\Env\CCC$ (like ``enveloping groupoid''). If $\SSS$ is a Garside family in~$\CCC$, every element of~$\CCC$ admits an $\SSS$-normal decomposition, and it is natural to try to extend the result from~$\CCC$ to~$\Env\CCC$, that is, to find distinguished decompositions for the elements of~$\Env\CCC$ in terms of elements of~$\SSSs$ and their inverses.

To do it, we extend the notion of an $\AAA$-path into that of a signed $\AAA$-path. Formally, if $\AAA$ is any precategory, we introduce a family $\INV\AAA$ that is disjoint from and in bijection to $\AAA$ as $\INV\AAA = \{ \INV\aa \mid \aa\in\AAA \}$, where the source of $\INV\aa$ is the target of $\aa$ and vice versa. A \emph{signed $\AAA$-path} is defined to be a $(\AAA\cup\INV\AAA)$-path. We extend the ``bar'' map to all signed paths by defining $\INV{(\INV\aa)}=\aa$ for $\aa$ in~$\AAA$ and $\INV{\ww_1 \pc \ww_2} = \INV{\ww_2} \pc \INV{\ww_1}$. For $\GGG$ a groupoid and $\AAA$ included in~$\GGG$, we extend the notation~$\cl\ww$ to signed $\AAA$-paths by declaring that $\cl{\,\INV\gg\,}$ is $\gg\inv$ for~$\gg$ in~$\AAA$, that is, we use the letters of~$\INV\AAA$ to represent the inverses of the elements of~$\AAA$. If $\ww$ is a signed path and $\cl\ww = \gg$ holds, we again say that $\ww$ is a decomposition of~$\gg$.

When $\CCC$ is a left-Ore category and $\SSS$ is a Garside family of~$\CCC$, we now look for distinguished decompositions for the elements of the groupoid~$\Env\CCC$. As every element of~$\Env\CCC$ is a left-fraction, it is natural to consider decompositions that are \emph{negative--positive} $\SSSs$-paths, this meaning that every negative entry precedes every positive entry, where the entries in~$\SSSs$ are called positive and those in~$\INV{\SSSs}$ are called negative.

\begin{defi}
\label{D:GreedySym}
Assume that $\CCC$ is a left-Ore category.

\ITEM1 Two elements~$\ff, \gg$ of~$\CCC$ are called \emph{left-disjoint} if, for all~$\ff', \gg'$ in~$\CCC$ satisfying $\ff\inv \gg = \ff'{}\inv \gg'$ in~$\Env\CCC$, there exists~$\hh$ in~$\CCC$ satisfying $\ff' = \hh \ff$ and $\gg' = \hh \gg$.

\ITEM2 For $\SSS$ included in~$\CCC$, a negative--positive path $\seqqqqqq{\INV{\gg_\qq}}\etc{\INV{\gg_1}}{\ff_1}\etc{\ff_\pp}$ is called \emph{symmetric $\SSS$-greedy} (\resp \emph{symmetric $\SSS$-normal}) if $\seqqq{\gg_1}\etc{\gg_\qq}$ and $\seqqq{\ff_1}\etc{\ff_\pp}$ are $\SSS$-greedy (\resp $\SSS$-normal) and, in addition, $\gg_1$ and $\ff_1$ are left-disjoint.
\end{defi}

When using diagrams, we shall indicate that two elements~$\ff, \gg$ are left-disjoint by appending a small arc as in 
\begin{picture}(30,4)(0,0)
\pcline{<-}(1,0)(14,0)
\taput{$\gg$}
\pcline{->}(16,0)(29,0)
\taput{$\ff$}
\psarc[style=thin](15,0){3}{0}{180}
\end{picture}. So a generic symmetric $\SSS$-greedy path pictorially corresponds to a diagram 
$$\begin{picture}(90,4)(0,0)
\pcline{<-}(1,0)(14,0)
\taput{$\gg_\qq$}
\pcline[style=etc](17,0)(28,0)
\pcline{<-}(31,0)(44,0)
\taput{$\gg_1$}
\pcline{->}(46,0)(59,0)
\taput{$\ff_1$}
\pcline[style=etc](62,0)(73,0)
\pcline{->}(76,0)(89,0)
\taput{$\ff_\pp$}
\psarc[style=thin](15.5,0){3}{0}{180}
\psarc[style=thin](30.5,0){3}{0}{180}
\psarc[style=thin](45,0){3}{0}{180}
\psarc[style=thin](59.5,0){3}{0}{180}
\psarc[style=thin](74.5,0){3}{0}{180}
\end{picture},$$
and it is symmetric $\SSS$-normal if, in addition, all edges correspond to elements of~$\SSSs$. Note that a positive path is symmetric $\SSS$-normal if and only if it $\SSS$-normal: indeed, a positive path is a negative--positive path whose negative part is empty, and, for every~$\gg$ in~$\SSSs(\xx, \ud)$, the elements~$\ew_\xx$ and~$\gg$ are (trivially) left-disjoint.

\begin{exam}\label{X:FreeAbelianGroup}
Consider the free Abelian monoid~$\mathcal A$ and the set~$\SSS$ as in Example~\ref{X:FreeAbelianMonoid}.
The unordered pairs of left-disjoint elements of $\SSSs$ are precisely $\{1,1\}$, $\{1,\tta\}$, $\{1,\ttb\}$, $\{1,\tta\ttb\}$, and $\{\tta,\ttb\}$, where $1=\id{\xx}$ for the unique element $\xx$ of $\Obj(\mathcal{A})$.
\end{exam}

Like $\SSS$-normal decompositions in the positive case, symmetric $\SSS$-normal decompositions turn out to be (nearly) unique when they exist.

\begin{prop}\cite[Proposition~III.2.16]{Garside}
\label{P:SymUnique}
Assume that $\CCC$ is a left-Ore category and $\SSS$ is included in~$\CCC$. Then any two symmetric $\SSS$-normal decompositions of an element of~$\Env\CCC$ (if any) are $\CCCi$-deformations of one another, this meaning that there exists a commutative diagram as in Figure~\ref{F:SymUnique}.
\end{prop}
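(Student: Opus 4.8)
The plan is to reduce the statement to its positive counterpart, Proposition~\ref{P:NormalUnique}. Embed $\CCC$ into $\Env\CCC$ (Proposition~\ref{P:Ore}) and fix two symmetric $\SSS$-normal decompositions of an element $\gg$ of $\Env\CCC(\xx,\yy)$, say $\seqqqqqq{\INV{\gg_\qq}}\etc{\INV{\gg_1}}{\ff_1}\etc{\ff_\pp}$ and $\seqqqqqq{\INV{\gg'_{\qq'}}}\etc{\INV{\gg'_1}}{\ff'_1}\etc{\ff'_{\pp'}}$, and put $G=\gg_1\cdots\gg_\qq$, $F=\ff_1\cdots\ff_\pp$ and similarly $G',F'$. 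Being negative--positive, the decompositions give $G\inv F=\gg=G'{}\inv F'$ in~$\Env\CCC$, and $G,F$ share a source, say~$\zz$, while $G',F'$ share a source~$\zz'$. I claim it suffices to exhibit one invertible element~$\kk$ of~$\CCC$, necessarily lying in~$\CCCi(\zz',\zz)$, with $G'=\kk G$ and $F'=\kk F$. Indeed, given such a~$\kk$, the path $\seqqq{\kk\gg_1}{\gg_2}\etc{\gg_\qq}$ is again an $\SSS$-normal decomposition of~$G'$: $\SSS$-greediness of a length-two path is unchanged when its first entry is left-multiplied by an invertible element, since~\eqref{E:Greedy} quantifies over all left-cofactors, and $\SSSs$ is stable under left-multiplication by invertible elements (a standard closure property of Garside families, cf.~\cite{Garside}). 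So by Proposition~\ref{P:NormalUnique} it is a $\CCCi$-deformation of $\seqqq{\gg'_1}\etc{\gg'_{\qq'}}$; composing with the elementary diagram relating $\seqqq{\gg_1}\etc{\gg_\qq}$ to $\seqqq{\kk\gg_1}{\gg_2}\etc{\gg_\qq}$ (carrying~$\kk$ at the $\zz/\zz'$ end and identity-elements elsewhere, in particular $\id\xx$ at the other end) produces a commutative ladder between the two negative halves. Treating $F'=\kk F$ in the same way produces a commutative ladder between the two positive halves, with $\id\yy$ at one end and the same~$\kk$ at the other, and the two ladders glue along their shared $\kk$-edge at the hinge object (the common source of~$G$ and~$F$, respectively of~$G'$ and~$F'$) into a single commutative diagram of the shape of Figure~\ref{F:SymUnique}, the shorter decomposition being padded by identity-elements as in Proposition~\ref{P:NormalUnique}.

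To produce~$\kk$, I would use that $G,F$ are left-disjoint and that $G',F'$ are left-disjoint (see the next paragraph), and then work purely inside~$\Env\CCC$. Applying Definition~\ref{D:GreedySym}\ITEM1 to the equality $G\inv F=G'{}\inv F'$ yields~$\hh$ in~$\CCC$ with $G'=\hh G$ and $F'=\hh F$, while, symmetrically, left-disjointness of $G',F'$ yields~$\hh'$ in~$\CCC$ with $G=\hh'G'$ and $F=\hh'F'$. Then $\hh'\hh G=G$ and $\hh\hh'G'=G'$, so right-cancellativity of~$\CCC$ (which is part of being left-Ore) forces $\hh'\hh$ and $\hh\hh'$ to be identity-elements; hence $\kk:=\hh$ is invertible, as wanted.

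The step I expect to be the main obstacle is the claim just used, namely that the whole negative part~$G$ is left-disjoint from the whole positive part~$F$, knowing only that the innermost entries~$\gg_1,\ff_1$ are left-disjoint and that the two halves are $\SSS$-greedy. Left-disjointness is a symmetric relation (invert the fraction equalities in Definition~\ref{D:GreedySym}\ITEM1), and $\SSS$-greedy prefixes concatenate (if $\seqq{\ee_1}{\ee_2}$ is $\SSS$-greedy then so is $\seqq{\cc\ee_1}{\ee_2}$ for any composable~$\cc$), so an immediate induction reduces the claim to the single assertion: \emph{if~$\aa$ and~$\bb_1$ are left-disjoint and $\seqq{\bb_1}{\bb_2}$ is $\SSS$-greedy, then~$\aa$ and~$\bb_1\bb_2$ are left-disjoint.} I would prove this by starting from an arbitrary factorization $\aa\inv\bb_1\bb_2=\ff'{}\inv\gg'$ with $\ff',\gg'$ in~$\CCC$, using the left-Ore property to move~$\bb_2$ across~$\gg'$ and bring the equality into a form where left-disjointness of~$\aa,\bb_1$ applies, and then invoking the domino rules (Lemma~\ref{L:Domino1} and its companion Lemma~\ref{L:Domino2}) to transport the common left-multiple obtained on the $\bb_1$-side across the greedy step~$\bb_2$; it is precisely the greediness of $\seqq{\bb_1}{\bb_2}$ that legitimates this transport. (Alternatively, this assertion may simply be quoted from~\cite{Garside}.)
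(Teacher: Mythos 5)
Your overall architecture is reasonable and quite possibly close to the proof in the cited source: show that the whole negative product $G$ and the whole positive product $F$ (and likewise $G',F'$) are left-disjoint, extract the hinge invertible $\kk$ by applying Definition~\ref{D:GreedySym}\ITEM1 in both directions and cancelling (this part is correct --- left-Ore does include cancellativity), then deform each half and glue the two ladders at the hinge. However, two of the steps you rely on are not available at the stated level of generality, where $\SSS$ is an \emph{arbitrary} subfamily of a left-Ore category and is not assumed to be a Garside family. The reduction to Proposition~\ref{P:NormalUnique} needs $\kk\gg_1$ and $\kk\ff_1$ to lie in~$\SSSs$; the closure property $\CCCi\SSSs\subseteq\SSSs$ that you invoke is a property of Garside families, not of arbitrary~$\SSS$ (by definition $\SSSs=\SSS\CCCi\cup\CCCi$ is only closed under \emph{right} multiplication by invertible elements). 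Without it your modified paths are $\SSS$-greedy but need not be $\SSS$-normal, so Proposition~\ref{P:NormalUnique} as stated does not apply; what is actually needed is a strengthening of that proposition allowing a prescribed invertible element at the left end of the ladder, which you neither prove nor can quote from this paper.

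The second and more serious gap is the step you yourself flag as the main obstacle: the propagation of left-disjointness from the innermost pair $\gg_1,\ff_1$ to the full products $G,F$ is never proved, and the tools you name do not cover it. Lemma~\ref{L:Domino2} is stated only for a \emph{bounded} Garside family, and Lemma~\ref{L:Disjoint} (the statement one could quote from~\cite{Garside}) assumes $\SSS$ is a Garside family, whereas Proposition~\ref{P:SymUnique} assumes neither; the domino rules that do hold for arbitrary~$\SSS$ (Lemma~\ref{L:Domino1}, Lemma~\ref{L:Domino3}) do not by themselves yield your key assertion. Your direct sketch also stalls exactly at the decisive point: from $\aa\inv\bb_1\bb_2=\ff'{}\inv\gg'$ the Ore condition gives $c\gg'=d\bb_2$, and left-disjointness of $\aa,\bb_1$ then yields $\hh_0$ with $c\ff'=\hh_0\aa$ and $d=\hh_0\bb_1$, so that $c\gg'=\hh_0\bb_1\bb_2$; to conclude one must show that $c$ left-divides~$\hh_0$, but the greediness of $\seqq{\bb_1}{\bb_2}$ only controls left-divisibility by elements of~$\SSS$, while $c$ is an arbitrary element of~$\CCC$ produced by the Ore condition. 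So the ``transport across the greedy step'' is precisely the missing argument, not a routine application of the domino rules; until this propagation lemma is established (or the statement is weakened to the case where $\SSS$ is a Garside family, where Lemma~\ref{L:Disjoint} applies), the proof is incomplete. Note also that this paper gives no proof of Proposition~\ref{P:SymUnique} itself --- it defers to \cite[Proposition~III.2.16]{Garside} --- so everything you use beyond the quoted lemmas has to be argued in full.
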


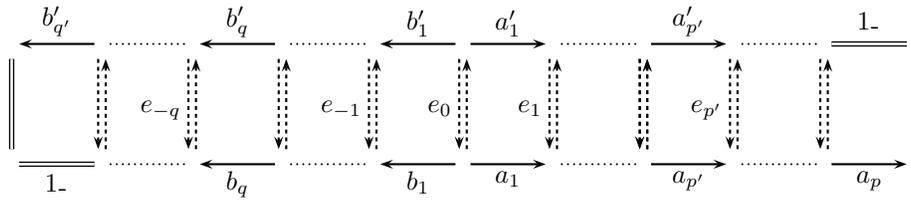
\begin{figure}[htb]
\begin{picture}(120,25)(-2,-3)
\pcline[style=double](0,1)(10,1)
\tbput{$\id\ud$}
\pcline{<-}(0,17)(10,17)
\taput{$\bb'_{\qq'}$}
\pcline[style=etc](12,1)(22,1)
\pcline[style=etc](12,17)(22,17)
\pcline{<-}(24,1)(34,1)
\tbput{$\bb_\qq$}
\pcline{<-}(24,17)(34,17)
\taput{$\bb'_\qq$}
\pcline[style=etc](36,1)(46,1)
\pcline[style=etc](36,17)(46,17)
\pcline{<-}(48,1)(58,1)
\tbput{$\bb_1$}
\pcline{<-}(48,17)(58,17)
\taput{$\bb'_1$}
\pcline{->}(60,1)(70,1)
\tbput{$\aa_1$}
\pcline{->}(60,17)(70,17)
\taput{$\aa'_1$}
\pcline[style=etc](72,1)(82,1)
\pcline[style=etc](72,17)(82,17)
\pcline{->}(84,1)(94,1)
\tbput{$\aa_{\pp'}$}
\pcline{->}(84,17)(94,17)
\taput{$\aa'_{\pp'}$}
\pcline[style=etc](96,1)(106,1)
\pcline[style=etc](96,17)(106,17)
\pcline{->}(108,1)(118,1)
\tbput{$\aa_\pp$}
\pcline[style=double](108,17)(118,17)
\taput{$\id\ud$}
\psline[style=double](-1,3)(-1,15)
\pcline[style=exist]{<-}(10.5,3)(10.5,15)
\pcline[style=exist]{->}(11.5,3)(11.5,15)
\pcline[style=exist]{<-}(22.5,3)(22.5,15)
\tlput{$\ee_{-\qq}$}
\pcline[style=exist]{->}(23.5,3)(23.5,15)
\pcline[style=exist]{<-}(34.5,3)(34.5,15)
\pcline[style=exist]{->}(35.5,3)(35.5,15)
\pcline[style=exist]{<-}(46.5,3)(46.5,15)
\tlput{$\ee_{-1}$}
\pcline[style=exist]{->}(47.5,3)(47.5,15)
\pcline[style=exist]{<-}(58.5,3)(58.5,15)
\tlput{$\ee_0$}
\pcline[style=exist]{->}(59.5,3)(59.5,15)
\pcline[style=exist]{<-}(70.5,3)(70.5,15)
\tlput{$\ee_1$}
\pcline[style=exist]{->}(71.5,3)(71.5,15)
\pcline[style=exist]{<-}(82.5,3)(82.5,15)
\pcline[style=exist]{->}(83.5,3)(83.5,15)
\pcline[style=exist]{<-}(82.5,3)(82.5,15)
\pcline[style=exist]{->}(83.5,3)(83.5,15)
\pcline[style=exist]{<-}(94.5,3)(94.5,15)
\tlput{$\ee_{\pp'}$}
\pcline[style=exist]{->}(95.5,3)(95.5,15)
\pcline[style=exist]{<-}(106.5,3)(106.5,15)
\pcline[style=exist]{->}(107.5,3)(107.5,15)
\psline[style=double](119,3)(119,15)
\end{picture}
\caption[]{\sf\smaller Deformation of a signed path by invertible elements: invertible elements connect the corresponding entries ; if some path is shorter than the other (here we have $\qq < \qq'$ and $\pp' < \pp$), it is extended by identity-elements.}
\label{F:SymUnique}
\end{figure}

As for existence, symmetric normal decompositions are connected with left-lcms:

\begin{lemm}
\label{L:Disjoint}
\cite[Lemma~III.2.19]{Garside} Assume that $\SSS$ is a Garside family in a left-Ore category~$\CCC$, that $\seqqq{\aa_1}\etc{\aa_\pp}$ and $\seqqq{\bb_1}\etc{\bb_\qq}$ are $\SSS$-normal paths, and that $\seqqqq{\bb_1}{\etc}{\bb_\qq}{\gg} = \seqqqq{\aa_1}{\etc}{\aa_\pp}{\ff}$ holds. Then $\seqqqqqq{\INV{\bb_\qq}}\etc{\INV{\bb_1}}{\aa_1}\etc{\aa_\pp}$ is symmetric $\SSS$-greedy if and only if $\bb_1 \pdots \bb_\qq$ and $\aa_1 \pdots \aa_\pp$ are left-disjoint, if and only if $\bb_1 \pdots \bb_\qq \gg$ is a left-lcm of~$\ff$ and~$\gg$.
\end{lemm}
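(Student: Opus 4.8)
The plan is to show that the following four conditions are equivalent: that $\seqqqqqq{\INV{\bb_\qq}}\etc{\INV{\bb_1}}{\aa_1}\etc{\aa_\pp}$ is symmetric $\SSS$-greedy~(a); that $\bb_1$ and $\aa_1$ are left-disjoint~(b); that $B:=\bb_1\pdots\bb_\qq$ and $A:=\aa_1\pdots\aa_\pp$ are left-disjoint~(c); and that $C:=B\gg=A\ff$ is a left-lcm of $\ff$ and~$\gg$~(d). Along the way I write $\gg_1=\bb_2\pdots\bb_\qq\gg$ and $\ff_1=\aa_2\pdots\aa_\pp\ff$, so that $C=\bb_1\gg_1=\aa_1\ff_1$ as well. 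Condition~(a) is equivalent to~(b) straight from the definitions: since the strands $\seqqq{\bb_1}\etc{\bb_\qq}$ and $\seqqq{\aa_1}\etc{\aa_\pp}$ are $\SSS$-normal, hence $\SSS$-greedy, Definition~\ref{D:GreedySym}(ii) says that (a) holds exactly when the two innermost entries $\bb_1$ and $\aa_1$ are left-disjoint.

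The equivalence of~(c) and~(d) uses nothing about~$\SSS$; it is pure fraction arithmetic. From $C=B\gg=A\ff$ we get $B\inv A=(B\inv C)(A\inv C)\inv=\gg\ff\inv$ in $\Env\CCC$, so for $B',A'$ in~$\CCC$ the equality ${B'}\inv A'=B\inv A$ holds in $\Env\CCC$ if and only if $A'\ff=B'\gg$ holds in~$\CCC$ (two elements of~$\CCC$ that agree in $\Env\CCC$ already agree in~$\CCC$, as $\CCC$ embeds in $\Env\CCC$). Every common left-multiple of $\ff$ and~$\gg$ has the form $B'\gg=A'\ff$, and right-cancellativity lets one pass between "$B'\gg=\hh C$" and "$B'=\hh B$"; so the defining clause of left-disjointness of $B$ and~$A$ becomes, term for term, the statement that every common left-multiple of $\ff$ and~$\gg$ is a left-multiple of~$C$, i.e.\ that $C$ is a left-lcm of $\ff$ and~$\gg$. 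Applying the same computation to the one-step factorization $\bb_1\gg_1=\aa_1\ff_1=C$ also gives that~(b) holds if and only if $C$ is a left-lcm of $\gg_1$ and~$\ff_1$.

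One implication between~(b) and~(c) is now immediate: if (d) holds, then $C$ is also a left-lcm of $\gg_1$ and~$\ff_1$ — it is a common left-multiple of them, and any common left-multiple of $\gg_1$ and~$\ff_1$ is one of $\gg$ and~$\ff$ (since $\gg\dive\gg_1$, $\ff\dive\ff_1$), hence a left-multiple of~$C$ — so by the previous paragraph~(b) follows. For the converse, (b)$\Rightarrow$(c), which is where $\SSS$-normality is genuinely used, it suffices (again by the fraction arithmetic) to show that $C$ is a left-lcm of $\ff$ and~$\gg$, and I would deduce this from the claim that \emph{every common left-divisor of $B$ and~$A$ is invertible}. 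To prove that claim, suppose $\hh\dive B$, $\hh\dive A$, and $\hh$ is not invertible; write $\hh=\dd_1\pdots\dd_\kk$ with each $\dd_\jj$ in~$\SSSs$ (possible since $\SSS$ is a Garside family), let $\dd_\ii$ be the first non-invertible factor, and set $\ee=\dd_1\pdots\dd_{\ii-1}$, which is invertible. Then $\dd_\ii\dive\ee\inv\hh$, hence $\dd_\ii\dive\ee\inv B$ and $\dd_\ii\dive\ee\inv A$. Now $\seqqqq{\ee\inv\bb_1}{\bb_2}\etc{\bb_\qq}$ is again $\SSS$-normal: its entries lie in~$\SSSs$ (which is stable under left-multiplication by invertibles), and the greedy condition~\eqref{E:Greedy} at its initial step reduces to that for $\seqq{\bb_1}{\bb_2}$ after absorbing~$\ee$ into the universally quantified parameter. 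Hence $\ee\inv\bb_1$ is the $\dive$-greatest element of~$\SSSs$ left-dividing $\ee\inv B=\ee\inv\bb_1\bb_2\pdots\bb_\qq$, so $\dd_\ii\dive\ee\inv\bb_1$, and symmetrically $\dd_\ii\dive\ee\inv\aa_1$. But $\ee\inv\bb_1$ and $\ee\inv\aa_1$ are left-disjoint (left-disjointness of $\bb_1,\aa_1$ is preserved on left-multiplying both by the common invertible~$\ee\inv$, directly from the definition), and a left-disjoint pair has only invertible common left-divisors — if $\dd\dive\ff$, $\dd\dive\gg$ with $\ff,\gg$ left-disjoint, write $\ff=\dd\ff''$, $\gg=\dd\gg''$; then $\ff\inv\gg={\ff''}\inv\gg''$, so the definition produces $\hh'$ with $\ff''=\hh'\ff=\hh'\dd\ff''$, whence $\hh'\dd$ is an identity and $\dd$ is invertible. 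So $\dd_\ii$ is invertible, contradicting its choice, and $\hh$ must be invertible after all.

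The step I expect to be the real obstacle is the reduction invoked above, from "every common left-divisor of $B$ and~$A$ is invertible" to "$C$ is a left-lcm of $\ff$ and~$\gg$". When $\CCC$ admits left-lcms this is the standard coprimeness/lcm dictionary: letting $C_0$ be the left-lcm of $\ff$ and~$\gg$, with $C_0=P_0\gg=Q_0\ff$ and $C=\hh C_0$, cancellation in $C=\hh P_0\gg=B\gg$ and $C=\hh Q_0\ff=A\ff$ exhibits~$\hh$ as a common left-divisor of $B$ and~$A$, hence invertible, so $C\eqir C_0$ and $C$ is itself a left-lcm. Under the bare left-Ore hypothesis no left-lcm is available a priori, and the reduction must be pushed through fractions: for an arbitrary common left-multiple $D=P\gg=Q\ff$ of $\ff$ and~$\gg$, one writes the groupoid element $DC\inv=PB\inv=QA\inv$ as a left-coprime left-fraction and uses coprimeness of $B$ and~$A$ to force $DC\inv\in\CCC$, i.e.\ $C\dive D$. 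Everything else is routine unwinding of the definitions together with standard facts about $\SSS$-greedy paths, in particular that the head entry of an $\SSS$-normal path is the $\dive$-greatest element of~$\SSSs$ left-dividing its value.
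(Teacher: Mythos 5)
Your reductions are sound as far as they go: (a)$\Leftrightarrow$(b) is immediate from Definition~\ref{D:GreedySym}, the equivalence (c)$\Leftrightarrow$(d) is correct pure fraction arithmetic (cancellativity plus the embedding of~$\CCC$ into~$\Env\CCC$), the implication (d)$\Rightarrow$(b) is fine, and your argument that (b) forces every common left-divisor of $B=\bb_1\pdots\bb_\qq$ and $A=\aa_1\pdots\aa_\pp$ to be invertible is correct (it legitimately uses that every element of~$\SSSs$ left-dividing the value of an $\SSS$-normal path left-divides its first entry, that $\CCCi\SSSs\subseteq\SSSs$, and that left-disjointness of $\bb_1,\aa_1$ survives a common invertible left factor). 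A small slip: ``$\gg\dive\gg_1$'' should read ``$\gg_1$ is a left-multiple of~$\gg$'', since $\dive$ denotes left-divisibility here. Note also that the paper itself gives no proof of this lemma (it cites \cite[Lemma~III.2.19]{Garside}), so your argument has to stand on its own.

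The gap is exactly where you point: the passage from ``$B$ and $A$ admit only invertible common left-divisors'' to ``$C$ is a left-lcm of~$\ff$ and~$\gg$''. By your own equivalence (c)$\Leftrightarrow$(d), this passage is nothing other than the implication ``no nontrivial common left-divisor $\Rightarrow$ left-disjoint'', which is not available in a bare left-Ore category: it is precisely the nontrivial content that the paper records only for bounded Garside families (Lemma~\ref{L:DeltaDisjoint}, introduced with the warning that such connections ``need not be valid in general''); if it held in general, the boundedness hypothesis there would be superfluous, and running your scheme with $\SSS=\CCC$ and $\pp=\qq=1$ would prove it for arbitrary elements of an arbitrary left-Ore category. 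Your sketched repair does not close it: without left-gcds or a Noetherianity assumption the element $DC\inv$ need not admit a left-coprime left-fraction expression at all, and even granting $DC\inv=u\inv v$ with $u,v$ left-coprime, the relations $vB=uP$ and $vA=uQ$ only exhibit $u$ as a common left-divisor of $vB$ and $vA$; deducing $u\dive v$ (hence $u$ invertible) from coprimeness of $B$ and $A$ is a gcd/lcm-compatibility step that is essentially equivalent to the statement being proved. (When $\CCC$ additionally admits left-lcms, your ``dictionary'' paragraph does complete the proof, but the lemma assumes only left-Ore.) Under the stated hypotheses the implication (b)$\Rightarrow$(c) has to retain the full strength of the disjointness of $\bb_1$ and $\aa_1$ and propagate it along the two normal paths (for instance by induction on $\pp+\qq$, in the spirit of the domino rules), rather than collapsing it to coprimeness of $B$ and~$A$, which is strictly weaker information.
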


Lemma~\ref{L:Disjoint} implies that an element~$\ff \gg\inv$ of~$\Env\CCC$ admits a symmetric $\SSS$-normal decomposition if and only if $\ff$ and~$\gg$ admit a left-lcm in~$\CCC$, whence:

\begin{prop}
\label{P:SymExist}
If $\SSS$ is a Garside family in a left-Ore category~$\CCC$ that admits left-lcms, every element of~$\Env\CCC$ that can be expressed as a right-fraction admits a symmetric $\SSS$-normal decomposition. 
\end{prop}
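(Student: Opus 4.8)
The plan is to reduce the statement to a single application of Lemma~\ref{L:Disjoint}, using the hypothesis that $\CCC$ admits left-lcms to produce the left-lcm that lemma requires. So, let $\psi$ be an element of~$\Env\CCC$ that can be written as a right-fraction, say $\psi = \ff\gg\inv$ with $\ff, \gg$ in~$\CCC$. For the product~$\ff\gg\inv$ to be defined in~$\Env\CCC$, the elements~$\ff$ and~$\gg$ must have the same target; as $\CCC$ admits left-lcms, they therefore admit a left-lcm~$\ell$, say $\ell = \hh\ff = \hh'\gg$ with $\hh, \hh'$ in~$\CCC$. Working in the groupoid~$\Env\CCC$, the equality $\hh'\gg = \hh\ff$ gives $\hh\inv\hh' = \ff\gg\inv = \psi$, so $\psi$ is the left-fraction~$\hh\inv\hh'$.

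Next I would use that $\SSS$ is a Garside family to pick an $\SSS$-normal decomposition $\seqqq{\bb_1}\etc{\bb_\qq}$ of~$\hh$ and an $\SSS$-normal decomposition $\seqqq{\aa_1}\etc{\aa_\pp}$ of~$\hh'$; in particular all the~$\bb_\ii$ and all the~$\aa_\jj$ lie in~$\SSSs$. By construction one has $\bb_1 \pdots \bb_\qq \, \ff = \hh\ff = \ell = \hh'\gg = \aa_1 \pdots \aa_\pp \, \gg$. Apply Lemma~\ref{L:Disjoint} to the $\SSS$-normal paths $\seqqq{\aa_1}\etc{\aa_\pp}$ and $\seqqq{\bb_1}\etc{\bb_\qq}$, with the elements denoted ``$\ff$'' and ``$\gg$'' in the lemma taken to be our~$\gg$ and our~$\ff$ respectively: the displayed equality is exactly the hypothesis $\seqqqq{\bb_1}{\etc}{\bb_\qq}{\gg} = \seqqqq{\aa_1}{\etc}{\aa_\pp}{\ff}$ of the lemma, and the common value~$\ell$ is, by our choice, a left-lcm of~$\ff$ and~$\gg$, which is the third of the three equivalent conditions of the lemma. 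Hence the signed path $\seqqqqqq{\INV{\bb_\qq}}\etc{\INV{\bb_1}}{\aa_1}\etc{\aa_\pp}$ is symmetric $\SSS$-greedy, and it represents $(\bb_1 \pdots \bb_\qq)\inv (\aa_1 \pdots \aa_\pp) = \hh\inv\hh' = \psi$. Finally, since the two paths were chosen $\SSS$-normal and not merely $\SSS$-greedy, this signed path is in fact symmetric $\SSS$-normal, and hence is the desired decomposition of~$\psi$.

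There is no genuine obstacle in this argument: it is essentially a repackaging of Lemma~\ref{L:Disjoint}, as foreshadowed in the remark preceding the statement. The single point that must be handled with care is the bookkeeping of sides: writing~$\psi$ as a right-fraction~$\ff\gg\inv$ forces~$\ff$ and~$\gg$ to share a \emph{target}, which is exactly what makes the hypothesis ``admits left-lcms'' applicable; and in Lemma~\ref{L:Disjoint} the elements~$\ff$ and~$\gg$ occurring in $\seqqqq{\bb_1}{\etc}{\bb_\qq}{\gg} = \seqqqq{\aa_1}{\etc}{\aa_\pp}{\ff}$ are the \emph{complementary} factors produced by the left-lcm, not the numerator and denominator of~$\psi$ itself, so their names have to be swapped when the lemma is invoked. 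One may also note that the hypothesis is essentially sharp, since by Lemma~\ref{L:Disjoint} the fraction~$\ff\gg\inv$ can admit a symmetric $\SSS$-normal decomposition only when $\ff$ and~$\gg$ admit a left-lcm in~$\CCC$.
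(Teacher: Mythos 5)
Your proposal is correct and follows exactly the route the paper intends: writing the right-fraction $\ff\gg\inv$ as a left-fraction $\hh\inv\hh'$ via a left-lcm $\hh\ff=\hh'\gg$, normalizing $\hh$ and $\hh'$, and invoking Lemma~\ref{L:Disjoint} (with the roles of $\ff$ and $\gg$ correctly swapped) to conclude that the resulting negative--positive path is symmetric $\SSS$-normal. The paper states this derivation in one line after Lemma~\ref{L:Disjoint}; your write-up just makes the bookkeeping explicit, and does so accurately.
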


In particular, if $\CCC$ is an Ore category, then every element of~$\Env\CCC$ can be expressed as a right-fraction, and we obtain:

\begin{coro}
If $\SSS$ is a Garside family in an Ore category~$\CCC$ that admits left-lcms, every element of~$\Env\CCC$ admits a symmetric $\SSS$-normal decomposition.
\end{coro}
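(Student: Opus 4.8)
The plan is to read the corollary off Proposition~\ref{P:SymExist}. That proposition already says that, for a Garside family~$\SSS$ in a left-Ore category~$\CCC$ admitting left-lcms, every element of~$\Env\CCC$ \emph{that can be expressed as a right-fraction} admits a symmetric $\SSS$-normal decomposition. An Ore category is in particular left-Ore, and admitting left-lcms is one of the hypotheses, so the only thing left to check is that, when $\CCC$ is moreover right-Ore, \emph{every} element of~$\Env\CCC$ is a right-fraction.

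To see this, I would start from an arbitrary element of~$\Env\CCC$, written as a left-fraction~$\ff\inv\gg$ with $\ff,\gg$ in~$\CCC$; for the product~$\ff\inv\gg$ to be defined in~$\Env\CCC$, the elements~$\ff$ and~$\gg$ must have the same source. Since $\CCC$ is right-Ore, $\ff$ and~$\gg$ then admit a common right-multiple, say $\ff\gg' = \gg\ff'$ with $\ff',\gg'$ in~$\CCC$; multiplying this identity on the left by~$\ff\inv$ and on the right by~$\ff'{}\inv$ gives $\ff\inv\gg = \gg'\ff'{}\inv$, which is a right-fraction. (Equivalently, one can invoke Proposition~\ref{P:Ore}: being right-Ore, $\CCC$ embeds in a groupoid of right-fractions, which coincides with~$\Env\CCC$ by the universal property characterising the enveloping groupoid; this is precisely the remark made in the paragraph preceding the statement.)

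Feeding this into Proposition~\ref{P:SymExist} then finishes the argument: every element of~$\Env\CCC$ is a right-fraction, hence admits a symmetric $\SSS$-normal decomposition, which is the claim.

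I do not expect any genuine obstacle here. The whole content is the implication ``Ore category $\Rightarrow$ every element of~$\Env\CCC$ is a right-fraction'', and the one-line computation above---pure groupoid algebra together with the common-right-multiple property---disposes of it, without even needing to identify the two one-sided groupoids of fractions.
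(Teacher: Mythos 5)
Your proposal is correct and follows the paper's own route: the paper likewise derives the corollary from Proposition~\ref{P:SymExist} together with the remark (stated in the sentence just before the corollary) that in an Ore category every element of~$\Env\CCC$ can be expressed as a right-fraction. Your explicit verification of that remark, turning a left-fraction $\ff\inv\gg$ into $\gg'\ff'{}\inv$ via a common right-multiple $\ff\gg'=\gg\ff'$, is exactly the standard argument the paper leaves implicit.
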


When algorithmic questions are involved, it will be convenient to consider special Garside families that we introduce now.

\begin{defi}
\label{D:Strong}
\rightskip40mm
Assume that $\CCC$ is a left-Ore category. A Garside family~$\SSS$ of~$\CCC$ is called \emph{strong} if, for all~$\aa, \bb$ in~$\SSSs$ with the same target, there exist~$\aa', \bb'$ in~$\SSSs$ that are left-disjoint and satisfy $\aa' \bb = \bb' \aa$. 
\hfill\begin{picture}(0,0)(-15,1)
\pcline{->}(1,0)(14,0)
\taput{$\bb$}
\pcline[style=exist]{->}(1,12)(14,12)
\tbput{$\bb'$}
\pcline[style=exist]{->}(0,11)(0,1)
\tlput{$\aa'$}
\pcline{->}(15,11)(15,1)
\trput{$\aa$}
\psarc[style=thin](0,12){3.5}{270}{360}
\end{picture}
\end{defi}

The interest of introducing the notion of a strong Garside family is to allow for a refined version of Proposition~\ref{P:SymExist}:

\begin{prop}\cite[Proposition~III.2.31]{Garside}
If $\SSS$ is a strong Garside family in a left-Ore category~$\CCC$ that admits left-lcms, every element of~$\Env\CCC$ that admits a positive--negative $\SSSs$-decomposition of length~$\ell$ admits a symmetric $\SSS$-normal decomposition of length at most~$\ell$. 
\end{prop}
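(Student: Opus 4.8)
The plan is to convert a positive--negative $\SSSs$-decomposition of length~$\ell$ into a negative--positive one of the same length by repeatedly applying the strongness hypothesis, and then to replace the two halves by their $\SSS$-normal decompositions and correct the junction, checking that these last operations do not increase the length. Unwinding the hypothesis: a positive--negative $\SSSs$-decomposition of~$g$ of length~$\ell$ is a signed path $\ww = \seqqq{\aa_1}{\etc}{\aa_\pp}\pc\seqqq{\INV{\cc_1}}{\etc}{\INV{\cc_\qq}}$ with all $\aa_\ii, \cc_\jj$ in~$\SSSs$ and $\pp+\qq=\ell$, representing $g = \aa_1 \pdots \aa_\pp\, \cc_\qq\inv \pdots \cc_1\inv$; in particular $g$ is a right-fraction, so Proposition~\ref{P:SymExist} already provides the \emph{existence} of a symmetric $\SSS$-normal decomposition, and the whole point of the statement is the length bound.

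The decisive step, and the one place where strongness is used, is the conversion to negative--positive form. The elementary move is: whenever a signed $\SSSs$-path contains a factor $\aa\pc\INV\cc$ with $\aa, \cc$ in~$\SSSs$, these two elements are composable in the path, hence share a target, so Definition~\ref{D:Strong} yields left-disjoint $\aa', \cc'$ in~$\SSSs$ with $\aa'\cc = \cc'\aa$, whence $\aa\cc\inv = \cc'{}\inv\aa'$ in~$\Env\CCC$; one may therefore replace the factor $\aa\pc\INV\cc$ by $\INV{\cc'}\pc\aa'$ without changing the represented element or the length of the path. Pushing $\aa_\pp$ in turn past $\INV{\cc_1} \wdots \INV{\cc_\qq}$, then pushing the new rightmost positive entry, and so on, is a finite process and produces a negative--positive $\SSSs$-path $\ww' = \seqqq{\INV{\bb_1}}{\etc}{\INV{\bb_\qq}}\pc\seqqq{\ee_1}{\etc}{\ee_\pp}$ of length~$\ell$ with $\cl{\ww'}=g$; equivalently, $g = \vv\inv\uu$ with $\vv = \bb_\qq \pdots \bb_1$ and $\uu = \ee_1 \pdots \ee_\pp$ in~$\CCC$ carrying $\SSSs$-decompositions of lengths $\le\qq$ and $\le\pp$.

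It remains to normalize~$\ww'$ without lengthening it. Since $\SSS$ is a Garside family, $\vv$ and~$\uu$ have $\SSS$-normal decompositions, and by the minimality of greedy decompositions---a standard consequence of the domino rules (Lemmas~\ref{L:Domino1} and~\ref{L:Domino2})---these have lengths $\le\qq$ and $\le\pp$; assembled as a negative--positive $\SSSs$-path they represent~$g$, have $\SSS$-normal halves, and have total length $\le\ell$. By Definition~\ref{D:GreedySym} this path is symmetric $\SSS$-normal provided its two innermost entries are left-disjoint, and since $g$ is a right-fraction, $\vv$ and~$\uu$ have a common right-multiple, so Lemma~\ref{L:Disjoint} tells us this holds exactly when $\vv$ and~$\uu$ are left-disjoint. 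If they are not, one passes to the essentially unique (Proposition~\ref{P:SymUnique}) left-disjoint pair $(\phi,\psi)$ with $g = \phi\inv\psi$ underlying the symmetric $\SSS$-normal decomposition given by Proposition~\ref{P:SymExist}: by the very definition of left-disjointness one has $\vv = \hh\phi$ and $\uu = \hh\psi$ for some~$\hh$ in~$\CCC$, so $\phi$ and~$\psi$ are right-divisors of $\vv$ and~$\uu$ and (again via the domino rules) their $\SSS$-normal decompositions are no longer than those of $\vv$ and~$\uu$; these assemble into a symmetric $\SSS$-normal decomposition of~$g$ of length at most $\qq+\pp=\ell$.

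The step I expect to be the main obstacle is the last one: verifying that correcting left-disjointness at the junction---equivalently, passing from $\vv,\uu$ to the right-divisors $\phi,\psi$---cannot increase the total length. This amounts to the minimality and stability of normal decompositions (normal forms are shortest, and behave monotonically under one-sided division), which is precisely what the two domino rules are meant to deliver but which takes some care to extract at this level of generality. A leaner alternative, trading this for a citation, is to omit the normalization step and quote from~\cite{Garside} the minimality of symmetric $\SSS$-normal decompositions among all negative--positive $\SSSs$-decompositions, applied directly to the path~$\ww'$ built above.
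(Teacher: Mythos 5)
The paper itself gives no argument for this statement---it is quoted from \cite{Garside}---so your proposal has to stand on its own. Its first two stages do: the swap of a factor $\aa\pc\INV\cc$ into $\INV{\cc'}\pc\aa'$ supplied by Definition~\ref{D:Strong} is exactly where strongness enters; it preserves the length, keeps all entries in~$\SSSs$, does not change the represented element, and terminates, yielding a negative--positive $\SSSs$-path of length~$\ell$. Normalizing the two halves keeps lengths $\qq$ and~$\pp$ by Proposition~\ref{P:Normal} (only the first domino rule is involved there; Lemma~\ref{L:Domino2}, which you also invoke, is stated for bounded Garside families and is not available under the present hypotheses), and Lemma~\ref{L:Disjoint} does reduce symmetric normality of the assembled path to left-disjointness of the numerator and the denominator.

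The gap is the step you flag yourself: the claim that replacing the pair $\vv,\uu$ by $\phi,\psi$ with $\vv=\hh\phi$ and $\uu=\hh\psi$ cannot increase the lengths of the normal decompositions. No domino rule says anything about the normal length of a left-quotient (equivalently, of a right-divisor): the first rule governs left-multiplication, the second requires a bounded family, the third and fourth concern left-disjointness. The statement you need---if $\hh\phi$ admits an $\SSSs$-decomposition of length~$\qq$, then so does $\phi$, up to length at most~$\qq$---is in fact true for every Garside family, but its proof rests on two facts appearing nowhere in this paper: every element of~$\SSSs$ left-dividing an element left-divides the first entry of its $\SSS$-normal decomposition, and $\SSSs$ is closed under right-divisor; one then left-divides by the $\SSSs$-letters of~$\hh$ one at a time. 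So, as written, the argument is incomplete, though reparable: either supply those lemmas (or cite the corresponding $\SSS$-length results of \cite{Garside}), or, more in the spirit of this paper, reroute the end of the proof by normalizing only the positive part (length~$\pp$; a positive $\SSS$-normal path is symmetric $\SSS$-normal) and then peeling off the $\qq$ negative letters one at a time using the right-division counterpart of Proposition~\ref{P:SymLeftMult}---this is where strongness and left-lcms are used---each step adding at most one entry, which gives the bound $\pp+\qq=\ell$ by induction and avoids the right-divisor question entirely. Your fallback of quoting from \cite{Garside} the geodesic property of symmetric normal decompositions among negative--positive $\SSSs$-paths would also close the gap, but then the only content left in your proof is the reversing step.
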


Finally, as in the positive case, we shall appeal to diagrammatic rules involving normal paths.

\begin{lemm}
\cite[Propositions III.2.39 and III.2.42]{Garside}
\label{L:Domino3}

\ITEM1 (third domino rule) Assume that $\CCC$ is a left-cancell\-ative category, $\SSS$ is included in~$\CCC$, and we have a commutative diagram with edges in~$\CCC$ as on the right. If $\seqq{\gg_1}{\gg_2}$ is $\SSS$-greedy, and $\ff_1$, $\gg'_2$ are left-disjoint, then $\seqq{\gg'_1}{\gg'_2}$ is $\SSS$-greedy as well.
\hfill\rightskip40mm
\begin{picture}(0,0)(-8,-4.5)
\psarc[style=thin](14.5,0){3}{180}{360}
\psarc[style=thin](15,12){3.5}{270}{360}
\psarc[style=thinexist](14.5,12){3}{0}{180}
\pcline{->}(1,0)(14,0)
\tbput{$\gg_1$}
\pcline{->}(16,0)(29,0)
\tbput{$\gg_2$}
\pcline{->}(1,12)(14,12)
\taput{$\gg'_1$}
\pcline{->}(16,12)(29,12)
\taput{$\gg'_2$}
\pcline{->}(0,11)(0,1)
\trput{$\ff_0$}
\pcline{->}(15,11)(15,1)
\tlput{$\ff_1$}
\pcline{->}(30,11)(30,1)
\tlput{$\ff_2$}
\end{picture}

\ITEM2 (fourth domino rule)
Assume that $\CCC$ is a left-Ore category, $\SSS$ is included in~$\CCC$, and we have a commutative diagram with edges in~$\CCC$ as on the right. If $\gg_1, \gg_2$ are left-disjoint, and $\ff_1, \gg'_2$ are left-disjoint, then $\gg'_1, \gg'_2$ are left-disjoint as well.
\hfill\begin{picture}(0,0)(-8,-1)
\psarc[style=thin](14.5,0){3}{180}{360}
\psarc[style=thin](15,12){3.5}{270}{360}
\psarc[style=thinexist](14.5,12){3}{0}{180}
\pcline{<-}(1,0)(14,0)
\tbput{$\gg_1$}
\pcline{->}(16,0)(29,0)
\tbput{$\gg_2$}
\pcline{<-}(1,12)(14,12)
\taput{$\gg'_1$}
\pcline{->}(16,12)(29,12)
\taput{$\gg'_2$}
\pcline{->}(0,11)(0,1)
\trput{$\ff_0$}
\pcline{->}(15,11)(15,1)
\tlput{$\ff_1$}
\pcline{->}(30,11)(30,1)
\tlput{$\ff_2$}
\end{picture}
\end{lemm}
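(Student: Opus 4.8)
The plan is to establish both rules directly from the definitions, moving divisibility and fraction information between the two horizontal rows of the diagram by pre- or post-composing with the vertical edges $\ff_0,\ff_1,\ff_2$, and to carry out the disjointness bookkeeping inside the groupoid of fractions $\Env\CCC$. This is legitimate: the very notion of left-disjoint elements (Definition~\ref{D:GreedySym}\ITEM1) presupposes that $\CCC$ is left-Ore, hence cancellative and faithfully embedded in $\Env\CCC$, so left- and right-cancellation are both available.

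For \ITEM1 I would verify condition~\eqref{E:Greedy} for $\seqq{\gg'_1}{\gg'_2}$ at index~$1$. Fix $\aa$ in~$\SSS$ and $\hh$ in~$\CCC$ with $\aa\dive\hh\gg'_1\gg'_2$; the goal is $\aa\dive\hh\gg'_1$. Post-composing with $\ff_2$ and using the commutativity relations $\gg'_2\ff_2=\ff_1\gg_2$ and $\gg'_1\ff_1=\ff_0\gg_1$ gives $\hh\gg'_1\gg'_2\ff_2=\hh\gg'_1\ff_1\gg_2=\hh\ff_0\gg_1\gg_2$, hence $\aa\dive\hh\ff_0\gg_1\gg_2$; since $\seqq{\gg_1}{\gg_2}$ is $\SSS$-greedy, \eqref{E:Greedy} applied with the prefix $\hh\ff_0$ yields $\aa\dive\hh\ff_0\gg_1=\hh\gg'_1\ff_1$. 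Thus $\aa$ left-divides both $\hh\gg'_1\ff_1$ and $\hh\gg'_1\gg'_2$, and it remains to extract $\aa\dive\hh\gg'_1$ from the left-disjointness of $\ff_1$ and $\gg'_2$. Writing $\aa\uu=\hh\gg'_1\ff_1$ and $\aa\vv=\hh\gg'_1\gg'_2$ with $\uu,\vv$ in~$\CCC$, cancelling $\aa$ on the left in $\hh\gg'_1=\aa\uu\ff_1\inv=\aa\vv\gg'_2{}\inv$ and rearranging gives $\ff_1\inv\gg'_2=\uu\inv\vv$ in $\Env\CCC$ with $\ff_1,\gg'_2,\uu,\vv$ in~$\CCC$; left-disjointness of $\ff_1,\gg'_2$ then provides $\cc$ in~$\CCC$ with $\uu=\cc\ff_1$ and $\vv=\cc\gg'_2$, and cancelling $\ff_1$ on the right in $\hh\gg'_1\ff_1=\aa\cc\ff_1$ finishes.

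For \ITEM2 (where $\SSS$ plays no role) I would verify the universal property of Definition~\ref{D:GreedySym}\ITEM1 for $\gg'_1,\gg'_2$: given $\uu,\vv$ in~$\CCC$ with $\uu\inv\vv=\gg'_1{}\inv\gg'_2$ in $\Env\CCC$, produce $\hh$ in~$\CCC$ with $\uu=\hh\gg'_1$ and $\vv=\hh\gg'_2$. Substituting the commutativity relations $\gg'_1\ff_0=\ff_1\gg_1$ and $\gg'_2\ff_2=\ff_1\gg_2$ gives $\gg'_1{}\inv\gg'_2=\ff_0(\gg_1\inv\gg_2)\ff_2\inv$, so the hypothesis rewrites as $(\uu\ff_0)\inv(\vv\ff_2)=\gg_1\inv\gg_2$, where $\uu\ff_0$ and $\vv\ff_2$ are defined and lie in~$\CCC$. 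Left-disjointness of $\gg_1,\gg_2$ then yields $\dd$ in~$\CCC$ with $\uu\ff_0=\dd\gg_1$ and $\vv\ff_2=\dd\gg_2$; substituting $\gg_2=\ff_1\inv\gg'_2\ff_2$ into the second equation and cancelling $\ff_2$ on the right produces $\dd\inv\vv=\ff_1\inv\gg'_2$ with $\dd,\vv$ in~$\CCC$, so left-disjointness of $\ff_1,\gg'_2$ yields $\cc$ in~$\CCC$ with $\dd=\cc\ff_1$ and $\vv=\cc\gg'_2$. Now $\hh:=\cc$ works: $\vv=\hh\gg'_2$ is immediate, and $\uu\ff_0=\dd\gg_1=\cc\ff_1\gg_1=\cc\gg'_1\ff_0$ (using $\ff_1\gg_1=\gg'_1\ff_0$), whence $\uu=\hh\gg'_1$ after cancelling $\ff_0$ on the right.

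The computations are routine once set up, so the part that needs attention is organizational. In \ITEM1 one must keep the original hypothesis $\aa\dive\hh\gg'_1\gg'_2$ in play even after pushing it down to the bottom row: it is the \emph{conjunction} of the two divisibilities that left-disjointness consumes, and this conjunction is exactly what, in the first domino rule (Lemma~\ref{L:Domino1}), is supplied instead by the greedy hypothesis on $\seqq{\gg'_1}{\ff_1}$. In \ITEM2 one must apply the two left-disjointness hypotheses in the correct order ($\gg_1,\gg_2$ first, then $\ff_1,\gg'_2$) and, before each application, check that the competing representation is a genuine equality of left-fractions with all four terms in~$\CCC$, not merely an equality in $\Env\CCC$ — mishandling this well-typedness is the easiest way to smuggle in a circular step. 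Finally, every closing cancellation is a right-cancellation, which is precisely where one uses that the ambient category, being left-Ore, is cancellative.
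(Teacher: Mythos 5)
The paper itself gives no argument for this lemma (it is imported from \cite{Garside}), so your proof has to stand on its own, and in substance it does: in \ITEM1 the reduction, via the commutativity relations and greediness of $\seqq{\gg_1}{\gg_2}$, to the conjunction $\aa\dive\hh\gg'_1\ff_1$ and $\aa\dive\hh\gg'_1\gg'_2$, followed by the extraction of $\aa\dive\hh\gg'_1$ from left-disjointness of $\ff_1,\gg'_2$; and in \ITEM2 the rewriting $(\uu\ff_0)\inv(\vv\ff_2)=\gg_1\inv\gg_2$, the application of disjointness of $\gg_1,\gg_2$ to obtain $\dd$, then of $\ff_1,\gg'_2$ to obtain $\cc$, and the closing right-cancellations, are all correct, including the source--target bookkeeping you carry out before each use of disjointness.

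The one point to be aware of concerns the hypotheses of \ITEM1. The lemma is stated for a left-cancellative category, but your argument passes through $\Env\CCC$ (in order to state and use left-disjointness as in Definition~\ref{D:GreedySym}) and ends with a right-cancellation, so what you actually prove is the rule under the stronger left-Ore hypothesis. You flag this, and it is defensible within this paper, since Definition~\ref{D:GreedySym} only defines left-disjointness for left-Ore categories and the rule is only invoked in that setting (e.g.\ Proposition~\ref{P:SymLeftMult}). Still, note that the cited source works with a fraction-free formulation of left-disjointness --- for every $\hh$ in~$\CCC$, every common left-divisor of $\hh\ff_1$ and $\hh\gg'_2$ left-divides $\hh$ --- which is equivalent to the fraction condition when $\CCC$ is left-Ore (your computation in \ITEM1, deriving the fraction equality and right-cancelling, is precisely the proof of one implication). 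With that formulation your first two steps already finish \ITEM1 in the stated left-cancellative generality: $\aa$ left-divides both $\hh\gg'_1\ff_1$ and $\hh\gg'_1\gg'_2$, hence left-divides $\hh\gg'_1$, with no detour through $\Env\CCC$ and no right-cancellation. So the argument is right, but the route through fractions is what silently strengthens the hypothesis in \ITEM1.
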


\subsection{Bounded Garside families and $\Delta$-normal decompositions}
\label{SS:Bounded}

In many cases, interesting Garside families in a monoid consist of the left-divisors of some maximal element~$\Delta$, in which case it is natural to call them bounded by~$\Delta$. In a category context, the maximal element has to depend on the source, and it is replaced with a map from the objects to the elements.

\begin{defi}
\label{D:Bounded}
A Garside family~$\SSS$ in a cancellative category~$\CCC$ is called \emph{bounded} if there exists a map~$\Delta$ from~$\Obj(\CCC)$ to~$\CCC$ satisfying the following conditions:

\ITEM1 $\aa \in \SSS(\xx, \ud)$ implies $\aa \dive\Delta(x)$,

\ITEM2 $\forall\yy \in \Obj(\CCC)\ \exists!\zz \in \Obj(\CCC)\ \forall \aa \in \SSSs(\ud, \yy) \ \exists \aa' \in \SSSs(\zz, \ud) \ (\aa' \aa = \Delta(\zz))$.
\end{defi}

Note that, in Definition~\ref{D:Bounded}, \ITEM1 and~\ITEM2 are symmetric, as \ITEM1 can be stated as
$$\forall\xx \in \Obj(\CCC)\ \exists!\yy \in \Obj(\CCC)\ \forall \aa \in \SSSs(\xx, \ud) \ \exists \aa' \in \SSSs(\ud, \yy) \ (\aa \aa' = \Delta(\xx)),$$
with $\yy$ the target of~$\Delta(\xx)$. In the above context, for every element~$\aa$ of~$\CCC$, the unique element~$\aa'$ satisfying $\aa \aa' = \Delta(\xx)$ is denoted by~$\der(\aa)$, whereas the unique element~$\aa'$ satisfying $\aa' \aa = \Delta(\zz)$ for some~$\zz$ is denoted by~$\derL(\aa)$.

\begin{rema}
A Garside family satisfying Definition~\ref{D:Bounded}\ITEM1 only is called \emph{right-bounded} by~$\Delta$. The latter notion is natural and useful in the positive case, but it is not sufficient in the signed case and we shall not consider it here.
\end{rema}

\begin{prop}\cite[Proposition~VI.3.11]{Garside}
\ITEM1 Every cancellative category that admits a bounded Garside family is an Ore category.

\ITEM2 Every bounded Garside family is strong.
\end{prop}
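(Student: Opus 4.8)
The plan is to prove the two items of the proposition separately, exploiting the symmetry between conditions \ITEM1 and~\ITEM2 in Definition~\ref{D:Bounded} that is already recorded in the excerpt.

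For \ITEM1, I must show that a cancellative category~$\CCC$ carrying a Garside family~$\SSS$ bounded by~$\Delta$ is an Ore category, i.e. that any two elements with the same target admit a common left-multiple and any two with the same source admit a common right-multiple. I would treat the left-multiple case first; the right-multiple case then follows by the left--right symmetry of Definition~\ref{D:Bounded}. So let $\ff, \gg$ be elements of~$\CCC$ with the same target~$\yy$. Applying Definition~\ref{D:Bounded}\ITEM2 to~$\yy$ produces an object~$\zz$ with the property that every element of~$\SSSs(\ud,\yy)$ is a right-divisor of~$\Delta(\zz)$; the key point is that $\zz$ depends only on~$\yy$, so the \emph{same} $\Delta(\zz)$ works for $\ff$ and for~$\gg$ simultaneously once we know each of $\ff, \gg$ is a right-divisor of some~$\Delta(\zz')$. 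To get the latter, I would use that $\SSS$ is a Garside family and that every entry of an $\SSS$-normal decomposition lies in~$\SSSs$: writing $\ff = \aa_1 \pdots \aa_\pp$ in $\SSS$-normal form and iterating the "each $\SSSs$-element with target~$\yy$ right-divides some~$\Delta(\zz)$" property from the last entry backwards, together with an analogous statement for~$\Delta$ (namely that $\Delta(\zz_1)$ is itself a right-divisor of some~$\Delta(\zz_2)$, obtained by applying \ITEM2 to the source of $\Delta(\zz_1)$), one shows by induction on~$\pp$ that $\ff$ right-divides some~$\Delta(\zz_\pp)$. The induction hypothesis needs a little care: I would in fact prove "any $\SSSs$-path of length~$\pp$ evaluating to~$\hh$ with target~$\yy$ makes $\hh$ a right-divisor of an iterated~$\Delta$", so that the one-step property can be applied to the last entry and the shorter prefix handled recursively. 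Having placed $\ff$ and~$\gg$ as right-divisors of iterated~$\Delta$'s with the same target~$\yy$, a final application of~\ITEM2 to a common such target gives a single~$\Delta(\zz)$ that is a common left-multiple of~$\ff$ and~$\gg$ --- and of course $\Delta(\zz)$ has source~$\zz$ and the required target. Cancellativity is in the hypothesis, so all the conditions of Proposition~\ref{P:Ore} (in the Ore form) are met.

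For \ITEM2, I must show that a bounded Garside family~$\SSS$ is strong, i.e. that for all $\aa, \bb$ in~$\SSSs$ with the same target there exist left-disjoint $\aa', \bb'$ in~$\SSSs$ with $\aa'\bb = \bb'\aa$. Given such $\aa, \bb$ with target~$\yy$, apply Definition~\ref{D:Bounded}\ITEM2 to~$\yy$ to get~$\zz$ and elements $\aa', \bb'$ of~$\SSSs(\zz,\ud)$ with $\aa'\aa = \Delta(\zz) = \bb'\bb$; these are the natural candidates and the equation $\aa'\bb = \bb'\aa$ should really read $\aa'\aa=\bb'\bb$, so the product condition is immediate. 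The substance is left-disjointness: I would show that $\aa'$ and~$\bb'$ are left-disjoint by using the characterization that left-disjoint pairs are exactly those for which the fraction $\aa'^{-1}\bb'$ is already in lowest terms, which by Lemma~\ref{L:Disjoint} is equivalent to $\Delta(\zz) = \aa'\aa = \bb'\bb$ being a left-lcm of~$\aa$ and~$\bb$. That $\Delta(\zz)$ is a \emph{common} left-multiple of~$\aa$ and~$\bb$ is the equation just written; that it is \emph{least} is where the definition of~$\zz$ via the "$\exists!$" and the universal quantifier over all of~$\SSSs(\ud,\yy)$ does the work: any common left-multiple of~$\aa$ and~$\bb$ can be refined, using that $\SSS$ is Garside and Lemma~\ref{L:GarClosed}, to a common left-multiple whose cofactors lie in~$\SSSs$, and such a multiple must right-divide~$\Delta(\zz)$ by minimality of the~$\zz$-construction --- I expect to phrase this as: every element of~$\SSSs(\ud,\yy)$ right-divides~$\Delta(\zz)$, hence so does the relevant cofactor, hence the common multiple divides~$\Delta(\zz)$. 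So $\Delta(\zz)$ is the left-lcm, Lemma~\ref{L:Disjoint} gives left-disjointness of $\aa', \bb'$, and we are done.

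The main obstacle I anticipate is the bookkeeping in \ITEM1: turning the one-object-at-a-time statement of Definition~\ref{D:Bounded}\ITEM2 into a "some iterated~$\Delta$ is a common left-multiple" statement requires both the induction down an $\SSS$-normal decomposition and the self-referential fact that each $\Delta(\zz)$ is itself a right-divisor of a further~$\Delta$; getting the induction hypothesis formulated so that the last entry of the normal path can be peeled off cleanly is the delicate point. In \ITEM2 the only real content is identifying left-disjointness with "$\Delta(\zz)$ is the left-lcm" via Lemma~\ref{L:Disjoint} and then using the extremal property of the bounding map; once that dictionary is in place the argument is short. Throughout I would lean on Lemma~\ref{L:GarClosed} to replace arbitrary common multiples by ones with $\SSSs$-cofactors, which is exactly what makes the divisibility comparisons with~$\Delta(\zz)$ legitimate.
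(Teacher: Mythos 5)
Your item \ITEM2 contains a genuine error, not just a gap. You propose taking $\aa'=\derL(\bb)$ and $\bb'=\derL(\aa)$, the complements of $\bb$ and $\aa$ inside $\Delta(\zz)$, and argue they are left-disjoint because $\Delta(\zz)$ is a left-lcm of $\aa$ and $\bb$. That is false in general: a left-lcm must be such that every common left-multiple of $\aa$ and $\bb$ is a \emph{left-multiple of} it, whereas your argument only tries to show that common left-multiples \emph{right-divide} $\Delta(\zz)$ --- the wrong direction --- and even that step misapplies Lemma~\ref{L:GarClosed}, which concerns common \emph{right}-multiples and has no automatic left-hand counterpart (the paper stresses exactly this asymmetry before Proposition~\ref{P:CommLMult}). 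Concretely, for $\aa=\bb$ noninvertible the two complements coincide and are not left-disjoint; and in $\BP3$ with $\SSS=\{1,\tta,\ttb,\tta\ttb,\ttb\tta,\Delta\}$, taking $\aa=\tta\ttb$, $\bb=\ttb$ gives complements $\ttb$ and $\ttb\tta$, which share the nontrivial left-divisor $\ttb$, while the actual left-lcm of $\tta\ttb$ and $\ttb$ is $\tta\ttb$, not $\Delta$. Producing the left-disjoint pair is precisely the substance of \ITEM2: one has to divide out the common part of the two complements, i.e.\ show that $\aa,\bb$ admit a left-lcm with cofactors in $\SSSs$ (for instance via Lemma~\ref{L:DeltaDisjoint}, which identifies left-disjointness of $\SSSs$-elements with the absence of a nontrivial common left-divisor), and your sketch supplies no such step. (The paper itself only cites this proposition from~\cite{Garside}, so I am judging the argument on its own terms.)

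In \ITEM1 the strategy is workable but the induction as you set it up does not close. If $\kk\ff'=\DELTAA{m}(\cdot)$ and $\ff=\ff'\aa_\pp$ with $\aa'\aa_\pp=\Delta(\zz_1)$, then $\kk\ff=\DELTAA{m}(\cdot)\,\aa_\pp$, and neither your induction hypothesis nor the observation that each $\Delta(\zz_1)$ right-divides a further $\Delta(\zz_2)$ turns this into a right-divisor of an iterated $\Delta$; what is missing is the commutation $\gg\,\Delta(\trg\gg)=\Delta(\src\gg)\,\phi(\gg)$ of Lemma~\ref{L:Auto} (together with the fact that $\phi^{\pm1}$ preserves $\SSSs$), which lets you move factors past iterated $\Delta$'s. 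Alternatively, and more economically with the tools of this paper, you can avoid iterated $\Delta$'s altogether: conditions \ITEM1 and \ITEM2 of Definition~\ref{D:Bounded} say that any two elements of $\SSSs$ with the same source (\resp target) admit $\Delta(\xx)$ (\resp $\Delta(\zz)$) as a common right- (\resp left-) multiple, and Propositions~\ref{P:CommRMult} and~\ref{P:CommLMult} then give common right- and left-multiples for arbitrary elements of~$\CCC$, which together with the assumed cancellativity yields the Ore property.
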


The most important technical property implied by the existence of the bounding map~$\Delta$ is the existence of a derived automorphism of the ambient category.

\begin{lemm}
\label{L:Auto}
\cite[Proposition~VI.1.11]{Garside} Assume that $\SSS$ is a Garside family bounded by~$\Delta$ in a cancellative category~$\CCC$. Put $\phi(\xx) = \trg{\Delta(\xx)}$ for~$\xx$ in~$\Obj(\CCC)$ and $\phi(\gg) = \der^2(\gg)$ for~$\gg$ in~$\CCC$. Then $\phi$ is an automorphism of~$\CCC$ that makes the diagram aside commutative for every~$\gg$ in~$\CCC(\xx, \yy)$. 
\hfill\rightskip40mm
\begin{picture}(0,0)(-12,-5.5)
\pscircle[style=thin](0,12){0.5}
\put(-1,14){$\xx$}
\pcline{->}(1,12)(19,12)
\taput{$\gg$}
\pscircle[style=thin](20,12){0.5}
\put(19,14){$\yy$}
\pscircle[style=thin](0,0){0.5}
\put(-5,-3.5){$\phi\!(\xx)$}
\pcline{->}(1,0)(19,0)
\tbput{$\phi\!(\gg)$}
\pscircle[style=thin](20,0){0.5}
\put(17,-3.5){$\phi\!(\yy)$}
\pcline{->}(0,11)(0,1)
\tlput{$\Delta(\xx)$}
\pcline{->}(20,11)(20,1)
\trput{$\Delta(\yy)$}
\pcline{->}(19,11)(1,1)
\put(11,3.5){$\der(\gg)$}
\end{picture}
\end{lemm}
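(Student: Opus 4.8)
The plan is to establish, in order: that $\phi$ is well-defined on objects and on morphisms, that it respects source and target, that it is functorial (multiplicative and identity-preserving), that the displayed square commutes, and finally that $\phi$ is bijective. The key structural input is the pair of ``co-divisor'' maps $\der$ and $\derL$ coming from Definition~\ref{D:Bounded}: for $\gg$ in $\CCC(\xx,\yy)$ we have $\gg\,\der(\gg)=\Delta(\xx)$, and since $\der(\gg)$ lies in $\CCC(\yy,\phi(\xx))$ we get $\der(\gg)\,\der^2(\gg)=\Delta(\yy)$, so $\der^2(\gg)=\phi(\gg)$ lies in $\CCC(\phi(\xx),\phi(\yy))$; this simultaneously checks that $\phi$ is type-correct and that the square of the statement commutes, because reading the square as $\Delta(\xx)\cdot\phi(\gg)=\gg\cdot\der(\gg)\cdot\der^2(\gg)=\gg\cdot\Delta(\yy)$ is exactly the associativity rewriting of $\Delta(\xx)\,\der^2(\gg)$. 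So the commutativity of the diagram is essentially free once $\der$ is understood.

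Next I would check functoriality. For multiplicativity: given composable $\gg\in\CCC(\xx,\yy)$ and $\hh\in\CCC(\yy,\zz)$, I want $\der^2(\gg\hh)=\der^2(\gg)\,\der^2(\hh)$. The cleanest route is to use the commuting square twice: from $\Delta(\xx)\,\phi(\gg)=\gg\,\Delta(\yy)$ and $\Delta(\yy)\,\phi(\hh)=\hh\,\Delta(\zz)$ one gets $\Delta(\xx)\,\phi(\gg)\,\phi(\hh)=\gg\,\Delta(\yy)\,\phi(\hh)=\gg\,\hh\,\Delta(\zz)=\Delta(\xx)\,\phi(\gg\hh)$, and then left-cancellativity of $\CCC$ (part of ``cancellative'') kills the leading $\Delta(\xx)$, giving $\phi(\gg)\phi(\hh)=\phi(\gg\hh)$. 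The same square with $\gg=\id\xx$ gives $\Delta(\xx)\phi(\id\xx)=\Delta(\xx)$, hence $\phi(\id\xx)=\id{\phi(\xx)}$ by left-cancellation; so identities are preserved. This shows $\phi$ is an endofunctor.

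For bijectivity I would argue that $\phi$ has a two-sided inverse built from the symmetric map $\derL$. Definition~\ref{D:Bounded}\ITEM2 associates to each object $\yy$ a unique object $\zz$ and, for $\aa\in\CCC(\ud,\yy)$, an element $\derL(\aa)\in\CCC(\zz,\ud)$ with $\derL(\aa)\,\aa=\Delta(\zz)$; writing $\psi$ for $\derL^2$ on morphisms and $\zz\mapsto\src{\Delta(\zz)}$ on objects, the same associativity/cancellation argument (now using right-cancellativity) shows $\psi$ is a functor satisfying $\psi(\gg)\,\Delta(\yy)=\Delta(\src\gg\text{-shifted})\cdot\gg$ in the mirrored sense. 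The point is that $\der$ and $\derL$ are mutually inverse in the appropriate slots: $\derL(\der(\gg))$ and $\gg$ are both the unique solution of an equation of the form $x\cdot\der(\gg)=\Delta(\text{that object})$, so $\derL\circ\der=\mathrm{id}$, and symmetrically $\der\circ\derL=\mathrm{id}$; iterating gives $\psi\circ\phi=\mathrm{id}$ and $\phi\circ\psi=\mathrm{id}$ on morphisms, and the object-level maps are inverse by the uniqueness clauses in \ITEM1 and \ITEM2. Hence $\phi$ is an automorphism.

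The main obstacle I expect is purely bookkeeping rather than conceptual: one must be careful that every instance of $\der$ and $\derL$ is applied to a morphism whose source/target matches the object for which $\Delta$ is being used, and in particular that $\der$ applied to an element of $\CCC(\yy,\phi(\xx))$ really outputs into $\CCC(\phi(\xx),\phi(\yy))$ — i.e.\ that iterating $\der$ twice lands us exactly at $\phi(\yy)=\trg{\Delta(\yy)}$ and not at some a priori different object. This is where the uniqueness of the target object in \ITEM1 (equivalently $\phi(\yy)$ being well-defined) must be invoked; and the one genuine thing to verify, rather than cite, is that $\derL$ is a two-sided inverse to $\der$ as a self-map of $\CCC$, which follows from the uniqueness parts of Definition~\ref{D:Bounded} but should be stated explicitly. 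All cancellations used are licensed by $\CCC$ being cancellative, so no extra hypothesis is needed.
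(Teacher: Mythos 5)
The paper itself gives no proof of this lemma (it is quoted from \cite{Garside}), so I measure your plan against the standard argument. Your local computations are fine, but the plan has a genuine gap at its very first step: you treat $\der$ (and $\derL$) as everywhere-defined self-maps of~$\CCC$, writing $\gg\,\der(\gg)=\Delta(\xx)$ for an \emph{arbitrary} $\gg$ in~$\CCC(\xx,\yy)$. Definition~\ref{D:Bounded}\ITEM1 only guarantees $\aa\dive\Delta(\xx)$ for $\aa$ in~$\SSS(\xx,\ud)$, so $\der$ is defined only on left-divisors of the $\Delta$-elements, not on all of~$\CCC$. For instance, in~$\BP3$ with $\Delta=\tta\ttb\tta$ and $\SSS=\{1,\tta,\ttb,\tta\ttb,\ttb\tta,\Delta\}$, the element $\tta^2$ does not left-divide~$\Delta$, so $\der(\tta^2)$ does not exist; yet the lemma asserts the outer square $\Delta(\xx)\,\phi(\gg)=\gg\,\Delta(\yy)$ for this~$\gg$ (with $\phi(\tta^2)=\ttb^2$), only the diagonal arrow being absent. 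Consequently your derivation of the commuting square, the functoriality argument you build on it, and the inverse $\derL^2$ are established only on $\Div(\Delta)\supseteq\SSSs$, and for a general element the square is precisely the nontrivial content of the lemma, not ``essentially free once $\der$ is understood''; the sentence ``$\phi(\gg)=\der^2(\gg)$ for $\gg$ in~$\CCC$'' is an abuse of notation that a proof cannot take literally.

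The missing idea is an extension step. First prove your square and the relations $\derL\circ\der=\mathrm{id}$, $\der\circ\derL=\mathrm{id}$ on~$\SSSs$ (there your computations, including the bookkeeping with the object~$\zz$ of Definition~\ref{D:Bounded}\ITEM2, are exactly what is needed). Then use that $\SSSs$ generates~$\CCC$ (true since $\SSS$ is a Garside family): write $\gg=\aa_1\pdots\aa_\pp$ with the $\aa_\ii$ in~$\SSSs$ and set $\phi(\gg)=\phi(\aa_1)\pdots\phi(\aa_\pp)$. Chaining the squares of the $\aa_\ii$ gives $\Delta(\xx)\,\phi(\aa_1)\pdots\phi(\aa_\pp)=\gg\,\Delta(\yy)$, and left-cancellativity shows that $\phi(\gg)$ is the unique element satisfying $\Delta(\xx)\,\phi(\gg)=\gg\,\Delta(\yy)$, hence is independent of the chosen decomposition and the square holds for every~$\gg$; multiplicativity, preservation of identities and injectivity then follow by the cancellation arguments you give, and surjectivity comes from the symmetric extension built from~$\derL^2$, together with the object-level argument from the uniqueness clause of~\ITEM2 that you sketch. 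With this restructuring (square on $\Div(\Delta)$ first, then multiplicative extension), your plan becomes the standard proof; as written, it does not define $\phi$ on all of~$\CCC$ and the key commutation for arbitrary~$\gg$ is unproved.
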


In terms of normal decompositions, the bounding map implies a simple connection between greediness and left-disjointness that need not be valid in general.

\begin{lemm}
\label{L:DeltaDisjoint}
\cite[Proposition~VI.1.46]{Garside} Assume that $\SSS$ is a Garside family bounded by a map~$\Delta$ in a cancellative category~$\CCC$. Then, for all~$\aa_1, \aa_2$ in~$\SSSs$, the following are equivalent:

\ITEM1 $\seqq{\aa_1}{\aa_2}$ is $\SSS$-normal;

\ITEM2 $\der(\aa_1)$ and $\aa_2$ are left-disjoint;

\ITEM3 $\der(\aa_1)$ and $\aa_2$ admit no nontrivial common left-divisor.
\end{lemm}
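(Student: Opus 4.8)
The plan is to prove the cyclic chain of implications \ITEM1~$\Rightarrow$~\ITEM2~$\Rightarrow$~\ITEM3~$\Rightarrow$~\ITEM1, exploiting the bounding map~$\Delta$ and the derived automorphism~$\phi$ from Lemma~\ref{L:Auto} to turn statements about $\seqq{\aa_1}{\aa_2}$ into statements about a left-lcm. The central observation is that, since $\aa_1$ lies in~$\SSSs$, it left-divides~$\Delta(\xx)$ where $\xx = \src{\aa_1}$, so $\der(\aa_1)$ is well-defined and satisfies $\aa_1 \der(\aa_1) = \Delta(\xx)$; the commutative diagram of Lemma~\ref{L:Auto} then gives $\der(\aa_1) \phi(\gg) = \gg \der(\aa_1)$-type relations that let us slide factors across~$\Delta(\xx)$.

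For \ITEM1~$\Rightarrow$~\ITEM2, I would argue contrapositively. Suppose $\der(\aa_1)$ and~$\aa_2$ are not left-disjoint; unravelling Definition~\ref{D:GreedySym}\ITEM1, this means there are $\ff', \gg'$ with $\der(\aa_1)\inv \aa_2 = \ff'{}\inv \gg'$ admitting no common left-multiplier~$\hh$ with $\ff' = \hh\,\der(\aa_1)$, $\gg' = \hh\,\aa_2$. Equivalently, the pushout square over $\der(\aa_1)$ and~$\aa_2$ in~$\CCC$ (which exists, since $\CCC$ admits left-lcms — cancellative categories with a bounded Garside family are Ore by the cited Proposition, and the left-lcm of any two elements of~$\SSSs$ with equal target exists because $\Delta(\xx)$ is a common left-multiple after sliding through~$\phi$) is \emph{not} the naive one: there is a nontrivial common right-divisor of $\der(\aa_1)$ and~$\aa_2$, or rather a nontrivial element factoring out on the left. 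Translating back through the relation $\aa_1 \der(\aa_1) = \Delta(\xx)$ and using that $\Delta(\xx)$ is a right-multiple of every element of $\SSS(\xx,\ud)$, one shows that $\aa_1$ is not a maximal left-divisor in~$\SSSs$ of~$\aa_1 \aa_2$, i.e.\ some $\aa \in \SSS$ witnesses the failure of \eqref{E:Greedy} for $\seqq{\aa_1}{\aa_2}$ (taking $\ff$ to be an identity). Hence $\seqq{\aa_1}{\aa_2}$ is not $\SSS$-greedy, so not $\SSS$-normal.

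The implication \ITEM2~$\Rightarrow$~\ITEM3 is immediate from Definition~\ref{D:GreedySym}\ITEM1: if $\hh$ is a nontrivial common left-divisor of $\der(\aa_1)$ and~$\aa_2$, write $\der(\aa_1) = \hh\,\ff'$ and $\aa_2 = \hh\,\gg'$; then $\der(\aa_1)\inv \aa_2 = \ff'{}\inv\gg'$, and left-disjointness would force an element~$\kk$ with $\ff' = \kk\,\der(\aa_1)$, $\gg' = \kk\,\aa_2$, whence $\der(\aa_1) = \hh\kk\,\der(\aa_1)$ and $\aa_2 = \hh\kk\,\aa_2$, so by left-cancellativity $\hh\kk$ is an identity and $\hh$ is invertible; combined with $\hh$ being a left-divisor this makes $\hh$ trivial in the relevant sense, a contradiction. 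For \ITEM3~$\Rightarrow$~\ITEM1, I would suppose $\seqq{\aa_1}{\aa_2}$ is not $\SSS$-normal. Since both entries lie in~$\SSSs$, failure of normality is failure of greediness, so there is $\aa \in \SSS$ and $\ff \in \CCC$ with $\aa \dive \ff\aa_1\aa_2$ but $\aa \not\dive \ff\aa_1$; by the standard reduction (using that $\SSS$ generates and, via Lemma~\ref{L:GarClosed}, that the relevant lcms stay in~$\SSSs$) one may take $\ff$ an identity, so $\aa \dive \aa_1\aa_2$, $\aa \not\dive \aa_1$. Then $\aa_1 \lcm \aa$ exists, is a proper right-multiple of~$\aa_1$ left-dividing $\aa_1\aa_2$, say $\aa_1\aa_2 = (\aa_1\lcm\aa)\,\cc$ with $\cc$ a proper left-divisor of~$\aa_2$; writing $\aa_1\lcm\aa = \aa_1 \dd$ with $\dd$ nontrivial, cancellativity gives $\aa_2 = \dd\cc$, and one checks $\dd$ is a common left-divisor of $\aa_2$ and of $\der(\aa_1)$ — the latter because $\dd$ also left-divides $\der(\aa_1)$ via the relation $\aa_1 \der(\aa_1) = \Delta(\xx)$ together with $\aa \dive \Delta(\xx)$ (so $\aa_1\lcm\aa \dive \Delta(\xx) = \aa_1\der(\aa_1)$, hence $\dd \dive \der(\aa_1)$). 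Thus $\der(\aa_1)$ and~$\aa_2$ have the nontrivial common left-divisor~$\dd$, contradicting \ITEM3.

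The main obstacle I anticipate is the bookkeeping in \ITEM1~$\Rightarrow$~\ITEM2 and \ITEM3~$\Rightarrow$~\ITEM1 around reducing the universally-quantified $\ff$ in~\eqref{E:Greedy} to an identity and around transporting divisibility facts across $\aa_1 \der(\aa_1) = \Delta(\xx)$; one must be careful that $\aa_1\lcm\aa$ genuinely left-divides $\Delta(\xx)$, which needs $\aa_1, \aa$ both left-dividing $\Delta(\xx)$ \emph{and} $\Delta(\xx)$ being their right-lcm or at least a common right-multiple lying in a context where Lemma~\ref{L:GarClosed} applies. Everything else is a direct unwinding of the definitions of left-disjointness and $\SSS$-greediness.
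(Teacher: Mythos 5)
The paper itself gives no proof of this lemma (it is imported from \cite[Proposition~VI.1.46]{Garside}), so your attempt has to stand on its own, and it has one genuine gap, located in the step \ITEM1$\,\Rightarrow\,$\ITEM2. There you pass from ``$\der(\aa_1)$ and $\aa_2$ are not left-disjoint'' to ``there is a nontrivial element factoring out on the left'', i.e.\ to the negation of \ITEM3, introduced only by the word ``Equivalently''. That passage is not a rewording of the definition: failure of left-disjointness merely provides one representation $\ff'{}\inv\gg'$ of $\der(\aa_1)\inv\aa_2$ that is not of the form $(\hh\der(\aa_1))\inv(\hh\aa_2)$; extracting from it a non-invertible common left-divisor of $\der(\aa_1)$ and $\aa_2$ is exactly the implication \ITEM3$\,\Rightarrow\,$\ITEM2, which is the substantive link between the two conditions and is precisely where the Garside hypotheses must enter (for instance via Lemma~\ref{L:Disjoint}, or via left-gcds: unwinding equality of left-fractions in the left-Ore category as $u\der(\aa_1)=u'\ff'$ and $u\aa_2=u'\gg'$, what has to be shown is $u'\dive u$, which follows once one knows that a left-gcd of $u\der(\aa_1)$ and $u\aa_2$ is $u$ up to an invertible element---a fact that itself needs the ambient structure). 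If that passage were definitional, \ITEM2$\,\Leftrightarrow\,$\ITEM3 would hold in every left-Ore category with no Garside family in sight. Your appeal to a ``pushout square'' also mixes up sides: $\der(\aa_1)$ and $\aa_2$ share their \emph{source}, so the square in question concerns common right-multiples, and knowing that a suitable $\Delta(\ud)$ is a common multiple is weaker than having an lcm.

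The other two implications are essentially right but need repairs. In \ITEM3$\,\Rightarrow\,$\ITEM1 you invoke the right-lcm $\aa_1\lcm\aa$, which the hypotheses (cancellative category, bounded Garside family) do not provide; instead apply Lemma~\ref{L:GarClosed} to the common right-multiple $\aa_1\aa_2$ of $\aa$ and $\aa_1$: it yields $\cc'=\aa_1\aa'=\aa\bb'$ with $\cc',\aa',\bb'$ in $\SSSs$ and $\cc'\dive\aa_1\aa_2$, and this $\aa'$ plays the role of your $\dd$: left-cancelling $\aa_1$ gives $\aa'\dive\aa_2$; if $\aa'$ were invertible then $\aa\dive\cc'$ would force $\aa\dive\aa_1$; and $\cc'\in\SSSs(\xx,\ud)$ left-divides $\Delta(\xx)=\aa_1\der(\aa_1)$ (where $\xx$ is the source of $\aa_1$), whence $\aa'\dive\der(\aa_1)$. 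Also, the ``standard reduction'' replacing the universally quantified $\ff$ in \eqref{E:Greedy} by an identity is a genuine lemma about Garside families, not a formality, and should be cited or proved. These points are fixable; the missing argument for \ITEM1$\,\Rightarrow\,$\ITEM2 is the real defect.
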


A consequence is the following counterpart of the first domino rule.

\begin{lemm}
\label{L:Domino2}
\cite[Lemma~VI.1.32]{Garside} (second domino rule) Assume that $\CCC$ is a cancellative category and $\SSS$ is a bounded Garside family in~$\CCC$. Then, when we have a commutative diagram as on the right with edges in~$\SSSs$ in which $\seqq{\aa_1}{\aa_2}$ and $\seqq{\bb_1}{\aa'_2}$ are $\SSS$-greedy, then $\seqq{\aa'_1}{\aa'_2}$ is $\SSS$-greedy as well.
\hfill\rightskip40mm
\begin{picture}(0,0)(-8,-5)
\psarc[style=thinexist](14.5,0){3}{180}{360}
\psarc[style=thin](15,0){3.5}{0}{90}
\psarc[style=thin](14.5,12){3}{0}{180}
\pcline{->}(1,0)(14,0)
\tbput{$\aa'_1$}
\pcline{->}(16,0)(29,0)
\tbput{$\aa'_2$}
\pcline{->}(1,12)(14,12)
\taput{$\aa_1$}
\pcline{->}(16,12)(29,12)
\taput{$\aa_2$}
\pcline{->}(0,11)(0,1)
\trput{$\bb_0$}
\pcline{->}(15,11)(15,1)
\tlput{$\bb_1$}
\pcline{->}(30,11)(30,1)
\tlput{$\bb_2$}
\end{picture}
\end{lemm}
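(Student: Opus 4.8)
The plan is to reduce the statement to the first domino rule (Lemma~\ref{L:Domino1}) by using the bounding map~$\Delta$ to transport the hypotheses about greediness into hypotheses about left-disjointness, and back. Concretely, since $\SSS$ is bounded by~$\Delta$, each $\aa_i, \bb_i, \aa'_i$ has a well-defined ``complement'' $\der(\cdot)$, and the derived automorphism $\phi = \der^2$ from Lemma~\ref{L:Auto} is available. First I would set up the auxiliary commutative square: given the square with rows $\seqq{\aa_1}{\aa_2}$, $\seqq{\aa'_1}{\aa'_2}$ and columns $\bb_0, \bb_1, \bb_2$, I would adjoin the complements $\der(\aa_1)$ and $\der(\aa'_1)$ (the unique elements of~$\SSSs$ with $\aa_1 \der(\aa_1) = \Delta(\ud)$, $\aa'_1 \der(\aa'_1) = \Delta(\ud)$) so as to form, via Lemma~\ref{L:Auto}, a commutative diagram in which the bottom-left vertical edge becomes $\phi(\bb_0)$ and the rows on the right are $\seqq{\der(\aa_1)}{\,\cdot\,}$-type paths. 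The key identity here is $\der(\aa_1)\,\phi(\bb_0) = \bb_1\,\der(\aa'_1)$, which is exactly the commutativity of the square furnished by Lemma~\ref{L:Auto} applied to~$\bb_0$ (with rows $\Delta$).

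Next I would translate the greediness hypotheses. By Lemma~\ref{L:DeltaDisjoint}, the hypothesis that $\seqq{\aa_1}{\aa_2}$ is $\SSS$-greedy is equivalent (for entries in~$\SSSs$) to $\der(\aa_1)$ and $\aa_2$ being left-disjoint; likewise $\seqq{\bb_1}{\aa'_2}$ being $\SSS$-greedy is equivalent to $\der(\bb_1)$ and $\aa'_2$ being left-disjoint. Wait --- more carefully, for the reduction to go through one wants the two greediness hypotheses arranged so that the first domino rule applies to a suitable rotated/reflected diagram. The cleanest route is: apply Lemma~\ref{L:DeltaDisjoint} to rewrite ``$\seqq{\aa_1}{\aa_2}$ is $\SSS$-greedy'' as ``$\der(\aa_1)$ and $\aa_2$ are left-disjoint'' and ``$\seqq{\bb_1}{\aa'_2}$ is $\SSS$-greedy'' as ``$\der(\bb_1)$ and $\aa'_2$ are left-disjoint'', then feed these two left-disjointness facts together with the commutative square relating $\der(\aa_1), \der(\bb_1), \der(\aa_1'), \aa_2, \aa_2'$ (obtained from the $\Delta$-augmented diagram) into the \emph{fourth} domino rule (Lemma~\ref{L:Domino3}\ITEM2), concluding that $\der(\aa'_1)$ and $\aa'_2$ are left-disjoint; one final application of Lemma~\ref{L:DeltaDisjoint} then yields that $\seqq{\aa'_1}{\aa'_2}$ is $\SSS$-greedy, as desired. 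I would double-check that all the intervening elements ($\der(\aa_1)$, $\der(\bb_1)$, $\der(\aa'_1)$, and the newly introduced complements) genuinely lie in~$\SSSs$, which holds because $\SSS$ is bounded and hence closed under the $\der$ operation on~$\SSSs$, and that the sources and targets match up so that the fourth domino rule is applicable.

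The main obstacle I expect is bookkeeping: getting the orientation of the auxiliary square exactly right so that the pattern of arrows matches the hypothesis pattern of Lemma~\ref{L:Domino3}\ITEM2 (which has one reversed horizontal edge, reflecting the negative--positive convention), and making sure the correct pair is asserted to be left-disjoint at the end. In particular one must be careful that the ``$\phi$-twist'' introduced by going around the $\Delta$-squares of Lemma~\ref{L:Auto} is harmless --- it only relabels objects and conjugates edges by an automorphism, so left-disjointness and greediness are preserved under it, but this should be stated explicitly. A secondary point is that Lemma~\ref{L:DeltaDisjoint} is stated for length-two paths $\seqq{\aa_1}{\aa_2}$ with both entries in~$\SSSs$; here the hypothesis already guarantees all edges are in~$\SSSs$, so this is not an issue, but the conclusion ``$\seqq{\aa'_1}{\aa'_2}$ is $\SSS$-greedy'' is obtained from the disjointness of $\der(\aa'_1)$ and $\aa'_2$ precisely via the equivalence \ITEM1~$\Leftrightarrow$~\ITEM2 of that lemma. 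If the $\Delta$-augmentation route proves too cumbersome, an alternative is a direct argument: unfold the definition of $\SSS$-greedy for $\seqq{\aa'_1}{\aa'_2}$, i.e. take $\aa \in \SSS$, $\ff \in \CCC$ with $\aa \div \ff \aa'_1 \aa'_2$, and use the commutativity $\bb_0 \aa'_1 = \aa_1 \bb_1$, $\bb_1 \aa'_2 = \aa_2 \bb_2$ together with cancellativity and the greediness of the two given paths to push $\aa$ past $\aa'_2$; this is more hands-on but avoids invoking the fourth domino rule.
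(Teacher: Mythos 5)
The paper gives no proof of this lemma at all—it is quoted verbatim from \cite[Lemma VI.1.32]{Garside}—so there is nothing internal to compare with; judged on its own merits, your plan is correct and the details do go through. The key identity $\der(\aa_1)\,\phi(\bb_0)=\bb_1\,\der(\aa'_1)$ holds: cancel $\aa_1$ in $\aa_1\bb_1\der(\aa'_1)=\bb_0\aa'_1\der(\aa'_1)=\bb_0\Delta(\ud)=\Delta(\ud)\phi(\bb_0)=\aa_1\der(\aa_1)\phi(\bb_0)$ (so it is obtained by pasting the left square, two $\der$-triangles and the square of Lemma~\ref{L:Auto}, not by Lemma~\ref{L:Auto} alone). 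Repeating this cancellation trick on the right square and on the derived square gives $\aa'_2\,\der(\bb_2)=\der(\bb_1)\,\phi(\aa_2)$ and $\der(\aa'_1)\,\phi(\der(\bb_0))=\der(\bb_1)\,\phi(\der(\aa_1))$, so the diagram to which Lemma~\ref{L:Domino3}\ITEM2 applies has top row $\der(\aa'_1),\aa'_2$, middle vertical $\der(\bb_1)$, outer verticals $\phi(\der(\bb_0))$, $\der(\bb_2)$, and bottom row $\phi(\der(\aa_1)),\phi(\aa_2)$: exactly the $\phi$-twist you anticipated, and harmless since $\phi$ is an automorphism, hence preserves left-disjointness. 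Lemma~\ref{L:DeltaDisjoint} converts the two greediness hypotheses into $\der(\aa_1),\aa_2$ and $\der(\bb_1),\aa'_2$ being left-disjoint and converts the conclusion back into $\SSS$-normality of $\seqq{\aa'_1}{\aa'_2}$; the complements stay in $\SSSs$ because boundedness makes $\SSSs$ closed under $\der$, and the left-Ore hypothesis of the fourth domino rule is supplied by the quoted Proposition~VI.3.11\ITEM1 (a cancellative category with a bounded Garside family is Ore). One scholarly caveat: Lemma~\ref{L:DeltaDisjoint} is \cite[Proposition VI.1.46]{Garside}, i.e.\ it comes \emph{after} VI.1.32 in that source, so your argument is a legitimate derivation from the facts quoted in this paper but presumably not the book's own route.

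The only real weak point is your fallback ``direct'' argument: unfolding~\eqref{E:Greedy} for $\seqq{\aa'_1}{\aa'_2}$ and pushing a divisor through using only cancellativity and the two greediness hypotheses cannot succeed, because the second domino rule genuinely fails for Garside families that are not bounded (Example~\ref{X:Domino2} in this paper); any correct proof must use $\Delta$ somewhere, as your main plan does. Also note that for an arbitrary $\ff$ with $\aa\dive\ff\aa'_1\aa'_2$ there is no reason $\ff\aa'_1$ should factor through $\bb_1$, which is where that hands-on attempt would stall.
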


Note that the second domino rule is not an exact counterpart of the first one in that, here, all involved elements are supposed to lie in the reference family~$\SSSs$.

Also, a second type of distinguished decomposition naturally arises for the elements of the associated groupoid of the considered category: called \emph{$\Delta$-normal}, these decompositions differ from symmetric normal decompositions in that the denominator is demanded to involve a power of the Garside map~$\Delta$. In the case of a monoid~$\MM$, a bound~$\Delta$ for a Garside family is an element of~$\MM$, and it makes sense to take powers of~$\Delta$. In the case of a general category~$\CCC$, the bound is a map of~$\Obj(\CCC)$ to~$\CCC$, and the notion of a power has to be adapted.

\begin{nota}
\label{N:PowerOfDelta}
Assume that $\CCC$ is a cancellative category and $\Delta$ is a Garside map in~$\CCC$. For $\nn$ in~$\ZZZZ$ and $\xx$ in~$\Obj(\CCC)$, we put 
\begin{equation}
\DELTA\nn(\xx):=
\begin{cases}
\Delta(\xx) \pc \Delta(\phi(\xx)) \pc \pdots \pc \Delta(\phi^{\nn-1}(\xx))
&\mbox{for $\nn > 0$},\\
\ew_\xx
&\mbox{for $\nn = 0$},\\
\INV{\Delta(\phi\inv(\xx))} \pc \INV{\Delta(\phi^{-2}(\xx))} \pc \pdots \pc \INV{\Delta(\phi^{-\vert\nn\vert}(\xx))}
&\mbox{for $\nn < 0$},
\end{cases}
\end{equation}
and we write $\DELTAA\nn(\xx)$ for the element of~$\Env\CCC$ represented by~$\DELTA\nn(\xx)$.
\end{nota}

Note that, in every case, the source of~$\DELTA\nn(\xx)$ and~$\DELTAA\nn(\xx)$ is~$\xx$ and its target is $\phi^\nn(\xx)$, and that $\DELTAA{-\nn}(\xx)$ is always the inverse of~$\DELTAA\nn(\phi^{-\nn}(\xx))$.

\begin{defi}
\label{D:DeltaNormal}
Assume that $\SSS$ is a Garside family of~$\CCC$ bounded by a map~$\Delta$ in an Ore category~$\CCC$.

\ITEM1 An element~$\gg$ of~$\CC(\xx, \ud)$ is called \emph{$\Delta$-like} if $\gg \eqir \Delta(\xx)$ holds.

\ITEM2 A signed $\SSS$-path is called \emph{$\Delta$-normal} if it has the form $\seqqqq{\DELTA\nn(\ud)}{\aa_1}\etc{\aa_\pp}$ with $\nn$ in~$\ZZZZ$ and $\seqqq{\aa_1}\etc{\aa_\pp}$ an $\SSS$-normal path such that $\aa_1$ is not $\Delta$-like.
\end{defi}

Thus a $\Delta$-normal path is either a positive $\SSS$-path beginning with elements in the image of~$\Delta$, or an empty path, or a negative--positive $\SSS$-path whose negative part consists of elements in the image of~$\Delta$. 

\begin{prop}
\label{P:DeltaUnique}
\cite[Proposition~VI.3.19]{Garside} Assume that $\SSS$ is a Garside family of~$\CCC$ bounded by a map~$\Delta$ in an Ore category~$\CCC$. Then every element of~$\Env\CCC$ admits a $\Delta$-normal decomposition, in which the exponent of~$\Delta$ is uniquely determined and the other entries are unique up to $\CCCi$\!-deformation.
\end{prop}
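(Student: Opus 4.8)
The plan is to reduce both assertions to the corresponding facts for ordinary $\SSS$-normal decompositions (Proposition~\ref{P:NormalUnique}), the guiding observation being that a power of the Garside map is harmless: by Lemma~\ref{L:Auto} multiplying by it only replaces an element of~$\CCC$ by its image under a power of the automorphism~$\phi$, and, since $\der(\Delta(\ud))$ is always an identity-element and $\Delta(\ud)$ lies in~$\SSSs$, Lemma~\ref{L:DeltaDisjoint} shows that any power of~$\Delta$ followed by an $\SSS$-normal path is again $\SSS$-normal.

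\emph{Existence.} Starting from~$\gg$ in~$\Env\CCC$, use that $\CCC$ is Ore to write $\gg = \ff\inv\hh$ with $\ff, \hh$ in~$\CCC$. First I would show, by induction on the length~$\nn$ of an $\SSS$-normal decomposition of~$\ff$ and combining the relations $\aa\,\der(\aa) = \Delta(\src\aa)$ for~$\aa$ in~$\SSSs$ with the commutation relation $\uu\,\Delta(\trg\uu) = \Delta(\src\uu)\,\phi(\uu)$ coming from Lemma~\ref{L:Auto}, that $\ff$ left-divides~$\DELTAA\nn(\src\ff)$; writing $\DELTAA\nn(\src\ff) = \ff\,\ff^*$ with $\ff^*$ in~$\CCC$ gives $\gg = \ff^*\,\DELTAA{-\nn}(\ud)\,\hh$, and sliding the denominator to the front past~$\ff^*$, which by Lemma~\ref{L:Auto} merely replaces~$\ff^*$ by~$\phi^{-\nn}(\ff^*)$, yields $\gg = \DELTAA\ell(\ud)\,\kk$ with $\ell = -\nn$ in~$\ZZZZ$ and $\kk$ in~$\CCC$. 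I then choose, among all expressions $\gg = \DELTAA\ell(\ud)\,\kk$ with $\ell$ in~$\ZZZZ$ and $\kk$ in~$\CCC$, one with $\ell$ maximal; this maximum is attained because $\ell$ is bounded above (if $\gg$ lies in~$\CCC$, then for $\ell \ge 0$ prepending $\DELTA\ell(\ud)$ to an $\SSS$-normal decomposition of the corresponding~$\kk$ yields, by the first paragraph, an $\SSS$-normal decomposition of~$\gg$ that starts with $\ell$ noninvertible entries, so $\ell$ cannot exceed the length of an $\SSS$-normal decomposition of~$\gg$; and if $\gg$ is not in~$\CCC$, then $\ell < 0$). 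Taking an $\SSS$-normal decomposition $\seqqq{\aa_1}\etc{\aa_\pp}$ of the~$\kk$ attached to this maximal~$\ell =: \nn$, the signed path $\seqqqq{\DELTA\nn(\ud)}{\aa_1}\etc{\aa_\pp}$ represents~$\gg$ and is $\Delta$-normal: were $\aa_1$ $\Delta$-like, one could absorb a copy of~$\Delta$ into the $\Delta$-block and get an expression with exponent~$\nn+1$, contradicting maximality.

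\emph{Uniqueness.} Suppose $\seqqqq{\DELTA\nn(\xx)}{\aa_1}\etc{\aa_\pp}$ and $\seqqqq{\DELTA\mm(\xx)}{\bb_1}\etc{\bb_\qq}$ both represent~$\gg$, say with $\nn \le \mm$. Picking~$N$ with $N \ge \max(0, -\nn, -\mm)$ and prepending $\DELTA N(\ud)$ turns both signed paths into $\SSS$-normal decompositions of the single element $\DELTAA N(\ud)\,\gg$ of~$\CCC$, namely $\seqqqq{\DELTA{N+\nn}(\ud)}{\aa_1}\etc{\aa_\pp}$ and $\seqqqq{\DELTA{N+\mm}(\ud)}{\bb_1}\etc{\bb_\qq}$, by the observation of the first paragraph. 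By Proposition~\ref{P:NormalUnique} these two $\SSS$-normal decompositions are $\CCCi$-deformations of one another. Following the deformation entry by entry, the first $N+\nn$ entries on each side, being the successive values of~$\Delta$, must be matched under identity connecting maps; were $\nn$ strictly smaller than~$\mm$, reading the $(N{+}\nn{+}1)$st entry would then force $\aa_1$ --- or, when $\pp = 0$, a noninvertible value of~$\Delta$ --- to equal $\Delta(\src\aa_1)$ times an invertible element, that is, to be $\Delta$-like, against the definition of a $\Delta$-normal path. Hence $\nn = \mm$, and removing the matching $\Delta$-blocks from the deformation leaves exactly a $\CCCi$-deformation between $\seqqq{\aa_1}\etc{\aa_\pp}$ and $\seqqq{\bb_1}\etc{\bb_\qq}$, as required.

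The step I expect to be the main obstacle is the bookkeeping around the Garside map in the existence part: proving with the correct objects and $\phi$-twists that $\ff$ left-divides~$\DELTAA\nn(\src\ff)$ and that $\Delta$-blocks may be slid through and prepended to elements of~$\CCC$, and verifying that the supremum defining the exponent~$\nn$ is attained. The special situation where some value of~$\Delta$ is an invertible element should be treated separately. Once these points are settled, both halves of the statement follow mechanically from Lemma~\ref{L:Auto}, Lemma~\ref{L:DeltaDisjoint}, and Proposition~\ref{P:NormalUnique}.
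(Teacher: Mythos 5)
Your argument is correct, and since the paper itself gives no proof of this statement (it is quoted from \cite{Garside}, Proposition~VI.3.19), it can only be judged on its own merits: the route you take --- reduce both existence and uniqueness to Proposition~\ref{P:NormalUnique} by padding with powers of~$\Delta$ --- is the standard one. The three pillars all check out: (i) since $\der(\Delta(\xx))$ is an identity-element and $\Delta(\xx)$ lies in~$\SSSs$, Lemma~\ref{L:DeltaDisjoint} indeed gives that a block $\DELTA\nn(\ud)$ with $\nn\ge0$ prepended to an $\SSS$-normal path is again $\SSS$-normal; (ii) the exponent is bounded above because, by Proposition~\ref{P:NormalUnique}, the number of non-invertible entries is the same in any two $\SSS$-normal decompositions of an element of~$\CCC$ (an entry matched to a padding identity is invertible) --- this half-line justification is the one place worth writing out explicitly, as it is exactly what your count of ``$\ell$ noninvertible entries'' relies on; (iii) in the uniqueness part, the induction showing that the connecting invertible elements along the common $\Delta$-block are identities uses only left-cancellativity, and the comparison at position $N{+}\nn{+}1$ does force $\aa_1$ (or, when $\pp=0$, a value of~$\Delta$) to be $\Delta$-like, the desired contradiction.

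Concerning the case you defer at the end: it is not a gap you must close, but an implicit nondegeneracy hypothesis of the statement itself. If some $\Delta(\xx)$ is invertible --- for instance $\CCC$ a nontrivial group, $\SSS=\CCC$, $\Delta(\xx)=\id\xx$, which does satisfy Definition~\ref{D:Bounded} --- then every element of $\SSSs(\xx,\ud)$ is $\Delta$-like, so non-identity elements admit no $\Delta$-normal decomposition and the exponent for~$\id\xx$ is not unique; the proposition is therefore to be read under the assumption that no value of~$\Delta$ is invertible, which is precisely the hypothesis your proof uses.
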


In the positive case, it is easily seen that $\Delta$-like entries must lie at the beginning of an $\SSS$-normal path, so a $\Delta$-normal path is simply an $\SSS$-normal path in which the initial $\Delta$-like entries are not only $\Delta$-like but even lie in the image of~$\Delta$. In the signed case, the difference with a symmetric $\SSS$-normal path is more important: in a $\Delta$-normal path, the numerator and the denominator need not be left-disjoint, the requirement being now that $\Delta$ does not left-divide the first positive entry. Actually, Propositions~\ref{P:SymExist} and~\ref{P:DeltaUnique} are not exactly comparable, as they require different assumptions, namely the existence of left-lcms in the former case, and that of a bounded Garside family in the latter.

\section{Recognizing Garside families, case of a presentation}
\label{S:Pres}

We now begin to investigate the effective methods relevant for Garside structures. In this section as well as the next one, we address the question of recognizing that a given family is a Garside family in the ambient category, as well as checking that the category is eligible for the Garside approach, that is, it is left-cancellative (\resp left-Ore). The question depends in turn on the way the category and the candidate Garside family are specified. In this section, we consider the case when the category is specified using a presentation. 

We first define reversing (Subsection~\ref{SS:Rev}), then address establishing that the ambient category is cancellative (Subsection~\ref{SS:Cancel}), recognizing Garside families (Subsection~\ref{SS:RecGar}), and finally proving further properties like being an Ore category or a strong Garside family (Subsection~\ref{SS:Further}).

\subsection{The reversing transformation}
\label{SS:Rev}

Before entering the main development, we introduce a technical tool that will be used several times in the sequel, namely a path transformation called reversing~\cite{Dff, Dgp, Dia}.

\begin{defi}
\label{D:RightComp}
Assume that $\AAA$ is a precategory. A \emph{right-complement} on~$\AAA$ is a partial map~$\RC$ of~$\AAA^2$ to~$\AAA^*$ such that $\RC(\aa, \aa)$ is defined and equal to~$\ew_\yy$ for every~$\aa$ in~$\AAA(\xx, \yy)$ and that, if $\RC(\aa, \bb)$ is defined, then $\aa$ and $\bb$ have the same source, $\RC(\bb, \aa)$ is defined, and both $\seqq{\aa}{\RC(\aa, \bb)}$ and $\seqq{\bb}{\RC(\bb, \aa)}$ are defined and have the same target. A right-complement is called \emph{short} if, whenever $\RC(\aa, \bb)$ is defined, $\RC(\aa, \bb)$ has length at most~$1$, that is, it belongs to~$\AAA$ or is empty. 
\end{defi}

Note that, if $\Obj(\AAA)$ consists of a single element, the conditions for~$\RC$ to be a right-complement are simply that $\RC(\aa, \aa)$ is empty for every~$\aa$ and that $\RC(\aa, \bb)$ is defined if and only if~$\RC(\bb, \aa)$ is.

\subsubsection*{Definition of right-reversing}

We are now ready to introduce right-reversing. We begin with the special case of a short right-complement and a negative--positive input path, which will be frequently used in the sequel.

\begin{nota}
\label{N:ExtEmpty}
If $\AAA$ is a precategory, we write $\ew_\AAA$ for $\{\ew_\xx \mid \xx \in \Obj(\AAA)\}$ and $\AAAh$ for $\AAA \cup \ew_\AAA$. For $\RC$ a right-complement on~$\AAA$, we write $\RCh$ for the extension of~$\RC$ to~$\AAAh$ obtained by adding $\RCh(\ew_\xx, \ew_\xx) = \ew_\xx$ for every object~$\xx$ plus $\RCh(\aa, \ew_\xx) = \ew_\yy$ and $\RCh(\ew_\xx, \aa) = \aa$ for every~$\aa$ in~$\AAA(\xx, \yy)$.
\end{nota}

Note that $\RCh$ is a right-complement on~$\AAAh$. We introduce right-reversing by means of an algorithm working on (certain) signed $\AAA$-paths.

\begin{algo}{Right-reversing, short case, negative--positive input}
\label{A:RightRevShort}
\begin{algorithmic}[1]
\CONTEXT{A precategory~$\AAA$, a short right-complement~$\RC$ on~$\AAA$}
\INPUT{A negative--positive $\AAA$-path $\seqqqqqq{\INV{\bb_\qq}}\etc{\INV{\bb_1}}{\aa_1}\etc{\aa_\pp}$}
\OUTPUT{A positive--negative $\AAA$-path, or $\mathtt{fail}$}
\STATE{$\bb_{\ii,0} := \bb_\ii$ for $\ii = 1 \wdots \qq$}
\STATE{$\aa_{0,\jj} := \aa_\jj$ for $\jj = 1 \wdots \pp$}
\FOR{$\ii$ increasing from $1$ to $\qq$}
\FOR{$\jj$ increasing from $1$ to $\pp$}
\IF{$\RCh(\aa_{\ii-1,\jj}, \bb_{\ii, \jj-1})$ is defined}
\STATE{$\bb_{\ii, \jj} := \RCh(\aa_{\ii-1,\jj}, \bb_{\ii, \jj-1})$}
\STATE{$\aa_{\ii, \jj} := \RCh(\bb_{\ii, \jj-1}, \aa_{\ii-1,\jj})$}
\ELSE
\RETURN{$\mathtt{fail}$}
\ENDIF
\ENDFOR
\ENDFOR
\RETURN{$\seqqqqqq{\aa_{\qq,1}}\etc{\aa_{\qq,\pp}}{\INV{\bb_{\qq,\pp}}}\etc{\INV{\bb_{1,\pp}}}$}
\end{algorithmic}
\end{algo}

\begin{lemm}
\label{L:TerminatingShort}
If $\RC$ is a short right-complement on a finite precategory~$\AAA$, Algorithm~\ref{A:RightRevShort} running on a pair of $\AAA$-paths of length at most~$\ell$ terminates in $O(\ell^2)$~steps.
\end{lemm}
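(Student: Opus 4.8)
The plan is to exploit two facts. First, the loops in Algorithm~\ref{A:RightRevShort} are bounded \emph{a priori} by the lengths of the two input paths, so termination itself is never in question and only the running-time estimate needs an argument. Second, because $\RC$ is short, every intermediate entry produced during the run is a single letter of~$\AAAh$ rather than an arbitrary path, so that each step of the computation becomes a constant-time operation on the finite data describing~$\AAA$ and~$\RC$.

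First I would establish the invariant that, for all admissible~$\ii$ and~$\jj$, the paths $\aa_{\ii,\jj}$ and $\bb_{\ii,\jj}$ computed by the algorithm belong to $\AAAh = \AAA \cup \ew_\AAA$, that is, each is either a generator of~$\AAA$ or an empty path. This goes by induction on~$\ii+\jj$: the initial entries satisfy $\aa_{0,\jj} = \aa_\jj \in \AAA$ and $\bb_{\ii,0} = \bb_\ii \in \AAA$, and, if $\aa_{\ii-1,\jj}$ and $\bb_{\ii,\jj-1}$ lie in~$\AAAh$, then, whenever they are defined, $\RCh(\aa_{\ii-1,\jj}, \bb_{\ii,\jj-1})$ and $\RCh(\bb_{\ii,\jj-1}, \aa_{\ii-1,\jj})$ again lie in~$\AAAh$: the values of~$\RC$ itself have length at most one by shortness, and the extra values introduced in passing from~$\RC$ to~$\RCh$ are empty paths or single generators. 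Hence the $\ii$-$\jj$ loop merely fills a $\qq \times \pp$ array whose cells range over the \emph{finite} set~$\AAAh$.

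Next I would count elementary steps. The two initialization lines cost $O(\pp+\qq)$ steps. The double loop performs at most $\qq\pp$ iterations (fewer if it aborts with $\mathtt{fail}$), and, by the invariant, each iteration only inspects and copies elements of the finite set~$\AAAh$: deciding whether $\RCh(\aa_{\ii-1,\jj}, \bb_{\ii,\jj-1})$ is defined, and if so reading off the two values $\RCh(\aa_{\ii-1,\jj}, \bb_{\ii,\jj-1})$ and $\RCh(\bb_{\ii,\jj-1}, \aa_{\ii-1,\jj})$, are lookups in the finite table defining~$\RC$ and cost $O(1)$ each. So the loop costs $O(\qq\pp)$ steps, and the final assembly of the output path costs a further $O(\pp+\qq)$. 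Since $\pp \le \ell$ and $\qq \le \ell$, all of this is $O(\ell^2)$, which absorbs the linear terms; in particular the algorithm always halts.

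I do not expect a genuine difficulty here: termination is free from the bounded loops, and the only place the hypotheses are really used is the invariant above. Without shortness the entries---and with them the effective size of the array and the cost of each complement evaluation---could grow, so that the clean $O(\ell^2)$ bound would fail as stated; and finiteness of~$\AAA$ (and of~$\Obj(\AAA)$) is exactly what turns each complement evaluation into a constant-time operation. If one wished to be completely precise about the machine model, the one point worth spelling out is that ``step'' is taken to mean a single such table lookup or assignment, so that the argument above is literally a counting of those.
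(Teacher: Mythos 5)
Your proposal is correct and matches the paper's reasoning: the paper simply observes that the claim is obvious from the pseudocode (the two nested loops are bounded by the input lengths and, by shortness, each cell operation is a constant-time lookup), which is exactly the argument you spell out in detail.
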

\begin{proof}
The claim is obvious from the pseudocode in Algorithm~\ref{A:RightRevShort}.
\end{proof}

\rightskip40mm
Running Algorithm~\ref{A:RightRevShort} amounts to recursively constructing a rectangular grid whose edges are labeled by elements of~$\AAA$ or empty paths, and in which each elementary square has the type
shown aside, see Figure~\ref{F:RightRevShort} for an example. 
\hfill\begin{picture}(0,0)(-11,-2)
\pcline{->}(1,12)(14,12)
\taput{$\bb$}
\pcline{->}(0,11)(0,1)
\tlput{$\aa$}
\pcline{->}(1,0)(14,0)
\taput{$\RCh(\aa, \bb)$}
\pcline{->}(15,11)(15,1)
\trput{$\RCh(\bb, \aa)$}
\end{picture}

\rightskip0mm
\begin{figure}[htb]
\begin{picture}(60,40)(0,0)
\pcline[style=thick]{->}(1,36)(14,36)
\taput{$\ttb$}
\pcline[style=thick]{->}(16,36)(29,36)
\taput{$\tta$}
\pcline[style=thick]{->}(31,36)(44,36)
\taput{$\ttb$}
\pcline[style=thick]{->}(46,36)(59,36)
\taput{$\ttb$}

\pcline[style=thick]{->}(0,35)(0,25)
\tlput{$\tta$}
\pcline{->}(15,35)(15,25)
\trput{$\tta$}
\pcline[style=double](30,35)(30,25)
\pcline[style=double](45,35)(45,25)
\pcline[style=double](60,35)(60,25)

\pcline{->}(1,24)(14,24)
\taput{$\ttb$}
\pcline[style=double](16,24)(29,24)
\pcline{->}(31,24)(44,24)
\taput{$\ttb$}
\pcline{->}(46,24)(59,24)
\taput{$\ttb$}

\pcline[style=thick]{->}(0,23)(0,13)
\tlput{$\ttb$}
\pcline[style=double](15,23)(15,13)
\pcline[style=double](30,23)(30,13)
\pcline[style=double](45,23)(45,13)
\pcline[style=double](60,23)(60,13)

\pcline[style=double](1,12)(14,12)
\pcline[style=double](16,12)(29,12)
\pcline{->}(31,12)(44,12)
\taput{$\ttb$}
\pcline{->}(46,12)(59,12)
\taput{$\ttb$}

\pcline[style=thick]{->}(0,11)(0,1)
\tlput{$\ttb$}
\pcline{->}(15,11)(15,1)
\trput{$\ttb$}
\pcline{->}(30,11)(30,1)
\trput{$\ttb$}
\pcline[style=double](45,11)(45,1)
\pcline[style=double](60,11)(60,1)

\pcline[style=double](1,0)(14,0)
\pcline[style=double](16,0)(29,0)
\pcline[style=double](31,0)(44,0)
\pcline{->}(46,0)(59,0)
\taput{$\ttb$}
\end{picture}
\caption{\sf\small The grid associated with a right-reversing, short case. Here we consider the right-complement~$\RC$ on~$\{\tta, \ttb\}$ defined by $\RC(\tta, \tta) =\nobreak \RC(\ttb, \ttb) =\nobreak \ew$, $\RC(\tta, \ttb) =\nobreak \ttb$, and $\RC(\ttb, \tta) = \tta$ (with one object only), and apply Algorithm~\ref{A:RightRevShort} to the negative--positive word $\INV\ttb \pc \INV\ttb \pc \INV\tta \pc \ttb \pc \tta \pc \ttb \pc \ttb$: the initial word is written on the left (negative part, here $\tta \pc \ttb \pc \ttb$) and the top (positive part, here $\ttb \pc \tta \pc \ttb \pc \ttb$), and then the grid is constructed by using the right-complement~$\RCh$ to recursively complete the squares; we use a double line for $\ew$-labeled arrows. In the current case, the output of the algorithm is the length one word~$\ttb$.}
\label{F:RightRevShort}
\end{figure}

We now turn to the case of an arbitrary right-complement, that is, we no longer assume that $\RC(\aa, \bb)$ necessarily has length zero or one. Then we can extend the definition of right-reversing, the only differences being that the constructed grid may involve rectangles whose edges contain more than one arrow when $\RC(\aa, \bb)$ has length at least~$2$.
For the description, it is convenient to start from an arbitrary signed path, and not necessarily from a negative--positive one.

\begin{algo}{Right-reversing, general case}
\label{A:RightRev}
\begin{algorithmic}[1]
\CONTEXT{A precategory~$\AAA$, a right-complement $\RC$ on~$\AAA$}
\INPUT{A signed $\AAA$-path $\ww$}
\OUTPUT{A positive--negative $\AAA$-path, or $\mathtt{fail}$, or no output}
\WHILE{$\exists \ii < \LG\ww \ (\ww[\ii] \in \INV\AAA$ and $\ww[\ii+1] \in \AAA)$}
\STATE{$\jj := \min\{\ii \mid \ww[\ii] \in \INV\AAA \mbox{ and }\ww[\ii+1] \in \AAA\}$}
\STATE{$\aa := \INV{\ww[\jj]}$}
\STATE{$\bb := \ww[\jj+1]$}
\IF{$\aa = \bb$}
\STATE{remove $\INV\aa\pc\bb$ in~$\ww$}
\ELSE
\IF{$\RCh(\aa, \bb)$ is defined}
\STATE{replace $\INV\aa \pc \bb$ in~$\ww$ with 
$\RCh(\aa, \bb) \pc \INV{\RCh(\bb, \aa)}$}
\ELSE
\RETURN{$\mathtt{fail}$}
\ENDIF
\ENDIF
\ENDWHILE
\RETURN{$\ww$}
\end{algorithmic}
\end{algo}

\begin{defi}
\label{D:RightRev}
If Algorithm~\ref{A:RightRev} terminates successfully, we say that the initial signed $\AAA$-path~$\ww$ is \emph{right-$\RC$-reversible} to the final path~$\ww'$, and we write $\ww \rev_\RC \ww'$. If $\uu, \vv$ are positive $\AAA$-paths, and there exist positive paths~$\uu', \vv'$ satisfying $\INV\uu \pc \vv \rev_\RC \vv' \pc \INV{\uu'}$, we define $\RC^*(\uu, \vv)$ to be~$\vv'$ and $\RC^*(\vv, \uu)$ to be~$\uu'$.
\end{defi}

\rightskip40mm
As in the short case, running Algorithm~\ref{A:RightRev} amounts to recursively constructing a grid whose edges are labeled by elements of~$\AAA$ or empty paths, and in which each elementary square has the type
shown aside.  The difference to Algorithm~\ref{A:RightRevShort} is twofold:
\hfill\begin{picture}(0,0)(-10,-2)
\pcline{->}(1,12)(14,12)
\taput{$\bb$}
\pcline{->}(0,11)(0,1)
\tlput{$\aa$}
\pcline{->}(1,0)(14,0)
\taput{$\RCh(\aa, \bb)$}
\pcline{->}(15,11)(15,1)
\trput{$\RCh(\bb, \aa)$}
\end{picture}

\rightskip0mm

Firstly, if the input is an arbitrary signed path, we do not necessary start with a vertical--horizontal path, but possibly with a staircase in which vertical and horizontal edges alternate; see Figure~\ref{F:RightRevSigned} for an example.

Secondly, if the right-complement $\RC$ is not short, the edges of the grid may have different sizes; see Figure~\ref{F:RightRev} for an example.

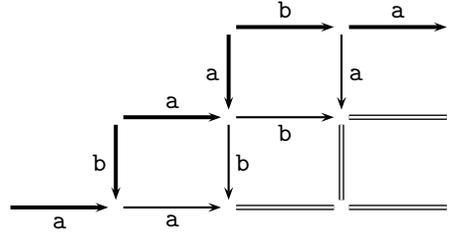
\begin{figure}[htb]
\begin{picture}(60,29)(0,10)
\pcline[style=thick]{->}(31,36)(44,36)
\taput{$\ttb$}
\pcline[style=thick]{->}(46,36)(59,36)
\taput{$\tta$}

\pcline[style=thick]{->}(30,35)(30,25)
\tlput{$\tta$}
\pcline{->}(45,35)(45,25)
\trput{$\tta$}
\pcline[style=double](60,35)(60,25)

\pcline[style=thick]{->}(16,24)(29,24)
\taput{$\tta$}
\pcline{->}(31,24)(44,24)
\tbput{$\ttb$}
\pcline[style=double](46,24)(59,24)

\pcline[style=thick]{->}(15,23)(15,13)
\tlput{$\ttb$}
\pcline{->}(30,23)(30,13)
\trput{$\ttb$}
\pcline[style=double](45,23)(45,13)
\pcline[style=double](60,23)(60,13)

\pcline[style=thick]{->}(1,12)(14,12)
\tbput{$\tta$}
\pcline{->}(16,12)(29,12)
\tbput{$\tta$}
\pcline[style=double](31,12)(44,12)
\pcline[style=double](46,12)(59,12)
\end{picture}
\caption{\sf\small The grid associated with the right-reversing of a signed path. We consider the right-complement from Figure~\ref{F:RightRevShort}, and apply Algorithm~\ref{A:RightRev} to the word $\tta \pc \INV\ttb \pc \tta\pc\INV\tta\pc\ttb \pc \tta$: the initial word is written on the top-left boundary (negative edges vertically and potivive edges horizontally), and completing the grid to the bottom-right yields the output word~$\tta\pc\tta$.}
\label{F:RightRevSigned}
\end{figure}

\begin{figure}[htb]
\begin{picture}(60,40)(0,0)
\pcline[style=thick]{->}(1,36)(14,36)
\taput{$\ttb$}
\pcline[style=thick]{->}(16,36)(29,36)
\taput{$\tta$}
\pcline[style=thick]{->}(31,36)(44,36)
\taput{$\ttb$}
\pcline[style=thick]{->}(46,36)(59,36)
\taput{$\ttb$}

\pcline[style=thick]{->}(0,35)(0,25)
\tlput{$\tta$}
\pcline{->}(15,35)(15,31)
\trput{$\tta$}
\pcline{->}(15,29)(15,25)
\trput{$\ttb$}
\pcline[style=double](30,35)(30,31)
\pcline{->}(30,29)(30,25)
\trput{$\ttb$}
\pcline[style=double](45,35)(45,31)
\pcline[style=double](45,29)(45,25)
\pcline[style=double](60,35)(60,31)
\pcline[style=double](60,29)(60,25)

\pcline[style=double](16,30)(29,30)
\pcline{->}(31,30)(44,30)
\taput{$\ttb$}
\pcline{->}(46,30)(59,30)
\taput{$\ttb$}

\pcline{->}(1,24)(6.5,24)
\taput{$\ttb$}
\pcline{->}(8.5,24)(14,24)
\taput{$\tta$}
\pcline[style=double](16,24)(29,24)
\pcline[style=double](31,24)(44,24)
\pcline{->}(46,24)(59,24)
\taput{$\ttb$}

\pcline[style=thick]{->}(0,23)(0,13)
\tlput{$\ttb$}
\pcline[style=double](7.5,23)(7.5,13)
\pcline[style=double](15,23)(15,13)
\pcline[style=double](30,23)(30,13)
\pcline[style=double](45,23)(45,13)
\pcline[style=double](60,23)(60,13)

\pcline[style=double](1,12)(6.5,12)
\pcline{->}(8.5,12)(14,12)
\taput{$\tta$}
\pcline[style=double](16,12)(29,12)
\pcline[style=double](31,12)(44,12)
\pcline{->}(46,12)(59,12)
\taput{$\ttb$}

\pcline[style=thick]{->}(0,11)(0,1)
\tlput{$\ttb$}
\pcline{->}(7.5,11)(7.5,1)
\trput{$\ttb$}
\pcline{->}(15,11)(15,7)
\trput{$\ttb$}
\pcline{->}(15,5)(15,1)
\trput{$\tta$}
\pcline{->}(30,11)(30,7)
\trput{$\ttb$}
\pcline{->}(30,5)(30,1)
\trput{$\tta$}
\pcline{->}(45,11)(45,7)
\trput{$\ttb$}
\pcline{->}(45,5)(45,1)
\trput{$\tta$}
\pcline[style=double](60,11)(60,7)
\pcline{->}(60,5)(60,1)
\trput{$\tta$}

\pcline[style=double](16,6)(29,6)
\pcline[style=double](31,6)(44,6)
\pcline[style=double](46,6)(59,6)
\pcline[style=double](1,0)(6.5,0)
\pcline{->}(8,0)(11,0)
\taput{$\tta$}
\pcline{->}(12,0)(15,0)
\taput{$\ttb\ $}
\pcline[style=double](16,0)(29,0)
\pcline[style=double](31,0)(44,0)
\pcline[style=double](46,0)(59,0)
\end{picture}
\caption{\sf\small The grid associated with a right-reversing for a right-complement that is not short. Here we consider the right-complement~$\RC$ on~$\{\tta, \ttb\}$ defined by $\RC(\tta, \tta) = \RC(\ttb, \ttb) = \ew$, $\RC(\tta, \ttb) =\nobreak \ttb \pc \tta$, and $\RC(\ttb, \tta) = \tta \pc \ttb$ (with one object only), and apply Algorithm~\ref{A:RightRev} to the negative--positive word $\INV\ttb \pc \INV\ttb \pc \INV\tta \pc \ttb \pc \tta \pc \ttb \pc \ttb$ (the same as in Figure~\ref{F:RightRevShort}): the difference is that, now, edges of variable size occur, so that, a priori, the process need not terminate. In the current case, it terminates, and the output word is~$\tta \pc \ttb \pc \INV\tta$.}
\label{F:RightRev}
\end{figure}

\begin{rema}
\label{R:NonDet}
More general versions of right-reversing are possible: at the expense of renouncing to determinism, we can consider multiform right-complements assigning with every pair of letters~$(\aa, \bb)$ a family of pairs of paths $\{(\uu_1, \vv_1) \wdots (\uu_\nn, \vv_\nn)\}$ and decide that $\INV\aa \pc \bb$ may reverse to any of the paths $\vv_1 \pc \INV{\uu_1} \wdots \vv_\nn \pc \INV{\uu_\nn}$, see~\cite{Dgp}. In such a context, several reversing grids may be associated with an initial path. Although most theoretical results can be adapted, these extended versions are less suitable for algorithms, and we shall not consider them here.
\end{rema}

\subsubsection*{Termination of reversing}

It should be clear that, whereas Algorithm~\ref{A:RightRevShort} always terminates (successfully or not, that is, with an output path or with the output ``$\mathtt{fail}$'') in finitely many steps, Algorithm~\ref{A:RightRev} may not terminate.

\begin{exam}
\label{X:BaumslagSolitar}
Consider the right-complement~$\RC$ defined on~$\{\tta, \ttb\}$ by $\RC(\tta, \ttb) = \ttb$ and $\RC(\ttb, \tta) = \ttb \pc \tta$. Let $\ww = \INV\tta \pc \ttb \pc \tta$. Then $\ww$ reverses in two steps to $\ttb \pc \ww \pc \INV\ttb$, hence in $2\nn$~steps to $\ttb^\nn \pc \ww \pc \INV\ttb^\nn$ for every~$\nn$, never leading to a positive--negative word.
\end{exam}

For our current approach, it will be useful to have a simple termination criterion.

\pagebreak

\begin{lemm}
\label{L:Terminating}
Assume that $\RC$ is a right-complement on a precategory~$\AAA$. 

\ITEM1 Right-$\RC$-reversing terminates successfully for all valid inputs if and only if there exists a family~$\BBB$ of $\AAA$-paths that includes~$\AAA$ and is such that, for all~$\uu, \vv$ in~$\BBB$ with the same source, there exist~$\uu', \vv'$ in~$\BBB$ satisfying $\INV\uu \pc \vv \rev_\RC \vv' \pc \INV{\uu'}$.

\ITEM2 If a family~$\BBB$ with the properties as in \ITEM1 exists and is finite, then, for every signed $\AAA$-path of length~$\ell$, the right-$\RC$-reversing of~$\ww$ terminates (successfully) in $O(\ell^2)$~steps and all involved paths have length in~$O(\ell)$.
\end{lemm}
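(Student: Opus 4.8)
The plan is to reduce both parts to the single fact that right-$\RC$-reversing of a \emph{negative--positive} path $\INV\uu \pc \vv$, with $\uu, \vv$ arbitrary $\AAA$-paths sharing their source, terminates successfully, and then to read off~\ITEM2 from the reversing grid attached to such an input. The enabling tool is the compatibility of right-reversing with concatenation on both sides: if $\INV\uu \pc \vv \rev_\RC \vv_1 \pc \INV{\uu_1}$, then $\INV\uu \pc \vv \pc \vv' \rev_\RC \vv_1 \pc \vv_2 \pc \INV{\uu_2}$ whenever $\INV{\uu_1} \pc \vv' \rev_\RC \vv_2 \pc \INV{\uu_2}$, and symmetrically $\INV{(\uu' \pc \uu)} \pc \vv \rev_\RC \vv_2 \pc \INV{(\uu_1 \pc \uu_2)}$ whenever $\INV{\uu'} \pc \vv_1 \rev_\RC \vv_2 \pc \INV{\uu_2}$. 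One proves this directly from the pseudocode of Algorithm~\ref{A:RightRev}, by checking that, as long as the subword that started as $\INV\uu \pc \vv$ is not yet a positive--negative path, it contains the leftmost position at which a letter of~$\INV\AAA$ is immediately followed by a letter of~$\AAA$; consequently the positive letters lying to its left and all letters lying to its right are never touched until that subword has been fully reversed. I would establish this first.

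For~\ITEM1, the direction ``$\Rightarrow$'' is immediate: if right-$\RC$-reversing terminates successfully on every signed $\AAA$-path, then each $\INV\uu \pc \vv$ (a valid input) reverses to a positive--negative path $\vv' \pc \INV{\uu'}$, so $\BBB := \AAA^*$ has the required properties, as it contains~$\AAA$ and $\uu', \vv' \in \AAA^*$. For ``$\Leftarrow$'', assume such a~$\BBB$ exists; since $\AAA \subseteq \BBB$, every $\AAA$-path is a concatenation of elements of~$\BBB$. Given $\uu = \aa_1 \pc \pdots \pc \aa_\pp$ and $\vv = \bb_1 \pc \pdots \pc \bb_\qq$ with the same source, I form the $\pp \times \qq$ grid whose $(\ii,\jj)$-cell has top edge~$T_{\ii,\jj}$ and left edge~$L_{\ii,\jj}$, where $T_{1,\jj} = \bb_\jj$, $L_{\ii,1} = \aa_\ii$, and $T_{\ii+1,\jj}$, $L_{\ii,\jj+1}$ are the bottom and right edges obtained by reversing $\INV{L_{\ii,\jj}} \pc T_{\ii,\jj}$. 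An induction on $\ii + \jj$ shows that $T_{\ii,\jj}$ and $L_{\ii,\jj}$ lie in~$\BBB$ and share a source, so the hypothesis on~$\BBB$ applies and each cell can be completed; iterating the concatenation identities then identifies the reversing of $\INV\uu \pc \vv$ with the successive completion of these $\pp\qq$ cells, hence it terminates successfully. An arbitrary signed path is handled by induction on its number of maximal negative blocks: writing it $\vv_0 \pc \INV{\uu_1} \pc \vv_1 \pc \INV{\uu_2} \pc \pdots$ with all $\vv_\ii, \uu_\ii$ positive, reversing first turns $\INV{\uu_1} \pc \vv_1$ into $\vv_1' \pc \INV{\uu_1'}$ (a negative--positive reversing, terminating by the previous step and leaving $\vv_0$ and the tail untouched), which merges two negative blocks into one; iterating reaches a positive--negative path.

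For~\ITEM2, finiteness of~$\BBB$ furnishes two uniform constants: $C := \max\{\LG\pp \mid \pp \in \BBB\}$, and $K$, the maximal number of reversing steps needed to reverse $\INV\pp \pc \qq$, the maximum taken over all $\pp, \qq \in \BBB$ with a common source; both are finite because $\BBB$ is finite and, by~\ITEM1, each such reversing terminates. For a signed input~$\ww$ of length~$\ell$, the reversing grid is (in general) a staircase-shaped region contained in a $\pp \times \qq$ array of cells with $\pp + \qq \le \ell$, each cell indexed by one negative and one positive letter of~$\ww$ and bordered by elements of~$\BBB$; it therefore has at most $\lfloor \ell/2 \rfloor^2 = O(\ell^2)$ cells, and completing one costs at most~$K$ steps, so the reversing terminates in $O(\ell^2)$ steps. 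At any moment the current path runs monotonically through the grid, crossing at most $\pp + \qq \le \ell$ cell edges, each of length $\le C$, together with the partial data inside the single cell currently being processed, whose total length is bounded by a further constant depending only on~$\BBB$; hence every path that occurs has length $O(\ell)$.

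The step I expect to be the genuine obstacle is the concatenation lemma and, with it, the verification that the reversing grid is exactly what Algorithm~\ref{A:RightRev} builds in the two awkward regimes: when $\RC$ is not short, so cells have edges of length~$\ge 2$ and the grid is non-uniform, and when the input is an arbitrary staircase rather than a negative--positive path. Concretely, one must rule out that the algorithm ``escapes'' a cell before finishing it, or re-enters an already completed region. Once the grid structure is pinned down, the counting in~\ITEM2 is routine.
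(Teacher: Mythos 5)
Your proposal is correct in substance and follows essentially the same route as the paper: the forward direction via $\BBB := \AAA^*$, the converse by tiling the reversing diagram of $\INV\uu \pc \vv$ into a $\pp\times\qq$ array of cells whose edges stay in~$\BBB$, and part~(2) by using the finiteness of~$\BBB$ to bound each cell by constants. The paper packages the tiling more compactly: it sets $\RCd := \RC^*$ restricted to~$\BBB$, observes that $\RCd$ is a \emph{short} right-complement on~$\BBB$, so $\RCd$-reversing terminates by the short case (Lemma~\ref{L:TerminatingShort}), and then refines each $\RCd$-square into an $\RC$-grid; your double induction on cells rebuilds exactly this macro-grid by hand, and your explicit treatment of staircase inputs makes precise a point the paper leaves implicit.

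Two caveats on the step you yourself single out as the obstacle. First, the vertical concatenation identity is stated with the factors in the wrong order: since $\INV{\uu \pc \uu'} = \INV{\uu'} \pc \INV{\uu}$, the conclusion should concern $\INV{\uu \pc \uu'} \pc \vv$, the band adjacent to~$\vv$ being the $\uu$-band. Second, and more importantly, the ``leftmost transition stays inside the subword'' argument does prove the horizontal composition and your block-merging step, but it fails for vertical stacking: while reversing $\INV{\uu'} \pc \INV\uu \pc \vv$, as soon as the evolving middle factor begins with a positive letter, a negative--positive transition appears at its junction with~$\INV{\uu'}$ and the deterministic Algorithm~\ref{A:RightRev} leaves the unfinished cell. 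So to identify the actual run of the algorithm with the cell-by-cell completion of the grid you still need the strategy-independence of reversing (the existence of a complete finite grid forces every reversing sequence, in particular the leftmost one, to terminate and to build that grid); this is exactly the fact the paper absorbs without proof into the phrase ``a $\RCd$-grid of size $\pp\times\qq$ is the juxtaposition of $\pp\qq$ $\RC$-grids, whose existence shows termination'', so you are no worse off than the source, but your sketched justification does not yet close it. Relatedly, in~(2) several cells can be in progress simultaneously, so ``the single cell currently being processed'' is not accurate; the $O(\ell)$ length bound survives because the frontier is monotone, hence meets at most $\pp+\qq$ cells, each contributing a bounded amount.
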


\begin{proof}
\ITEM1 If right-$\RC$-reversing is terminating, then $\AAA^*$ has the expected property. 

Conversely, assume that $\BBB$ satisfies the property of the lemma. Let~$\RCd$ be the restriction of~$\RC^*$ to~$\BBB$. Then $\RCd$ is a short right-complement on~$\BBB$, so right-$\RCd$-reversing is terminating. Now, assume that $\INV\uu \pc \vv$ is a signed $\AAA$-path. By assumption, $\INV\uu \pc \vv$ is also a signed $\BBB$-path, and its right-$\RCd$-reversing terminates, so there exists a witnessing $\RCd$-grid. Now a $\RCd$-grid of size $\pp \times \qq$ is the juxtaposition of $\pp\qq$ $\RC$-grids, whose existence shows that the right-$\RC$-reversing of~$\INV\uu \pc \vv$ also terminates.

\ITEM2 Right-$\RCd$-reversing terminates in $O(\ell^2)$ steps and all involved $\BBB$-paths have length in $O(\ell^2)$. As the family $\BBB$ is finite, there exists a constant that bounds the length of any element of $\BBB$ considered as an $\AAA$-path, and the claim follows.
\end{proof}

\begin{coro}
\label{C:TerminatingShort}
If $\RC$ is a short right-complement on a finite precategory~$\AAA$, Algorithm~\ref{A:RightRev} running on a signed $\AAA$-path of length~$\ell$ terminates in $O(\ell^2)$~steps.
\end{coro}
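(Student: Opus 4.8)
The plan is to apply Lemma \ref{L:Terminating}\ITEM2 with the witnessing family $\BBB = \AAAh$, the extension of~$\AAA$ by the empty paths. First I would observe that, since $\RC$ is a \emph{short} right-complement on~$\AAA$, the extended complement~$\RCh$ takes values in $\AAA \cup \ew_\AAA = \AAAh$ (the only new values introduced in Notation~\ref{N:ExtEmpty} are $\ew_\xx$, $\ew_\yy$, and $\aa$ itself, all in~$\AAAh$). Hence for any two elements $\uu, \vv$ of~$\AAAh$ with the same source, $\INV\uu \pc \vv$ right-$\RC$-reverses in a single step — either by deleting $\INV\uu \pc \vv$ when $\uu = \vv$, or by the rewrite $\INV\uu \pc \vv \to \RCh(\uu,\vv) \pc \INV{\RCh(\vv,\uu)}$ — and the resulting path $\RCh(\uu,\vv) \pc \INV{\RCh(\vv,\uu)}$ is of the form $\vv' \pc \INV{\uu'}$ with $\vv', \uu'$ in~$\AAAh$. (When one of $\uu, \vv$ is an empty path the single reversing step still applies, using the clauses $\RCh(\aa, \ew_\xx) = \ew_\yy$ and $\RCh(\ew_\xx, \aa) = \aa$.) This is exactly the closure property required of~$\BBB$ in Lemma~\ref{L:Terminating}\ITEM1.

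Next I would invoke finiteness: the precategory~$\AAA$ is finite, so $\AAAh = \AAA \cup \ew_\AAA$ is finite as well (it adds only one empty path per object, and $\Obj(\AAA)$ is finite since $\AAA$ is). Thus the hypotheses of Lemma~\ref{L:Terminating}\ITEM2 are met with this finite family~$\BBB = \AAAh$, and we conclude directly that, for every signed $\AAA$-path~$\ww$ of length~$\ell$, the right-$\RC$-reversing of~$\ww$ terminates in $O(\ell^2)$ steps (and, as a bonus, all intermediate paths have length in~$O(\ell)$). Since $\AAAh$ contains~$\AAA$, every signed $\AAA$-path is in particular a signed $\BBB$-path, so the lemma applies to the inputs of interest.

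The only point needing a little care — and the place I expect to be the main (minor) obstacle — is checking that the single reversing step described above is genuinely the step performed by Algorithm~\ref{A:RightRev}, i.e.\ that when $\RC$ is short the ``$\RCd = \RC^*$ restricted to~$\BBB$'' of the proof of Lemma~\ref{L:Terminating} coincides with~$\RCh$ itself, so that no hidden multi-step reversing is smuggled in. This is immediate from Definition~\ref{D:RightRev}: for $\uu,\vv \in \AAAh$ one has $\INV\uu \pc \vv \rev_\RC \RCh(\uu,\vv) \pc \INV{\RCh(\vv,\uu)}$ in one step, so $\RC^*(\uu,\vv) = \RCh(\uu,\vv)$ on~$\AAAh$, confirming that $\RCd$ is short and that the $\RCd$-grid and the $\RC$-grid coincide square by square. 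With that identification in hand the quadratic bound is inherited verbatim from Lemma~\ref{L:Terminating}\ITEM2.
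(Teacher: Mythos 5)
Your argument is exactly the paper's: the proof of Corollary~\ref{C:TerminatingShort} applies Lemma~\ref{L:Terminating} with $\BBB = \AAAh$, the union of~$\AAA$ and the empty paths, noting that shortness of~$\RC$ gives the required closure and finiteness of~$\AAA$ gives finiteness of~$\BBB$. Your additional check that $\RC^*$ restricted to~$\AAAh$ coincides with~$\RCh$ is a correct (and harmless) elaboration of the same one-line argument.
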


\begin{proof}
If $\RC$ is short, the condition of Lemma~\ref{L:Terminating} is satisfied with $\BBB$ equal to the family~$\AAAh$ in Notation~\ref{N:ExtEmpty}, that is, the union of~$\AAA$ and the empty paths.
\end{proof}

So, starting with an arbitrary right-complement~$\RC$, we can possibly show that right-$\RC$-reversing terminates successfully for all valid inputs by applying the following closure method:

\begin{algo}{Termination of right-reversing}
\label{A:Termination}
\begin{algorithmic}[1]
\CONTEXT{A precategory~$\AAA$}
\INPUT{A right-complement~$\RC$ on~$\AAA$}
\OUTPUT{A (minimal) subfamily of~$\AAA^*$ that includes~$\AAA$ and is closed under~$\RC^*$}
\STATE{$\AAA_0 := \AAA$}
\REPEAT
\STATE{$\AAA_{\ii+1} := \AAA_\ii \cup \{\RC^*(\aa, \bb) \mid \aa, \bb \in \AAA_\ii\}$}
\UNTIL{$\AAA_{\ii+1} = \AAA_\ii$}
\RETURN{$\AAA_\ii$}
\end{algorithmic}
\end{algo}

\begin{exam}
\label{X:BraidMonoidB3}
Consider the right-complement of Figure~\ref{F:RightRev} again. Starting with $\AAA_0 = \{\tta, \ttb\}$ and applying Algorithm~\ref{A:Termination}, we find $\AAA_1 = \AAA_0 \cup \{\ew, \tta\pc\ttb, \ttb\pc\tta\}$, and $\AAA_2 =\nobreak \AAA_1$: here the process terminates in one step, leading to $\{\ew, \tta, \ttb, \tta\pc\ttb, \ttb\pc\tta\}$. The existence of this 5 element family that is closed under right-reversing implies that right-reversing is terminating with a quadratic complexity upper bound.
\end{exam}

When the closure under~$\RC^*$ is infinite, the situation is more complicated and there is no general result. Examples are known when right-reversing is always terminating but the time (\resp space) complexity is more than quadratic (\resp linear): for instance, the right-reversing associated with the right-complement~$\RC$ defined on~$\{\tta, \ttb\}$ by $\RC(\tta, \ttb) = \ew$ and $\RC(\ttb, \tta) = \tta (\ttb \tta \ttb)^3 \tta \ttb$ is always terminating, but the time (\resp space) complexity is cubic (\resp quadratic) \cite[Example 10.3]{Die}, whereas \cite[Proposition~VIII.1.11]{Dgd} displays an example (with an infinite family of generators) where right-reversing is terminating but the only known bound for time complexity is a tower of exponentials of exponential height.

\subsection{Establishing left-cancellativity}
\label{SS:Cancel}

We now address our main problem, name\-ly investigating a presented category~$\PRESp\AAA\RRR$ and, in particular, trying to recognize whether it is left-cancellative. Here we consider the problem for presentations of a certain syntactical type. This restriction allows for using right-reversing, and it is natural in our context as one can show that every Garside family gives rise to a presentation that is eligible for this approach (at least in the extended version alluded to in Remark~\ref{R:NonDet}).

\begin{defi}
\label{D:RightCompPres}
A presentation $(\AAA ; \RRR)$ is called \emph{positive} if all relations of~$\RRR$ are of the form $\uu = \vv$ with $\uu, \vv$ nonempty; it is called \emph{right-complemented}, associated with the right-complement~$\RC$, if $\RRR$ consists of all relations $\seqq\aa{\RC(\aa, \bb)} = \seqq\bb{\RC(\bb, \aa)}$ with $(\aa, \bb)$ in the domain of~$\RC$.
\end{defi}

Note that, by definition, a right-complemented presentation is positive and that, if $(\AAA ; \RRR)$ is a positive presentation, then the category~$\PRESp\AAA\RRR$ contains no nontrivial invertible element since an empty path and a nonempty path cannot be $\RRR$-equivalent. Saying that a presentation~$(\AAA ; \RRR)$ is right-complemented just means that it is positive and that, for all~$\aa, \bb$ in~$\AAA$, the family~$\RRR$ contains at most one relation of the form $\aa ... = \bb ...$\,. The involved right-complement is short if the paths ``...'' have length $0$ or~$1$, that is, all relations in~$\RRR$ are of the form $\uu = \vv$ with $\uu$ and $\vv$ of length~$1$ or~$2$. For instance, the presentation $(\tta, \ttb;\tta\ttb = \ttb \tta)$ is associated with the short right-complement of Figure~\ref{F:RightRevShort}, whereas the presentation $(\tta, \ttb;\tta\ttb\tta = \ttb\tta\ttb)$ is associated with the right-complement of Figure~\ref{F:RightRev} and Example~\ref{X:BraidMonoidB3}. By contrast, the presentation $(\tta, \ttb;\tta\ttb = \ttb\tta, \tta^2 = \ttb^2)$ is not right-complemented since it contains two relations of the form $\tta ... = \ttb ...$\,.

When a presentation is right-complemented, it is eligible for the associated right-reversing transformation, leading in good cases to a practical method for recognizing left-cancellativity. The first observation is that right-reversing gives a way to construct common right-multiples in the associated category:

\begin{lemm}
\label{L:Equiv}
Assume that $(\AAA ; \RRR)$ is a presentation associated with a right-comple\-ment~$\RC$. Then, for all $\AAA$-paths~$\uu, \vv, \uu', \vv'$ satisfying $\INV\uu \pc \vv \rev_\RC \vv' \pc \INV{\uu'}$, the paths $\uu \pc \vv'$ and $\vv \pc \uu'$ are $\RRR$-equivalent.
\end{lemm}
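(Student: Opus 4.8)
The statement says that whenever $\INV\uu \pc \vv \rev_\RC \vv' \pc \INV{\uu'}$, the paths $\uu \pc \vv'$ and $\vv \pc \uu'$ represent the same element of $\PRESp\AAA\RRR$. The natural approach is induction on the number of elementary reversing steps performed by Algorithm~\ref{A:RightRev} along a run transforming $\INV\uu \pc \vv$ into $\vv' \pc \INV{\uu'}$, combined with the geometric picture of the reversing grid. The key invariant to track is the following: if at some intermediate stage the current signed path is $\ww$ and $\ww$ arose from $\INV\uu \pc \vv$, then the "boundary word" identity holds for $\ww$. Concretely, one shows that a single reversing step preserves the $\equivp_\RRR$-class of the associated diagonal/boundary loop.

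First I would set up the one-step claim precisely. A single step of Algorithm~\ref{A:RightRev} replaces a factor $\INV\aa \pc \bb$ occurring in the current path either by the empty path (when $\aa = \bb$) or by $\RCh(\aa,\bb) \pc \INV{\RCh(\bb,\aa)}$ (when $\RCh(\aa,\bb)$ is defined). In the first case the substituted factors are trivially $\RRR$-equivalent to the corresponding unit paths; in the second case, by the very definition of a right-complemented presentation (Definition~\ref{D:RightCompPres}), $\RRR$ contains the relation $\seqq\aa{\RC(\aa,\bb)} = \seqq\bb{\RC(\bb,\aa)}$, which is exactly the commutativity of the elementary square $\aa \pc \RCh(\aa,\bb) \equivp_\RRR \bb \pc \RCh(\bb,\aa)$ in $\PRESp\AAA\RRR$ (the $\ew$-padded cases covered by Notation~\ref{N:ExtEmpty} are immediate since $\RCh$ then only records identities). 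Hence each elementary square of the reversing grid commutes in the category.

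Next I would pass from commuting squares to the global boundary identity. Viewing the completed grid as a rectangular array of commuting squares, one reads off that the concatenation of the top edge followed by the right edge equals, in $\PRESp\AAA\RRR$, the concatenation of the left edge followed by the bottom edge: this is just pasting commuting squares, formally an easy induction on the number of squares (do rows left-to-right, then stack rows, using associativity of composition in the category and the fact that $\equivp_\RRR$ is a congruence compatible with concatenation). For the negative--positive input $\INV\uu \pc \vv$, the left edge is $\uu$, the top edge is $\vv$, the bottom edge is $\vv'$ and the right edge is $\uu'$, so the pasting identity reads $\uu \pc \vv' \equivp_\RRR \vv \pc \uu'$, i.e. $\cl{\uu \pc \vv'} = \cl{\vv \pc \uu'}$ in $\PRESp\AAA\RRR$, which is the claim. (When $\RC$ is not short the "squares" are rectangles with multi-arrow edges, but each such rectangle is itself tiled by the elementary squares produced during the run, so nothing changes.)

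The main obstacle, such as it is, is bookkeeping rather than mathematics: one must be careful that the induction is on reversing \emph{steps} and that the intermediate signed paths are not assumed to be negative--positive (Algorithm~\ref{A:RightRev} may leave a "staircase"), so the cleanest formulation of the inductive invariant is the pasting statement for the partially-built grid, phrased so that the case distinctions on whether a given cell is an $\aa=\bb$ deletion or a genuine $\RCh$-square are handled uniformly. Alternatively, and perhaps more slickly, one can avoid the grid altogether: prove directly by induction on the length of the reversing derivation $\INV\uu \pc \vv \rev_\RC \vv' \pc \INV{\uu'}$ that for \emph{every} signed path $\ww$ reversible to a positive--negative path $\ww''$, writing $\ww = \INV{\uu_0}\pc\vv_0\pc\INV{\uu_1}\pc\vv_1\pc\cdots$ and $\ww'' = \vv'\pc\INV{\uu'}$, the "numerator" and "denominator" obtained by reading off positive and negative parts satisfy the appropriate $\equivp_\RRR$-identity; the single-step case is exactly the elementary-square relation above, and the case $\aa=\bb$ uses left-cancellativity-free reasoning since it only removes an $\INV\aa\pc\aa$ which evaluates to an identity in any category. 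Either way the substance is entirely contained in the observation that the reversing rule is, step by step, an application of a defining relation of $\RRR$.
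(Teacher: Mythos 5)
Your argument is correct and is essentially the paper's proof: the paper simply observes that each elementary square of the reversing grid corresponds to a relation of $\RRR$ (or to a trivial $\INV\aa\pc\aa$ cancellation) and reads the claim off by pasting, which is exactly your one-step-plus-induction bookkeeping spelled out in detail.
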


\begin{proof}
By definition, each elementary square in the rectangular grid associated with Algorithm~\ref{A:RightRev} corresponds to a relation of~$\RRR$.
\end{proof}

Lemma~\ref{L:Equiv} says in particular that, if $\uu, \vv$ are $\AAA$-paths and $\INV\uu \pc \vv$ is right-$\RC$-reversible to an empty path, then $\uu$ and~$\vv$ are $\RRR$-equivalent, that is, they represent the same element in the category~$\PRESp\AAA\RRR$. In our context, right-reversing will be useful only when the previous implication is an equivalence.

\begin{defi}
If $(\AAA ; \RRR)$ is a presentation associated with a right-complement~$\RC$, we say that right-reversing is \emph{complete} for $(\AAA ; \RRR)$ if $\INV\uu \pc \vv \rev_\RC \ew$ holds whenever $\uu$ and~$\vv$ represent the same element in~$\PRESp\AAA\RRR$.
\end{defi}

In other words, right-reversing is complete if it always detects equivalence. The interest of introducing completeness here is the following easy result:

\begin{lemm}
Assume that $(\AAA ; \RRR)$ is a presentation associated with a right-comple\-ment~$\RC$ and right-reversing is complete for $(\AAA ; \RRR)$. Then the category~$\PRESp\AAA\RRR$ is left-cancellative.
\end{lemm}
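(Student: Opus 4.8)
The plan is to unpack the definition of left-cancellativity and reduce the cancellation identity to a reversing statement that completeness resolves. So suppose $\ff, \gg, \gg'$ are elements of $\PRESp\AAA\RRR$ with $\ff\gg = \ff\gg'$; I want to conclude $\gg = \gg'$. Choose $\AAA$-paths $\uu$ representing $\ff$, and $\vv$, $\vv'$ representing $\gg$ and $\gg'$ respectively, with the source of $\vv$ and $\vv'$ equal to the target of $\uu$, so that the concatenations $\uu\pc\vv$ and $\uu\pc\vv'$ are defined and represent $\ff\gg$ and $\ff\gg'$. The hypothesis says $\uu\pc\vv$ and $\uu\pc\vv'$ represent the same element of $\PRESp\AAA\RRR$.

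The key step is to feed the signed path $\INV{(\uu\pc\vv)}\pc(\uu\pc\vv') = \INV\vv\pc\INV\uu\pc\uu\pc\vv'$ into right-reversing. By completeness, since $\uu\pc\vv$ and $\uu\pc\vv'$ represent the same element, this signed path reverses to an empty path: $\INV\vv\pc\INV\uu\pc\uu\pc\vv' \rev_\RC \ew$. Now I would exploit the determinism of Algorithm~\ref{A:RightRev}: the leftmost reversible pattern in $\INV\vv\pc\INV\uu\pc\uu\pc\vv'$ occurs at the $\INV\uu\pc\uu$ junction (every entry of $\INV\vv\pc\INV\uu$ is negative and every entry of $\uu\pc\vv'$ is positive, so the first index $\jj$ with $\ww[\jj]\in\INV\AAA$, $\ww[\jj+1]\in\AAA$ is exactly at the start of the $\uu$-block). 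Reversing $\INV\uu\pc\uu$ entry by entry against matching letters deletes it, so after $\LG\uu$ steps the path becomes $\INV\vv\pc\vv'$; this is the standard fact that $\INV\uu\pc\uu\rev_\RC\ew$ (the diagonal squares $\RC(\aa,\aa)=\ew_\yy$ collapse). Since reversing is deterministic, the full computation on $\INV\vv\pc\INV\uu\pc\uu\pc\vv'$ must factor through $\INV\vv\pc\vv'$, hence $\INV\vv\pc\vv'\rev_\RC\ew$. By Lemma~\ref{L:Equiv} (applied with $\uu'=\ew$, $\vv'_{\mathrm{Lemma}}=\ew$), $\vv$ and $\vv'$ are $\RRR$-equivalent, i.e.\ $\gg = \gg'$, which is what we wanted.

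The main obstacle is the determinism/confluence bookkeeping in the middle step: one must be careful that the reversing of $\INV\uu\pc\uu$ really does collapse to $\ew$ independently of what precedes and follows it, and that the leftmost-pattern selection rule genuinely forces the computation on the larger path to pass through the reduced path $\INV\vv\pc\vv'$. This is where one uses that Algorithm~\ref{A:RightRev} chooses the minimal reversible index $\jj$ at each step, so the blocks $\INV\vv$ and (after cancellation) $\vv'$ are never touched until the $\INV\uu\pc\uu$ barrier between them has been eliminated; and each diagonal step uses precisely the defining property $\RC(\aa,\aa)=\ew$ of a right-complement (Definition~\ref{D:RightComp}), together with the branch of the algorithm that removes $\INV\aa\pc\bb$ when $\aa=\bb$. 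Everything else — choosing representatives, translating between $\RRR$-equivalence and equality in $\PRESp\AAA\RRR$ — is routine.
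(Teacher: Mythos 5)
Your proof is correct and follows essentially the same route as the paper's: use completeness to reverse the signed path encoding the equality to an empty path, observe that the determinism of Algorithm~\ref{A:RightRev} forces the initial steps to delete the middle $\INV{\,\cdot\,}\pc\cdot$ block, and conclude via Lemma~\ref{L:Equiv}. The only cosmetic difference is that the paper first reduces to cancelling a single generator $\aa$, whereas you cancel the whole representing word $\uu$ at once by iterating the same deletion argument.
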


\begin{proof}
It is sufficient to prove that, if $\aa$ belongs to~$\AAA$ and $\uu, \vv$ are $\AAA$-paths such that $\aa \pc \uu$ and~$\aa \pc \vv$ are $\RRR$-equivalent, then $\uu$ and~$\vv$ are $\RRR$-equivalent. Now, as right-reversing is complete, the hypothesis implies that $\INV\uu \pc \INV\aa \pc \aa \pc \vv$ right-$\RC$-reverses to an empty path. Now the first step in the reversing process necessarily consists in deleting~$\INV\aa \pc \aa$. We deduce that $\INV\uu \pc \vv$ must right-$\RC$-reverse to an empty path, which, by Lemma~\ref{L:Equiv}, implies that $\uu$ and~$\vv$ are $\RRR$-equivalent.
\end{proof}

We are thus led to looking for completeness criteria. 

\begin{defi}
Assume that $\RC$ is a right-complement on a precategory~$\AAA$. For $\aa, \bb, \cc$ in~$\AAA$, we say that $\RC$ satisfies the \emph{cube condition} at~$(\aa, \bb, \cc)$ if either neither of $\RC^*(\RC^*(\aa, \bb), \RC^*(\aa, \cc))$ and $\RC^*(\RC^*(\bb, \aa), \RC^*(\bb, \cc))$ is defined, or both are defined and 
$$\RC^*(\RC^*(\RC^*(\aa, \bb), \RC^*(\aa, \cc)), \RC^*(\RC^*(\bb, \aa), \RC^*(\bb, \cc)))$$
is empty.
\end{defi}

\begin{prop}
\label{P:Complete}
\cite[Proposition~II.4.11]{Garside} or \cite[Proposition~6.1]{Dgk}
Assume that $(\AAA ; \RRR)$ is a presentation associated with a right-com\-ple\-ment~$\RC$ and, moreover, the right-complement~$\RC$ is short, or the presentation $(\AAA ; \RRR)$ is right-Noetherian. Then right-reversing is complete for $(\AAA ; \RRR)$ if and only if, for all pairwise distinct $\aa, \bb, \cc$ in~$\AAA$ with the same source, the cube condition for~$(\aa, \bb, \cc)$ is satisfied.
\end{prop}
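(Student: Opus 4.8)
The statement to prove is Proposition~\ref{P:Complete}: under the hypothesis that $\RC$ is short or that $(\AAA;\RRR)$ is right-Noetherian, right-reversing is complete for $(\AAA;\RRR)$ if and only if the cube condition holds at all pairwise distinct triples $(\aa,\bb,\cc)$ of letters with the same source. The plan is to follow the now-standard reversing-word-problem argument, which reduces the global completeness property to a local confluence-type condition.

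First I would recall the reformulation of completeness in terms of reversing. Completeness says: whenever $\uu,\vv$ represent the same element of~$\PRESp\AAA\RRR$, one has $\INV\uu\pc\vv\rev_\RC\ew$. One direction is trivial: if right-reversing is complete, then taking $\uu,\vv,\cc$ in~$\AAA$ pairwise distinct with the same source, Lemma~\ref{L:Equiv} shows that the two paths obtained by reversing $\INV\bb\pc\aa$, $\INV\cc\pc\aa$, etc., represent the same element (they are both expressions for a common right-multiple of $\aa,\bb,\cc$), so the iterated reversing detecting their equality is precisely the cube condition being satisfied; the case where the relevant complements are undefined is handled symmetrically. The substantive direction is the converse, and this is where the work lies.

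For the converse, the key is to pass from the local cube condition to a global property by an induction. The natural statement to prove is a ``word redistribution'' lemma: for any positive $\AAA$-paths $\uu,\vv$ that are $\RRR$-equivalent, $\INV\uu\pc\vv$ reverses to~$\ew$. Using the presentation form of~$\RRR$, an $\RRR$-equivalence $\uu\equivp_\RRR\vv$ is a finite chain of elementary transformations, each replacing $\seqq\aa{\RC(\aa,\bb)}$ by $\seqq\bb{\RC(\bb,\aa)}$ (or the reverse) inside a word. So by induction on the length of this chain it suffices to treat one elementary step, i.e.\ to show: if $\vv$ is obtained from $\uu$ by a single relation application, then $\INV\uu\pc\vv\rev_\RC\ew$. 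This in turn is proved by building a reversing grid for $\INV\uu\pc\vv$ and showing it closes up; the closing-up is exactly where the cube condition enters, as the elementary ``commutation of squares'' in the three-dimensional reversing diagram. Concretely, one shows that the reversing of $\INV{\uu_1\pc\uu_2}\pc{\vv}$ can be computed by first reversing $\INV{\uu_2}\pc\vv$ and then reversing $\INV{\uu_1}$ against the output, and that performing a single relation replacement on~$\uu$ does not change the final reversed word — this ``relation-invariance of the reversing output'' is the heart, and it reduces letter-by-letter to the cube condition on triples.

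The main obstacle — and the reason for the hypothesis ``$\RC$ short \emph{or} $(\AAA;\RRR)$ right-Noetherian'' — is termination: the inductive grid arguments above only make sense if every reversing involved actually terminates, and in general (Example~\ref{X:BaumslagSolitar}) reversing need not terminate. In the short case termination is automatic (Corollary~\ref{C:TerminatingShort}). In the right-Noetherian case, one uses the $\equivp_\RRR$-invariant ordinal-valued map~$\wit$ of Lemma~\ref{L:Noeth}: since reversing replaces $\INV\aa\pc\bb$ by paths representing the same elements, the multiset of $\wit$-values of the positive letters to the right is controlled, and a suitable induction on these ordinals forces every branch of the grid construction to be finite. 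Thus the proof structure is: (i) establish termination under the stated hypothesis; (ii) prove the relation-invariance of the reversed word by induction, using the cube condition for the base case of three letters; (iii) deduce that $\RRR$-equivalent positive paths reverse to~$\ew$, which is completeness; (iv) note the easy converse. I would expect step (ii), keeping track of exactly which instances of the cube condition are needed when the letters in a triple are not pairwise distinct (the ``either ... or ...'' clause and the trivial cases $\aa=\bb$ etc.) to require the most care, but it is routine bookkeeping rather than a genuine difficulty; the genuine subtlety is the interplay with termination in the non-short, Noetherian case.
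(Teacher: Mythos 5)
The first thing to note is that the paper contains no proof of Proposition~\ref{P:Complete}: it is imported by citation from \cite{Garside} and \cite{Dgk}, so the only possible comparison is with those sources. Your outline does follow the route they take: the direction from completeness to the cube condition via the interpretation of reversing diagrams as equivalences and common right-multiples (Lemma~\ref{L:Equiv}, and in substance Lemma~\ref{L:Lcm}), and the converse by showing that $\RRR$-equivalent positive paths reverse to an empty path, by induction on the number of elementary relation applications, with shortness (Corollary~\ref{C:TerminatingShort}) or the Noetherianity witness~$\wit$ of Lemma~\ref{L:Noeth} controlling the induction. So the architecture is the right one.

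Judged as a proof, however, your text has a genuine gap exactly at the step you dismiss as ``routine bookkeeping''. The load-bearing lemma in the cited proofs is that the cube condition for triples of \emph{letters} propagates to triples of arbitrary \emph{paths}; this is what makes the grid attached to a single relation application close up with an unchanged output, and it is established by a genuinely delicate induction (on length in the short case, on the well-founded witness~$\wit$ in the Noetherian case), carried out with no a priori termination guarantee for the intermediate reversings. Saying that the relation-invariance ``reduces letter-by-letter to the cube condition'' is asserting precisely the statement that has to be proved. Relatedly, your step (i) cannot mean global termination: the presentation of Example~\ref{X:BaumslagSolitar} is right-Noetherian (take $\wit(\ttb^\ii\tta^\jj)=\omega\cdot\jj+\ii$) and, having only two generators, satisfies the cube condition vacuously, yet right-reversing diverges on $\INV\tta \pc \ttb \pc \tta$; completeness only asserts that the particular reversings $\INV\uu \pc \vv$ with $\cl\uu = \cl\vv$ terminate with empty output, and that termination has to come out of the same induction rather than be ``established'' beforehand. (A small slip as well: in decomposing the grid for $\INV{(\uu_1 \pc \uu_2)} \pc \vv$ one first reverses $\INV{\uu_1} \pc \vv$ and then reverses $\INV{\uu_2}$ against the output, not the other way around.) For the missing propagation argument, see \cite{Dgp} or Chapter~II of~\cite{Garside}.
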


\begin{coro}
\label{C:Cancel}
If the equivalent conditions of Proposition~\ref{P:Complete} are satisfied, the category~$\PRESp\AAA\RRR$ is left-cancellative.
\end{coro}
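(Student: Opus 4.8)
The plan is to read this off directly from Proposition~\ref{P:Complete} and the (unnumbered) lemma stated just above it. Among the two equivalent conditions in Proposition~\ref{P:Complete} --- completeness of right-reversing for $(\AAA;\RRR)$, and the cube condition at every triple of pairwise distinct generators with a common source --- the first is precisely the hypothesis of that earlier lemma. So, assuming the conditions of Proposition~\ref{P:Complete}, I would first invoke the proposition to conclude that right-reversing is complete for $(\AAA;\RRR)$, and then apply the lemma stating that, for a presentation associated with a right-complement, completeness of right-reversing implies that $\PRESp\AAA\RRR$ is left-cancellative. That chain of two implications is the entire argument.

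I do not expect any genuine obstacle here, as Corollary~\ref{C:Cancel} is essentially a repackaging. The only points deserving a word of care are expository: making explicit that ``the equivalent conditions of Proposition~\ref{P:Complete}'' include the completeness of right-reversing (and not merely the cube condition, which is the version one checks in practice), and recalling that the background hypothesis ``$\RC$ is short, or $(\AAA;\RRR)$ is right-Noetherian'' is inherited unchanged from Proposition~\ref{P:Complete}.

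If one wished to make the note more self-contained rather than just citing the lemma, I would recall its mechanism: left-cancellativity reduces to showing $\aa\pc\uu \equivp_\RRR \aa\pc\vv \Rightarrow \uu \equivp_\RRR \vv$ for a single generator~$\aa$; completeness then gives $\INV\uu \pc \INV\aa \pc \aa \pc \vv \rev_\RC \ew$, the first reversing step deletes $\INV\aa\pc\aa$, leaving $\INV\uu\pc\vv \rev_\RC \ew$, and Lemma~\ref{L:Equiv} yields $\uu \equivp_\RRR \vv$. Since this is exactly the content of the lemma already proved in the excerpt, simply citing it suffices, and no new computation is needed.
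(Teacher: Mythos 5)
Your proposal is correct and matches the paper's (implicit) argument exactly: the corollary is an immediate consequence of Proposition~\ref{P:Complete} combined with the unnumbered lemma stating that completeness of right-reversing for a right-complemented presentation implies left-cancellativity of~$\PRESp\AAA\RRR$, which is why the paper gives no separate proof. Your optional recap of the lemma's mechanism is also faithful to the paper's proof of that lemma, so nothing is missing.
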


\begin{exam}
\label{X:Complete}
Both presentations $(\tta, \ttb ; \tta\ttb = \ttb\tta)$ and $(\tta, \ttb ; \tta\ttb\tta = \ttb\tta\ttb)$ are eligible for Proposition~\ref{P:Complete} and Corollary~\ref{C:Cancel}. Indeed, the former is associated with a short right-complement, whereas the latter is associated with a right-complement that is not short, but defining $\wit(\ww)$ to be the length of~$\ww$ and noting that the (unique) relation of the presentation consists of two words with the same length, Lemma~\ref{L:Noeth} yields that the presentation is right-Noetherian. Then, in order to apply Proposition~\ref{P:Complete} and Corollary~\ref{C:Cancel}, we need to check that the cube condition is satisfied for all triples of pairwise distinct elements of~$\{\tta, \ttb\}$, which is vacuously true. We conclude that both monoids $\PRESp{\tta, \ttb}{\tta\ttb = \ttb\tta}$ (the free Abelian monoid~$\NNNN^2$) and $\PRESp{\tta, \ttb}{\tta\ttb\tta = \ttb\tta\ttb}$ (the $3$-strand braid monoid~$\BP3$) are left-cancellative.
\end{exam}

\begin{rema}
Proposition~\ref{P:Complete} does not exhaust all known types of presentations for which right-reversing is complete. For instance, the presentation $(\tta, \ttb ; \tta = \ttb \tta \ttb)$ of the Klein bottle monoid is not eligible for Proposition~\ref{P:Complete}, since the associated right-complement~$\RC$ is neither short nor right-Noetherian since no map~$\wit$ may satisfy $\wit(\tta) = \wit(\ttb \tta \ttb) > \wit(\tta \ttb)$. However right-reversing is complete and terminating for this presentation, as well as for a number of similar presentations~\cite{Die}.
\end{rema}

\subsection{Recognizing Garside}
\label{SS:RecGar}

Assuming that $(\AAA ; \RRR)$ is a right-complemented presentation and the category~$\PRESp\AAA\RRR$ has been shown to be left-cancellative using the method explained in Subsection~\ref{SS:Cancel}, our next task is to recognize that some subfamily is possibly a Garside family. Once again the task will turn out to be easy whenever right-reversing is complete for the considered presentation.

The main observation is that, in the above context, the category~$\PRESp\AAA\RRR$ admits local right-lcms, that is, any two elements that admit a common right-multiple admit a right-lcm.

\begin{lemm}
\label{L:Lcm}
Assume that $(\AAA ; \RRR)$ is a presentation associated with a right-compl\-ement~$\RC$ and right-reversing is complete for it. Then, for all $\AAA$-paths~$\uu, \vv$ with the same source, the elements~$\cl\uu, \cl\vv$ admit a common right-multiple in~$\PRESp\AAA\RRR$ if and only if the right-reversing of~$\INV\uu \pc \vv$ terminates, in which case we have $\cl\uu \lcm \cl\vv = \cl{\uu \pc \RC^*(\uu, \vv)}$ and $\cl\uu \under \cl\vv = \cl{\RC^*(\uu, \vv)}$.
\end{lemm}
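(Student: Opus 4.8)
The plan is to prove both implications, using completeness of right-reversing to pass freely between syntactic reversing statements and semantic statements about right-multiples in $\PRESp\AAA\RRR$.

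First, for the ``only if'' direction, suppose $\cl\uu$ and $\cl\vv$ admit a common right-multiple, say $\cl\uu\pc\uu' = \cl\vv\pc\vv'$ for some $\AAA$-paths $\uu', \vv'$, i.e.\ $\uu\pc\uu'$ and $\vv\pc\vv'$ are $\RRR$-equivalent. Then the signed path $\INV{(\vv\pc\vv')}\pc(\uu\pc\uu') = \INV{\vv'}\pc\INV\vv\pc\uu\pc\uu'$ represents an identity in $\PRESp\AAA\RRR$, so by completeness it right-$\RC$-reverses to an empty path. I would then argue that the initial segment $\INV\vv\pc\uu$ of the input must itself reverse to completion as part of this process: the reversing of $\INV{\vv'}\pc\INV\vv\pc\uu\pc\uu'$ begins by working on the innermost negative--positive pattern, which lies inside $\INV\vv\pc\uu$, and no later step can touch $\INV{\vv'}$ or $\uu'$ until $\INV\vv\pc\uu$ has been converted to a positive--negative path. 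So the reversing of $\INV\vv\pc\uu$ (equivalently $\INV\uu\pc\vv$, by the obvious left--right symmetry of the statement, or just argued directly for $\INV\uu\pc\vv$) terminates. This ``subword of a terminating reversing is itself terminating'' claim is the point that needs a little care; it follows from the confluence/locality of the reversing rewrite system, but I would want to state it cleanly, perhaps by invoking that any reversing grid contains a sub-grid witnessing the reversing of any initial negative--positive block.

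Second, for the ``if'' direction, suppose the right-reversing of $\INV\uu\pc\vv$ terminates, producing positive paths $\vv' = \RC^*(\uu,\vv)$ and $\uu' = \RC^*(\vv,\uu)$ with $\INV\uu\pc\vv \rev_\RC \vv'\pc\INV{\uu'}$. By Lemma~\ref{L:Equiv}, the paths $\uu\pc\vv'$ and $\vv\pc\uu'$ are $\RRR$-equivalent, so their common class is a common right-multiple of $\cl\uu$ and $\cl\vv$; call it $\hh$. It remains to see $\hh$ is a \emph{least} such multiple. So let $\cl\ww$ be any common right-multiple, $\cl\uu\pc\uu_1 = \cl\ww = \cl\vv\pc\vv_1$; I must show $\hh\div\cl\ww$, i.e.\ that $\cl{\vv'}$ left-divides $\cl{\uu_1}$ (equivalently $\cl{\uu'}$ left-divides $\cl{\vv_1}$). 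Here I would reverse the signed path $\INV{(\uu\pc\uu_1)}\pc(\vv\pc\vv_1)$: since $\uu\pc\uu_1$ and $\vv\pc\vv_1$ are $\RRR$-equivalent, completeness says this reverses to the empty path; doing the reversing in two stages—first reversing the inner $\INV\uu\pc\vv$ block to $\vv'\pc\INV{\uu'}$, which is legitimate by the locality just discussed—we are left reversing $\INV{\uu_1}\pc\vv'\pc\INV{\uu'}\pc\vv_1$ to empty. The fact that this terminates at the empty path forces, via Lemma~\ref{L:Equiv} applied to the sub-reversing of $\INV{\uu_1}\pc\vv'$, that $\RC^*(\uu_1, \vv')$ is empty, i.e.\ $\cl{\vv'}$ left-divides $\cl{\uu_1}$, which is exactly $\hh\div\cl\ww$. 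This gives $\cl\uu\lcm\cl\vv = \cl{\uu\pc\RC^*(\uu,\vv)}$, and then $\cl\uu\under\cl\vv = \cl{\RC^*(\uu,\vv)}$ follows by left-cancellativity (available from completeness via the preceding lemma) and the definition of the $\under$ operation, using that $\PRESp\AAA\RRR$ has no nontrivial invertible element since the presentation is positive.

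The main obstacle I anticipate is the locality/sub-reversing bookkeeping: making precise that an initial negative--positive (or more generally a factorized) block inside a longer signed path reverses ``first'', independently of the surrounding letters, and that the outcome of the global reversing is the same as first reversing the block and then continuing. This is intuitively clear from the pseudocode of Algorithm~\ref{A:RightRev}—at each step one picks the \emph{leftmost} eligible pattern, and patterns straddling a fixed block boundary only arise once the left part is fully positive—but turning it into a clean lemma (essentially a confluence statement for the reversing rewriting system) is where the real work lies. Everything else is a routine application of Lemma~\ref{L:Equiv}, completeness, and cancellativity.
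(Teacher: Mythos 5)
Your proof is correct and takes essentially the same route as the paper: completeness plus Lemma~\ref{L:Equiv}, the crux being that the (finite, fully filled) reversing grid of a path known to reverse to an empty path contains as a sub-grid the reversing grid of the inner block $\INV\uu \pc \vv$ — exactly the ``decompose the associated reversing grid'' step the paper uses on $\INV{\vv'}\pc\INV\uu\pc\vv\pc\uu'$, with your leastness argument just a mild variant (a $2\times2$ block decomposition against an arbitrary common multiple instead of the paper's single grid). The one caveat is that your dynamic justification of locality (that the leftmost-pattern strategy cannot touch $\INV{\vv'}$ or $\uu'$ before the inner block is fully reversed) is not literally true, since positive letters created at the front of the middle block can interact with $\INV{\vv'}$ early; but this does not matter, because the grid labels are determined by the left and top edges independently of the order of filling, so your fallback sub-grid formulation is exactly right and is all the paper itself relies on.
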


\begin{proof}
\rightskip40mm
Assume that $\cl\uu$ and $\cl\vv$ admit a common right-multiple in the category~$\PRESp\AAA\RRR$. This means that there exist $\AAA$-paths~$\uu', \vv'$ such that $\uu\pc\vv'$ and~$\vv \pc \uu'$ are $\RRR$-equivalent. As right-reversing is complete, this implies that $\INV{\vv'}\pc \INV\uu \pc \vv \pc \uu'$ right-reverses to an empty path. Decompose the associated reversing grid as shown aside. The assumption that the reversing of $\INV{\vv'}\pc \INV\uu \pc \vv \pc \uu'$ successfully terminates implies in particular that the reversing of~$\INV\uu \pc \vv$ successfully terminates, that is, the paths $\RC^*(\uu, \vv)$ and $\RC^*(\vv, \uu)$ are defined. 
\hfill\begin{picture}(0,0)(-8,-5)
\pcline{->}(1,24)(14,24)
\taput{$\vv$}
\pcline{->}(16,24)(29,24)
\taput{$\uu'$}
\pcline{->}(1,12)(14,12)
\tbput{$\RC^*(\uu, \vv)$}
\pcline{->}(16,12)(29,12)
\pcline[style=double](1,0)(14,0)
\pcline[style=double](16,0)(29,0)
\pcline{->}(0,23)(0,13)
\tlput{$\uu$}
\pcline{->}(0,11)(0,1)
\tlput{$\vv'$}
\pcline{->}(15,23)(15,13)
\trput{$\RC^*(\vv, \uu)$}
\pcline{->}(15,11)(15,1)
\pcline[style=double](30,23)(30,13)
\pcline[style=double](30,11)(30,1)
\end{picture}

\rightskip0mm
The diagram then shows that $\cl{\uu\pc\vv'}$ is a right-multiple of $\cl{\uu \pc \RC^*(\uu, \vv)}$ in~$\PRESp\AAA\RRR$, so the latter, which depends only on~$\cl\uu$ and~$\cl\vv$, is a right-lcm of these elements.
\end{proof}

Applying the criterion of Lemma~\ref{L:RecGar}, we immediately deduce a method for recognizing Garside families in the right-Noetherian case. Actually, we obtain more: instead of just a YES/NO answer for a candidate-subfamily, we obtain the existence and a characterization of the smallest Garside family that includes the given family. Hereafter, for~$\BBB \subseteq \AAA^*$, we write $\cl\BBB$ for $\{\cl\ww \mid \ww \in \BBB\}$. 

\pagebreak

\begin{algo}{Smallest Garside family}
\label{A:SmallestGar}
\begin{algorithmic}[1]
\CONTEXT{A right-Noetherian right-complemented presentation $(\AAA ; \RRR)$ for which right-reversing is complete}
\INPUT{A finite subfamily~$\BBB$ of $\AAA^*$ that includes~$\AAA$}
\OUTPUT{A subfamily of $\AAA^*$ that represents the smallest Garside family of $\PRESp\AAA\RRR$ that includes~$\BBB \cup \Id\AAA$ if the algorithm terminates successfully}
\STATE{enumerate $\BBB $ as $\{\ww_1 \wdots \ww_\nn\}$ and set $\BBBh:=[\ww_1 \wdots \ww_\nn]$}
\STATE{$\ii := 2$}
\WHILE{$\ii\le|\BBBh|$}
\FOR{$\jj := 1$ to $\ii-1$}
\IF{$\RC^*(\ww_\ii, \ww_\jj)$ is defined}\COMMENT{right-reversing may fail to terminate}
\STATE{\CALL{Include}{$\BBBh$, $\RC^*(\ww_\ii, \ww_\jj)$}}\label{A:SmallestGar:compl-1}
\STATE{\CALL{Include}{$\BBBh$, $\RC^*(\ww_\jj, \ww_\ii)$}}\label{A:SmallestGar:compl-2}
\STATE{\CALL{Include}{$\BBBh$, $\ww_\ii \pc \RC^*(\ww_\ii, \ww_\jj)$}}\label{A:SmallestGar:lcm-1}
\STATE{\CALL{Include}{$\BBBh$, $\ww_\jj \pc \RC^*(\ww_\jj, \ww_\ii)$}}\label{A:SmallestGar:lcm-2}
\ELSE
\RETURN{$\mathtt{fail}$}
\ENDIF
\ENDFOR
\STATE{$\ii := \ii+1$}
\ENDWHILE
\RETURN{$\BBBh$}
\medskip

\PROCEDURE{Include}{$\BBBh$, $\ww$}
\COMMENT{append $\ww$ to $\BBBh$, unless $\BBBh$ contains a path equivalent to $\ww$.}
\IF{
$\not\!\exists \ii\in\{1 \wdots |\BBBh|\} \text{ with } \RC^*(\ww, \ww_\ii) = \RC^*(\ww_\ii, \ww) = \ew_-$}\label{A:SmallestGar:equiv}
\STATE{append $w$ to $\BBBh$}
\ENDIF
\ENDPROCEDURE
\end{algorithmic}
\end{algo}

\begin{prop}
\label{P:SmallestGar}
Assume that $(\AAA ; \RRR)$ is a finite right-Noetherian right-compl\-emented presentation for which right-reversing is complete and $\BBB$ is a finite subfamily of~$\AAA^*$ that includes~$\AAA$. Then there exists a smallest finite Garside family of~$\CCC$ including~$\cl\BBB \cup \Id\CCC$ if and only if Algorithm~\ref{A:SmallestGar} successfully terminates, in which case the returned family of paths represents that Garside family.
\end{prop}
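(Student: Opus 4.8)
The plan is to reduce the statement to the recognition criterion of Lemma~\ref{L:RecGar}, reading off what the algorithm actually computes through the lcm‑by‑reversing formula of Lemma~\ref{L:Lcm}.

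First I would record the structure of $\CCC := \PRESp\AAA\RRR$. By Corollary~\ref{C:Cancel}, $\CCC$ is left‑cancellative; being positive, the presentation gives $\CCC$ no nontrivial invertible element, so a local right‑lcm, when it exists, is unique; $\CCC$ is right‑Noetherian by hypothesis; and by Lemma~\ref{L:Lcm} it admits local right‑lcms. So Lemma~\ref{L:RecGar} applies: a subfamily $\SSS$ of $\CCC$ is a Garside family iff $\SSS$ generates $\CCC$ and $\aa \lcm \bb$ and $\aa \under \bb$ lie in $\SSS \cup \Id\CCC$ for all $\aa, \bb$ in $\SSS$ admitting a common right‑multiple. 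Since $\AAA$ is finite, $\Id\CCC$ is finite and is contained in every Garside family under consideration; and since $\cl\BBB \supseteq \cl\AAA$ generates $\CCC$, the intersection of any two Garside families including $\cl\BBB \cup \Id\CCC$ again generates $\CCC$ and is again closed under $\lcm$ and $\under$, hence is again such a family. Therefore there is always a unique smallest Garside family $\SSS_0$ including $\cl\BBB \cup \Id\CCC$, namely the closure of $\cl\BBB \cup \Id\CCC$ under the partial operations $(\aa, \bb) \mapsto \aa \lcm \bb$ and $(\aa, \bb) \mapsto \aa \under \bb$; a finite Garside family including $\cl\BBB \cup \Id\CCC$ exists if and only if $\SSS_0$ is finite, in which case $\SSS_0$ is the smallest one. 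So the proposition amounts to: $\SSS_0$ is finite iff Algorithm~\ref{A:SmallestGar} terminates successfully, and then $\BBBh$ represents $\SSS_0$.

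Next I would check that Algorithm~\ref{A:SmallestGar} manipulates a family of paths representing a growing part of $\SSS_0$. The invariant to keep along the run is $\cl\BBBh \subseteq \SSS_0$ together with ``$\BBBh$ contains no two $\equivp_\RRR$‑equivalent paths''. The first half is preserved by each call to \textsc{Include}: whenever the test ``$\RC^*(\ww_\ii, \ww_\jj)$ is defined'' succeeds, Lemma~\ref{L:Lcm} says that $\cl{\ww_\ii}$ and $\cl{\ww_\jj}$ admit a common right‑multiple and that the four paths passed to \textsc{Include} on lines~\ref{A:SmallestGar:compl-1}--\ref{A:SmallestGar:lcm-2} represent $\cl{\ww_\ii} \under \cl{\ww_\jj}$, $\cl{\ww_\jj} \under \cl{\ww_\ii}$, and (twice) $\cl{\ww_\ii} \lcm \cl{\ww_\jj}$, all of which lie in $\SSS_0$ by closure and the inductive hypothesis $\cl{\ww_\ii}, \cl{\ww_\jj} \in \SSS_0$. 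The second half uses completeness of right‑reversing, which makes the test on line~\ref{A:SmallestGar:equiv} exact: $\RC^*(\ww, \ww_\ii) = \RC^*(\ww_\ii, \ww) = \ew_-$ holds precisely when $\cl\ww = \cl{\ww_\ii}$. Finally, the nested loops eventually confront every pair of entries ever placed in $\BBBh$, so if the main loop exits then $\cl\BBBh$ is closed under $\lcm$ and $\under$ and contains $\cl\AAA$, whence $\cl\BBBh \cup \Id\CCC$ is a Garside family (Lemma~\ref{L:RecGar}) including $\cl\BBB \cup \Id\CCC$ and contained in $\SSS_0$; being the smallest, $\SSS_0 = \cl\BBBh \cup \Id\CCC$. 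The two implications then drop out: if the algorithm terminates successfully, $\BBBh$ is finite and represents $\SSS_0 = \cl\BBBh \cup \Id\CCC$, so $\SSS_0$ is finite; conversely, if $\SSS_0$ is finite, the no‑duplication half of the invariant gives $|\BBBh| \le |\SSS_0|$ throughout, so the main loop runs through finitely many values of $\ii$ and, provided the right‑reversings it invokes all terminate, it halts returning $\BBBh$. So everything reduces to the termination of the right‑reversings performed on pairs of entries of $\BBBh$.

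That last point is the main obstacle: a right‑reversing may run forever (Example~\ref{X:BaumslagSolitar}), and one must exclude this on all inputs $\INV{\ww_\ii} \pc \ww_\jj$ and $\INV\ww \pc \ww_\kk$ processed, using only the finiteness of $\SSS_0$. The natural device is Lemma~\ref{L:Terminating}: the family made of $\AAA$, the empty paths, and the paths produced by the algorithm is finite once $|\BBBh|$ is bounded and, by Lemma~\ref{L:Lcm} together with the closure of $\SSS_0$ under $\lcm$ and $\under$, is closed under $\RC^*$ on every pair on which $\RC^*$ is defined, so that Lemma~\ref{L:Terminating}\ITEM2 yields termination — with a quadratic bound — of each individual reversing. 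The delicate bookkeeping that remains is to handle the reversings that stop with the output $\mathtt{fail}$: by Lemma~\ref{L:Lcm} and completeness, such a $\mathtt{fail}$ records that two already‑constructed elements admit no common right‑multiple, and one must tie this to the behaviour of the closure so that a successful run genuinely outputs $\SSS_0$ while a $\mathtt{fail}$ returned by the main loop is traced back to the obstruction it witnesses. Organising cleanly the interplay between ``terminates with a path'', ``terminates with $\mathtt{fail}$'', and ``loops'' — rather than any single computation — is where the real work of the proof lies.
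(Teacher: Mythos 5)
Your proposal follows the paper's own proof: eligibility for Lemma~\ref{L:RecGar} via Lemma~\ref{L:Lcm}, identification of the smallest Garside family as the closure of $\cl\BBB\cup\Id\CCC$ under right-lcm and right-complement, and the observation that Algorithm~\ref{A:SmallestGar} enumerates exactly this closure, with the test on line~\ref{A:SmallestGar:equiv} made exact by completeness of right-reversing. The ``delicate bookkeeping'' you defer in your last paragraph (termination versus $\mathtt{fail}$ versus looping of the individual reversings) is not carried out in the paper either---its proof stops essentially where your second paragraph ends---so your attempt matches, and in places exceeds, the level of detail of the published argument.
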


\begin{proof}
By Lemma~\ref{L:Lcm}, any two elements of~$\PRESp\AAA\RRR$ that admit a common right-multiple admit a right-lcm, so $\PRESp\AAA\RRR$ is eligible for Lemma~\ref{L:RecGar}. Hence a subfamily~$\SSS$ that includes~$\AAA \cup \Id\AAA$ is a Garside family if and only if, for all~$\aa, \bb$ in~$\SSS$ with a common right-multiple, $\aa \lcm \bb$ and $\aa \under \bb$ belong to~$\SSS$. So, by Lemma~\ref{L:Lcm} again, a family~$\BBBh$ of $\AAA$-paths represents a Garside family if and only if, for all~$\uu, \vv$ in~$\BBBh$, the paths~$\uu \pc \RC^*(\uu, \vv)$ and $\RC^*(\uu, \vv)$ are $\RRR$-equivalent to at least one element of~$\BBBh$. It follows that there exists a smallest Garside family that includes~$\cl\BBB$, namely the smallest family of words that includes~$\BBB$ and is such that, for all~$\uu, \vv$ in~$\BBBh$, the paths $\uu \pc \RC^*(\uu, \vv)$ and $\RC^*(\uu, \vv)$ are $\RRR$-equivalent to elements of~$\BBBh$. 

That smallest subfamily~$\BBBh$ is precisely what Algorithm~\ref{A:SmallestGar} computes. Indeed, what the latter does is to consider systematically all pairs $(\ww_\ii, \ww_\jj)$ and, for each of them, test whether $\RC^*(\ww_\ii, \ww_\jj)$ (lines~\ref{A:SmallestGar:compl-1} and~\ref{A:SmallestGar:compl-2}) as well as $\ww_\ii \pc \RC^*(\ww_\ii, \ww_\jj)$ (lines~\ref{A:SmallestGar:lcm-1} and~\ref{A:SmallestGar:lcm-2}) are $\RRR$-equivalent to some existing path~$\ww_\ii$ of the list and, if not, append the missing paths to the list. Note that, as right-reversing is complete, two paths $\ww, \ww'$ are equivalent if and only if $\RC^*(\ww, \ww')$ and $\RC^*(\ww', \ww)$ exist and are empty (line~\ref{A:SmallestGar:equiv}).
\end{proof}

\begin{exam}
\label{X:BraidMinGar}
Consider again the presentation $(\tta, \ttb ; \tta\ttb\tta = \ttb\tta\ttb)$. Running Algorithm~\ref{A:SmallestGar} on the family $\{\tta, \ttb\}$ yields $\{\ew, \tta, \ttb, \tta\pc\ttb, \ttb\pc\tta, \tta\pc\ttb\pc\tta\}$, a family of words representing the well known smallest Garside family $\{1, \tta, \ttb, \tta\ttb, \ttb\tta, \Delta\}$ of the braid mon\-oid~$B_3^+$ that includes~$1$; here and everywhere in the sequel, we use $\Delta$ for~$\tta\ttb\tta$.
\end{exam}

\begin{rema}\label{R:SmallestGarNotTerminating}
If right-reversing is not always terminating, that is, if some elements of the considered category have no common right-multiple although they share the same source, Algorithm~\ref{A:SmallestGar} may never terminate. Even in the case when right-reversing is always terminating, it might happen that Algorithm~\ref{A:SmallestGar} does not terminate: for instance, in the case of $(\tta, \ttb ; \tta \ttb^2 = \ttb \tta)$, starting from $\{\tta, \ttb\}$, even Algorithm~\ref{A:SmallestGar} does not terminate in finite time: indeed, we have then $\RC^*(\tta, \ttb^\nn) = \ttb^{2\nn}$, and the family $\widehat{\{\tta, \ttb\}}$ is the infinite family $\{\ew, \tta\} \cup \{\ttb^{2^\nn} \mid \nn \ge 0\}$. However, it can be shown \cite[Lemme~1.9]{Dgk} that, when the closure under~$\under$ is finite, then the closure under~$\under$ and~$\lcm$ is necessarily finite. 
\end{rema}

\subsection{Further questions}
\label{SS:Further}

In the previous subsections, we showed how to establish left-cancell\-ativity and to recognize Garside families starting from a presentation. We now briefly address further relevant questions, namely recognizing Ore categories, establishing the existence of lcms, and recognizing strong and bounded Garside families. 

\subsubsection*{Establishing right-cancellativity} 

Right-reversing is not suitable here, and no practical method is known for establishing right-cancellativity directly. However an obvious solution is to consider the opposite category and the opposite presentation, that is, to switch left and right everywhere, and apply the previous methods. Equivalently, we can work with the initial presentation and apply the symmetric counterpart of right-reversing, naturally called \emph{left-reversing}: whereas right-reversing consists in iteratively replacing subpaths of the form~$\INV\aa \pc \bb$ with $\RC(\aa, \bb) \pc \INV{\RC(\bb, \aa)}$, left-reversing consists in replacing subpaths of the form~$\bb \pc \INV\aa$ with $\INV{\LC(\aa, \bb)} \pc {\LC(\bb, \aa)}$ when $\LC$ is a \emph{left-complement} on the considered precategory~$\AAA$, namely a partial map of~$\AAA^2$ to~$\AAA^*$ such that $\LC(\aa, \aa)$ is empty for every~$\aa$ in~$\AAA$ and that, if $\LC(\aa, \bb)$ is defined, then $\aa$ and $\bb$ have the same target, $\LC(\bb, \aa)$ is defined, and both $\seqq{\LC(\aa, \bb)}\bb$ and $\seqq{\LC(\bb, \aa)}\aa$ are defined and have the same source.  In terms of diagrams, this corresponds to constructing a grid starting from the bottom and the right, instead of from the top and the left. Then the counterpart of Proposition~\ref{C:Cancel} gives a criterion for establishing that a category~$\PRESp\AAA\RRR$ is right-cancellative.

\subsubsection*{Establishing the existence of common multiples}

Here two different methods can be used. If $(\AAA ; \RRR)$ is a presentation associated with a right-complement~$\RC$, then the existence of common right-multiples in~$\PRESp\AAA\RRR$ is directly connected with the termination of right-reversing since, as proved in Lemma~\ref{L:Lcm}, two elements~$\cl\uu$ and~$\cl\vv$ admit a common right-multiple (and even a right-lcm) in~$\PRESp\AAA\RRR$ if and only if the right-$\RC$-reversing of~$\INV\uu \pc \vv$ successfully terminates in finite time. We deduce the following sufficient condition:

\begin{prop}
If $(\AAA ; \RRR)$ is a presentation associated with a right-complement~$\RC$ and Algorithm~\ref{A:Termination} running on~$(\AAA ; \RRR)$ succeeds, any two elements of~$\PRESp\AAA\RRR$ with the same source admit a common right-multiple.
\end{prop}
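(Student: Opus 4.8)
The plan is to combine Lemma~\ref{L:Terminating} with Lemma~\ref{L:Equiv}: successful termination of Algorithm~\ref{A:Termination} produces exactly a finite family of the kind whose existence forces right-$\RC$-reversing to terminate on every input, and each terminating reversing grid visibly exhibits a common right-multiple.

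First I would make precise what ``Algorithm~\ref{A:Termination} succeeds'' means. If the algorithm halts and returns a family~$\BBB$, then $\BBB$ is a finite subfamily of~$\AAA^*$ that contains~$\AAA$ and satisfies $\BBB \cup \{\RC^*(\uu, \vv) \mid \uu, \vv \in \BBB\} = \BBB$; in particular, for all $\uu, \vv$ in~$\BBB$ with the same source, $\RC^*(\uu, \vv)$ is defined and lies in~$\BBB$. Unravelling Definition~\ref{D:RightRev}, this says that for all such $\uu, \vv$ there exist $\uu', \vv'$ in~$\BBB$ with $\INV\uu \pc \vv \rev_\RC \vv' \pc \INV{\uu'}$, which is precisely the hypothesis of Lemma~\ref{L:Terminating}\ITEM1; since $\BBB$ is moreover finite, part~\ITEM2 applies as well. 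Hence right-$\RC$-reversing terminates successfully on every valid input (even in quadratic time with all intermediate paths of linear length, though only termination is needed here).

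Next I would deduce the statement. Let $\gg_1, \gg_2$ be elements of~$\PRESp\AAA\RRR$ with the same source~$\xx$, and pick $\AAA$-paths $\uu_1, \uu_2$ with source~$\xx$ representing them (an empty path~$\ew_\xx$ if one of the~$\gg_\ii$ is an identity, a case that is trivial in any event). Then $\INV{\uu_1} \pc \uu_2$ is a signed $\AAA$-path, so by the previous step its right-$\RC$-reversing terminates successfully, say $\INV{\uu_1} \pc \uu_2 \rev_\RC \vv \pc \INV\ww$ with $\vv = \RC^*(\uu_1, \uu_2)$ and $\ww = \RC^*(\uu_2, \uu_1)$. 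By Lemma~\ref{L:Equiv}, the paths $\uu_1 \pc \vv$ and $\uu_2 \pc \ww$ are $\RRR$-equivalent, so their common class $\cl{\uu_1 \pc \vv} = \cl{\uu_2 \pc \ww}$ is a right-multiple of $\gg_1 = \cl{\uu_1}$ and of $\gg_2 = \cl{\uu_2}$ in~$\PRESp\AAA\RRR$. (If in addition right-reversing were complete, Lemma~\ref{L:Lcm} would upgrade this to the existence of a right-lcm, but that is not what is claimed.)

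There is no genuine obstacle in this argument; it is a direct assembly of results already established. The only points deserving a line of care are the translation of the termination of the \emph{closure} procedure Algorithm~\ref{A:Termination} into the fixed-family hypothesis of Lemma~\ref{L:Terminating} --- noting that ``$\BBB$ closed under~$\RC^*$'' presupposes that $\RC^*$ is defined on all same-source pairs drawn from~$\BBB$, which is part of what successful termination provides --- and the harmless remark that the source is well defined on~$\PRESp\AAA\RRR$, so that two elements with the same source lift to $\AAA$-paths with the same source.
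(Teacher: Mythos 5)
Your proof is correct and follows essentially the route the paper intends: successful termination of Algorithm~\ref{A:Termination} yields a finite family satisfying the hypothesis of Lemma~\ref{L:Terminating}, so right-$\RC$-reversing terminates successfully on every input, and the resulting grid for $\INV{\uu_1}\pc\uu_2$ exhibits a common right-multiple. Your use of Lemma~\ref{L:Equiv} rather than Lemma~\ref{L:Lcm} is the right call, since it avoids any completeness assumption, which indeed is not part of the statement.
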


Another approach can be used once a Garside family~$\SSS$ is known. It is based on the following result, which reduces the existence of common multiples for arbitrary elements to the existence of common multiples inside the Garside family.

\begin{prop}
\label{P:CommRMult}
Assume that $\SSS$ is a Garside family in a left-cancellative category~$\CCC$. Then any two elements of~$\CCC$ with the same source admit a common right-multiple if and only if any two elements of~$\SSS$ with the same source admit one.
\end{prop}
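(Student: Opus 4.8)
The plan is to prove both implications. The forward implication ("if any two elements of $\CCC$ admit a common right-multiple, so do any two elements of $\SSS$") is trivial since $\SSS \subseteq \CCC$, so the content is in the converse. First I would fix two elements $\ff, \gg$ of $\CCC$ with the same source and, using the Garside hypothesis, pick $\SSS$-normal decompositions $\seqqq{\aa_1}\etc{\aa_\pp}$ of $\ff$ and $\seqqq{\bb_1}\etc{\bb_\qq}$ of $\gg$. The idea is to reduce the problem, entry by entry, to the assumed statement that any two elements of $\SSS$ with the same source have a common right-multiple, building a common right-multiple of $\ff$ and $\gg$ as the evaluation of a suitable rectangular grid whose edges lie in $\SSSs$.

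The main induction would be on $\pp + \qq$, the total length of the two normal decompositions. The base cases ($\pp = 0$ or $\qq = 0$) are immediate: if one of the elements is an identity, the other is a common right-multiple. For the inductive step, I would first handle the two leading entries $\aa_1, \bb_1$, which lie in $\SSSs$ and share the source of $\ff$ and $\gg$. By the hypothesis applied inside $\SSS$ (upgrading from $\SSS$ to $\SSSs$ via Lemma~\ref{L:GarClosed}), there is a common right-multiple $\cc'$ of $\aa_1$ and $\bb_1$ with $\cc'$ and the cofactors $\aa_1', \bb_1'$ (defined by $\aa_1 \bb_1' = \bb_1 \aa_1' = \cc'$) all lying in $\SSSs$. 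This fills the first square of the grid. Now the two sides of this square give two new $\CCC$-paths: $\seqqq{\aa_1'}{\aa_2}{\etc}\,$-type continuations obtained by pushing $\aa_2 \wdots \aa_\pp$ along $\bb_1'$ and $\bb_2 \wdots \bb_\qq$ along $\aa_1'$. Concretely, $\bb_1' \cdot (\aa_2 \pdots \aa_\pp)$ and $\aa_1' \cdot (\bb_2 \pdots \bb_\qq)$ are elements of $\CCC$ with the same source (the target of $\cc'$); by the induction hypothesis applied to this pair—whose normal decompositions have total length at most $(\pp-1)+(\qq-1)+2 = \pp+\qq$, and strictly less once one accounts for the fact that $\aa_1', \bb_1'$ are single elements of $\SSSs$—they admit a common right-multiple, and prepending $\cc'$ yields a common right-multiple of $\ff$ and $\gg$.

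The delicate point, and the step I expect to be the main obstacle, is making the induction parameter genuinely decrease: after filling the first square we have replaced the pair $(\ff, \gg)$ of "lengths $(\pp, \qq)$" by a pair built from paths of lengths roughly $(1 + (\pp-1), 1 + (\qq-1))$, i.e.\ no obvious decrease. The fix is to not induct on normal-form length of the \emph{given} pair but to set up the grid argument more carefully: one shows by induction on $\pp$ alone (for all $\qq$ simultaneously) that $\aa_1 \pdots \aa_\pp$ and \emph{any} element of $\CCC$ sharing its source admit a common right-multiple, the inductive step consuming the single entry $\aa_1 \in \SSSs$ against the first normal entry of the other element via Lemma~\ref{L:GarClosed}, and then invoking the induction hypothesis for $\aa_2 \pdots \aa_\pp$ against the (still arbitrary) continuation. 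This way the parameter $\pp$ strictly decreases, the role of $\qq$ being absorbed into the "arbitrary element" quantifier. Here one also needs that a right-multiple of a right-multiple is a right-multiple (transitivity of $\div$, which holds by left-cancellativity as recalled after Definition~\ref{D:Selector}), to assemble the nested common multiples, and that elements of $\SSSs$ sharing a source have a common right-multiple whenever elements of $\SSS$ do—again Lemma~\ref{L:GarClosed}, together with the observation that invertible prefactors do not affect the existence of common right-multiples.
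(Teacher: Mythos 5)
Your plan is essentially the paper's proof: necessity is trivial, and for sufficiency both arguments upgrade the hypothesis from~$\SSS$ to~$\SSSs$ via Lemma~\ref{L:GarClosed} (together with the remark that invertible factors do not affect the existence of common right-multiples) and then tile a $\pp\times\qq$ rectangular grid with edges in~$\SSSs$ built on $\SSSs$-decompositions of~$\ff$ and~$\gg$; note that the paper does not even use normal decompositions, only the fact that $\SSSs$ generates~$\CCC$, so that part of your setup is superfluous.

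The one step you should tighten is the inductive step of your final scheme. With the statement ``for every~$\gg$ in~$\CCC$ with the same source, the element $\aa_1\pdots\aa_\pp$ and~$\gg$ admit a common right-multiple'' proved by induction on~$\pp$, consuming $\aa_1$ against only the first entry~$\bb_1$ of~$\gg$ and then invoking the induction hypothesis does not close: after the single square $\aa_1\bb'_1=\bb_1\aa'_1$ and an application of the induction hypothesis to $\aa_2\pdots\aa_\pp$ versus~$\bb'_1$, you still need a common right-multiple of an element of the form $\aa'_1\ww$ (with $\ww$ uncontrolled) and $\bb_2\pdots\bb_\qq$, and for that sub-problem no parameter has decreased---this is exactly the non-termination you worried about. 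What the step really requires is the case $\pp=1$ as a separate sub-lemma: a single element~$\aa$ of~$\SSSs$ and an arbitrary~$\gg$ admit a common right-multiple of the form $\aa\gg'=\gg\aa'$, proved by induction on the length of an $\SSSs$-decomposition of~$\gg$; it is precisely here that one needs the cofactor provided by Lemma~\ref{L:GarClosed} to lie again in~$\SSSs$ (not merely in~$\CCC$), so that the next square of the row can be filled. Once this row lemma is available, your outer induction on~$\pp$ goes through verbatim (and there the cofactors need not be tracked at all), and this nested induction is exactly the inductive construction of the grid that the paper's proof invokes in one sentence.
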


\begin{proof}
Obviously the condition is necessary. On the other hand, assume that any two elements of~$\SSS$ with the same source admit a common right-multiple. By Lemma~\ref{L:GarClosed}, for all~$\aa, \bb$ in~$\SSSs$ with the same source, there exist~$\aa', \bb'$ in~$\SSSs$ satisfying $\aa \bb' = \bb \aa'$. Now, consider arbitrary elements~$\ff, \gg$ of~$\CCC$ with the same source. As $\SSSs$ generates~$\CCC$, there exist $\aa_1 \wdots \aa_\pp$ and $\bb_1 \wdots \bb_\qq$ in~$\SSSs$ satisfying $\ff = \aa_1 \pdots \aa_\pp$ and $\gg = \bb_1 \pdots \bb_\qq$. Using the result above, one inductively constructs a $\pp \times \qq$ rectangular grid based on $\aa_1 \wdots \aa_\pp$ and $\bb_1 \wdots \bb_\qq$ with edges in~$\SSSs$, and the diagonal of the grid (as well as any path from the top--left corner to the bottom--right corner) represents a common right-multiple of~$\ff$ and~$\gg$.
\end{proof}

Note that the construction of a grid in the proof of Proposition~\ref{P:CommRMult} is directly reminiscent of a right-reversing process---more exactly, of its non-deterministic extension alluded to in Remark~\ref{R:NonDet} as there is no uniqueness of the elements called~$\aa', \bb'$ in general.

It should be clear that Proposition~\ref{P:CommRMult} directly leads to an effective method for deciding the existence of common right-multiples in the case of a finite Garside family, see Example~\ref{X:BraidFinal} below.

A symmetric argument is possible for common left-multiples. However, as the definition of a Garside family is not invariant under exchanging left and right, the result takes a different form. In particular, it only gives a sufficient condition that need not be necessary in general.

\begin{prop}
\label{P:CommLMult}
Assume $\SSS$ is a Garside family in a left-cancellative category~$\CCC$ and, for all~$\aa, \bb$ in~$\SSSs$ with the same target, there exist~$\aa', \bb'$ in~$\SSSs$ satisfying $\aa' \bb = \bb' \aa$. Then any two elements of~$\CCC$ with the same target admit a common left-multiple.
\end{prop}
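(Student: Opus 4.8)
The plan is to adapt the proof of Proposition~\ref{P:CommRMult}, building a rectangular grid ``from the bottom-right corner'' rather than ``from the top-left corner''. One should first note that Proposition~\ref{P:CommRMult} cannot simply be invoked for the opposite category, since a Garside family of~$\CCC$ need not be a (right-)Garside family of~$\CCC$ read in reverse; so a direct argument is needed. On the positive side, the appeal to Lemma~\ref{L:GarClosed} that occurs in the proof of Proposition~\ref{P:CommRMult} will here be unnecessary, because the hypothesis is already stated for~$\SSSs$ and delivers witnesses~$\aa', \bb'$ that again lie in~$\SSSs$.

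First I would record that $\SSSs$ generates~$\CCC$: as $\SSS$ is a Garside family, every element of~$\CCC$ has an $\SSS$-normal decomposition, all of whose entries lie in~$\SSSs$ by definition. Hence, starting from two elements~$\ff, \gg$ of~$\CCC$ with a common target~$\yy$, one can write $\ff = \aa_1 \pdots \aa_\pp$ and $\gg = \bb_1 \pdots \bb_\qq$ with all~$\aa_\ii$ and~$\bb_\jj$ in~$\SSSs$; the targets of~$\aa_\pp$ and of~$\bb_\qq$ are then~$\yy$.

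Next I would construct a commutative $\pp{\times}\qq$ grid with nodes~$N_{\ii,\jj}$ for $0 \le \ii \le \pp$ and $0 \le \jj \le \qq$, with $N_{\pp,\qq} = \yy$, whose bottom edge $N_{\pp,0} \to N_{\pp,\qq}$ reads~$\bb_1 \wdots \bb_\qq$ and whose right edge $N_{0,\qq} \to N_{\pp,\qq}$ reads~$\aa_1 \wdots \aa_\pp$, and fill in the remaining edges from the bottom-right corner towards the top-left one---say by treating the columns $\jj = \qq, \qq{-}1 \wdots 1$ in turn and, within each column, the rows from bottom to top. When the square with corners $N_{\ii-1,\jj-1}, N_{\ii-1,\jj}, N_{\ii,\jj-1}, N_{\ii,\jj}$ is reached, its right and bottom edges are already labeled, by~$\aa_\ii$ \resp $\bb_\jj$ if they lie on the boundary of the grid and by edges produced at an earlier step otherwise; by induction they lie in~$\SSSs$ and they have the same target~$N_{\ii,\jj}$, so the hypothesis applies to them and yields~$\aa', \bb'$ in~$\SSSs$ with $\aa' \bb = \bb' \aa$, which I would use to label the left edge $N_{\ii-1,\jj-1}\to N_{\ii,\jj-1}$ and the top edge $N_{\ii-1,\jj-1}\to N_{\ii-1,\jj}$ of the square, keeping it commutative and all new edges in~$\SSSs$. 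Once the grid is complete, reading the top row as a path~$\gg'$ and the left column as a path~$\ff'$, commutativity of the grid gives $\gg' \ff = \ff' \gg$, an element of~$\CCC$ that is at once a left-multiple of~$\ff$ and of~$\gg$.

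I do not expect a genuine obstacle: the construction is routine, and the only points that require a little care are that the bottom and right edges of each square do share a target (both point into the bottom-right node of that square, so the hypothesis, which speaks of pairs with the same target, is indeed applicable) and that the square-filling never leaves~$\SSSs$---which is precisely why the hypothesis is stated for~$\SSSs$ and asked to return witnesses in~$\SSSs$. Finally I would emphasize, as the paragraph preceding the statement already does, that only sufficiency is being claimed: this construction does not yield a necessary condition, the absence of left-right symmetry in the notion of a Garside family being responsible for the discrepancy with Proposition~\ref{P:CommRMult}.
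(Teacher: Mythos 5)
Your proposal is correct and follows essentially the same route as the paper, whose proof of Proposition~\ref{P:CommLMult} is precisely the symmetric grid construction started from the bottom and the right, with the hypothesis on~$\SSSs$ playing the role that Lemma~\ref{L:GarClosed} played in Proposition~\ref{P:CommRMult}. Your bookkeeping (generation by~$\SSSs$, the filling order of the squares, and the assignment of $\aa'$ and $\bb'$ to the left and top edges) is accurate, so nothing needs to be added.
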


The proof is symmetric to that of Proposition~\ref{P:CommRMult}, the inductive step consisting now in constructing a rectangular grid starting from the bottom and the right. Again Proposition~\ref{P:CommLMult} leads to an effective method for deciding the existence of common left-multiples in the case of a finite Garside family, see Example~\ref{X:BraidFinal}.

\subsubsection*{Establishing the existence of lcms}

As for the existence of right-lcms (their computation will be addressed in Subsection~\ref{SS:Lcm}), we shall just mention an algorithmically important consequence of Lemma~\ref{L:GarClosed}: 

\begin{prop}
\label{P:GarLcm}
Assume that $\SSS$ is a Garside family in a left-cancellative category~$\CCC$. Then $\CCC$ admits right-lcms (\resp local right-lcms) if and only if any two elements~$\aa, \bb$ of~$\SSSs$ with the same source (\resp that admit a common right-multiple in~$\SSSs$) admit a right-lcm~$\cc$ inside~$\SSSs$, this meaning that $\cc$ belongs to~$\SSSs$ and every common right-multiple of~$\aa$ and~$\bb$ that lies in~$\SSSs$ is a right-multiple of~$\cc$.
\end{prop}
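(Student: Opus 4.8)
The plan is to establish the two implications separately; the ``only if'' part reduces quickly to Lemma~\ref{L:GarClosed}, while the ``if'' part is the substance.

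\emph{The ``only if'' direction.} Suppose $\CCC$ admits right-lcms (\resp local right-lcms) and let $\aa, \bb$ lie in~$\SSSs$ with the same source (\resp admitting a common right-multiple). They then have a genuine right-lcm~$\cc$ in~$\CCC$, and every common right-multiple of~$\aa$ and~$\bb$ lying in~$\SSSs$ is a fortiori a right-multiple of~$\cc$, so the only point to check is that $\cc$ may be taken in~$\SSSs$. Applying Lemma~\ref{L:GarClosed} to the common right-multiple~$\cc$ of the elements~$\aa, \bb$ of~$\SSSs$ yields a common right-multiple~$\cc'$ of~$\aa$ and~$\bb$ lying in~$\SSSs$ such that $\cc$ is a right-multiple of~$\cc'$; conversely, $\cc'$ being a common right-multiple of~$\aa$ and~$\bb$ and $\cc$ their right-lcm, $\cc$ divides~$\cc'$, whence $\cc \eqir \cc'$. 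As $\SSSs$ is closed under right-multiplication by invertible elements, $\cc$ lies in~$\SSSs$.

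\emph{The ``if'' direction, reduction to~$\SSSs$.} Assume now the right-hand condition and let $\aa, \bb$ be in~$\SSSs$ with the same source (\resp admitting a common right-multiple in~$\CCC$). I would first show they admit a \emph{genuine} right-lcm, lying in~$\SSSs$ together with a pair of complementary factors. In the local case, use Lemma~\ref{L:GarClosed} to replace a common right-multiple in~$\CCC$ by one lying in~$\SSSs$, so that the hypothesis applies and gives a right-lcm~$\cc$ of~$\aa, \bb$ inside~$\SSSs$. Then $\cc$ is a genuine right-lcm: any common right-multiple~$\dd$ of~$\aa, \bb$ in~$\CCC$ dominates, by Lemma~\ref{L:GarClosed}, a common right-multiple~$\dd'$ lying in~$\SSSs$ together with factors~$\aa', \bb'$ with $\aa\bb' = \bb\aa' = \dd'$, whence $\cc \dive \dd' \dive \dd$ by the defining property of~$\cc$. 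Taking $\dd = \dd'$ gives $\cc \eqir \dd'$, so a pair of complementary factors of~$\cc$ also lies in~$\SSSs$.

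\emph{The ``if'' direction, from~$\SSSs$ to~$\CCC$.} For this I would use the standard recursive rule for lcms in a left-cancellative category: for composable~$\ff, \gg_1, \gg_2$, the elements~$\ff$ and~$\gg_1 \gg_2$ admit a right-lcm if and only if $\ff$ and~$\gg_1$ do---write $\ff \lcm \gg_1 = \gg_1 \hh$---and $\hh$ and~$\gg_2$ do, in which case $\ff \lcm (\gg_1 \gg_2) = \gg_1 (\hh \lcm \gg_2)$; this is a two-line left-cancellation argument. Since $\SSS$ is a Garside family, $\SSSs$ generates~$\CCC$, so any two elements with the same source write as $\ff = \aa_1 \pdots \aa_\pp$ and $\gg = \bb_1 \pdots \bb_\qq$ with entries in~$\SSSs$. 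One first treats $\pp = 1$ by induction on~$\qq$: the recursive rule reduces $\aa_1 \lcm (\bb_1 \pdots \bb_\qq)$ to $\aa_1 \lcm \bb_1$---which exists by the previous paragraph---followed by $\hh \lcm (\bb_2 \pdots \bb_\qq)$, where $\hh$ is a complementary factor chosen in~$\SSSs$, so the induction hypothesis applies (the empty product giving the trivial base case). The general case then follows by induction on~$\pp$, peeling off~$\aa_1$: the recursive rule reduces $(\bb_1 \pdots \bb_\qq) \lcm (\aa_1 \pdots \aa_\pp)$ to the case just settled followed by $\hh' \lcm (\aa_2 \pdots \aa_\pp)$, where the complementary factor~$\hh'$, read off the edges of the grid of local lcms just built, is a product of elements of~$\SSSs$, so the induction hypothesis applies. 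Concretely this amounts to filling a $\pp \times \qq$ rectangular grid one square at a time with the right-lcm of the current top and left edges---all edges remaining in~$\SSSs$---and reading off the bottom--right corner. For the ``\resp'' statement one carries a fixed common right-multiple of~$\ff$ and~$\gg$ through both inductions, so that at each appeal to the hypothesis the smaller elements are certified to have a common right-multiple, pushed into~$\SSSs$ via Lemma~\ref{L:GarClosed}.

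The main obstacle is bookkeeping in the last step rather than any new idea: one must ensure the ``complement'' produced by the recursive rule remains a product of elements of~$\SSSs$---so that the induction keeps applying---and, in the local version, that a common right-multiple is propagated so that every use of the hypothesis is licensed. Lemma~\ref{L:GarClosed} is precisely what keeps everything inside~$\SSSs$, and the recursive lcm rule is what turns the grid of local lcms into a genuine global right-lcm.
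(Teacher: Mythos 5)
Your argument is correct and follows essentially the same route as the paper: Lemma~\ref{L:GarClosed} to reduce the existence and characterization of right-lcms to common right-multiples lying in~$\SSSs$, and then the grid/induction argument of (the proof of) Proposition~\ref{P:CommRMult} to pass from~$\SSSs$ to arbitrary products of elements of~$\SSSs$. You merely spell out in more detail (the iterated lcm rule, the propagation of a common right-multiple in the local case) what the paper compresses into a citation of that proof.
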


\begin{proof}
By Lemma~\ref{L:GarClosed}, a common right-multiple of two elements of~$\SSSs$ must be a right-multiple of some common right-multiple that lies in~$\SSSs$. So it is enough to consider right-multiples lying in~$\SSSs$, and we deduce that two elements of~$\SSSs$ admit a right-lcm in~$\CCC$ if and only if they admit one inside~$\SSSs$. Then (the proof of) Proposition~\ref{P:CommRMult} enables one to go from~$\SSSs$ to products of elements of~$\SSSs$, that is, to arbitrary elements of~$\CCC$.
\end{proof}

We can thus establish the possible existence of right-lcms by exclusively inspecting right-multiples inside the considered Garside family.

\subsubsection*{Recognizing strong and bounded Garside families}

So far, we have obtained methods for possibly establishing that a category is left- or right-Ore. Deciding whether a given Garside family~$\SSS$ is strong is then easy when $\SSS$ is finite and an effective method is available for deciding, for all~$\ff, \gg, \hh$, whether~$\hh$ is a left-lcm of~$\ff$ and~$\gg$. Note that, by the counterpart of Lemma~\ref{L:Lcm}, such a method exists whenever the presentation is associated with a left-complement for which left-reversing is complete. Finally, deciding whether a finite Garside family is bounded is easy, as it only requires testing divisibility relations.

\begin{exam}
\label{X:BraidFinal}
We continue with the presentation $(\tta, \ttb;\tta\ttb\tta = \ttb\tta\ttb)$ of the mon\-oid~$\BP3$. In Example~\ref{X:BraidMinGar}, we identified the Garside family $\SSS = \{1, \tta, \ttb, \tta\ttb, \ttb\tta, \Delta\}$ (where we recall $\Delta$ is $\tta\ttb\tta$). It is clear that $\Delta$ is a right-multiple of every element of~$\SSS$. By Proposition~\ref{P:CommRMult}, we conclude that any two elements of~$\BP3$ admit a common right-multiple. Next, we easily check the existence of right-lcms inside~$\SSS$ and, by Proposition~\ref{P:GarLcm}, conclude that $\BP3$ admits right-lcms (alternatively one can invoke Lemma~\ref{L:Lcm} here as the monoid contains no nontrivial invertible element). 

Similarly, $\Delta$ is also a common left-multiple of each element of~$\SSSs$, which is $\SSS$, so, by Proposition~\ref{P:CommLMult}, we deduce that any two elements of~$\BP3$ admit a common left-multiple---whence a left-lcm by the symmetric counterpart of Lemma~\ref{L:Lcm}.

Finally, as every element of~$\SSS$ is both a left- and a right-divisor of~$\Delta$, we deduce that the Garside family~$\SSS$ is bounded by~$\Delta$. So, in particular, it is strong.
\end{exam}

\section{Recognizing Garside families, case of a germ}
\label{S:Germ}

We now consider the same questions as in Section~\ref{S:Pres}, namely establishing left-cancellativity and recognizing Garside families, when the ambient category is specified by giving either the complete multiplication table (Subsection~\ref{SS:Ext_mult}), or a germ, defined to be a fragment of the multiplication table that contains enough information to determine the latter unambiguously (Subsections~\ref{SS:Germ} and~\ref{S:RecGerm}). In this case, one obtains a compact method that enables one to treat both questions (left-cancellativity and Garside family) simultaneously.

\subsection{Using the complete multiplication table}
\label{SS:Ext_mult}

We first quickly consider the case of a finite category. Such a category can be specified by an exhaustive enumeration of its elements, its source and map functions, and its multiplication table, which all are finite data. All questions are then easy.

First, for a finite category with an explicit multiplication table, left-cancellativity can be decided by an exhaustive inspection. Next, the left-divisibility relation is finite, so checking the condition~\eqref{E:Greedy} is easy and, for every subfamily~$\SSS$ of~$\CCC$, one can construct a list of all $\SSS$-greedy paths of length two. \emph{A priori}, it is not clear that this is sufficient to recognize a Garside family, since the definition of the latter mentions no upper bound on the length of the considered decompositions. However, such a bound exists, which limits both the elements to be considered and the length of the candidate-decompositions.

\begin{lemm}\cite[Proposition~3.1]{Dif}
\label{L:LengthTwo}
A subfamily~$\SSS$ of a left-cancellative category~$\CCC$ is a Garside family if and only if $\SSSs$ generates~$\CCC$ and every element of~$(\SSSs)^2$ admits an $\SSS$-normal decomposition of length two.
\end{lemm}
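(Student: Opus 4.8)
The plan is to prove the two implications separately, the forward one being almost immediate and the converse being the substance of the statement. First, suppose $\SSS$ is a Garside family. Then by definition every element of~$\CCC$, in particular every element of~$(\SSSs)^2$, admits an $\SSS$-normal decomposition; moreover one knows (this is part of the general theory recalled in Subsection~\ref{SS:Gar}, e.g.\ it follows from the definition of $\SSS$-normality together with the fact that $\SSSs$ contains the invertible elements and is closed under right-multiplication by them) that $\SSSs$ generates~$\CCC$. The only point needing a word is that the $\SSS$-normal decomposition of an element of~$(\SSSs)^2$ can be taken of length exactly two: if $\seqqq{\gg_1}\etc{\gg_\pp}$ is $\SSS$-normal and represents $\cl{\seqq{\aa_1}{\aa_2}}$ with $\aa_1,\aa_2\in\SSSs$, one compares it with the (length two) decomposition $\seqq{\aa_1}{\aa_2}$; a length argument using right-Noetherianity is \emph{not} available in general here, so instead I would invoke Proposition~\ref{P:NormalUnique}: the path $\seqq{\aa_1}{\aa_2}$ need not be normal, but one can still normalise it and appeal to the essential uniqueness to see that the normal form has length at most two, padding with an identity-element entry if necessary (which lies in~$\CCCi\subseteq\SSSs$). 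Hence the condition holds.

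For the converse — the real content — assume $\SSSs$ generates~$\CCC$ and every element of $(\SSSs)^2$ has a length-two $\SSS$-normal decomposition; I must produce an $\SSS$-normal decomposition for an \emph{arbitrary} element $\gg$ of~$\CCC$. Since $\SSSs$ generates, write $\gg=\cl{\seqqq{\aa_1}\etc{\aa_\pp}}$ with all $\aa_\ii\in\SSSs$. The strategy is a \emph{local-to-global} normalisation: repeatedly replace adjacent pairs by their length-two $\SSS$-normal form, in the style of a bubble-sort / "domino" argument, until the whole path becomes $\SSS$-normal. Concretely, pick any $\ii$ with $\seqq{\aa_\ii}{\aa_{\ii+1}}$ not $\SSS$-greedy; by hypothesis the element $\cl{\seqq{\aa_\ii}{\aa_{\ii+1}}}$, which lies in $\cl{(\SSSs)^2}$, has a length-two $\SSS$-normal decomposition $\seqq{\aa'_\ii}{\aa'_{\ii+1}}$, and we substitute it for $\seqq{\aa_\ii}{\aa_{\ii+1}}$. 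This leaves $\gg$ unchanged and keeps all entries in~$\SSSs$. The key point — and this is where the first domino rule, Lemma~\ref{L:Domino1}, enters — is that performing such a replacement at position $\ii$ does not destroy greediness already achieved at position $\ii-1$: the commutative square formed by the old pair $\seqq{\aa_{\ii-1}}{\aa_\ii}$, the new pair, and the connecting morphism is exactly the configuration of Lemma~\ref{L:Domino1}, so greediness of $\seqq{\aa_{\ii-1}}{\aa_\ii}$ propagates to $\seqq{\aa_{\ii-1}}{\aa'_\ii}$. (Greediness at position $\ii+1$, i.e.\ of $\seqq{\aa_{\ii+1}}{\aa_{\ii+2}}$, is also preserved by a symmetric use of the rule; one organises the sweep — left to right, then repeat — so that this bookkeeping closes up.)

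The main obstacle is \emph{termination}: without a length-decreasing or Noetherian quantity, one must argue that the local-replacement process halts after finitely many steps. The way to handle this is to track a suitable combinatorial invariant. Since the total length $\pp$ is fixed, and each replacement is an "$\CCCi$-deformation" of a length-two window (by Proposition~\ref{P:NormalUnique} applied to the two normal decompositions of the window element), the successive paths all represent~$\gg$ and all have length~$\pp$; one shows that the normalisation procedure, carried out as a finite sequence of left-to-right sweeps, reaches a fixed point — each sweep either leaves the path unchanged (in which case it is already $\SSS$-normal and we are done) or strictly increases the length of the greedy prefix, a quantity bounded by~$\pp$. That every position stabilises is precisely what the domino rule guarantees, and after at most $\pp$ sweeps the entire path is $\SSS$-greedy; finally, replacing any trailing non-$\SSSs$ garbage is unnecessary since all entries already lie in~$\SSSs$, so the path is $\SSS$-normal. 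This exhibits the required $\SSS$-normal decomposition of~$\gg$ and completes the proof. (Alternatively, one may simply cite that this is exactly \cite[Proposition~3.1]{Dif}; I have sketched the argument one would reconstruct.)
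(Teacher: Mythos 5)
Your overall strategy for the converse---normalise locally via the length-two hypothesis and propagate greediness with the first domino rule---is indeed the right idea (the paper itself gives no proof, citing \cite[Proposition~3.1]{Dif}; the argument is embodied in its Algorithms~\ref{A:LeftMult} and~\ref{A:Normal} and Propositions~\ref{P:LeftMult} and~\ref{P:Normal}), but the two specific claims your sketch rests on are false. First, replacing a non-greedy pair $\seqq{\aa_\ii}{\aa_{\ii+1}}$ by a length-two $\SSS$-normal decomposition of the same element \emph{can} destroy greediness of the pair to its left, and Lemma~\ref{L:Domino1} does not apply to that configuration: the rule requires a commutative square in which the middle vertical edge together with the new top-left entry forms a greedy pair, whereas here the new entry $\aa'_\ii$ is a right-multiple of~$\aa_\ii$, which is the wrong shape of square. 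Concretely, in~$\BP3$ with $\SSS=\{1,\tta,\ttb,\tta\ttb,\ttb\tta,\Delta\}$, take the path $\tta\ttb \pc \ttb \pc \tta\ttb$: the pair $(\tta\ttb,\ttb)$ is $\SSS$-normal and $(\ttb,\tta\ttb)$ is not; replacing the latter by its normal form $(\Delta,1)$ yields $\tta\ttb \pc \Delta \pc 1$, in which $(\tta\ttb,\Delta)$ is no longer greedy. Second, your termination invariant (each left-to-right sweep strictly increases the length of the greedy prefix) also fails: one sweep transforms $\ttb \pc \tta \pc \ttb$ into $\ttb\tta \pc \ttb \pc 1$, and the greedy prefix still has length one, since $\Delta$ left-divides $\ttb\tta\cdot\ttb$ but not $\ttb\tta$. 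So the converse, as organised in your sketch, does not go through as justified.

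The gap is repaired by organising the computation as the paper does: induct on the length~$\pp$ of an $\SSSs$-decomposition $\seqqq{\aa_1}\etc{\aa_\pp}$ of~$\gg$; by the induction hypothesis replace $\seqqq{\aa_2}\etc{\aa_\pp}$ by an $\SSS$-normal path, then left-multiply by~$\aa_1$ using a single left-to-right sweep of the $\Square$-witness provided by the length-two hypothesis. In that scheme every application of Lemma~\ref{L:Domino1} is legitimate, because the bottom row of each square is a pair of the already-normal path, which is never modified, and the vertical edges are the successive remainders; termination is automatic (a finite triangular loop with $\pp(\pp-1)/2$ calls), so no extra invariant is needed. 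Your forward direction also needs adjusting: Proposition~\ref{P:NormalUnique} only compares two $\SSS$-\emph{normal} decompositions, so invoking it to bound the length of a normal decomposition of an element of~$(\SSSs)^2$ by comparison with the (generally non-normal) path $\seqq{\aa_1}{\aa_2}$ is circular. The correct easy argument is the one the paper records at the start of Subsection~\ref{SS:NormalLeft} as the existence of a $\Square$-witness on~$\SSSs$: it is proved directly from the greediness condition (which forces $\aa_1\dive\gg_1$ for the first entry $\gg_1$ of any normal decomposition), left-cancellativity, and the closure properties of~$\SSSs$, not from uniqueness.
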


We deduce

\begin{prop}
Assume that $\CCC$ is a finite category~$\CCC$ with $\nn$~elements, and that the source and target maps on $\CCC$ and the multiplication table of $\CCC$ are given. Then the following hold:

\ITEM1 It is decidable in time $O(n^2)$ whether $\CCC$ is left-cancellative.

\ITEM2 The left-divisibility relation can be computed in time $O(n^2)$.

\ITEM3 If $\CCC$ is left-cancellative and $\SSS$ is a subfamily of~$\CCC$, then it is decidable in time $O(n^6)$ whether $\SSS$ is a Garside family in $\CCC$.
\end{prop}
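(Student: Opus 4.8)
The plan is to handle the three parts separately, using throughout that, with $n = |\CCC|$, the source and target maps together with the multiplication table form input of size $O(n^2)$, so that a single pass over the table is an $O(n^2)$ operation and the identity-elements (hence the objects) are available directly.

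For~\ITEM1, I would rephrase left-cancellativity as the requirement that, for every~$\ff$ in~$\CCC$, the partial map $\gg \mapsto \ff\gg$ be injective on its domain. To test this for a fixed~$\ff$ I would scan the row of the multiplication table indexed by~$\ff$ --- at most~$n$ defined products --- recording the values in a scratch array of size~$n$ indexed by the elements of~$\CCC$ and reporting failure as soon as a value recurs; clearing the array between rows is again $O(n)$. This is $O(n)$ per~$\ff$, hence $O(n^2)$ overall. For~\ITEM2 I would keep an $n \times n$ Boolean matrix~$D$, initialised to $\mathtt{false}$ in $O(n^2)$ steps, and make one pass over the table, setting $D[\ff,\gg] := \mathtt{true}$ whenever $\ff\gg' = \gg$ occurs for some~$\gg'$; by the definition of~$\dive$, and because reflexivity is witnessed by the entries $\ff\cdot\id{\trg\ff} = \ff$, the resulting matrix is exactly the left-divisibility relation, and the pass visits at most~$n^2$ entries.

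The substance is~\ITEM3, where the point is that the definition of a Garside family speaks of $\SSS$-normal decompositions of \emph{a priori} unbounded length; the key is to invoke Lemma~\ref{L:LengthTwo}, which reduces the test to two bounded conditions: $\SSSs$ generates~$\CCC$, and every element of~$(\SSSs)^2$ admits an $\SSS$-normal decomposition of length two. I would proceed in four steps. First, compute the set~$\CCCi$ of invertible elements: since $\CCC$ is left-cancellative, $\ee$ is invertible exactly when some product~$\ee\ee'$ equals~$\id{\src\ee}$, which one reads off the row of~$\ee$, so $O(n^2)$ in total; then compute the membership predicate of $\SSSs = \SSS\CCCi \cup \CCCi$ by forming all products~$\aa\ee$ with $\aa\in\SSS$ and $\ee\in\CCCi$, again $O(n^2)$. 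Second, test whether $\SSSs$ generates~$\CCC$, i.e.\ whether the closure of~$\SSSs$ under the partial product exhausts~$\CCC$, by the evident fixpoint: repeatedly adjoin all products of pairs of already-obtained elements; this stabilises within~$n$ rounds, each inspecting $O(n^2)$ pairs, for $O(n^3)$. Third, using the matrix~$D$, test, for every pair $(\gg_1, \gg_2)$ of elements of~$\SSSs$ with $\gg_1\gg_2$ defined, whether $\seqq{\gg_1}{\gg_2}$ is $\SSS$-greedy, namely whether
$$\forall\aa{\in}\SSS\ \forall\ff{\in}\CCC\ (\aa\dive\ff\gg_1\gg_2 \Rightarrow \aa\dive\ff\gg_1)$$
holds; all products and divisibilities occurring here are table look-ups, so this costs $O(n^2)$ per pair; since both entries already lie in~$\SSSs$, such a greedy path is automatically $\SSS$-normal, so I would mark every element of~$\CCC$ that arises as $\gg_1\gg_2$ for some greedy pair as admitting a length-two $\SSS$-normal decomposition. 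Fourth, form $(\SSSs)^2 = \{\aa\bb \mid \aa,\bb\in\SSSs,\ \aa\bb\ \text{defined}\}$ in $O(n^2)$ and check that each of its elements was marked in the third step; by Lemma~\ref{L:LengthTwo}, $\SSS$ is a Garside family exactly when the second step and this final check both succeed.

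The dominant cost is the third step, $O(n^2)$ pairs times $O(n^2)$ per pair, i.e.\ $O(n^4)$ --- in particular the claimed $O(n^6)$ --- while every other step is $O(n^3)$ or less. I do not expect a serious obstacle once Lemma~\ref{L:LengthTwo} is available: the remaining work is bookkeeping, and the only points needing a little care are the correct computation of~$\SSSs$ (which presupposes first computing~$\CCCi$) and the elementary observation that, for paths of length two with both entries in~$\SSSs$, being $\SSS$-greedy and being $\SSS$-normal coincide, so that no separate search over longer decompositions, or over decompositions with entries outside~$\SSSs$, is required.
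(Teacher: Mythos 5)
Your proposal is correct and follows essentially the same route as the paper: row-scanning of the multiplication table for left-cancellativity and left-divisibility, computing $\CCCi$ and $\SSSs$, a fixpoint test that $\SSSs$ generates~$\CCC$, and the reduction via Lemma~\ref{L:LengthTwo} to testing condition~\eqref{E:Greedy} on length-two $\SSSs$-paths. Your bookkeeping even yields the sharper bound $O(n^4)$ for the last step, which of course is within the claimed $O(n^6)$.
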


\begin{proof}
We can test whether $\CCC$ is left-cancellative and compute the left-divisibility relation as follows: For each $\ff$ in~$\CCC$, iterate over all $\gg$ in~$\CCC$, keeping track of the products $\ff\gg$ that occur; $\CCC$ is left-cancellative if and only if no such product occurs more than once, and the occurring products are precisely the right-multiples of $\ff$. Hence \ITEM1 and \ITEM2 hold.

Comparing $\ff\gg$ to $1_x$, where $x$ is the source of $\ff$ for all $\ff,\gg$ in~$\CCC$ is sufficient to determine $\CCCi$, and then $\SSSs$ can be obtained by computing all products $\aa\gg$ for $\aa$ in~$\SSS$ and $\gg$ in~$\CCCi$. Thus computing $\SSSs$ takes time $O(n^2)$.
To verify that $\SSSs$ generates~$\CCC$, one computes, for $i=2,3,...$, the set $(\SSSs)^i$ by considering all products $\gg\aa$ with $\gg$ in~$(\SSSs)^{i-1}$ and $\aa$ in~$\SSSs$ until this sequence stabilises, which happens after at most $n$ steps. Thus verifying that $\SSSs$ generates $\CCC$ takes time $O(n^3)$.
Finally, by Lemma~\ref{L:LengthTwo}, one can decide whether a subfamily~$\SSS$ of~$\CCC$ is a Garside family by checking for each element of~$(\SSSs)^2$ all possible $\SSS$-paths of length two against condition~\eqref{E:Greedy}; as each test of condition~\eqref{E:Greedy} involves $O(n^2)$ operations, this takes time $O(n^6)$, completing the proof of \ITEM3.
\end{proof}

Note that, if $\CCC$ is infinite, $\SSS$ being finite does not make recognizing $\SSS$-greediness decidable in general, as \eqref{E:Greedy} contains a universal quantification over an arbitrary element~$\ff$ of~$\CCC$ and, so, in that case, Lemma~\ref{L:LengthTwo} is of no use.

\begin{rema}
The above analysis partly extends to the case when the category~$\CCC$ is infinite but all operations are computable. For instance, if multiplication, viewed as a partial function of~$\CCC^2$ to~$\CCC$, is computable, then left-cancellativity is a $\Pi_1^1$ condition: by enumerating all triples $(\ff, \gg, \gg')$ with $\gg \not= \gg'$ and checking whether $\ff\gg$ and $\ff\gg'$ are equal, one finds a counter-example in finite time if one exists, but one never obtains an answer when the category is left-cancellative. Similarly, left-divisibility is a $\Sigma_1^1$ condition and, therefore, $\SSS$-greediness, which entails an additional existential quantification over an arbitrary element~$\ff$ of~$\CCC$, is a $\Pi^1_2$ condition.
\end{rema}

\subsection{Germs}
\label{SS:Germ}

For an infinite category, it is impossible to exhaustively enumerate the multiplication table. However, in good cases, it may happen that some finite fragment of the latter determines the category and provides methods for establishing properties of the latter. This is the germ approach that we now introduce.

\begin{defi}
\label{D:Germ}
A \emph{germ} is a triple~$(\SSS, \Id\SSS, \OP)$ where $\SSS$ is a precategory, $\Id\SSS$ is a subfamily of~$\SSS$ consisting of an element~$\id\xx$ with source and target~$\xx$ for each object~$\xx$, and $\OP$ is a partial map of~$\Seq\SSS2$ into~$\SSS$ that satisfies
\begin{gather}
\label{E:GEGerm1}
\BOX{\VR(3,1.5) if $\aa \OP \bb$ is defined, its source is the source of~$\aa$ and its target is the target of~$\bb$,}\\
\label{E:GEGerm2}
\BOX{$\id\xx \OP \aa = \aa = \aa \OP \id\yy$ hold for each~$\aa$ in~$\SSS(\xx, \yy)$,}\\
\label{E:GEGerm3}
\BOX{if $\aa \OP \bb$ and $\bb \OP \cc$ are defined, then $(\aa \OP \bb) \OP \cc$ is defined if and only if $\aa \OP (\bb \OP \cc)$ is, in which case they are equal.}
\end{gather}
The germ is called \emph{left-associative} if, for all~$\cc, \aa, \bb$ in~$\SSS$, it satisfies
\begin{equation}
\label{E:GEGermAssociative}
\BOX{if $(\aa \OP \bb) \OP \cc$ is defined, then $\bb \OP \cc$ is defined,}
\end{equation}
and it is called \emph{left-cancellative} if, for all~$\aa, \bb, \bb'$ in~$\SSS$, it satisfies
\begin{equation}
\label{E:GEGermCancellative}
\BOX{if $\aa \OP \bb$ and $\aa \OP \bb'$ are defined and equal, then $\bb = \bb'$ holds.}
\end{equation}
\end{defi}

We shall usually write~$\SSSg$ for a germ whose domain is~$\SSS$. Whenever $\SSS$ is a subfamily of a category~$\CCC$, one obtains a germ by considering the restriction of the product of~$\CCC$ to~$\SSS$, that is, the partial binary operation~$\OP$ on~$\SSS$ defined by $\aa \OP \bb = \aa\bb$ whenever $\aa\bb$ is defined in~$\CCC$ and it belongs to~$\SSS$. In the other direction, starting with a germ, we can always construct a category.

\begin{defi}
If $\SSSg$ is a germ, we denote by~$\Cat(\SSSg)$ the category~$\PRESp\SSS\ROP$, where $\ROP$ is the family of all relations~$\seqq{\aa}{\bb} = \aa \OP \bb$ with $\aa, \bb$ in~$\SSS$ and $\aa \OP \bb$ defined.
\end{defi}

When we start with a category~$\CCC$ and a subfamily~$\SSS$ of $\CCC$, it may or may not be the case that the induced germ on~$\SSS$ contains enough information to reconstruct the initial category~$\CCC$.

\begin{exam}
\label{X:Germ}
\rightskip40mm
Let us consider the braid monoid~$\BP3$ again. If we take $\AAA = \{1, \tta, \ttb\}$, the table of the induced germ is shown aside, and the derived monoid is the free monoid based on~$\tta, \ttb$, hence one that is not isomorphic to~$\BP3$.
\hfill\begin{picture}(0,0)(-12,-7)
\begin{tabular}{c|c@{\hspace{1.5ex}}c@{\hspace{1.5ex}}c}
$\OP$   
&$1$     
&$\tta$     
&$\ttb$\\
\hline
$1$ 
&$1$     
&$\tta$     
&$\ttb$\\
$\tta$   
&$\tta$\\
$\ttb$   
&$\ttb$
\end{tabular}
\end{picture}

\rightskip0mm
\noindent By contrast, when we start with $\SSS = \{1, \tta, \ttb, \tta\ttb, \ttb\tta, \Delta\}$, the table of the induced germ is less hollow, and the derived monoid is indeed isomorphic to~$\BP3$.
$$\begin{tabular}{c|c@{\hspace{1.5ex}}c@{\hspace{1.5ex}}c@{\hspace{1.5ex}}c@{\hspace{1.5ex}}c@{\hspace{1.5ex}}c}
$\OP$   
& $1$     
& $\tta$     
& $\ttb$     
& $\tta\ttb$   
& $\ttb\tta$ 
& $\Delta$ \\
\hline
$1$   
& $1$     
& $\tta$     
& $\ttb$     
& $\tta\ttb$   
& $\ttb\tta$ 
& $\Delta$ \\
$\tta$   
& $\tta$     
&        
& $\tta\ttb$   
&        
& $\Delta$ \\
$\ttb$   
& $\ttb$     
& $\ttb\tta$   
&        
& $\Delta$ \\
$\tta\ttb$ 
& $\tta\ttb$   
& $\Delta$ \\
$\ttb\tta$ 
& $\ttb\tta$
&
& $\Delta$ \\
$\Delta$  
& $\Delta$
\end{tabular}$$
\end{exam}

The good point is that, as in the above example, the germ induced by a Garside family always contains enough information to reconstruct the initial category:

\begin{lemm}\cite[Proposition 4.8]{Dif}
\label{L:Enough}
If $\SSS$ is a Garside family in a left-cancellative category~$\CCC$ and $\SSSg$ is the germ induced on~$\SSS$, the category~$\Cat(\SSSg)$ is isomorphic to~$\CCC$.
\end{lemm}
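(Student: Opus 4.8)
The plan is to produce the canonical comparison functor $\pi\colon\Cat(\SSSg)\to\CCC$ and prove it is an isomorphism, checking surjectivity and injectivity separately. Every defining relation $\seqq\aa\bb=\aa\OP\bb$ of~$\ROP$ holds in~$\CCC$ — by construction $\aa\OP\bb$ is just the $\CCC$-product $\aa\bb$ when the latter lies in~$\SSS$ — so the inclusion $\SSS\hookrightarrow\CCC$ descends to a functor $\pi\colon\Cat(\SSSg)=\PRESp\SSS\ROP\to\CCC$ that is the identity on objects and on~$\SSS$; since $\SSS$ generates~$\CCC$, it is surjective. As $\pi$ restricts to the identity of~$\SSS$, distinct elements of~$\SSS$ keep distinct classes in~$\Cat(\SSSg)$, so I would identify $\SSS$ with its image there; transporting products along~$\pi$ then shows that the germ this copy of~$\SSS$ induces inside~$\Cat(\SSSg)$ is again~$\SSSg$.

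For injectivity the plan is to run the $\SSS$-greedy normalisation in both categories at once, using germ data only. First I would check that $\SSS$ is a Garside family in~$\Cat(\SSSg)$: from any $\SSS$-path~$\ww$, a left-to-right greedy sweep — inspecting successive length-two factors $\seqq{\aa_i}{\aa_{i+1}}$, deciding from~$\SSSg$ alone whether the factor is greedy and, if not, applying an $\ROP$-move that moves a left-divisor of~$\aa_{i+1}$ into~$\aa_i$ — rewrites $\ww$, within its $\equivp_\ROP$-class, to an $\SSS$-normal path~$\ww^\sharp$ (termination and the fact that the entries stay in~$\SSSs$ come from the existence in~$\CCC$ of the $\SSS$-normal decomposition of~$\cl\ww_\CCC$, together with Lemma~\ref{L:GarClosed}). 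Because the sweep is expressed purely in terms of~$\SSSg$, it is reproduced verbatim in~$\CCC$ after applying~$\pi$, so $\ww^\sharp$ is, read in~$\CCC$, an $\SSS$-normal decomposition of~$\cl\ww_\CCC$. Now take $\SSS$-paths $\ww,\ww'$ with $\cl\ww_\CCC=\cl{\ww'}_\CCC$: then $\ww^\sharp$ and $(\ww')^\sharp$ are $\SSS$-normal decompositions (in~$\CCC$) of one element, hence $\CCCi$-deformations of each other by Proposition~\ref{P:NormalUnique}; the connecting invertibles lie in~$\SSSs$ and, $\pi$ being a bijection on~$\SSS$ and invertibility being preserved in both directions, they lift to invertibles of~$\Cat(\SSSg)$ realising $\ww^\sharp$ and $(\ww')^\sharp$ as a $\Cat(\SSSg)^{\times}$-deformation of one another. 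Such a deformation is a finite product of $\ROP$-moves, so $\ww^\sharp\equivp_\ROP(\ww')^\sharp$ and hence $\cl\ww=\cl{\ww'}$ in~$\Cat(\SSSg)$; with surjectivity this makes $\pi$ an isomorphism.

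The step I expect to be the real obstacle is the claim that the greedy normalisation is \emph{intrinsic to the germ}: greediness of a pair $\seqq{\aa_1}{\aa_2}$ is a priori a statement quantifying over all $\aa\in\SSS$ and all $\ff\in\CCC$ (condition~\eqref{E:Greedy}), and one must show both that it can be decided from~$\SSSg$ alone and that the rewriting which turns a non-greedy pair into a greedy one uses only relations of~$\ROP$. This is precisely the assertion that a Garside family ``remembers enough'' of the ambient category; carrying it out is where Lemma~\ref{L:GarClosed} and the first domino rule (Lemma~\ref{L:Domino1}) enter, and once such an intrinsic description of $\SSS$-normal paths is available the rest is bookkeeping. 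Alternatively, one can invoke the germ-theoretic characterisation of the germs arising from Garside families established in~\cite{Dif} and merely verify that the germ induced by our Garside family satisfies it.
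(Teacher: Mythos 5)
The paper itself gives no proof of this lemma---it is quoted from \cite[Proposition~4.8]{Dif}---so your proposal can only be measured against the argument such a proof must contain, and your architecture (the evaluation functor $\pi\colon\Cat(\SSSg)\to\CCC$, surjectivity from generation, injectivity by normalising $\SSS$-paths inside their $\equivp_{\ROP}$-class and then invoking Proposition~\ref{P:NormalUnique}) is indeed the natural one. But the proposal has a genuine gap exactly at the step that carries all the weight, and you say so yourself: you never prove that greedy normalisation can be effected by $\ROP$-moves, that is, that two $\SSS$-paths with the same evaluation in~$\CCC$ are $\equivp_{\ROP}$-equivalent. Concretely, when a non-greedy factor $\seqq{\aa_\ii}{\aa_{\ii+1}}$ is to be replaced by $\seqq{\bb_1}{\bb_2}$ with $\bb_1=\aa_\ii\cc$ and $\aa_{\ii+1}=\cc\,\bb_2$, one must check (i) that $\cc$ and the new head $\aa_\ii\cc$ are again letters and that $\aa_\ii\OP\cc$ and $\cc\OP\bb_2$ are germ products---Lemma~\ref{L:GarClosed} and the $\Square$-witness only place these elements in~$\SSSs$, whereas the alphabet of~$\Cat(\SSSg)$ is~$\SSS$, and this friction between $\SSS$ and~$\SSSs$ is never confronted---and (ii) termination: the existence in~$\CCC$ of an $\SSS$-normal decomposition of~$\cl\ww$ does not by itself make an unstructured local rewriting terminate; one has to organise the computation as in Algorithm~\ref{A:Normal} (induction on the length, the $\Square$-witness, the first domino rule, Lemma~\ref{L:Domino1}), where termination holds by construction. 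Your fallback---invoking the germ-theoretic characterisation of germs arising from Garside families established in~\cite{Dif}---is circular in this exercise, since that characterisation is essentially the result to be proved.

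Two further assertions would not survive nontrivial invertible elements. Surjectivity of~$\pi$ is claimed ``since $\SSS$ generates~$\CCC$'', but the definitions only guarantee that $\SSSs$ generates~$\CCC$ (Lemma~\ref{L:LengthTwo}); a Garside family itself need not generate: in a group~$G$ the family $\{1\}$ is already a Garside family, while $\Cat(\SSSg)$ is then trivial, so the generation point is not a formality but a hypothesis that must be secured or made explicit. Likewise, the invertible elements~$\ee_\ii$ furnished by Proposition~\ref{P:NormalUnique} need not ``lift'' to~$\Cat(\SSSg)$: an element of~$\SSS$ invertible in~$\CCC$ need not be invertible in~$\Cat(\SSSg)$, since its inverse need not be a product of elements of~$\SSS$, and the equalities $\bb_\ii\ee_\ii=\ee_{\ii-1}\aa_\ii$ need not be instances of relations of~$\ROP$. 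Both difficulties disappear when $\CCC$ has no nontrivial invertible element and $\Id\CCC\subseteq\SSS$---the situation in all of the paper's examples, and in effect the reading under which the lemma is applied---but then this restriction should be stated, or the whole argument carried out, as in~\cite{Dif}, at the level of~$\SSSs$ rather than~$\SSS$.
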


\subsection{Establishing left-cancellativity and recognizing Garside}
\label{S:RecGerm}

Lemma~\ref{L:Enough} precisely shows that using a germ to specify a category is relevant here: if we start with a good candidate for a Garside family, the germ will indeed define the category. 

\begin{defi}
A germ~$\SSSg$ is said to be a \emph{Garside germ} if $\SSS$ embeds in~$\Cat(\SSSg)$, the latter is left-cancellative, and (the image of)~$\SSS$ is a Garside family in that category.
\end{defi}

So, for instance, the germs in Example~\ref{X:Germ} are Garside germs. By contrast, below is an example of a germ that is not a Garside germ although it defines the ambient monoid.

\pagebreak

\begin{exam}
\rightskip45mm
Let $\MM = \PRESp{\tta, \ttb}{\tta\ttb = \ttb\tta, \tta^2 = \ttb^2}$, and $\SSS$ consist of $1, \tta, \ttb, \tta\ttb, \tta^2$. The germ~$\SSSg$ induced on~$\SSS$ is shown aside. It is left-associative and left-cancellative. The category (here the monoid) $\Cat(\SSSg)$ is (isomorphic to)~$\MM$, as the relations $\tta\pc\tta = \tta^2 = \ttb\pc\ttb$ and $\tta\pc\ttb = \tta\ttb = \ttb \pc \tta$ belong to the family~$\ROP$. However $\SSS$ is not a Garside family in~$\MM$, as $\tta^3$ admits no $\SSS$-normal decomposition: $\tta^2 \pc \tta$ is not $\SSS$-greedy as $\tta\ttb$ left-divides~$\tta^3$ but not~$\tta^2$, and $\tta\ttb \pc \ttb$ is not $\SSS$-greedy as $\tta^2$ left-divides~$\tta^3$ but not~$\tta\ttb$.
\hfill\begin{picture}(0,0)(-7,-16)
\begin{tabular}{c|c@{\hspace{1.5ex}}c@{\hspace{1.5ex}}c@{\hspace{1.5ex}}c@{\hspace{1.5ex}}c}
$\OP$   
&$1$     
&$\tta$     
&$\ttb$
&$\tta^2$
&$\tta \ttb$\\
\hline
$1$ 
&$1$     
&$\tta$     
&$\ttb$
&\VR(3.3,0)$\tta^2$
&$\tta \ttb$\\
$\tta$   
&$\tta$
&$\tta^2$
&$\tta \ttb$\\
$\ttb$   
&$\ttb$
&$\tta \ttb$
&$\tta^2$\\
$\tta^2$
&$\tta^2$\\
$\tta \ttb$
&$\tta \ttb$
\end{tabular}
\end{picture}
\end{exam}

What we do below is to develop algorithms to decide whether a (finite) germ is a Garside germ. Note that this includes establishing that the defined category is left-cancellative. Our method is based on a result of~\cite{Dif}.

\begin{defi}
Assume that $\SSSg$ is a germ. 

\ITEM1 We define the \emph{local left-divisibility} relation~$\diveS$ of~$\SSSg$ by saying that $\aa \diveS \bb$ holds if and only if there exists $\bb'$ in~$\SSS$ satisfying $\bb =\aa \bb'$. We write $\aa \divS \bb$ for the conjunction of $\aa \diveS \bb$ and $\bb \not\diveS \aa$, and we call a sequence $\aa_1 \wdots \aa_\nn$ in~$\SSS$ \emph{non-ascending} if $\aa_\ii\not\divS\aa_\jj$ holds for $1 \le \ii < \jj \le \nn$.

\ITEM2 For~$\seqq{\aa_1}{\aa_2}$ in~$\Seq\SSS2$, we put $\JJJ_{\SSS}(\aa_1, \aa_2) = \{\bb \in \SSS \mid \aa_1 \OP \bb \mbox{ is defined and } \bb \diveS \aa_2 \}$.
\end{defi}

\begin{prop}\cite[Proposition~5.9]{Dif}
\label{P:GERecGarGerm2}
A germ~$\SSSg$ is a Garside germ if and only if it is left-associative, left-cancellative, and if, for any $\aa_1, \aa_2$ in $\SSS$ there exists a $\diveS$-greatest element in~$\JJJ_{\SSS}(\aa_1, \aa_2)$ (that is, an element $\cc$ in~$\JJJ_{\SSS}(\aa_1, \aa_2)$ such that $\bb\diveS\cc$ holds for all $\bb$ in~$\JJJ_{\SSS}(\aa_1, \aa_2)$).
\end{prop}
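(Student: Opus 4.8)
The plan is to reduce both implications to the two bridge lemmas already available: Lemma~\ref{L:LengthTwo}, which recognises Garside families by a condition on products of two generators, and Lemma~\ref{L:Enough}, which says a Garside family recovers its ambient category from the induced germ. Throughout write $\CCC:=\Cat(\SSSg)=\PRESp\SSS\ROP$ and identify (once it is known to be an embedding) $\SSS$ with its image in~$\CCC$.

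The implication ``$\Rightarrow$'' is a routine unwinding. If $\SSSg$ is a Garside germ then $\CCC$ is left-cancellative and $\SSS$ is a Garside family in it, so left-cancellativity of the germ is read off from that of~$\CCC$ through the injectivity of $\SSS\hookrightarrow\CCC$, and left-associativity from standard closure properties of a Garside family (a right-divisor of a product $\aa\bb$ that lies in~$\SSSs$ stays in~$\SSSs$, hence $\bb\cc\in\SSSs$ whenever $\aa\bb,\aa\bb\cc\in\SSS$, which is exactly what $\bb\OP\cc$ being defined amounts to). For the remaining condition, fix $\aa_1,\aa_2$ in~$\SSS$; Lemma~\ref{L:LengthTwo} gives an $\SSS$-normal decomposition $\seqq{\gg_1}{\gg_2}$ of $\aa_1\aa_2$; applying \eqref{E:Greedy} with the divisor~$\aa_1$ shows $\aa_1\dive\gg_1$, so $\gg_1=\aa_1\cc$ and $\aa_2=\cc\gg_2$ by left-cancellation, with $\cc\in\SSSs$; thus $\cc$ belongs to $\JJJ_\SSS(\aa_1,\aa_2)$, and for any other $\bb$ there, $\aa_1\bb$ is an $\SSS$-left-divisor of $\aa_1\aa_2=\gg_1\gg_2$, so $\aa_1\bb\dive\gg_1$ by greediness and $\bb\dive\cc$ after cancellation; hence $\cc$ is $\diveS$-greatest in $\JJJ_\SSS(\aa_1,\aa_2)$. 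The bookkeeping between $\SSS$ and $\SSSs$ in these steps is innocuous; one may, if desired, first reduce to the case where the germ has no nontrivial invertible.

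For ``$\Leftarrow$'', assume the three conditions. Since $\Id\SSS$ always meets $\JJJ_\SSS(\aa_1,\aa_2)$, each such set is nonempty, so has a well-defined $\diveS$-greatest element $\cc(\aa_1,\aa_2)$; set $H(\aa_1,\aa_2):=\aa_1\OP\cc(\aa_1,\aa_2)\in\SSS$ and let $T(\aa_1,\aa_2)\in\SSS$ be given by $\aa_2=\cc(\aa_1,\aa_2)\OP T(\aa_1,\aa_2)$. Using left-associativity to legitimise the intermediate products, the $\SSS$-paths $\seqq{\aa_1}{\aa_2}$ and $\seqq{H(\aa_1,\aa_2)}{T(\aa_1,\aa_2)}$ represent the same element of~$\CCC$. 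Call an $\SSS$-path \emph{$H$-normal} if every consecutive pair $\aa_\ii,\aa_{\ii+1}$ satisfies $H(\aa_\ii,\aa_{\ii+1})=\aa_\ii$. The proposition follows once one has established that \textit{(i)}~$\CCC$ is left-cancellative, \textit{(ii)}~$\SSS$ embeds in~$\CCC$, and \textit{(iii)}~each $\seqq{H(\aa_1,\aa_2)}{T(\aa_1,\aa_2)}$ is $\SSS$-greedy in~$\CCC$: indeed $\SSS$, hence $\SSSs$, generates~$\CCC$ by construction, and by~\textit{(iii)} every element of~$(\SSSs)^2$ has a length-two $\SSS$-normal decomposition, so Lemma~\ref{L:LengthTwo} makes $\SSS$ a Garside family of~$\CCC$; together with~\textit{(i)} and~\textit{(ii)} this is exactly the assertion that $\SSSg$ is a Garside germ, and Lemma~\ref{L:Enough} then confirms that the germ induced by this Garside family is $\SSSg$.

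The crux is \textit{(i)}--\textit{(iii)}, for which I would build an explicit model of~$\CCC$ out of $H$-normal paths. The key technical step is a germ-theoretic analogue of the first domino rule (Lemma~\ref{L:Domino1}): from left-cancellativity, left-associativity and the $\diveS$-greatest property one proves a commutation lemma for the head map, which yields both that the rewriting $\seqq{\aa_1}{\aa_2}\mapsto\seqq{H(\aa_1,\aa_2)}{T(\aa_1,\aa_2)}$ (pushed greedily leftward) is confluent and that the local maximality built into $\cc(\aa_1,\aa_2)$ propagates to the full greedy condition~\eqref{E:Greedy}, whose prefix~$\ff$ is arbitrary --- this last point being \textit{(iii)}. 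One also needs this rewriting to terminate on each fixed path; since no Noetherianity is assumed, this calls for a bounded-rank argument on the given path rather than a global one. Uniqueness of the resulting $H$-normal, identity-free representative of each element of~$\CCC$ gives \textit{(ii)}, and comparing the representatives of $\ff\gg$ and $\ff\gg'$ gives \textit{(i)}. I expect the commutation lemma, together with the termination bookkeeping, to absorb essentially all the work; the rest is formal.
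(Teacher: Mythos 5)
The paper itself does not prove this statement: Proposition~\ref{P:GERecGarGerm2} is quoted from \cite[Proposition~5.9]{Dif}, so your attempt has to be measured against that external proof, and on its own terms it is not complete. The easy direction is essentially right in outline, but even there the parenthetical argument for left-associativity has a slip: closure under right-divisors is a property of~$\SSSs$, not of~$\SSS$, so from $\aa\OP\bb$ and $(\aa\OP\bb)\OP\cc$ being defined you only get that the product $\bb\cc$ lies in~$\SSSs$, and in any case ``$\bb\cc$ lies in the image of~$\SSS$'' is not literally ``$\bb\OP\cc$ is defined'' unless you first show that the germ structure of a Garside germ coincides with the germ induced on the image of~$\SSS$ in~$\Cat(\SSSg)$ --- a point your write-up uses silently and which itself requires an argument.

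The genuine gap is the converse, which is the substantive half of the proposition and which your text explicitly defers: the germ-level analogue of the first domino rule (your ``commutation lemma''), the passage from the local $\diveS$-greatest property of $\JJJ_{\SSS}(\aa_1,\aa_2)$ to the greedy condition~\eqref{E:Greedy} with an \emph{arbitrary} prefix $\ff$ in~$\CCC$ (your point (iii)), the termination of the leftward greedy rewriting, and the uniqueness of $H$-normal representatives yielding the embedding of~$\SSS$ and left-cancellativity of~$\Cat(\SSSg)$ are all named but none is proved, and these are exactly where the work lies in~\cite{Dif}. The termination step is moreover doubtful as stated: no Noetherianity is available, a ``bounded-rank argument on the given path'' is not obviously at hand, and the published proof sidesteps the issue by constructing the normal form by induction on path length via the head function (in the spirit of Algorithm~\ref{A:LeftMult}) and verifying invariance under the relations of~$\ROP$, rather than by proving confluence and termination of a rewriting system. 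As it stands your proposal is a strategy outline; to turn it into a proof you must state and prove the commutation lemma, show that local $H$-maximality implies~\eqref{E:Greedy} for arbitrary~$\ff$, and replace the termination argument by an induction that does not presuppose Noetherianity, after which Lemma~\ref{L:LengthTwo} and Lemma~\ref{L:Enough} can indeed be invoked as you indicate.
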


\begin{coro}
\label{C:MaxJFunction}
If $\SSS$ is a Garside germ, if $\seqq{\aa_1}{\aa_2}$ is in~$\Seq\SSS2$,
if $\bb$ is a $\diveS$-greatest element in~$\JJJ_{\SSS}(\aa_1, \aa_2)$, and if $\cc$ is an element of $\SSS$ satisfying $\aa_2 = \bb\OP \cc$, then the path $\aa_1\bb \pc \cc$ is an $\SSS$-normal decomposition of $\aa_1 \aa_2$ in~$\Cat(\SSSg)$.
\end{coro}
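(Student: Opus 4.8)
The plan is to derive Corollary~\ref{C:MaxJFunction} directly from Proposition~\ref{P:GERecGarGerm2}, using only the definitions of $\diveS$, $\JJJ_\SSS$, and $\SSS$-greediness in the category $\Cat(\SSSg)$. First I would note that the hypothesis that $\SSS$ is a Garside germ guarantees, via Proposition~\ref{P:GERecGarGerm2}, that $\SSSg$ is left-cancellative and that $\JJJ_\SSS(\aa_1,\aa_2)$ has a $\diveS$-greatest element, so the $\bb$ and $\cc$ in the statement make sense; moreover, by Lemma~\ref{L:Enough} the category $\Cat(\SSSg)$ is left-cancellative (and $\SSS$ embeds in it), so divisibility in $\Cat(\SSSg)$ restricted to $\SSS$ matches the local divisibility $\diveS$ of the germ. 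The first routine point is that $\aa_1\bb\pc\cc$ is a genuine $\Cat(\SSSg)$-path and a decomposition of $\aa_1\aa_2$: since $\bb\in\JJJ_\SSS(\aa_1,\aa_2)$, the product $\aa_1\OP\bb$ is defined in the germ, hence $\aa_1\bb$ lies in $\SSS$ (so it is an element, not just a length-two path), and $\aa_2=\bb\OP\cc$ gives $\aa_1\aa_2=\aa_1\bb\cc$ in $\Cat(\SSSg)$.

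The substantive step is to verify that the length-two path $\seqq{\aa_1\bb}{\cc}$ is $\SSS$-greedy, i.e.\ that \eqref{E:Greedy} holds: for every $\dd\in\SSS$ and every $\ff\in\Cat(\SSSg)$ with $\dd\dive \ff\,(\aa_1\bb)\,\cc$ we must show $\dd\dive \ff\,(\aa_1\bb)$. Here I would use the characterization afforded by the choice of $\bb$ as the $\diveS$-maximum of $\JJJ_\SSS(\aa_1,\aa_2)$. The key reduction is to the case $\ff=\id{\src{\aa_1}}$: if $\dd$ left-divides $\aa_1\bb\cc=\aa_1\aa_2$, then since $\dd\in\SSS$ there is a factorization exhibiting $\dd$ as a left-divisor, and one extracts from it an element $\bb'$ of $\SSS$ with $\aa_1\OP\bb'$ defined and $\bb'\diveS\aa_2$ — that is, $\bb'\in\JJJ_\SSS(\aa_1,\aa_2)$ — such that $\dd\dive\aa_1\bb'$; maximality of $\bb$ gives $\bb'\diveS\bb$, hence $\aa_1\bb'\dive\aa_1\bb$ and therefore $\dd\dive\aa_1\bb$. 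The general $\ff$ case then follows because $\SSS$-greediness of a two-entry path is equivalent to its $\id{}$-instance together with a local condition, or can be handled directly by the same extraction argument applied after absorbing $\ff$; alternatively one invokes the fact (used implicitly throughout Section~\ref{S:Germ}) that for paths of the form $\aa\pc\bb$ with $\aa\bb$ representing a fixed element, greediness of $\seqq{\aa_1\bb}{\cc}$ reduces to the statement that $\aa_1\bb$ contains every element of $\SSSs(\src{\aa_1},\ud)$ that left-divides $\aa_1\aa_2$ — which is exactly the maximality just established.

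To finish, I would observe that once $\seqq{\aa_1\bb}{\cc}$ is known to be $\SSS$-greedy, it is automatically $\SSS$-normal in the sense of Definition~\ref{D:Greedy}, because both entries $\aa_1\bb$ and $\cc$ lie in $\SSS\subseteq\SSSs$; so $\aa_1\bb\pc\cc$ is an $\SSS$-normal decomposition of $\aa_1\aa_2$ in $\Cat(\SSSg)$, as claimed. The main obstacle I anticipate is the extraction step in the previous paragraph: turning an arbitrary left-divisor $\dd$ (an element of $\SSS$, equivalently of $\Cat(\SSSg)$) of $\aa_1\aa_2$ into a witness $\bb'\in\JJJ_\SSS(\aa_1,\aa_2)$ with $\dd\dive\aa_1\bb'$, which requires carefully using left-cancellativity of $\Cat(\SSSg)$ and the fact that $\SSS$ generates it so that any factor of a product of two germ elements can be realized inside the germ; this is essentially the content of the equivalence in Proposition~\ref{P:GERecGarGerm2}, and the clean way to present it is to cite that proposition's characterization of $\SSS$-normality of length-two paths via the $\JJJ_\SSS$-maximum rather than to re-derive it.
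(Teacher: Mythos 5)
Your overall strategy (show the path is a decomposition, then get greediness from the $\diveS$-maximality of $\bb$) is the right shape, but the two steps that carry all the weight are asserted rather than proved, and the fallback you propose does not close them. The crux is your extraction step: from $\dd\in\SSS$ with $\dd\dive\aa_1\aa_2$ in $\Cat(\SSSg)$ you need some $\bb'$ in $\JJJ_\SSS(\aa_1,\aa_2)$ with $\dd\dive\aa_1\bb'$. This requires translating a divisibility relation of the presented category back into germ data: one needs, for instance, that $\SSSs$ is closed under right-divisor and that $\diveS$ coincides with the restriction of $\dive$ to (the image of) $\SSS$ --- the latter you simply assert at the outset. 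Neither fact is established in this paper, and your proposed clean exit, namely to cite ``Proposition~\ref{P:GERecGarGerm2}'s characterization of $\SSS$-normality of length-two paths via the $\JJJ_\SSS$-maximum'', is circular: the proposition as stated is only an equivalence telling when a germ is a Garside germ; it says nothing about which length-two paths are $\SSS$-normal. That characterization is exactly the content of the corollary you are asked to prove; it comes from the \emph{proof} of the cited result in~\cite{Dif}, which is why the paper can state the corollary without argument. So the honest options are to invoke that proof explicitly or to redo the work (for instance, take a length-two $\SSS$-normal decomposition $\seqq{t_1}{t_2}$ of $\aa_1\aa_2$, which exists by Lemma~\ref{L:LengthTwo}, show $\aa_1\bb\dive t_1$ by greediness and then $t_1\dive\aa_1\bb$ by the extraction argument); citing the statement of Proposition~\ref{P:GERecGarGerm2} does neither.

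The second gap is your reduction of condition~\eqref{E:Greedy} to its instance where $\ff$ is an identity element. The claim that greediness of a two-entry path is ``equivalent to its identity-instance together with a local condition'' is not a formal consequence of the definitions: the universal quantification over $\ff\in\CCC$ is in Definition~\ref{D:Greedy} precisely because the head condition alone is weaker in general. For a Garside family the reduction is indeed valid, but it rests on closure properties of $\SSSs$ (closure under right-comultiple, in the spirit of Lemma~\ref{L:GarClosed}) and an induction on an $\SSSs$-decomposition of $\ff$; it is an actual argument, not something ``used implicitly throughout Section~\ref{S:Germ}''. As written, both the passage from category divisibility back to the germ and the passage from the head condition to full greediness are missing, and together they are essentially the whole content of the statement.
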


\begin{algo}{Recognizing a Garside germ}
\label{A:RecGarGerm}
\begin{algorithmic}[1]
\INPUT{A finite germ $\SSSg$}
\OUTPUT{$\mathtt{true}$ if $\SSSg$ is a Garside germ, and $\mathtt{false}$ otherwise}

\STATE{$\mathtt{isLeftCancellative}$, $\diveS$ := \CALL{LeftDivisibility}{$\SSS$}}
\IF{not $\mathtt{isLeftCancellative}$ or not \CALL{IsLeftAssociative}{$\SSS$}}
 \RETURN{$\mathtt{false}$}
\ENDIF
\STATE{$\SSS'$ := \CALL{NonAscending}{$\SSS$, $\diveS$}}
\FOR{$\seqq{\aa_1}{\aa_2}$ in~$\Seq\SSS2$}
  \IF{not \CALL{JHasGreatestElement}{$\SSS'$, $\aa_1$, $\aa_2$, $\diveS$}}
   \RETURN{$\mathtt{false}$}
  \ENDIF
\ENDFOR
\RETURN{$\mathtt{true}$}
\medskip

\FUNCTION{IsLeftAssociative}{$\SSS$}
 \FOR{$\seqqq\aa\bb\cc$ in $\Seq\SSS3$}
    \IF{$(\aa\OP\bb)\OP\cc\isdef$ and not $\bb\OP\cc\isdef$}
     \RETURN{$\mathtt{false}$}
    \ENDIF
 \ENDFOR
\RETURN{$\mathtt{true}$}
\ENDFUNCTION
\medskip

\FUNCTION{LeftDivisibility}{$\SSS$}
\COMMENT{whether $\SSS$ is left-cancellative and, if it is, a table (denoted by $\diveS$) with the truth values $\TV{\aa\diveS \bb}$ of $\aa \diveS \bb$ for $\aa,\bb$ in $\SSS$}
 \FOR{$\aa\in\SSS$}
  \FOR{$\bb\in\SSS$}
   \STATE{$\TV{\aa\diveS \bb} := \mathtt{false}$}
  \ENDFOR
  \FOR{$\bb\in\SSS$}
   \IF{$\aa\OP\bb\isdef$}
    \IF{$\TV{\aa \diveS \aa\OP\bb}$}
     \RETURN{$\mathtt{false}$}
    \ELSE
     \STATE{$\TV{\aa \diveS \aa\OP\bb} := \mathtt{true}$}
    \ENDIF
   \ENDIF
  \ENDFOR
 \ENDFOR
 \RETURN{$\mathtt{true}$, $\diveS$}
\ENDFUNCTION
\medskip

\FUNCTION{NonAscending}{$\SSS$, $\diveS$}
\COMMENT{$\SSS$ as a non-ascending sequence}
 \STATE{$\SSS' := [\,]$}
 \FOR{$\aa \in\SSS$}
  \IF{$\exists \ii\in\{1 \wdots |\SSS'|\}$ with $\SSS'[\ii]\divS \aa$}
   \STATE{insert $\aa$ into $\SSS$ at position $\min(\{ \ii\in\{1 \wdots |\SSS'|\} \mid \SSS'[\ii]\divS\aa\})$}
  \ELSE
   \STATE{append $\aa$ to $\SSS'$}
  \ENDIF
 \ENDFOR
 \RETURN{$\SSS'$}
\ENDFUNCTION
\medskip

\FUNCTION{JHasGreatestElement}{$\SSS'$, $\aa_1$, $\aa_2$, $\diveS$}
\COMMENT{$\SSS'$ non-ascending}
 \STATE{$\cc := \bot$}
 \FOR{$\ii$ := 1 to $|\SSS'|$}
  \IF{$\aa_1\OP\SSS'[\ii]\isdef$ and $\SSS'[\ii]\diveS\aa_2$}
   \COMMENT{$\SSS'[\ii] \in \JJJ_\SSS(\aa_1, \aa_2)$}
   \IF{$\cc = \bot$}
    \STATE{$\cc := \SSS'[\ii]$}
   \ELSEIF{not $\SSS'[\ii] \diveS \cc$}
    \RETURN{$\mathtt{false}$}
   \ENDIF
  \ENDIF
 \ENDFOR
 \RETURN{$\mathtt{true}$}
\ENDFUNCTION
\end{algorithmic}
\end{algo}

\begin{prop}
Assume that $\SSSg$ is a finite germ with $|\SSS|=n$ and that the partial binary operation~$\OP$ can be computed in time $O(1)$. Then the following hold; cf.\ Algorithm~\ref{A:RecGarGerm}.

\ITEM1 The function $\mathtt{IsLeftAssociative}$ decides in time $O(n^3)$ whether $\SSSg$ is left-associative.

\ITEM2 The function $\mathtt{LeftDivisibility}$ decides whether $\SSSg$ is left-cancella\-tive and, if it is, computes the left-divisibility relation~$\diveS$ on~$\SSS$ with respect to~$\OP$ in time $O(n^2)$.

\ITEM3 Given the left-divisibility relation~$\diveS$, the function $\mathtt{NonAscending}$ computes a non-ascend\-ing sequence containing the elements of~$\SSS$ in time $O(n^2)$.

\ITEM4 Given $\seqq{\aa_1}{\aa_2}$ in~$\Seq\SSS2$, a non-ascending sequence~$\SSS'$ containing the elements of~$\SSS$, and the left-divisibility relation~$\diveS$, the function $\mathtt{JHasGreatestElement}$ decides in time $O(n)$ whether $\JJJ_{\SSS}(\aa_1, \aa_2)$ has a $\diveS$-greatest element.

\ITEM5 It is decidable in time $O(n^3)$ whether $\SSS$ is a Garside germ.
\end{prop}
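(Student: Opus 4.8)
The plan is to verify each of the five functions of Algorithm~\ref{A:RecGarGerm} against its specification, part~\ITEM5 then following by assembling parts~\ITEM1--\ITEM4 and invoking Proposition~\ref{P:GERecGarGerm2}. One preliminary fact is used throughout: if the germ~$\SSSg$ is left-associative, then the local left-divisibility relation~$\diveS$ is a preorder. Reflexivity holds since $\aa = \aa \OP \id{\trg\aa}$ by~\eqref{E:GEGerm2}, and transitivity because, if $\bb = \aa \OP \uu$ and $\cc = \bb \OP \vv$, then $(\aa \OP \uu) \OP \vv$ is defined, so~\eqref{E:GEGermAssociative} gives that $\uu \OP \vv$ is defined, whence $\cc = \aa \OP (\uu \OP \vv)$ by~\eqref{E:GEGerm3}, that is, $\aa \diveS \cc$. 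Everything else is loop counting plus one combinatorial observation about non-ascending sequences.

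Parts~\ITEM1 and~\ITEM2 are immediate. In $\mathtt{IsLeftAssociative}$ one runs once through the at most~$n^3$ triples of~$\Seq\SSS3$ and, by~\eqref{E:GEGermAssociative}, returns $\mathtt{false}$ exactly when a triple witnessing a failure of left-associativity is met; each triple costs $O(1)$ (two evaluations of~$\OP$), giving the $O(n^3)$ bound. In $\mathtt{LeftDivisibility}$ one treats each $\aa$ in~$\SSS$ in turn, clears the entries $\TV{\aa \diveS \bb}$ for all~$\bb$, then runs over $\bb$ in~$\SSS$ setting $\TV{\aa \diveS \aa\OP\bb}$ whenever $\aa \OP \bb$ is defined; it reports failure precisely when it would set an already-set entry, that is, when $\aa \OP \bb = \aa \OP \bb'$ for some $\bb \ne \bb'$, which by~\eqref{E:GEGermCancellative} is exactly the failure of left-cancellativity, and otherwise the final table records $\aa \diveS \bb$ iff $\bb = \aa \OP \bb'$ for some~$\bb'$. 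Two nested loops over~$\SSS$ with an $O(1)$ body give the $O(n^2)$ bound.

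Parts~\ITEM3 and~\ITEM4 are the delicate ones; here we may assume the germ left-associative, as is the case when these functions are reached inside Algorithm~\ref{A:RecGarGerm}, so that~$\diveS$ is a preorder. The key observation is: if $\SSS'$ is a non-ascending sequence listing~$\SSS$ and $\JJJ \subseteq \SSS$ admits a $\diveS$-greatest element, then the \emph{first} element~$\bb_0$ of~$\JJJ$ met along~$\SSS'$ is itself $\diveS$-greatest in~$\JJJ$. Indeed, if $\cc$ is $\diveS$-greatest in~$\JJJ$, then $\bb_0 \diveS \cc$, while $\bb_0$ occurs no later than~$\cc$ in the non-ascending sequence~$\SSS'$, so $\bb_0 \not\divS \cc$, forcing $\cc \diveS \bb_0$; then $\bb \diveS \cc \diveS \bb_0$, hence $\bb \diveS \bb_0$, for every $\bb$ in~$\JJJ$. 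Granting this, $\mathtt{JHasGreatestElement}$ is correct: it records the first element~$\bb_0$ of $\JJJ_{\SSS}(\aa_1, \aa_2)$ encountered along~$\SSS'$ --- this set is nonempty, as it contains~$\id{\trg{\aa_1}}$, using $\trg{\aa_1} = \src{\aa_2}$ --- and returns $\mathtt{true}$ iff every subsequent element of $\JJJ_{\SSS}(\aa_1, \aa_2)$ left-divides~$\bb_0$, which by the observation is equivalent to $\JJJ_{\SSS}(\aa_1, \aa_2)$ having a $\diveS$-greatest element; one pass over~$\SSS'$ with $O(1)$ tests (one $\OP$ evaluation, lookups in the table~$\diveS$) costs $O(n)$. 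For~\ITEM3, $\mathtt{NonAscending}$ inserts each $\aa$ just before the first already-placed element~$\bb$ with $\bb \divS \aa$ (and at the end if there is none), producing a permutation of~$\SSS$; one checks by induction that the non-ascending property is maintained, the only nonobvious point being that~$\aa$ strictly left-divides no element placed after it: if~$\bb$ is the element before which~$\aa$ was inserted, so $\bb \divS \aa$, and $\aa \divS \cc$ for some~$\cc$ placed after~$\aa$, then $\bb \diveS \aa \diveS \cc$, so $\bb \diveS \cc$; since~$\bb$ precedes~$\cc$ already in the old list, $\bb \not\divS \cc$, hence $\cc \diveS \bb$, and then $\cc \diveS \bb \diveS \aa$ gives $\cc \diveS \aa$, contradicting $\aa \divS \cc$. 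Each of the~$n$ insertions costs $O(n)$, hence $O(n^2)$.

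Finally, for~\ITEM5, Algorithm~\ref{A:RecGarGerm} runs $\mathtt{LeftDivisibility}$ then $\mathtt{IsLeftAssociative}$ (times $O(n^2)$ and $O(n^3)$), rejecting unless the germ is left-cancellative and left-associative; it then computes a non-ascending listing ($O(n^2)$) and, for each of the at most~$n^2$ pairs $\seqq{\aa_1}{\aa_2}$ of~$\Seq\SSS2$, calls $\mathtt{JHasGreatestElement}$ ($O(n)$ each, $O(n^3)$ in total). So it accepts iff~$\SSSg$ is left-associative, left-cancellative, and $\JJJ_{\SSS}(\aa_1, \aa_2)$ has a $\diveS$-greatest element for all~$\aa_1, \aa_2$ in~$\SSS$, which by Proposition~\ref{P:GERecGarGerm2} is exactly the criterion for~$\SSSg$ to be a Garside germ; the total running time is $O(n^3)$. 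The one substantive point, rather than bookkeeping, is the correctness of $\mathtt{NonAscending}$ and $\mathtt{JHasGreatestElement}$ --- that a non-ascending ordering reduces testing the existence of a $\diveS$-greatest element of each~$\JJJ_{\SSS}(\aa_1, \aa_2)$ to a single linear scan --- which is why the transitivity of~$\diveS$, available once left-associativity has been confirmed, has to be secured first; the remaining complexity estimates are routine.
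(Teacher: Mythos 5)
Your proof is correct and follows essentially the same route as the paper's: parts (i)–(ii) by direct inspection, part (iii) by induction on the insertion step, part (iv) via the observation that in a non-ascending listing the first element of $\JJJ_{\SSS}(\aa_1,\aa_2)$ encountered is the only candidate for a $\diveS$-greatest element, and part (v) by Proposition~\ref{P:GERecGarGerm2}. The only difference is that you make explicit the transitivity of $\diveS$ (via left-associativity, available since the functions are only reached after that check), a point the paper's proof uses implicitly.
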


\begin{proof}
Claims \ITEM1 and \ITEM2 are obvious from the pseudocode in Algorithm~\ref{A:RecGarGerm} and the definitions.

The sequence $\SSS'$ constructed in the function $\mathtt{NonAscending}$ is non-ascending at any time by induction: If there is no $\ii$ satisfying $\SSS'[k]\divS\aa$, the induction step is trivial. Otherwise, one has $\SSS'[\jj]\not\divS\aa$ for all $\jj<\ii$ by the choice of $\ii$, and $\aa\not\divS\SSS'[j]$ for $\jj\ge\ii$, as $\SSS'[k]\divS\aa\divS\SSS'[j]$ would contradict the induction hypothesis. It is clear from the pseudocode that the time complexity is $O(n^2)$, so claim~\ITEM3 holds.

The function $\mathtt{JHasGreatestElement}$ tests the elements of $\SSS$ for membership in $\JJJ_{\SSS}(\aa_1, \aa_2)$ in non-ascending order. Moreover, the set $\JJJ_{\SSS}(\aa_1, \aa_2)$ is non-empty, as it contains $\ew_\xx$, where~$\xx$ is the target of~$\aa_1$. Hence, the set $\JJJ_{\SSS}(\aa_1, \aa_2)$ has a $\diveS$-greatest element if and only if the \emph{first} encountered element is an upper bound; this is what the function $\mathtt{JHasGreatestElement}$ tests. It is clear from the pseudocode that the time complexity is $O(n)$, so claim~\ITEM4 holds.

Claim~\ITEM5 then follows with Proposition~\ref{P:GERecGarGerm2}.
\end{proof}

\begin{exam}
\label{X:BraidGerm}
We apply Algorithm~\ref{A:RecGarGerm} to the second germ of Example~\ref{X:Germ}, that is, we have $\SSS = \{1,\tta,\ttb,\tta\ttb,\ttb\tta,\Delta\}$ and $\OP$ is the partial binary operation on~$\SSS$ induced by the multiplication in~$B_3^+$.

One readily verifies that the germ is left-associative and left-cancellative, and computes the left-divisibility relation~$\diveS$ which is given in the left table below.

We obtain $\SSS' = (\Delta, \tta\ttb, \ttb\tta, \ttb, \tta, 1)$ and non-ascending sequences describing the sets $\JJJ_{\SSS}(\aa_1, \aa_2)$ given in the right table. For each set $\JJJ_{\SSS}(\aa_1, \aa_2)$, the first listed element is a maximum, showing that $\SSS$ is a Garside germ.

\begin{center}
\hfill
\begin{tabular}{c|c@{\hspace{1.5ex}}c@{\hspace{1.5ex}}c@{\hspace{1.5ex}}c@{\hspace{1.5ex}}c@{\hspace{1.5ex}}c}
$\diveS$   & $1$ & $\tta$ & $\ttb$ & $\tta\ttb$ & $\ttb\tta$ & $\Delta$ \\
\hline
$1$   & $\divesmall$ & $\divesmall$ & $\divesmall$ & $\divesmall$   & $\divesmall$   & $\divesmall$ \\
$\tta$   &    & $\divesmall$ &    & $\divesmall$   &      & $\divesmall$ \\
$\ttb$   &    &    & $\divesmall$ &      & $\divesmall$   & $\divesmall$ \\
$\tta\ttb$ &    &    &    & $\divesmall$   &      & $\divesmall$ \\
$\ttb\tta$ &    &    &    &      & $\divesmall$   & $\divesmall$ \\
$\Delta$  &    &    &    &      &      & $\divesmall$
\end{tabular}
\hfill
\begin{tabular}{c|c@{\hspace{1ex}}c@{\hspace{1ex}}c@{\hspace{1ex}}c@{\hspace{1ex}}c@{\hspace{1ex}}c}
$\JJJ_\SSS$ & $1$   & $\tta$    & $\ttb$    & $\tta\ttb$
     & $\ttb\tta$      & $\Delta$       \\
\hline\rule[9pt]{0pt}{0pt}
$1$    & $(1)$  & $(\tta,1)$ & $(\ttb,1)$ & $(\tta\ttb,\tta,1)$
     & $(\ttb\tta,\ttb,1)$ & $\SSS'$        \\
$\tta$   & $(1)$  & $(1)$   & $(\ttb,1)$ & $(1)$
     & $(\ttb\tta,\ttb,1)$ & $(\ttb\tta,\ttb,1)$ \\
$\ttb$   & $(1)$  & $(\tta,1)$ & $(1)$   & $(\tta\ttb,\tta,1)$
     & $(1)$        & $(\tta\ttb,\tta,1)$ \\
$\tta\ttb$ & $(1)$  & $(\tta,1)$ & $(1)$   & $(\tta,1)$
     & $(1)$        & $(\tta,1)$     \\
$\ttb\tta$ & $(1)$  & $(1)$   & $(\ttb,1)$ & $(1)$
     & $(\ttb,1)$     & $(\ttb,1)$     \\
$\Delta$  & $(1)$  & $(1)$   & $(1)$   & $(1)$
     & $(1)$        & $(1)$
\end{tabular}
\hfill\null
\end{center}
\end{exam}

\subsection{Further questions}
\label{SS:GermMore}

From that point, results are similar to those of Subsection~\ref{SS:Further}: once a germ has been shown to be a Garside germ, it is known to be a Garside family in its ambient category, and, for instance, Propositions~\ref{P:CommRMult} and~\ref{P:CommLMult} directly apply for the existence of common multiples. The only point worth mentioning here is right-cancellativity: let us observe that a germ is a symmetric structure, so applying the method of Subsection~\ref{S:RecGerm} to the opposite of the considered germ~$\SSSg$, that is, $(\SSS,\widetilde{\OP})$ with $\widetilde{\OP}$ defined by $\aa \mathbin{\widetilde{\OP}} \bb = \bb \OP \aa$, directly leads to a right-cancellativity criterion.

\section{Computing with a Garside family, positive case}
\label{S:Pos}

We now turn to a different series of questions: here we no longer aim at deciding whether the ambient category and a candidate family are relevant for the Garside approach, but we assume that they are and we aim at computing in the context so obtained. In this section, we consider computations that take place inside the considered category, and postpone to the next section the computations that take place in the groupoid of fractions. We shall successively consider computing normal decompositions (Subsections~\ref{SS:NormalLeft} and~\ref{SS:NormalRight}), solving the Word Problem (Subsection~\ref{SS:WordPb}) and computing least common multiples (Subsection~\ref{SS:Lcm}).

\subsection{Computing normal decompositions from the left}
\label{SS:NormalLeft}

By definition, a Garside family gives rise to distinguished decompositions for the elements of the ambient category, namely $\SSS$-normal decompositions. The first natural algorithmic question is to determine an $\SSS$-normal decomposition of an element~$\gg$ specified by an arbitrary decomposition in terms of~$\SSSs$. So the question is, starting from an $\SSSs$-path~$\ww$, to find an equivalent $\SSS$-normal path. Here we shall use an incremental approach, starting from a solution in the case of length two paths. 

\begin{defi}
\label{D:Square}
\rightskip30mm
A subfamily~$\AAA$ of a left-cancellative category~$\CCC$ \emph{satisfies Property~$\Square$} if, for every~$\seqq{\aa_1}{\aa_2}$ in~$\Seq\AAA2$, there exists an $\AAA$-greedy decomposition $\seqq{\bb_1}{\bb_2}$ of~$\aa_1\aa_2$ with $\bb_1$ and $\bb_2$ in~$\AAA$. In this case, a map $\SW$ that chooses, for every~$\seqq{\aa_1}{\aa_2}$ in~$\Seq\AAA2$, a pair~$(\bb_1, \bb _2)$ as above is called a \emph{$\Square$-witness} on~$\AAA$.
\hfill\begin{picture}(0,0)(-10,-2)
\pcline{->}(1,0)(14,0)
\tbput{$\aa_2$}
\pcline[style=exist]{->}(1,12)(14,12)
\taput{$\bb_1$}
\pcline{->}(0,11)(0,1)
\tlput{$\aa_1$}
\pcline[style=exist]{->}(15,11)(15,1)
\trput{$\bb_2$}
\psarc[style=thin](15,12){3.5}{180}{270}
\end{picture}
\end{defi}

If $\SSS$ is a Garside family, then one easily shows that there exists a $\Square$-witness on~$\SSSs$, that is, every element of~$(\SSSs)^2$ admits an $\SSS$-normal decomposition of length~$2$. Moreover, if the ambient category admits no nontrivial invertible element, then, by Proposition~\ref{P:NormalUnique}, the $\Square$-witness on~$\SSSs$ is unique. Note that, if the Garside family was initially specified as a germ, then Corollary~\ref{C:MaxJFunction} directly provides a $\Square$-witness; see Table~\ref{T:Witness} for an example. 

\begin{table}[htb]
\begin{tabular}{c|c@{\hspace{1.5ex}}c@{\hspace{1.5ex}}c@{\hspace{1.5ex}}c@{\hspace{1.5ex}}c@{\hspace{1.5ex}}c}
&$\ew$     
& $\tta$    
& $\ttb$    
& $\tta\ttb$   
& $\ttb\tta$ 
& $\Delta$\\
\hline
\VR(4,0)$\ew$   
&$(1, 1)$     
&$(\tta, 1)$     
&$(\ttb, 1)$     
&$(\tta\ttb, 1)$   
&$(\ttb\tta, 1)$ 
&$(\Delta, 1)$ \\
$\tta$   
&$(\tta, 1)$     
&$(\tta, \tta)$        
&$(\tta\ttb, 1)$   
&$(\tta, \tta\ttb)$   
&$(\Delta, 1)$
&$(\Delta, \ttb)$\\
$\ttb$   
&$(\ttb, 1)$     
&$(\ttb\tta, 1)$        
&$(\ttb, \ttb)$   
&$(\Delta, 1)$   
&$(\ttb, \ttb\tta)$
&$(\Delta, \tta)$\\
$\tta\ttb$   
&$(\tta\ttb, 1)$     
&$(\Delta)$        
&$(\tta\ttb, \ttb)$   
&$(\Delta, \ttb)$
&$(\tta\ttb, \ttb\tta)$
&$(\Delta, \ttb\tta)$\\
$\ttb\tta$   
&$(\ttb\tta, 1)$     
&$(\ttb\tta, \tta)$        
&$(\Delta, 1)$   
&$(\ttb\tta, \tta\ttb)$
&$(\Delta, \tta)$
&$(\Delta, \tta\ttb)$\\
$\Delta$   
&$(\Delta, 1)$     
&$(\Delta, \tta)$        
&$(\Delta, \ttb)$   
&$(\Delta, \tta\ttb)$
&$(\Delta, \ttb\tta)$
&$(\Delta, \Delta)$
\end{tabular}
\vspace{2mm}
\caption{\sf\smaller The $\Square$-witness on the Garside family $\{1, \tta, \ttb, \tta\ttb, \ttb\tta, \Delta\}$ in the braid monoid~$\BP3$: for every pair of elements~$(\aa_1, \aa_2)$, it specifies the unique normal decomposition of~$\aa_1\aa_2$ of length~$2$. The values directly follow from considering~$\SSS$ as a germ and using the maximal $\JJJ$-function of Example~\ref{X:BraidGerm}.}
\label{T:Witness}
\end{table}

\begin{algo}{Left-multiplication---see Figure~\ref{F:LeftMult}}
\label{A:LeftMult}
\begin{algorithmic}[1]
\CONTEXT{A Garside family~$\SSS$ in a left-cancellative category~$\CCC$, a $\Square$-witness~$\SW$ for~$\SSSs$}
\INPUT{An element~$\bb$ of~$\SSSs$ and an 
$\SSS$-normal decomposition~$\seqqq{\aa_1}\etc{\aa_\pp}$ of an element~$\gg$ of~$\CCC$ such that $\bb \gg$ exists}
\OUTPUT{An $\SSS$-normal decomposition of $\bb \gg$}
\STATE{$\bb_0 := \bb$}
\FOR{$\ii$ increasing from~$1$ to~$\pp$}
\STATE{$(\aa'_\ii, \bb_\ii) := \SW(\bb_{\ii-1}, \aa_\ii)$}
\ENDFOR
\RETURN{$\seqqqq{\aa'_1}\etc{\aa'_\pp}{\bb_\pp}$}
\end{algorithmic}
\end{algo}

\begin{figure}[htb]
\begin{picture}(45,18)(0,-2)
\pcline{->}(-14,0)(-1,0)
\tbput{$\bb$}
\pcline{->}(1,0)(14,0)
\tbput{$\aa_1$}
\pcline{->}(16,0)(29,0)
\tbput{$\aa_2$}
\psline[style=etc](33,0)(41,0)
\pcline{->}(46,0)(59,0)
\tbput{$\aa_\pp$}

\pcline{->}(1,12)(14,12)
\taput{$\aa'_1$}
\pcline{->}(16,12)(29,12)
\taput{$\aa'_2$}
\psline[style=etc](33,12)(41,12)
\pcline{->}(46,12)(59,12)
\taput{$\aa'_\pp$}

\pcline{->}(0,11)(0,1)
\trput{$\bb_0$}
\pcline{->}(15,11)(15,1)
\trput{$\bb_1$}
\pcline{->}(30,11)(30,1)
\trput{$\bb_2$}
\pcline{->}(45,11)(45,1)
\trput{$\bb_{\pp-1}$}
\pcline{->}(60,11)(60,1)
\trput{$\bb_\pp$}

\psarc[style=thin](15,12){3}{180}{270}
\psarc[style=thin](30,12){3}{180}{270}
\psarc[style=thin](60,12){3}{180}{270}
\psarc[style=thin](14.5,0){2.5}{180}{360}
\psarc[style=thin](29.5,0){2.5}{180}{360}
\psarc[style=thin](44.5,0){2.5}{180}{360}

\psline[style=double](-15,1)(-15,5)
\psarc[style=double](-8,5){7}{90}{180}
\psline[style=double](-8,12)(-1,12)
\end{picture}
\caption[]{\sf\smaller Algorithm~\ref{A:LeftMult}: starting from~$\bb$ in~$\SSSs$ and an $\SSS$-normal decomposition of~$\gg$, it returns an $\SSS$-normal decomposition of~$\bb \gg$.}
\label{F:LeftMult}
\end{figure}

\begin{prop}
\label{P:LeftMult}
Assume that $\SSS$ is a Garside family in a left-cancellative category~$\CCC$, and $\SW$ is a $\Square$-witness for~$\SSSs$. Then Algorithm~\ref{A:LeftMult} running on~$\bb$ and an $\SSS$-normal decomposition $\seqqq{\aa_1}\etc{\aa_\pp}$ of~$\gg$ returns an $\SSS$-normal decomposition of~$\bb\gg$. The function~$\SW$ is called $\pp$~times. 
\end{prop}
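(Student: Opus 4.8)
The plan is to read both assertions off the rectangular grid that the algorithm constructs (Figure~\ref{F:LeftMult}).

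First I would check that the algorithm is well defined, that is, that $\SW$ is only ever applied to pairs in $\Seq{\SSSs}2$ and that the products in question exist. By induction on~$\ii$ one sees that $\bb_{\ii-1}$ lies in~$\SSSs$: for $\ii=1$ because $\bb_0=\bb$ is assumed to lie in~$\SSSs$, and for the inductive step because $\bb_\ii$ is the second component of $\SW(\bb_{\ii-1},\aa_\ii)$, which lies in~$\SSSs$ by Definition~\ref{D:Square}; meanwhile $\aa_\ii$ lies in~$\SSSs$ since $\seqqq{\aa_1}\etc{\aa_\pp}$ is $\SSS$-normal. The hypothesis that $\bb\gg$ exists, together with $\bb_{\ii-1}\aa_\ii=\aa'_\ii\bb_\ii$, gives by the same induction that each product $\bb_{\ii-1}\aa_\ii$ is defined. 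Definition~\ref{D:Square} then tells us that, for each~$\ii$, the path $\seqq{\aa'_\ii}{\bb_\ii}$ is an $\SSS$-greedy decomposition of $\bb_{\ii-1}\aa_\ii$ with both entries in~$\SSSs$, so the equalities $\bb_{\ii-1}\aa_\ii=\aa'_\ii\bb_\ii$ glue into a single commutative grid.

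Next I would establish that the returned path $\seqqqq{\aa'_1}\etc{\aa'_\pp}{\bb_\pp}$ represents $\bb\gg$. This is a downward telescoping: a decreasing induction on~$\ii$, using $\aa'_\ii\bb_\ii=\bb_{\ii-1}\aa_\ii$, shows that $\aa'_\ii\cdots\aa'_\pp\bb_\pp=\bb_{\ii-1}\aa_\ii\cdots\aa_\pp$, and the case $\ii=1$ together with $\bb_0=\bb$ and $\aa_1\cdots\aa_\pp=\gg$ gives the claim. The degenerate cases $\pp=0$ and $\pp=1$ are immediate.

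Finally I would prove that the output is $\SSS$-normal. Its entries $\aa'_1 \wdots \aa'_\pp$ and $\bb_\pp$ all lie in~$\SSSs$ by the first step, so by Definition~\ref{D:Greedy} it remains to check that each length-two factor is $\SSS$-greedy. The last factor $\seqq{\aa'_\pp}{\bb_\pp}$ is $\SSS$-greedy directly from the definition of~$\SW$. For $\ii<\pp$ I would apply the first domino rule (Lemma~\ref{L:Domino1}) to the commutative rectangle formed by columns~$\ii$ and~$\ii+1$ of the grid: bottom row $\seqq{\aa_\ii}{\aa_{\ii+1}}$, top row $\seqq{\aa'_\ii}{\aa'_{\ii+1}}$, downward verticals $\bb_{\ii-1},\bb_\ii,\bb_{\ii+1}$. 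Since $\seqq{\aa_\ii}{\aa_{\ii+1}}$ is $\SSS$-greedy (the input path is $\SSS$-normal) and $\seqq{\aa'_\ii}{\bb_\ii}$ is $\SSS$-greedy (it is $\SW(\bb_{\ii-1},\aa_\ii)$), the rule yields that $\seqq{\aa'_\ii}{\aa'_{\ii+1}}$ is $\SSS$-greedy. Hence the output is $\SSS$-normal; and $\SW$ is evaluated exactly once per loop iteration, i.e. $\pp$ times, which is the last assertion. I expect the only genuinely delicate point to be keeping the orientation straight when invoking Lemma~\ref{L:Domino1}: one must match its hypothesis that the path $\seqq{\gg'_1}{\ff_1}$ (first top edge, then middle vertical) is $\SSS$-greedy with the greediness of $\seqq{\aa'_\ii}{\bb_\ii}$ coming from the $\Square$-witness, rather than with the greediness of the input path.
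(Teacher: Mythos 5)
Your proposal is correct and follows the paper's own argument: commutativity of the grid of Figure~\ref{F:LeftMult}, greediness of each $\seqq{\aa'_\ii}{\bb_\ii}$ from the $\Square$-witness, and the first domino rule (Lemma~\ref{L:Domino1}) applied column by column to get that $\seqq{\aa'_\ii}{\aa'_{\ii+1}}$ is $\SSS$-greedy, with entries in $\SSSs$ and $\pp$ calls to $\SW$. The extra checks you include (well-definedness of the products, the telescoping identity showing the output represents $\bb\gg$, and the orientation of the domino rule) are routine details the paper subsumes under ``by construction the diagram is commutative''.
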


\begin{proof}
By construction, the diagram of Figure~\ref{F:LeftMult} is commutative. By assumption, $\seqq{\aa_\ii}{\aa_{\ii+1}}$ is $\SSS$-greedy for 
every~$\ii$, and, by the defining property of a $\Square$-witness, $\seqq{\aa'_\ii}{\bb_\ii}$ is $\SSS$-normal for every~$\ii$. Then the first domino rule (Lemma~\ref{L:Domino1}) implies that $\seqq{\aa'_\ii}{\aa'_{\ii+1}}$ is $\SSS$-normal. It is obvious from Algorithm~\ref{A:LeftMult} that the function~$\SW$ is invoked $\pp$~times.
\end{proof}

We can now compute $\SSS$-normal decompositions for an arbitrary element of $\CCC$ specified by an arbitrary $\SSSs$-decomposition by iterating Algorithm~\ref{A:LeftMult}.

\begin{algo}{Normal decomposition}
\label{A:Normal}
\begin{algorithmic}[1]
\CONTEXT{A Garside family~$\SSS$ in a left-cancellative category~$\CCC$, a $\Square$-witness~$\SW$ for~$\SSSs$}
\INPUT{An $\SSSs$-decomposition $\seqqq{\bb_1}\etc{\bb_\pp}$ of an element~$\gg$ of~$\CCC$}
\OUTPUT{An $\SSS$-normal decomposition of $\gg$}
\STATE{$\bb_{0,\ii} := \bb_\ii$ for $1 \le \ii \le \pp$}
\FOR{$\jj$ decreasing from~$\pp$ to~$1$}
\FOR{$\ii$ increasing from~$1$ to~$\pp - \jj$ (if any)}
\STATE{$(\aa_{\ii, \jj-1}, \bb_{\ii, \jj}) := \SW(\bb_{\ii-1, \jj}, \aa_{\ii, \jj})$}
\ENDFOR
\STATE{$\aa_{\pp-\jj+1, \jj-1} := \bb_{\pp-\jj, \jj}$}
\ENDFOR
\RETURN{$\seqqq{\aa_{1,0}}\etc{\aa_{\pp,0}}$}
\end{algorithmic}
\end{algo}

\begin{prop}
\label{P:Normal}
Assume that $\SSS$ is a Garside family in a left-cancellative category~$\CCC$ and $\SW$ is a $\Square$-witness for~$\SSSs$. Then Algorithm~\ref{A:Normal} running on an $\SSSs$-decomposition $\seqqq{\bb_1}\etc{\bb_\pp}$ of an element~$\gg$ of~$\CCC$ returns an $\SSS$-normal decomposition of~$\gg$. The map~$\SW$ is appealed to $\pp(\pp - 1)/2$~times.
\end{prop}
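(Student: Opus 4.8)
The plan is to recognize each pass of the outer loop of Algorithm~\ref{A:Normal} as an instance of Algorithm~\ref{A:LeftMult}, and then to run a downward induction on the outer loop variable using Proposition~\ref{P:LeftMult}. We may assume $\pp \ge 1$: for $\pp = 0$ the input and output are both the empty path, which is an $\SSS$-normal decomposition of an identity-element, and $\SW$ is called $0 = \pp(\pp-1)/2$ times.

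For $0 \le \jj \le \pp-1$, write $P_\jj$ for the $\SSSs$-path $\seqqq{\aa_{1,\jj}}\etc{\aa_{\pp-\jj,\jj}}$, of length~$\pp-\jj$; thus $P_{\pp-1}$ is the length-one path~$\aa_{1,\pp-1}$, and $P_0 = \seqqq{\aa_{1,0}}\etc{\aa_{\pp,0}}$ is the path returned by the algorithm. Inspecting the pseudocode, one checks that, after the pass of the outer loop with value~$\jj+1$ has been executed, exactly the entries of~$P_\jj$ have been defined, and that the pass with value~$\jj$ (for $1 \le \jj \le \pp-1$) reads off as Algorithm~\ref{A:LeftMult} applied to the element~$\bb_{0,\jj} = \bb_\jj$ of~$\SSSs$ and to the path~$P_\jj$: the inner loop sets $(\aa_{\ii,\jj-1}, \bb_{\ii,\jj}) := \SW(\bb_{\ii-1,\jj}, \aa_{\ii,\jj})$ for $\ii = 1 \wdots \pp-\jj$, and the final line sets $\aa_{\pp-\jj+1,\jj-1} := \bb_{\pp-\jj,\jj}$, so that the resulting path is, entry by entry, the output $P_{\jj-1}$ of that call of Algorithm~\ref{A:LeftMult}. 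For $\jj = \pp$ the inner loop is empty and the final line sets $\aa_{1,\pp-1} := \bb_{0,\pp} = \bb_\pp$, that is, the pass produces $P_{\pp-1} = \bb_\pp$. All the products that occur are defined because $\bb_\jj \pdots \bb_\pp$ is a subpath of the input path~$\seqqq{\bb_1}\etc{\bb_\pp}$.

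Next I claim that, for $0 \le \jj \le \pp-1$, the path~$P_\jj$ is an $\SSS$-normal decomposition of $\bb_{\jj+1} \pdots \bb_\pp$, which I would prove by downward induction on~$\jj$. For $\jj = \pp-1$, $P_{\pp-1} = \bb_\pp$ is a length-one $\SSSs$-path, hence $\SSS$-normal, and decomposes $\bb_\pp$. For the step, assume $1 \le \jj \le \pp-1$ and that $P_\jj$ is an $\SSS$-normal decomposition of $\bb_{\jj+1} \pdots \bb_\pp$; by the previous paragraph the pass with value~$\jj$ is Algorithm~\ref{A:LeftMult} run on~$\bb_\jj$ and on that $\SSS$-normal decomposition, so Proposition~\ref{P:LeftMult} gives that $P_{\jj-1}$ is an $\SSS$-normal decomposition of $\bb_\jj \cdot (\bb_{\jj+1} \pdots \bb_\pp) = \bb_\jj \pdots \bb_\pp$, which is the claim at~$\jj-1$. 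Taking $\jj = 0$, the returned path $P_0$ is an $\SSS$-normal decomposition of $\bb_1 \pdots \bb_\pp = \gg$, as wanted.

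For the count, Proposition~\ref{P:LeftMult} says the pass with value~$\jj$ calls~$\SW$ exactly $\pp-\jj$ times (also correctly when $\jj = \pp$, giving~$0$), so the total is $\sum_{\jj=1}^{\pp} (\pp-\jj) = \pp(\pp-1)/2$. I do not expect any real obstacle: the only delicate point is bookkeeping---matching the double indices $\aa_{\ii,\jj}, \bb_{\ii,\jj}$ of Algorithm~\ref{A:Normal} with the single index of Algorithm~\ref{A:LeftMult}, and fixing the base case and the direction of the induction. All the mathematical content sits inside Proposition~\ref{P:LeftMult}, hence ultimately inside the first domino rule, Lemma~\ref{L:Domino1}.
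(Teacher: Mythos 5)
Your argument is correct and follows essentially the same route as the paper's proof: an induction on the outer loop variable~$\jj$, identifying each pass as one application of Algorithm~\ref{A:LeftMult} and invoking Proposition~\ref{P:LeftMult} both for normality of the intermediate paths and for the count of $\pp-\jj$ calls to~$\SW$ per pass. The index bookkeeping and the summation $\sum_{\jj=1}^{\pp}(\pp-\jj)=\pp(\pp-1)/2$ are handled correctly, so there is nothing to add.
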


\begin{proof}
Consider the \textbf{for}-loop for $\jj$ and assume that before the execution of its body,
$\seqqq{\aa_{1,\jj}}\etc{\aa_{\pp-\jj,\jj}}$ is an $\SSS$-normal decomposition of $\bb_{0,\jj+1} \pdots \bb_{0,\pp}$, which is also $\bb_{\jj+1} \pdots \bb_{\pp}$; this condition is trivially satisfied for $\jj=\pp$.

According to Proposition~\ref{P:LeftMult}, the execution of lines 3-5 involves $\pp-\jj$ invocations of the map~$\SW$ and produces an $\SSS$-normal decomposition $\seqqq{\aa_{1,\jj-1}}\etc{\aa_{\pp-\jj+1,\jj-1}}$ of $\bb_{0,\jj} \pdots \bb_{0,\pp}$ that is, of $\bb_{\jj} \pdots \bb_{\pp}$. The claim then follows by induction.
\end{proof}

\begin{exam}
Consider the Garside family $\SSS = \{1, \tta, \ttb, \tta\ttb, \ttb\tta, \Delta\}$ and the element~$\gg = \ttb\tta\ttb^2$. We begin with the empty word, $\SSS$-normal decomposition of~$1$, and, starting from the right, we multiply by the successive letters on the left using Algorithm~\ref{A:LeftMult} and the $\Square$-witness of Table~\ref{T:Witness}, thus finding $\ttb$, $\seqq\ttb\ttb$, $\seqq{\tta\ttb}\ttb$ and, finally, $\seqq\Delta\ttb$ for the (unique) $\SSS$-normal decompositions of~$\ttb$, $\ttb^2$, $\tta\ttb^2$, and~$\gg$.
\end{exam}

\subsection{Computing normal decompositions using right-multiplication}
\label{SS:NormalRight}

Algorithm \ref{A:LeftMult} is not symmetric: the construction starts from the left and moves to the right. It is natural to wonder whether a symmetric right--to--left version exists. The answer depends on the considered Garside family~$\SSS$: if the latter satisfies the second domino rule (Lemma~\ref{L:Domino2}), the previous constructions have counterparts based on right-multiplication.

\begin{algo}{Right-multiplication---see Figure~\ref{F:RightMult}}
\label{A:RightMult}
\begin{algorithmic}[1]
\CONTEXT{A Garside family~$\SSS$ satisfying the second domino rule in a left-cancella\-tive category~$\CCC$, a $\Square$-witness~$\SW$ for~$\SSSs$}
\INPUT{An element~$\bb$ of~$\SSSs$, an $\SSS$-normal decomposition~$\seqqq{\aa_1}\etc{\aa_\pp}$ of an element~$\gg$ of~$\CCC$ such that ~$\gg\bb$ is defined}
\OUTPUT{An $\SSS$-normal decomposition of~$\gg\bb$}
\STATE{$\bb_\pp := \bb$}
\FOR{$\ii$ decreasing from~$\pp$ to~$1$}
\STATE{$(\bb_{\ii-1}, \aa'_\ii) := \SW(\aa_\ii, \bb_\ii)$}
\ENDFOR
\RETURN{$\seqqqq{\bb_0}{\aa'_1}\etc{\aa'_\pp}$}
\end{algorithmic}
\end{algo}

\begin{figure}[htb]
\begin{picture}(75,18)(0,-2)
\pcline{->}(1,0)(14,0)
\tbput{$\aa'_1$}
\pcline{->}(16,0)(29,0)
\tbput{$\aa'_2$}
\psline[style=etc](33,0)(41,0)
\pcline{->}(46,0)(59,0)
\tbput{$\aa'_\pp$}

\pcline{->}(1,12)(14,12)
\taput{$\aa_1$}
\pcline{->}(16,12)(29,12)
\taput{$\aa_2$}
\psline[style=etc](33,12)(41,12)
\pcline{->}(46,12)(59,12)
\taput{$\aa_\pp$}
\pcline{->}(61,12)(74,12)
\taput{$\bb$}

\pcline{->}(0,11)(0,1)
\trput{$\bb_0$}
\pcline{->}(15,11)(15,1)
\trput{$\bb_1$}
\pcline{->}(30,11)(30,1)
\trput{$\bb_2$}
\pcline{->}(45,11)(45,1)
\trput{$\bb_{\pp-1}$}
\pcline{->}(60,11)(60,1)
\trput{$\bb_\pp$}

\psarc[style=thin](0,0){3}{0}{90}
\psarc[style=thin](15,0){3}{0}{90}
\psarc[style=thin](45,0){3}{0}{90}
\psarc[style=thin](14.5,12){2.5}{0}{180}
\psarc[style=thin](29.5,12){2.5}{0}{180}
\psarc[style=thin](44.5,12){2.5}{0}{180}
\psline[style=double](75,7)(75,11)
\psarc[style=double](68,7){7}{270}{360}
\psline[style=double](61,0)(68,0)
\end{picture}
\caption[]{\sf\smaller Algorithm~\ref{A:RightMult}: starting from an $\SSS$-normal decomposition of~$\gg$ and~$\bb$, it returns an $\SSS$-normal decomposition of~$\gg \bb$.}
\label{F:RightMult}
\end{figure}

\begin{prop}[\bf right-multiplication]
\label{P:RightMult}
Assume that $\SSS$ is a Garside family satisfying the second domino rule in a left-cancellative category~$\CCC$ and $\SW$ is a $\Square$-witness for~$\SSSs$. Then Algorithm~\ref{A:RightMult} running on~$\bb$ in~$\SSSs$ and an $\SSS$-normal decomposition~$\seqqq{\aa_1}\etc{\aa_\pp}$ of~$\gg$ in~$\CCC$ returns an $\SSS$-normal decomposition of~$\gg\bb$, if the latter is defined. The function~$\SW$ is invoked $\pp$~times. 
\end{prop}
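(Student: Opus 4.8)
The plan is to mirror the proof of Proposition~\ref{P:LeftMult}, replacing the first domino rule with the second one (Lemma~\ref{L:Domino2}) and reading the grid of Figure~\ref{F:RightMult} from right to left rather than from left to right. First I would observe that, by construction of Algorithm~\ref{A:RightMult}, the diagram of Figure~\ref{F:RightMult} is commutative: the $\ii$-th iteration sets $(\bb_{\ii-1}, \aa'_\ii) := \SW(\aa_\ii, \bb_\ii)$, so by the defining property of a $\Square$-witness the square with top edge~$\aa_\ii$, right edge~$\bb_\ii$, bottom edge~$\aa'_\ii$, and left edge~$\bb_{\ii-1}$ commutes and satisfies $\aa_\ii \bb_\ii = \bb_{\ii-1} \aa'_\ii$; stacking these squares horizontally gives $\aa_1 \pdots \aa_\pp \bb = \bb_0 \aa'_1 \pdots \aa'_\pp$, so that $\seqqqq{\bb_0}{\aa'_1}\etc{\aa'_\pp}$ is indeed a decomposition of~$\gg\bb$. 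All edges lie in~$\SSSs$ because $\SW$ takes values in $\SSSs\times\SSSs$.

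Next I would establish greediness of the output path. By hypothesis $\seqqq{\aa_1}\etc{\aa_\pp}$ is $\SSS$-normal, so each $\seqq{\aa_\ii}{\aa_{\ii+1}}$ is $\SSS$-greedy, and by the defining property of a $\Square$-witness each $\seqq{\aa_\ii}{\bb_\ii}$ — equivalently, after relabelling, each square — has $\SSS$-greedy top-then-right border; more precisely $\SW(\aa_\ii,\bb_\ii)$ outputs an $\SSS$-normal, hence $\SSS$-greedy, length-two path $\seqq{\bb_{\ii-1}}{\aa'_\ii}$ representing $\aa_\ii\bb_\ii$, which is exactly the hypothesis needed to apply the second domino rule to the square with corners carrying $\aa_\ii, \aa_{\ii+1}$ on top and $\aa'_\ii$ on the bottom. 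I would then argue by downward induction on~$\ii$: the square in position~$\ii$ has $\seqq{\aa_\ii}{\aa_{\ii+1}}$ $\SSS$-greedy on its top and $\seqq{\bb_{\ii-1}}{\aa'_\ii}$ — being the output of $\SW$ — $\SSS$-greedy as its left-then-bottom border; applying Lemma~\ref{L:Domino2} yields that $\seqq{\aa'_\ii}{\aa'_{\ii+1}}$ is $\SSS$-greedy. Doing this for $\ii = \pp-1$ down to~$1$ shows $\seqq{\aa'_\ii}{\aa'_{\ii+1}}$ is $\SSS$-greedy for all relevant~$\ii$; I also need $\seqq{\bb_0}{\aa'_1}$ to be $\SSS$-greedy, which is immediate since it is precisely the length-two path returned by $\SW(\aa_1,\bb_1)$, hence $\SSS$-normal, hence $\SSS$-greedy. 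Since all entries of the output lie in~$\SSSs$, the path $\seqqqq{\bb_0}{\aa'_1}\etc{\aa'_\pp}$ is $\SSS$-normal.

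The complexity claim is immediate from the pseudocode: the \textbf{for}-loop runs for $\ii$ from~$\pp$ down to~$1$, invoking $\SW$ exactly once per iteration, so $\SW$ is called $\pp$~times. The main obstacle, and the only place where the hypothesis that $\SSS$ satisfies the second domino rule is essential, is the greediness argument: unlike the first domino rule (which needs $\seqq{\gg_1}{\gg_2}$ and $\seqq{\gg'_1}{\ff_1}$ $\SSS$-greedy with $\ff_1$ the \emph{vertical} middle edge), here the available greedy data are the top row and the \emph{left-then-bottom} corner produced by $\SW$, which is exactly the configuration handled by Lemma~\ref{L:Domino2}; one must be careful that all four edges of each square lie in~$\SSSs$, which is why $\SW$ is required to take values in $\SSSs\times\SSSs$ and why the second domino rule — whose statement demands edges in~$\SSSs$ — applies. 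A minor subtlety worth checking is the degenerate case $\pp = 0$, where the loop body is never executed and the algorithm returns the length-one path $\seqq{\bb_0}$ with $\bb_0 = \bb$, which is trivially an $\SSS$-normal decomposition of $\gg\bb = \bb$.
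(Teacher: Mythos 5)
Your overall route is exactly the paper's: commutativity of the grid of Figure~\ref{F:RightMult} gives $\bb_0\aa'_1\pdots\aa'_\pp=\aa_1\pdots\aa_\pp\bb$, then the second domino rule is applied to each two-square subdiagram starting from the right, and normality follows since all entries lie in~$\SSSs$; the count of $\SW$-calls and the degenerate case $\pp=0$ are handled as in the paper (the latter implicitly). So the approach is sound, but there is one concrete slip in how you match the hypotheses of Lemma~\ref{L:Domino2}. For the two-square subdiagram whose top is $\seqq{\aa_\ii}{\aa_{\ii+1}}$, with verticals $\bb_{\ii-1},\bb_\ii,\bb_{\ii+1}$ and bottom $\seqq{\aa'_\ii}{\aa'_{\ii+1}}$, the lemma requires the \emph{middle} vertical followed by the bottom-\emph{right} edge to be $\SSS$-greedy, that is, $\seqq{\bb_\ii}{\aa'_{\ii+1}}$, which is the output of $\SW(\aa_{\ii+1},\bb_{\ii+1})$; the pair you invoke, $\seqq{\bb_{\ii-1}}{\aa'_\ii}=\SW(\aa_\ii,\bb_\ii)$, plays the role of $\seqq{\bb_0}{\aa'_1}$ in the lemma and is not among its hypotheses (likewise, your earlier phrase about the ``top-then-right border'' $\seqq{\aa_\ii}{\bb_\ii}$ being greedy is not what $\SW$ gives you). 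Since every $\SW$-output is $\SSS$-normal, the fact you actually need is available by the very same reasoning, just at index $\ii+1$, so the argument is repaired by shifting the index; note also that no induction is needed---each application of the domino rule is independent, as in the paper's proof. The remaining points (greediness of $\seqq{\bb_0}{\aa'_1}$ as the output of $\SW(\aa_1,\bb_1)$, membership of all edges in~$\SSSs$) are correct.
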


\begin{proof}
The commutativity of the diagram gives $\bb_0 \aa'_1 \pdots \aa'_\pp = \aa_1 \pdots \aa_\pp \bb$. Applying the second domino rule to each two-square subdiagram of the diagram of Figure~\ref{F:RightMult} starting from the right, we see that the sequence $\seqqqq{\bb_0}{\aa'_1}\etc{\aa'_\qq}$ is $\SSS$-greedy. As all entries lie in~$\SSSs$, the sequence is $\SSS$-normal.
\end{proof}

By iterating Algorithm~\ref{A:RightMult} (when applicable), we easily obtain a symmetric version of Algorithm~\ref{A:Normal} and Proposition~\ref{P:Normal}, that we shall not explicitly state.

\begin{rema}
The effect of the second domino rule is to shorten the computation of certain normal decompositions. Indeed, assume that $\seqqq{\bb_1}\etc{\bb_\qq}$ and  $\seqqq{\aa_1}\etc{\aa_\pp}$ are $\SSS$-normal paths and $\bb_\qq \aa_1$ is defined. By applying  Proposition~\ref{P:RightMult}, we can compute an $\SSS$-normal  decomposition of the product $\bb_1 \pdots \bb_\qq \aa_1 \pdots \aa_\pp$ by filling a diagram as in Figure~\ref{F:Domino12}. When valid, the second domino rule guarantees that the path consisting of the first $\pp$ top edges followed by $\qq$ vertical edges is $\SSS$-normal, that is, the triangular part of the diagram may be forgotten. 
\end{rema}

\begin{figure}[htb]
\begin{picture}(90,40)(0,-1)
\pcline{->}(1,36)(14,36)
\taput{$\aa'_1$}
\pcline[style=etc](16,36)(28,36)
\pcline{->}(31,36)(44,36)
\taput{$\aa'_\pp$}
\pcline{->}(46,36)(59,36)
\taput{$\aa'_{1+\pp}$}
\pcline[style=etc](61,36)(74,36)
\pcline{->}(76,36)(89,36)
\taput{$\aa'_{\qq+\pp}$}

\pcline{->}(1,24)(14,24)
\pcline[style=etc](16,24)(28,24)
\pcline{->}(31,24)(44,24)
\pcline{->}(46,24)(59,24)
\pcline[style=etc](61,24)(74,24)
\pcline[style=double](76,24)(83,24)
\psarc[style=double](83,31){7}{270}{360}
\pcline[style=double](90,31)(90,35)

\pcline{->}(1,12)(14,12)
\pcline[style=etc](16,12)(28,12)
\pcline{->}(31,12)(44,12)
\pcline{->}(46,12)(59,12)
\pcline[style=etc](61,12)(68,12)
\psarc[style=etc](68,19){7}{270}{360}
\pcline[style=etc](75,19)(75,23)

\pcline{->}(1,0)(14,0)
\tbput{$\aa_1$}
\pcline[style=etc](16,0)(28,0)
\pcline{->}(31,0)(44,0)
\tbput{$\aa_\pp$}
\pcline[style=double](46,0)(53,0)
\psarc[style=double](53,7){7}{270}{360}
\pcline[style=double](60,7)(60,11)

\pcline{->}(0,35)(0,25)
\tlput{$\bb_1$}
\pcline[style=etc](0,22)(0,14)
\pcline{->}(0,11)(0,1)
\tlput{$\bb_\qq$}

\pcline{->}(15,35)(15,25)
\pcline[style=etc](15,22)(15,14)
\pcline{->}(15,11)(15,1)

\pcline{->}(30,35)(30,25)
\pcline[style=etc](30,22)(30,14)
\pcline{->}(30,11)(30,1)

\pcline{->}(45,35)(45,25)
\trput{$\bb'_1$}
\pcline[style=etc](45,22)(45,14)
\pcline{->}(45,11)(45,1)
\trput{$\bb'_\qq$}

\pcline{->}(60,35)(60,25)
\pcline[style=etc](60,22)(60,14)

\pcline{->}(75,35)(75,25)

\psarc[style=thin](15,0){2.5}{180}{360}
\psarc[style=thin](30,0){2.5}{180}{360}

\psarc[style=thin](15,36){2.5}{0}{180}
\psarc[style=thin](30,36){2.5}{0}{180}
\psarc[style=thin](45,36){2.5}{0}{180}
\psarc[style=thin](60,36){2.5}{0}{180}
\psarc[style=thin](75,36){2.5}{0}{180}

\psarc[style=thin](0,24){2.5}{90}{270}
\psarc[style=thin](0,12){2.5}{90}{270}

\psarc[style=thinexist](45,24){2.5}{-90}{90}
\psarc[style=thinexist](45,12){2.5}{-90}{90}

\psarc[style=thin](15,36){3}{180}{270}
\psarc[style=thin](45,36){3}{180}{270}
\psarc[style=thin](60,36){3}{180}{270}
\psarc[style=thin](15,12){3}{180}{270}
\psarc[style=thin](45,12){3}{180}{270}
\end{picture}
\caption[]{\sf\smaller Finding an $\SSS$-normal decomposition of $\bb_1 \pdots \bb_\qq \aa_1 \pdots \aa_\pp$ when $\seqqq{\bb_1}\etc{\bb_\qq}$ and $\seqqq{\aa_1}\etc{\aa_\pp}$ are $\SSS$-normal: using Proposition~\ref{P:LeftMult}, hence the first domino rule only, one determines the $\SSS$-normal sequence $\seqqq{\aa'_1}\etc{\aa'_{\qq+ \pp}}$ in $\pp\qq + \qq(\qq-1)/2$ steps; if the second domino rule is valid, the sequence $\seqqq{\bb'_1}\etc{\bb'_\qq}$ is already $\SSS$-normal, and $\seqqqqqq{\aa'_1}\etc{\aa'_\pp}{\bb'_1}\etc{\bb'_\qq}$ is an $\SSS$-normal decomposition of $\bb_1 \pdots \bb_\qq \aa_1 \pdots \aa_\pp$.}
\label{F:Domino12}
\end{figure}

We conclude with an example of a Garside family that does not satisfy the second domino rule: its existence shows that no uniform result for multiplication on the right is possible.

\begin{exam}
\label{X:Domino2}
For $\nn \ge 2$, let $\MM_\nn = \PRESp{\tta, \ttb}{\tta \ttb^\nn = \ttb^{\nn+1}}$. The method of Subsection~\ref{SS:Cancel} shows that $\MM_\nn$ is left-cancellative, and that any two elements of~$\MM_\nn$ that admit a common right-multiple admit a right-lcm. On the other hand, $\MM_\nn$ is not right-cancellative since we have $\tta \ttb^{\nn-1} \not= \ttb^\nn$ and $\tta \ttb^\nn = \ttb^{\nn+1}$. Let $\SS_\nn = \{1, \tta, \ttb, \ttb^2 \wdots \ttb^{\nn+1}\}$, a subset of~$\MM_\nn$ with $\nn+3$ elements. Using Lemma~\ref{L:LengthTwo}, one can check that $\SS_\nn$ is a Garside family in~$\MM_\nn$. 

\rightskip48mm
Now the second domino rule is not valid for~$\SS_\nn$ in~$\MM_\nn$. Indeed, the diagram aside is commutative, the paths $\seqq{\tta}{\ttb}$ and $\seqq{\ttb^{\nn+1}}{\ttb}$ are $\SS_\nn$-greedy, and all edges corresponds to elements of~$\SS_\nn$. However $\seqq{\ttb}{\ttb}$ is not $\SS_\nn$-greedy since $\ttb^2$ lies in~$\SS_\nn$. 
\hfill\begin{picture}(0,0)(-10,-3)
\psarc[style=thin](15,0){3.5}{0}{90}
\psarc[style=thin](14.5,12){3}{0}{180}
\pcline{->}(1,0)(14,0)
\tbput{$\ttb$}
\pcline{->}(16,0)(29,0)
\tbput{$\ttb$}
\pcline{->}(1,12)(14,12)
\taput{$\tta$}
\pcline{->}(16,12)(29,12)
\taput{$\ttb$}
\pcline{->}(0,11)(0,1)
\trput{$\ttb^{\nn+1}$}
\pcline{->}(15,11)(15,1)
\trput{$\ttb^{\nn+1}$}
\pcline{->}(30,11)(30,1)
\trput{$\ttb^{\nn+1}$}
\end{picture}

\end{exam}

\subsection{Solving the Word Problem}
\label{SS:WordPb}

We shall describe two solutions of the Word Problem for a left-cancellative category equipped with a Garside family, one based on normal decompositions, and one based on reversing.

\subsubsection*{Using $\SSS$-normal decompositions}

Whenever an effective method determining an $\SSS$-normal decomposition is available, a solution to the Word Problem is very close. However, due to the possible existence of nontrivial invertible elements, $\SSS$-normal decompositions need not be readily unique, so a final comparison step is needed.

\begin{defi}
\label{D:ETest}
Assume that $\CCC$ is a left-cancellative category and $\AAA$ is included in~$\CCC$. An \emph{$\eqir$-test} on~$\AAA$ is a map~$\EE$ of $\AAA^2$ to $\CCCi \cup \{\bot\}$ satisfying $\aa \EE(\aa, \bb) = \bb$ whenever $\aa, \bb$ are $\eqir$-equivalent, and $\EE(\aa, \bb) = \bot$ otherwise.
\end{defi}

\begin{algo}{Word Problem, positive case I}
\label{A:WordPb1}
\begin{algorithmic}[1]
\CONTEXT{A left-cancellative category~$\CCC$, a Garside subfamily~$\SSS$ of~$\CCC$, a $\Square$-witness~$\SW$ on~$\SSSs$, an $\eqir$-test~$\EE$ on~$\SSSs$}
\INPUT{Two $\SSSs$-paths $\uu, \vv$}
\OUTPUT{$\mathtt{true}$ if $\uu, \vv$ represent the same element of~$\CCC$, and $\mathtt{false}$ otherwise}
\IF{$\src\uu \not= \src\vv$ or $\trg\uu \not= \trg\vv$}
\RETURN{$\mathtt{false}$}
\ELSE
\STATE{use Algorithm~\ref{A:Normal} with~$\SW$ to find an $\SSS$-normal path $\seqqq{\aa_1}\etc{\aa_\pp}$ representing~$\cl\uu$}\label{A:WordPb1:path1}
\STATE{use Algorithm~\ref{A:Normal} with~$\SW$ to find an $\SSS$-normal path $\seqqq{\bb_1}\etc{\bb_\qq}$ representing~$\cl\vv$}\label{A:WordPb1:path2}
\RETURN{the value of \CALL{CompareNormalPaths}{$\seqqq{\aa_1}\etc{\aa_\pp}$, $\seqqq{\bb_1}\etc{\bb_\qq}$}}
\ENDIF
\medskip

\FUNCTION{CompareNormalPaths}{$\seqqq{\aa_1}\etc{\aa_\pp}$, $\seqqq{\bb_1}\etc{\bb_\qq}$}

\COMMENT{the paths should have the same source and the same target}
\STATE{$\xx := $ source of~$\bb_1$\,; $\yy := $ target of~$\bb_\qq$}
\STATE{$\ee_0 := \id\xx$\,; $\bb_\ii:= \id\yy$ for $\qq < \ii \le \max(\qq, \pp)$\,; $\aa_\jj:= \id\yy$ for $\pp < \jj \le \max(\qq, \pp)$}
\FOR{$k := 1 \text{ to}\max(\qq, \pp)$}\label{A:WordPb1:loop-start}
\IF{$\EE(\bb_\ii, \ee_{\ii-1}\aa_\ii) \not= \bot$}
\STATE{$\ee_\ii:= \EE(\bb_\ii, \ee_{\ii-1}\aa_\ii)$}
\ELSE
\RETURN{$\mathtt{false}$}
\ENDIF
\ENDFOR\label{A:WordPb1:loop-end}
\RETURN{$\TV{\ee_\ii = \id\yy}$}
\ENDFUNCTION
\end{algorithmic}
\end{algo}

\begin{prop}\label{P:WordPb1}
Assume that $\SSS$ is a Garside family in a left-cancellative category~$\CCC$, $\SW$ is a $\Square$-witness on~$\SSSs$, and $\EE$ is an $\eqir$-test on~$\SSSs$.

\ITEM1
Given two $\SSS$-normal paths $\uu$ and $\vv$ of length at most $\ell$, the function \textsc{CompareNormalPaths} in Algorithm~\ref{A:WordPb1} decides in time~$O(\ell)$ whether $\cl\uu=\cl\vv$ holds.

\ITEM2
Given two $\SSSs$-paths $\uu$ and $\vv$ of length at most $\ell$, Algorithm~\ref{A:WordPb1} decides in time~$O(\ell^2)$ whether $\cl\uu=\cl\vv$ holds.
\end{prop}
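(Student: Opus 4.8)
The plan is to prove the two parts of Proposition~\ref{P:WordPb1} separately, dealing first with \ITEM1 (the correctness and running time of \textsc{CompareNormalPaths}) and then deriving \ITEM2 by combining \ITEM1 with the complexity of Algorithm~\ref{A:Normal} and the preprocessing step that compares sources and targets.

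For \ITEM1, the key is Proposition~\ref{P:NormalUnique}: two $\SSS$-normal decompositions of the same element of~$\CCC$ are $\CCCi$-deformations of one another, meaning there exist invertible elements $\ee_0 \wdots \ee_\rr$ with $\rr = \max(\pp,\qq)$, $\ee_0$ and $\ee_\rr$ identity-elements, and $\bb_\ii \ee_\ii = \ee_{\ii-1}\aa_\ii$ for $1 \le \ii \le \rr$ (the shorter path padded with identity-elements). So I would argue as follows. First, if $\cl\uu = \cl\vv$ holds, then such a deformation exists; I claim the loop in lines~\ref{A:WordPb1:loop-start}--\ref{A:WordPb1:loop-end} recovers exactly this sequence. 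Indeed, $\ee_0 = \id\xx$ is set correctly, and, inductively, given $\ee_{\ii-1}$, the relation $\bb_\ii \ee_\ii = \ee_{\ii-1}\aa_\ii$ says precisely that $\bb_\ii$ and $\ee_{\ii-1}\aa_\ii$ are $\eqir$-equivalent with $\ee_\ii$ the connecting invertible element; since $\EE$ is an $\eqir$-test, $\EE(\bb_\ii, \ee_{\ii-1}\aa_\ii) = \ee_\ii \ne \bot$, so the algorithm computes $\ee_\ii$ and does not return $\mathtt{false}$. At the end $\ee_\rr = \id\yy$ by Proposition~\ref{P:NormalUnique}, so the algorithm returns $\mathtt{true}$. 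Conversely, if the algorithm returns $\mathtt{true}$, then it has produced invertible elements $\ee_0 \wdots \ee_\rr$ with $\ee_0 = \id\xx$, $\ee_\rr = \id\yy$, and $\bb_\ii \ee_\ii = \ee_{\ii-1}\aa_\ii$ for all~$\ii$ (the defining property of the $\eqir$-test), which forces $\cl\uu = \cl{\aa_1 \pdots \aa_\pp} = \cl{\bb_1 \pdots \bb_\qq} = \cl\vv$ by telescoping: $\bb_1 \pdots \bb_\rr = \ee_0 \aa_1 \ee_1\inv \cdot \ee_1 \aa_2 \ee_2\inv \cdots = \ee_0(\aa_1 \pdots \aa_\rr)\ee_\rr\inv = \aa_1 \pdots \aa_\rr$. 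The loop runs $\max(\pp,\qq) \le \ell$ times, each iteration calling $\EE$ once and performing one multiplication in~$\CCC$ (both $O(1)$ in our cost model, or at worst bounded by a constant depending only on the data), so the running time is $O(\ell)$.

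For \ITEM2, the main point is the cost accounting. Comparing $\src\uu$ with $\src\vv$ and $\trg\uu$ with $\trg\vv$ takes $O(1)$; if they differ, $\cl\uu$ and $\cl\vv$ cannot be equal (they have different source or target), so returning $\mathtt{false}$ is correct. Otherwise, by Proposition~\ref{P:Normal}, Algorithm~\ref{A:Normal} applied to each of $\uu$ and $\vv$ (each of length at most~$\ell$) returns an $\SSS$-normal path representing the same element, using $O(\ell^2)$ invocations of~$\SW$, hence in time $O(\ell^2)$; moreover the resulting normal paths have length at most~$\ell$ (each step of Algorithm~\ref{A:Normal} does not increase length, or one notes the output length equals the input length). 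Then \ITEM1 gives a comparison in time $O(\ell)$. The total is $O(\ell^2) + O(\ell^2) + O(\ell) = O(\ell^2)$, and correctness follows because $\cl\uu = \cl\vv$ in~$\CCC$ if and only if the two normal paths represent the same element, which \ITEM1 decides correctly.

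The only genuinely delicate point is the converse direction in \ITEM1: one must be sure that the sequence $(\ee_\ii)$ the algorithm builds greedily is forced to be \emph{a} $\CCCi$-deformation whenever it completes without failing, i.e.\ that completing the loop with $\ee_\rr = \id\yy$ actually certifies $\cl\uu = \cl\vv$ rather than merely being consistent with it. This is exactly what the telescoping computation above shows, using left-cancellativity only implicitly (it is not even needed here, since we are verifying an equation, not solving one); I would spell out that computation carefully. A secondary subtlety is that the induction in the forward direction needs $\ee_{\ii-1}\aa_\ii$ to be a well-defined element of~$\CCC$ (so that $\EE$ can be applied to it), which holds because $\ee_{\ii-1}$ is invertible with target $\src{\aa_\ii}$ by the source/target bookkeeping in the algorithm; this is routine but worth a remark.
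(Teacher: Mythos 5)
Your proposal is correct and follows the same route as the paper: Proposition~\ref{P:NormalUnique} reduces equality of classes of $\SSS$-normal paths to being $\CCCi$-deformations, which is exactly what the $\eqir$-test loop checks, and Proposition~\ref{P:Normal} gives the $O(\ell^2)$ normalization cost with output length at most~$\ell$. You merely spell out what the paper leaves implicit (the telescoping verification and the fact that, by left-cancellativity, the connecting invertible elements $\ee_\ii$ are uniquely determined, so the greedy loop necessarily recovers the deformation when one exists), which is fine.
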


\begin{proof}
\ITEM1 By Proposition~\ref{P:NormalUnique}, two $\SSS$-normal paths~$\uu$ and~$\vv$ satisfy $\cl\uu=\cl\vv$ if and only if they are $\CCCi$-deformations of one another. The latter in turn is the case if and only if $\EE(\bb_\ii, \ee_{\ii-1}\aa_\ii) \neq\bot$ for all $\ii=1 \wdots \max(\pp, \qq)$ and $\ee_{\max(\pp, \qq)}=1_y$ hold in lines \ref{A:WordPb1:loop-start}-\ref{A:WordPb1:loop-end} of the function \textsc{CompareNormalPaths}. It is obvious that this test takes time~$O(\ell)$.

\ITEM2 If $\uu$ and $\vv$ have different sources or different targets, clearly we have $\cl\uu\ne \nobreak \cl\vv$. Otherwise, one has $\cl\uu=\cl\vv$ if and only if the paths computed in lines~\ref{A:WordPb1:path1} and~\ref{A:WordPb1:path2} of Algorithm~\ref{A:WordPb1} represent the same element. By Proposition~\ref{P:Normal}, lines~\ref{A:WordPb1:path1} and~\ref{A:WordPb1:path2} of Algorithm~\ref{A:WordPb1} have a cost of~$O(\ell^2)$ and the lengths of the produced paths is at most~$\ell$. The claim then follows with~\ITEM1.
\end{proof}

\begin{exam}
\label{X:BraidWordProblemViaNF}
Consider once again the 3-strand braid monoid~$\BP3$. Let $\SSS$ be $\{1, \tta, \ttb, \tta\ttb, \ttb\tta, \Delta\}$. We saw in Example~\ref{X:BraidMinGar} that $\SSS$ is a (minimal) Garside family in~$\BP3$ containing~$1$, and determined in Table~\ref{T:Witness} a $\Square$-witness on~$\SSS$. As $\BP3$ contains no nontrivial invertible element, testing for $\eqir$ reduces to testing equality.

As in Figure~\ref{F:RightRev}, consider $\uu = \tta \pc \ttb \pc \ttb$ and $\vv = \ttb \pc \tta \pc \ttb \pc \ttb$. Applying Algorithm~\ref{A:Normal}, we obtain the normal decomposition $\tta\ttb \pc \ttb$ of $\cl\uu$ and the normal decomposition $\Delta \pc \ttb$ of $\cl\vv$. As $\tta\ttb$ and $\Delta$ are not equal, hence not $\eqir$-equivalent, Algorithm~\ref{A:WordPb1} returns $\mathtt{false}$, that is, one has $\cl\uu\neq\cl\vv$.
\end{exam}

\subsubsection*{Using reversing}

An alternative solution of the Word Problem that does not require computing a normal form can be given using the reversing method of Subsection~\ref{SS:Rev}. By definition, if the investigated category~$\CCC$ is specified using a right-complemented presentation~$(\AAA ; \RRR)$ for which right-reversing is complete, then two $\AAA$-paths~$\uu, \vv$ represent the same element of~$\CCC$ if and only if the signed path~$\INV\uu \pc \vv$ reverses to an empty path, that is, if and only if Algorithm~\ref{A:RightRev} running on~$\INV\uu \pc \vv$ returns an empty path. This however provides a solution to the Word Problem only if right-reversing is known to terminate in finite time. By Lemma~\ref{L:Terminating}, this happens whenever any two elements with the same source admit a common right-multiple in the considered category, but nothing can be said in more general cases.

Now the termination problem does not arise when one considers a presentation that is associated with a short right-complement, in which case the Word Problem can be solved using reversing. The point here is that every Garside family gives rise to such a presentation:

\begin{lemm}
\label{L:Gar2Rev}
Under the assumptions of Proposition~\ref{P:Gar2Pres}, the obtained presentation $(\SSS ; \RRR)$ of~$\CCC$ is associated with a short right-complement, and right-reversing is complete for this presentation.
\end{lemm}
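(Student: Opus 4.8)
The plan is to work with the presentation $(\SSS ; \RRR)$ from Proposition~\ref{P:Gar2Pres}, where $\RRR$ consists of the relations $\aa \pc (\aa \under \bb) = \bb \pc (\bb \under \aa)$ for all $\aa, \bb$ in $\SSS$ admitting a common right-multiple. First I would observe that this presentation is by construction right-complemented, the associated right-complement $\RC$ being the partial map $(\aa, \bb) \mapsto \aa \under \bb$: for each pair $(\aa, \bb)$ in $\SSS^2$ admitting a common right-multiple there is exactly one relation of the form $\aa\, ... = \bb\, ...$, and Definition~\ref{D:RightComp} is satisfied because $\aa \under \aa = \id{\,}$ (the target) and $\aa (\aa \under \bb) = \aa \lcm \bb = \bb \lcm \aa = \bb (\bb \under \aa)$ is a genuine right-lcm, hence has the required common target. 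The key point for shortness is Lemma~\ref{L:GarClosed} together with the defining hypothesis of Proposition~\ref{P:Gar2Pres} that $\SSS$ is a Garside family in a left-cancellative category with no nontrivial invertible element admitting local right-lcms: applying Lemma~\ref{L:GarClosed} to $\aa, \bb$ in $\SSS = \SSSs$ with common right-multiple shows that $\aa \lcm \bb$ lies in $\SSSs = \SSS \cup \Id\CCC$ and that $\aa' = \bb \under \aa$, $\bb' = \aa \under \bb$ also lie in $\SSSs$. Since there are no nontrivial invertible elements, $\SSSs = \SSS \cup \Id\CCC$, so $\aa \under \bb$ is either an element of $\SSS$ or an identity-element; in the path language of $\SSS^*$ that means $\RC(\aa, \bb)$ has length at most $1$, which is exactly the definition of a short right-complement.

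For completeness, I would invoke Proposition~\ref{P:Complete}: since $\RC$ is short, right-reversing is complete for $(\SSS ; \RRR)$ if and only if the cube condition holds at every triple $(\aa, \bb, \cc)$ of pairwise distinct elements of $\SSS$ with the same source. So the remaining task is to verify the cube condition. Here I would argue semantically rather than combinatorially: by Lemma~\ref{L:Equiv} (or directly from the definition of $\RC^*$), whenever the iterated complements in the cube condition are defined, the paths they produce represent actual right-lcms inside $\CCC$ — specifically $\RC^*(\RC^*(\aa, \bb), \RC^*(\aa, \cc))$ represents $(\aa \under \bb) \under (\aa \under \cc)$, and the full expression in the cube condition represents a comparison of two paths both of which evaluate to the right-lcm $\aa \lcm \bb \lcm \cc$ in $\CCC$. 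Uniqueness of right-lcms (no nontrivial invertibles) forces these two paths to be $\RRR$-equivalent, and since one can check that the relevant reversing of $\INV\uu \pc \vv$ for two equal short-complement outputs terminates at the empty path, the cube expression is empty whenever defined; and the two ``halves'' of the cube are simultaneously defined because each is defined exactly when $\aa, \bb, \cc$ admit a common right-multiple. This gives the cube condition and hence completeness.

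I expect the main obstacle to be the bookkeeping in the cube-condition verification: one has to be careful that $\RC^*$ applied to length-$\le 1$ paths may itself involve the extended complement $\RCh$ on $\SSSh = \SSS \cup \ew_\SSS$ (because some $\aa \under \bb$ is an identity-element), and that the ``either neither is defined or both are'' clause is handled correctly — this is where the hypothesis that $\CCC$ \emph{admits local right-lcms} (not merely that two-element lcms exist) is used, to guarantee that $\aa, \bb, \cc$ having pairwise common right-multiples is equivalent to having a common right-multiple of all three, so that definedness propagates symmetrically through the cube. Apart from that, everything reduces to Lemma~\ref{L:GarClosed}, Lemma~\ref{L:Equiv}, uniqueness of lcms, and Proposition~\ref{P:Complete}, all of which are available.
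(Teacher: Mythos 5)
Your proposal is correct and follows essentially the same route as the paper's proof, which defines $\RC(\aa,\bb)=\aa\under\bb$, obtains shortness from Lemma~\ref{L:GarClosed}, deduces the cube condition from the associativity (and, implicitly, uniqueness) of right-lcms, and concludes by Proposition~\ref{P:Complete}. Your version merely spells out the semantic lcm argument and the definedness bookkeeping that the paper compresses into one sentence.
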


\begin{proof}
Let $\RC$ be the partial map defined on~$\SSS$ by $\RC(\aa, \bb) = \aa \under \bb$. By definition, $\RC$ is a right-complement on~$\SSS$ and, by Lemma~\ref{L:GarClosed}, it is short. Next, the associativity of the right-lcm operation implies that $\RC$ satisfies the cube condition for every triple of elements of~$\SSS$. Then Proposition~\ref{P:Complete}, implies that right-reversing is complete for~$(\SSS;\RRR)$.
\end{proof}

It follows that the presentation~$(\SSS ; \RRR)$ is eligible for right-reversing and that the latter solves the Word Problem.

\begin{algo}{Word Problem, positive case II}
\label{A:WordPb2}
\begin{algorithmic}[1]
\CONTEXT{A Garside family~$\SSS$ in a left-cancellative category~$\CCC$ that admits unique local right-lcms}
\INPUT{Two $\SSS$-paths $\uu, \vv$}
\OUTPUT{$\mathtt{true}$ if $\uu, \vv$ represent the same element of~$\CCC$, and $\mathtt{false}$ otherwise}
\STATE{$\RC$ := the right-lcm selector on~$\SSS \cup \Id\CCC$}
\STATE{$\mathtt{ret}$ := the return value of Algorithm~\ref{A:RightRevShort} for right-complement~$\RC$ and input $\INV\uu \pc \vv$}
\RETURN{$\TV{\mathtt{ret} = \ew_{\ud}}$}\label{A:WordPb2:return_success}
\COMMENT{$\mathtt{ret}$ is either a positive--negative path or equal to $\mathtt{fail}$}
\end{algorithmic}
\end{algo}

Lemmas~\ref{L:Gar2Rev} and~\ref{L:TerminatingShort} immediately imply:

\begin{prop}
\label{P:WordPb2}
Assume that $\SSS$ is a Garside family in a left-cancellative category~$\CCC$ that admits unique local right-lcms. 

\ITEM1 Algorithm~\ref{A:WordPb2} solves the Word Problem of~$\CCC$ with respect to~$\SSS$. 

\ITEM2 If $\SSS$ is finite, the complexity of Algorithm~\ref{A:WordPb2} is quadratic in the length of the input paths.
\end{prop}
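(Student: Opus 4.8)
The plan is to reduce the statement to the properties of short right-reversing established in Subsection~\ref{SS:Rev}, by way of Lemma~\ref{L:Gar2Rev}. The first point is that the hypothesis that $\CCC$ admits unique local right-lcms includes two things we need: that $\CCC$ admits local right-lcms, and that $\CCC$ contains no nontrivial invertible element---for if $\ee$ were a nontrivial invertible element with source~$\xx$, then both~$\id\xx$ and~$\ee$ would be right-lcms of the pair $(\id\xx, \id\xx)$, contradicting uniqueness. Hence $\SSS$ and~$\CCC$ satisfy the assumptions of Proposition~\ref{P:Gar2Pres}, so Lemma~\ref{L:Gar2Rev} applies: the presentation $(\SSS ; \RRR)$ of Proposition~\ref{P:Gar2Pres} is a presentation of~$\CCC$, it is associated with the short right-complement $\RC(\aa, \bb) = \aa \under \bb$ on~$\SSS \cup \Id\CCC$ (shortness being Lemma~\ref{L:GarClosed}; and, since no nontrivial invertible element exists, this $\RC$ is exactly the right-lcm selector that Algorithm~\ref{A:WordPb2} uses), and right-reversing is complete for $(\SSS ; \RRR)$.

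For~\ITEM1, let $\uu, \vv$ be $\SSS$-paths; assuming, as usual, $\src\uu = \src\vv$ (otherwise they trivially represent distinct elements), the path $\INV\uu \pc \vv$ is a valid negative--positive input for Algorithm~\ref{A:RightRevShort}, which always halts---its two nested loops perform exactly $\LG\uu \cdot \LG\vv$ iterations---returning a positive--negative $\SSS$-path or~$\mathtt{fail}$. Completeness of right-reversing gives $\INV\uu \pc \vv \rev_\RC \ew$ whenever $\cl\uu = \cl\vv$, while Lemma~\ref{L:Equiv} (each elementary square of the reversing grid being a relation of~$\RRR$) gives the converse: if $\INV\uu \pc \vv$ right-$\RC$-reverses to an empty path then $\uu$ and~$\vv$ are $\RRR$-equivalent, hence $\cl\uu = \cl\vv$ since $(\SSS ; \RRR)$ presents~$\CCC$. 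Therefore the output $\mathtt{ret}$ of Algorithm~\ref{A:RightRevShort} is an empty path if and only if $\cl\uu = \cl\vv$, so line~\ref{A:WordPb2:return_success} returns the correct Boolean and Algorithm~\ref{A:WordPb2} solves the Word Problem of~$\CCC$ with respect to~$\SSS$.

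For~\ITEM2, suppose $\SSS$ is finite. Then the short right-complement~$\RC$ has finite domain and can be tabulated once and for all, so each evaluation of its extension~$\RCh$ costs~$O(1)$; combined with the bound $\LG\uu \cdot \LG\vv = O(\ell^2)$ on the number of iterations (equivalently, Lemma~\ref{L:TerminatingShort}), this shows that Algorithm~\ref{A:RightRevShort}, and hence Algorithm~\ref{A:WordPb2}, runs in time quadratic in the length~$\ell$ of the input paths.

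I do not expect a real obstacle: once Lemma~\ref{L:Gar2Rev} is in place, the argument is just an assembly of Lemmas~\ref{L:Gar2Rev}, \ref{L:Equiv} and~\ref{L:TerminatingShort} together with the completeness implication. The only slightly delicate bookkeeping is the first paragraph---deducing from the uniqueness of local right-lcms that there is no nontrivial invertible element, which is what makes Proposition~\ref{P:Gar2Pres} (hence Lemma~\ref{L:Gar2Rev}) applicable and identifies the right-lcm selector used in the algorithm with the short right-complement it produces.
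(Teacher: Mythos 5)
Your proof is correct and follows the same route as the paper, which deduces the proposition directly from Lemma~\ref{L:Gar2Rev} (completeness and shortness of the associated right-complement) together with Lemma~\ref{L:TerminatingShort} and the equivalence criterion of Lemma~\ref{L:Equiv}. The only addition is your explicit check that uniqueness of local right-lcms rules out nontrivial invertible elements, a point the paper leaves implicit but which you justify correctly.
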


\begin{exam}
\label{X:BraidGarToPres}
Consider once again the 3-strand braid monoid~$\BP3$. As seen in Example~\ref{X:BraidMinGar}, the five elements family $\{\tta, \ttb, \tta\ttb, \ttb\tta, \Delta\}$ is a (minimal) Garside family in~$\BP3$. The resulting presentation of~$\BP3$ is $(\tta, \ttb, \tta\ttb, \ttb\tta, \Delta;\RRR)$, where $\RRR$ consists of $\binom52$ relations (in which we leave the concatenation sign to avoid ambiguities)
$\tta \pc \ttb\tta = \ttb \pc \tta\ttb$, 
$\tta \pc \ttb = \tta\ttb$, 
$\tta \pc \ttb\tta = \ttb\tta \pc \ttb$,
$\tta \pc \ttb\tta = \Delta$,
$\ttb \pc \tta\ttb = \tta\ttb \pc \tta$,
$\ttb \pc \tta = \ttb\tta$,
$\ttb \pc \tta\ttb = \Delta$,
$\tta\ttb \pc \tta = \ttb\tta \pc \ttb$,
$\tta\ttb \pc \tta = \Delta$,
$\ttb\tta \pc \ttb = \Delta$.
Starting, as in Example~\ref{X:BraidWordProblemViaNF}, from the words $\uu = \tta \pc \ttb \pc \ttb$ and $\vv = \ttb \pc \tta \pc \ttb \pc \ttb$, we apply Algorithm~\ref{A:RightRevShort} for the input $\INV{\uu}\pc\vv$, obtaining $\ww = \tta\ttb \pc \INV{\tta}$ (see Figure~\ref{F:RightRevWord}) and thus, as $\ww$ is not empty, we conclude once again that $\uu$ and~$\vv$ are not equivalent.
\end{exam}

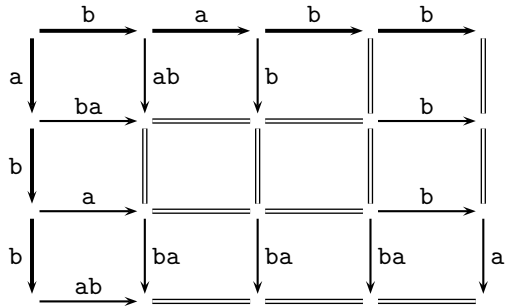
\begin{figure}[htb]
\begin{picture}(60,40)(0,0)
\pcline[style=thick]{->}(1,36)(14,36)
\taput{$\ttb$}
\pcline[style=thick]{->}(16,36)(29,36)
\taput{$\tta$}
\pcline[style=thick]{->}(31,36)(44,36)
\taput{$\ttb$}
\pcline[style=thick]{->}(46,36)(59,36)
\taput{$\ttb$}

\pcline[style=thick]{->}(0,35)(0,25)
\tlput{$\tta$}
\pcline{->}(15,35)(15,25)
\trput{$\tta\ttb$}
\pcline{->}(30,35)(30,25)
\trput{$\ttb$}
\pcline[style=double](45,35)(45,25)
\pcline[style=double](60,35)(60,25)

\pcline{->}(1,24)(14,24)
\taput{$\ttb\tta$}
\pcline[style=double](16,24)(29,24)
\pcline[style=double](31,24)(44,24)
\pcline{->}(46,24)(59,24)
\taput{$\ttb$}

\pcline[style=thick]{->}(0,23)(0,13)
\tlput{$\ttb$}
\pcline[style=double](15,23)(15,13)
\pcline[style=double](30,23)(30,13)
\pcline[style=double](45,23)(45,13)
\pcline[style=double](60,23)(60,13)

\pcline{->}(1,12)(14,12)
\taput{$\tta$}
\pcline[style=double](16,12)(29,12)
\pcline[style=double](31,12)(44,12)
\pcline{->}(46,12)(59,12)
\taput{$\ttb$}

\pcline[style=thick]{->}(0,11)(0,1)
\tlput{$\ttb$}
\pcline{->}(15,11)(15,1)
\trput{$\ttb\tta$}
\pcline{->}(30,11)(30,1)
\trput{$\ttb\tta$}
\pcline{->}(45,11)(45,1)
\trput{$\ttb\tta$}
\pcline{->}(60,11)(60,1)
\trput{$\tta$}

\pcline{->}(1,0)(14,0)
\taput{$\tta\ttb$}
\pcline[style=double](16,0)(29,0)
\pcline[style=double](31,0)(44,0)
\pcline[style=double](46,0)(59,0)
\end{picture}
\caption{\sf\small The grid associated with the right-reversing of $\INV{\uu}\pc\vv$ in Example~\ref{X:BraidGarToPres}; compare with the grid of Figure~\ref{F:RightRev}: the final paths are equivalent, but not equal, which is not contradictory as they are obtained in different ways.}
\label{F:RightRevWord}
\end{figure}

One of the benefits of the reversing approach is to provide at the same time a decision method for the left-divisibility relation, as Lemma~\ref{L:Gar2Rev} also implies:

\begin{prop}
If line~\ref{A:WordPb2:return_success} of Algorithm~\ref{A:WordPb2} is modified so that the algorithm returns $\mathtt{true}$ if $\mathtt{ret}$ is a positive path, and $\mathtt{false}$ otherwise, the modified algorithm returns $\mathtt{true}$ if and only if $\cl\uu$ left-divides~$\cl\vv$ in~$\CCC$.
\end{prop}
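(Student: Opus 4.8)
The plan is to reduce everything to the completeness of right-reversing for the presentation of~$\CCC$ derived from the Garside family (Proposition~\ref{P:Gar2Pres}, Lemma~\ref{L:Gar2Rev}) together with a direct inspection of the grid produced by Algorithm~\ref{A:RightRevShort}. Write $\RC$ for the right-lcm selector on~$\SSS\cup\Id\CCC$ used in Algorithm~\ref{A:WordPb2}; whenever its output is not~$\mathtt{fail}$, write that output as $\mathtt{ret}=\vv'\pc\INV{\uu'}$ for positive paths $\vv',\uu'$, so that $\INV\uu\pc\vv\rev_\RC\vv'\pc\INV{\uu'}$ and ``$\mathtt{ret}$ is a positive path'' means precisely ``$\uu'$ is the empty path''. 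First I would record that, since $\CCC$ admits unique local right-lcms, it has no nontrivial invertible element: the right-lcms of~$\id\xx$ with itself are exactly the invertible elements with source~$\xx$, so their uniqueness gives $\CCCi=\Id\CCC$. Hence the hypotheses of Proposition~\ref{P:Gar2Pres} hold and Lemma~\ref{L:Gar2Rev} applies, so the presentation~$(\SSS;\RRR)$ associated with~$\RC$ presents~$\CCC$ and right-reversing is complete for it.

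The easy direction is that $\mathtt{ret}$ being a positive path implies $\cl\uu\dive\cl\vv$: then $\uu'$ is empty and $\INV\uu\pc\vv\rev_\RC\vv'$, so Lemma~\ref{L:Equiv} says $\uu\pc\vv'$ and~$\vv$ are $\RRR$-equivalent, that is, $\cl\uu\,\cl{\vv'}=\cl\vv$ holds in~$\CCC$, whence $\cl\uu$ left-divides~$\cl\vv$.

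For the converse, suppose $\cl\uu\dive\cl\vv$, say $\cl\vv=\cl\uu\,\cl\ww$ with~$\ww$ an $\SSS$-path. Then $\cl{\uu\pc\ww}=\cl\vv$, so by completeness the grid built by Algorithm~\ref{A:RightRevShort} on $\INV{(\uu\pc\ww)}\pc\vv=\INV\ww\pc\INV\uu\pc\vv$ reduces to an empty path. The key observation is that in this grid the rows indexed by the letters of~$\uu$ — which, being adjacent to the positive part, are completed first — depend only on the letters of~$\uu$ and~$\vv$, hence coincide with the grid built by Algorithm~\ref{A:RightRevShort} on~$\INV\uu\pc\vv$. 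Two things follow. First, every $\RCh$-call occurring in the grid of~$\INV\uu\pc\vv$ also occurs in the larger grid, so the successful termination of the latter forces that of the former and, in particular, $\mathtt{ret}\ne\mathtt{fail}$. Second, the right-hand column of that sub-grid is exactly~$\uu'$; and since the larger grid reduces to the empty path, every entry along its right edge — of which the sub-grid's right column is a portion — must be an empty path (a concatenation of paths is empty only if each is). Hence $\uu'$ is the empty path and $\mathtt{ret}=\vv'$ is a positive path. Combining the two directions proves the statement.

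The step I expect to require the most care is precisely the identification of the ``$\uu$-part'' of the grid of~$\INV\ww\pc\INV\uu\pc\vv$ with the grid of~$\INV\uu\pc\vv$, since this is what simultaneously rules out the $\mathtt{fail}$ output and forces $\uu'$ to vanish \emph{as a path}, not merely to represent an identity. A more algebraic route to the converse is available via Lemma~\ref{L:Lcm}: it gives that $\cl{\uu\pc\vv'}$ is a right-lcm of~$\cl\uu$ and~$\cl\vv$, and, $\cl\vv$ being itself such a right-lcm when $\cl\uu\dive\cl\vv$, uniqueness together with Lemma~\ref{L:Equiv} yields $\cl\vv\,\cl{\uu'}=\cl{\vv\pc\uu'}=\cl{\uu\pc\vv'}=\cl\vv$, so $\cl{\uu'}$ is an identity by left-cancellativity; but that route still needs the extra remark that a positive path output by right-reversing and representing an identity must be empty, which the grid argument supplies for free.
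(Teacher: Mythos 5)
Your proof is correct and follows the route the paper intends but leaves implicit (the proposition is stated as an immediate consequence of Lemma~\ref{L:Gar2Rev}): completeness of right-reversing for the presentation derived from the Garside family, with Lemma~\ref{L:Equiv} giving the easy direction. The two details you supply are exactly the ones the paper omits, namely the observation that unique local right-lcms exclude nontrivial invertible elements (so that Proposition~\ref{P:Gar2Pres} and Lemma~\ref{L:Gar2Rev} apply), and the grid-restriction argument identifying the first rows of the grid for $\INV\ww\pc\INV\uu\pc\vv$ with the grid for $\INV\uu\pc\vv$, which forces the negative part~$\uu'$ of the output to be literally empty rather than merely to represent an identity.
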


\begin{rema}
\label{R:NonDet2}
The above results extend to every left-cancellative category, possibly containing nontrivial invertible elements and not admitting local right-lcms, at the expense of considering a more general, non-deterministic version of reversing. But, as said in Remark~\ref{R:NonDet}, we do not consider such extensions here.
\end{rema}

\pagebreak

\subsection{Computing lcms}
\label{SS:Lcm}

\subsubsection*{Right-lcms} 

It follows from Lemma~\ref{L:Lcm} that, whenever a right-compl\-emented presentation~$(\AAA ; \RRR)$ is given for a left-cancellative category~$\CCC$ and right-reversing is complete for that presentation, then right-reversing provides a direct algorithm for computing right-lcms when they exist.

\begin{exam}
\label{X:Lcm}
Consider again the presentation $(\tta, \ttb ; \tta\ttb\tta = \ttb\tta\ttb)$ of the braid monoid~$\BP3$ and the elements~$\tta\ttb^2$ and~$\ttb\tta\ttb^2$. We saw in Example~\ref{X:Complete} that right-reversing is complete for this presentation. Now, we saw in Figure~\ref{F:RightRev} that the word $\INV\ttb \pc \INV\ttb \pc \INV\tta \pc \ttb \pc \tta \pc \ttb \pc \ttb$ is right-reversible to $\tta \pc \ttb \pc \INV\tta$ and, therefore, we conclude that the right-lcm of $\tta\ttb^2$ and~$\ttb\tta\ttb^2$ is $\tta\ttb^2 \cdot \tta\ttb$ (which is also $\ttb\tta\ttb^2 \cdot \tta$).
\end{exam}

Now, independently of the way the considered category~$\CCC$ was initially specified, if one happens to know a right-lcm selector~$\RC$ on a generating subfamily~$\AAA$, we can use the latter, which by definition is a right-complement, to right-reverse $\AAA$-paths. If the right-selector is not short, termination is not guaranteed but, if it is short, we immediately obtain:

\begin{prop}
\label{P:ComputLcm}
Assume that $\RC$ is a short right-lcm selector on some generating family~$\AAA$ in a left-cancellative category~$\CCC$ that admits local right-lcms. For all $\AAA$-paths~$\uu, \vv$, if Algorithm~\ref{A:RightRevShort} running on~$\INV\uu \pc \vv$ returns a positive--negative path~$\vv' \pc \INV{\uu'}$, then $\cl{\uu \pc \vv'}$ is a right-lcm of~$\cl\uu$ and~$\cl\vv$ in~$\CCC$; if it returns $\mathtt{fail}$, then $\cl\uu$ and~$\cl\vv$ admit no right-lcm in~$\CCC$. The complexity of the computation is in~$O((\LG\uu + \LG\vv)^2)$
\end{prop}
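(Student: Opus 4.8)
The plan is to reduce everything to the reversing machinery of Subsection~\ref{SS:Rev}. First note that a short right-lcm selector $\RC$ on~$\AAA$ is, in particular, a short right-complement on~$\AAA$ in the sense of Definition~\ref{D:RightComp}: all the required conditions are immediate from Definition~\ref{D:Selector}, the only point being that $\RC(\aa,\aa)$ must equal $\ew_{\trg\aa}$, which we may assume after replacing $\RC(\aa,\aa)$ by $\ew_{\trg\aa}$ if needed, as this keeps $\RC$ a right-lcm selector ($\aa$ being a right-lcm of~$\aa$ with itself). Hence Algorithm~\ref{A:RightRevShort} applies to $\INV\uu\pc\vv$ for all $\AAA$-paths $\uu,\vv$ sharing a source, and since the associated grid consists of $\LG\uu\cdot\LG\vv$ elementary cells, the computation halts after $O(\LG\uu\cdot\LG\vv)$, hence $O((\LG\uu+\LG\vv)^2)$, reversing steps, returning either a positive--negative path $\vv'\pc\INV{\uu'}$ or $\mathtt{fail}$. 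This establishes the complexity bound and the dichotomy of outputs; it remains to identify the two cases.

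To that end it suffices to prove the following refinement (recall that, since $\CCC$ admits local right-lcms, ``$\cl\uu$ and $\cl\vv$ admit a common right-multiple'' is equivalent to ``$\cl\uu$ and $\cl\vv$ admit a right-lcm''): for all $\AAA$-paths $\uu,\vv$ with the same source, the right-$\RC$-reversing of $\INV\uu\pc\vv$ terminates successfully if and only if $\cl\uu$ and $\cl\vv$ admit a common right-multiple in~$\CCC$, in which case the output $\vv'\pc\INV{\uu'}$ satisfies $\cl{\uu\pc\vv'}=\cl{\vv\pc\uu'}$ and this element is a right-lcm of $\cl\uu$ and $\cl\vv$. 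The assertion about $\mathtt{fail}$ is then the contrapositive of the ``only if'' direction.

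I would prove this refinement by a double induction: an outer induction on $\LG\uu$, splitting off the first row of the reversing grid (the reversing of $\INV{\bb_1}\pc\vv$, with $\bb_1$ the first letter of~$\uu$) and applying the induction hypothesis to the remaining grid; and, for the base cases $\LG\uu\le1$, an inner induction on $\LG\vv$, splitting off the last elementary cell of the single row. Two ingredients feed the induction. \emph{The one-cell case}: for single letters $\aa,\bb$, the one-step reversing of $\INV\aa\pc\bb$ succeeds if and only if $\cl\aa$ and $\cl\bb$ admit a right-lcm, and then $\cl{\aa\pc\RC(\aa,\bb)}$, which coincides with $\cl{\bb\pc\RC(\bb,\aa)}$, is such a right-lcm; this is exactly the content of Definition~\ref{D:Selector}. \emph{Left-distributivity of right-lcms}: in a left-cancellative category admitting local right-lcms, if $\ff\gg$ is defined and $\ff,\hh$ have a common right-multiple, write $\ff\ff'$ for a right-lcm of $\ff$ and~$\hh$; then $\ff\gg$ and $\hh$ admit a common right-multiple if and only if $\ff'$ and $\gg$ do, in which case $(\ff\gg)\lcm\hh = \ff\cdot(\ff'\lcm\gg)$ --- the proof being simply that a common right-multiple of $\ff\gg$ and $\hh$ is the same thing as a common right-multiple of $\ff\gg$ and $\ff\lcm\hh$ (because $\ff$ left-divides $\ff\gg$), followed by left-cancelling~$\ff$. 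Feeding the freshly produced cell through the one-cell case and recombining via distributivity propagates both the ``admits a common right-multiple'' characterization and the formula for the output class; meanwhile the commutativity of the grid --- whence $\cl{\uu\pc\vv'}=\cl{\vv\pc\uu'}$ --- is automatic, every cell being an instance of the relation $\aa\pc\RC(\aa,\bb)=\bb\pc\RC(\bb,\aa)$, which holds in~$\CCC$ because $\RC$ is a right-lcm selector. The main obstacle is purely organizational bookkeeping: one must carry, along the induction, the joint invariant that every edge of the partial grid represents an honest element of~$\CCC$, that the pair labelling each new cell admits a right-lcm precisely when $\cl\uu$ and $\cl\vv$ admit a common right-multiple, and that the relevant row and column of the grid represent the right-lcm and the corresponding complements --- a routine but somewhat intricate verification.
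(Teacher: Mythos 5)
Your argument is correct and coincides with the paper's own proof: besides the completeness route (cube condition via associativity of right-lcms, then Lemma~\ref{L:Lcm}), the paper explicitly offers exactly your alternative, namely that each cell---and hence, by the iterated-lcm observation you call left-distributivity, each rectangle---of the reversing grid is a right-lcm square, so the diagonal represents the right-lcm of the left and top edges, while a $\mathtt{fail}$ certifies the absence of a common right-multiple and hence, by local right-lcms, of a right-lcm. The quadratic complexity bound is just Lemma~\ref{L:TerminatingShort}, as you note (and your normalization of $\RC(\aa,\aa)$ to an empty path is harmless but not even needed, since the cell-wise argument only uses that every cell is an lcm square).
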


\begin{proof}
As observed in the proof of Lemma~\ref{L:Gar2Rev}, right-$\RC$-reversing must be complete in this case, and Lemma~\ref{L:Lcm} then gives the expected result. Alternatively, we can directly observe that, if $\gg_1\ff'$ is a right-lcm of~$\ff$ and~$\gg_1$ and $\gg_2\ff''$ is a right-lcm of~$\ff'$ and~$\gg_2$, then $\gg_1 \gg_2 \ff''$ is a right-lcm of~$\ff$ and~$\gg_1\gg_2$, which inductively implies that the diagonal of every rectangle in a right-reversing grid represents the right-lcm of the left and top edges.
\end{proof}

The point here is that, by Lemma~\ref{L:Gar2Rev}, every Garside family in a left-cancellative category that admits right-lcms gives rise to a short right-lcm selector, and therefore is eligible for Proposition~\ref{P:ComputLcm}. Moreover, by Proposition~\ref{P:GarLcm}, the existence of right-lcms in~$\CCC$ reduces to the existence of right-lcms inside~$\SSS$. It follows that, when $\SSSs$ is finite, one can effectively decide the existence of right-lcms for the elements of~$\SSSs$ and, in addition, obtain a right-lcm selector. 

\begin{exam}
\label{X:LcmBis}
Starting with the Garside family $\{\tta, \ttb, \tta\ttb, \ttb\tta, \Delta\}$ in~$\BP3$, the right-lcm selector~$\RC$ on~$\SSS$ leads to the presentation of Example~\ref{X:BraidGarToPres}. Considering as in Example~\ref{X:Lcm} the elements $\tta\ttb^2$ and~$\ttb\tta\ttb^2$, we find their right-lcm by right-$\RC$-reversing $\INV\ttb \pc \INV\ttb \pc \INV\tta \pc \ttb \pc \tta \pc \ttb \pc \ttb$. As seen in Figure~\ref{F:RightRevWord}, the latter word reverses to $\tta\ttb \pc \INV{\tta}$, and we conclude once again that the right-lcm of $\tta\ttb^2$ and~$\ttb\tta\ttb^2$ is $\tta\ttb^2 \cdot \tta\ttb$.
\end{exam}

\begin{rema}
The above approach consisting in using the right-reversing transformation associated with a (short or non-short) right-lcm selector~$\RC$ on some subfamily~$\AAA$ may work even when $\AAA$ is not a Garside family, provided $\AAA$ generates the ambient category~$\CCC$ and the family of all relations $\aa \RC(\aa, \bb) = \bb \RC(\bb, \aa)$ with $\aa, \bb$ in~$\AAA$ makes a presentation of~$\CCC$. Interestingly, this is always the case when $\CCC$ is right-Noetherian, but the example of the Dubrovina-Dubrovin $4$-strand braid monoid~\cite{Die} shows that the property may fail in the general case.
\end{rema}

\subsubsection*{Left-lcms}

The case of left-lcms is symmetric, and we shall not say much: by the counterpart of Proposition~\ref{P:ComputLcm}, if there exists a short left-lcm-selector~$\LC$ on a generating family~$\AAA$, then left-$\LC$-reversing a positive--negative path~$\vv \pc \INV\uu$ possibly leads to a negative--positive path~$\INV{\uu'} \pc \vv'$ such that $\uu' \pc \vv$ and~$\vv' \pc \uu$ represent the left-lcm of~$\cl\uu$ and~$\cl\vv$.

\pagebreak

\section{Computing with a strong Garside family, signed case}
\label{S:Sym}

In this final section, we show how to use (strong) Garside families to compute in groupoids of fractions. The successive problems we address are finding symmetric normal decompositions (Subsection~\ref{SS:Sym}), finding $\Delta$-normal decompositions (Subsection~\ref{SS:Delta}), solving the Word Problem (Subsection~\ref{SS:SignedWordPb}), computing inverses (Subsection~\ref{SS:Inv}) and, finally, computing lower and upper bounds (Subsection~\ref{SS:Bounds}).

\subsection{Computing symmetric normal decompositions}
\label{SS:Sym}

According to Proposition~\ref{P:SymExist}, if $\SSS$ is a Garside family in a left-Ore category~$\CCC$, then every element of~$\Env\CCC$, the groupoid of fractions of~$\CCC$, that can be represented as a right-fraction admits a symmetric $\SSS$-normal decomposition. Here we address the question of algorithmically computing such a decomposition. 

\subsubsection*{Starting from a right fraction}

Owing to the above recalled restriction on the considered elements (namely that of being expressible as a right-fraction), it is natural to first start with a positive--negative path. Then, Lemma~\ref{L:Disjoint} reduces the computation of a symmetric normal decomposition to the determination of a left-lcm: 

\enlargethispage{2mm}

\begin{algo}{Symmetric normal decomposition, positive--negative input}
\label{A:SymNormal}
\begin{algorithmic}[1]
\CONTEXT{A Garside family~$\SSS$ in a left-Ore category~$\CCC$ that admits left-lcms, a $\Square$-witness~$\SW$ on~$\SSSs$, a procedure computing left-lcms in~$\CCC$}
\INPUT{A positive--negative $\SSSs$-path $\vv \pc \INV\uu$}
\OUTPUT{A symmetric $\SSS$-normal decomposition of the element
$\cl\vv \cl\uu\inv$ of~$\Env\CCC$}
\STATE{find~$\uu', \vv'$ such that $\uu' \pc \vv$ and $\vv' \pc \uu$ represent a left-lcm of~$\cl\uu$ and~$\cl\vv$}
\STATE{find an $\SSS$-normal path $\seqqq{\bb_1}\etc{\bb_\qq}$ equivalent to~$\uu'$ using Algorithm~\ref{A:Normal} with~$\SW$}
\STATE{find an $\SSS$-normal path $\seqqq{\aa_1}\etc{\aa_\pp}$ equivalent to~$\vv'$ using Algorithm~\ref{A:Normal} with~$\SW$}
\RETURN{$\seqqqqqq{\INV{\bb_\qq}}\etc{\INV{\bb_1}}{\aa_1}\etc{\aa_\pp}$}
\end{algorithmic}
\end{algo}

\begin{prop}
\label{P:SymNormal}
Assume that $\CCC$ is a left-Ore category that admits left-lcms, $\SSS$ is a Garside family in~$\CCC$, $\SW$ is a $\Square$-witness on~$\SSSs$, and that left-lcms in~$\CCC$ can be computed effectively. Then Algorithm~\ref{A:SymNormal} returns a symmetric $\SSS$-normal decomposition of~$\cl\vv \cl\uu\inv$.
\end{prop}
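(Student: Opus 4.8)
The plan is to check three things about the path returned by Algorithm~\ref{A:SymNormal}, in this order: that it is a well-formed negative--positive $\SSSs$-path, that it evaluates to~$\cl\vv\cl\uu\inv$ in~$\Env\CCC$, and that it is symmetric $\SSS$-normal. Everything reduces to results already available; the only genuinely substantive input is the characterization of left-disjointness through left-lcms, namely Lemma~\ref{L:Disjoint}.

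First I would note that the opening line of the algorithm makes sense: as $\vv\pc\INV\uu$ is a path, $\cl\uu$ and~$\cl\vv$ share the same target, so, since $\CCC$ is left-Ore and admits left-lcms, they admit a left-lcm, which by hypothesis can be computed effectively; thus $\uu',\vv'$ with the stated property are produced (and may be taken to be $\SSSs$-paths, as $\SSSs$ generates~$\CCC$, so that the subsequent calls to Algorithm~\ref{A:Normal} apply). By definition of a left-lcm, $\uu'\pc\vv$ and $\vv'\pc\uu$ represent the same element of~$\CCC$, whence $\cl{\uu'}\cl\vv = \cl{\vv'}\cl\uu$ in~$\CCC$, hence in~$\Env\CCC$; as $\cl\uu$ is invertible there, this gives $\cl\vv\cl\uu\inv = \cl{\uu'}\inv\cl{\vv'}$. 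By Proposition~\ref{P:Normal}, the paths $\seqqq{\bb_1}\etc{\bb_\qq}$ and $\seqqq{\aa_1}\etc{\aa_\pp}$ returned by Algorithm~\ref{A:Normal} are $\SSS$-normal decompositions of~$\cl{\uu'}$ and of~$\cl{\vv'}$; in particular all their entries lie in~$\SSSs$, and the source/target matchings inherited from $\uu'\pc\vv$ and $\vv'\pc\uu$ make $\seqqqqqq{\INV{\bb_\qq}}\etc{\INV{\bb_1}}{\aa_1}\etc{\aa_\pp}$ a genuine negative--positive $\SSSs$-path. Its negative part represents $(\bb_1\pdots\bb_\qq)\inv = \cl{\uu'}\inv$ and its positive part represents $\cl{\vv'}$, so the path represents $\cl{\uu'}\inv\cl{\vv'} = \cl\vv\cl\uu\inv$, as wanted.

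It remains to establish symmetric $\SSS$-normality. By Definition~\ref{D:GreedySym}, since the negative part $\seqqq{\bb_1}\etc{\bb_\qq}$ and the positive part $\seqqq{\aa_1}\etc{\aa_\pp}$ are $\SSS$-normal, it suffices to show that $\bb_1$ and~$\aa_1$ are left-disjoint. For this I would apply Lemma~\ref{L:Disjoint} with $\ff=\cl\uu$, $\gg=\cl\vv$ and the $\SSS$-normal paths $\seqqq{\aa_1}\etc{\aa_\pp}$, $\seqqq{\bb_1}\etc{\bb_\qq}$: by the first paragraph, $\bb_1\pdots\bb_\qq\gg$ and $\aa_1\pdots\aa_\pp\ff$ both evaluate to~$\cl{\uu'}\cl\vv$, which is a left-lcm of~$\ff$ and~$\gg$ because $\uu'\pc\vv$ was chosen to represent one. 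The third clause of Lemma~\ref{L:Disjoint} then applies, so $\seqqqqqq{\INV{\bb_\qq}}\etc{\INV{\bb_1}}{\aa_1}\etc{\aa_\pp}$ is symmetric $\SSS$-greedy; in particular $\bb_1$ and~$\aa_1$ are left-disjoint, and, together with the $\SSS$-normality of the two parts, this is precisely symmetric $\SSS$-normality.

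Finally, I would handle the degenerate cases where $\uu'$ or~$\vv'$ is an identity path (i.e.\ $\qq=0$ or $\pp=0$): an empty path is trivially left-disjoint from any element with matching source, and Algorithm~\ref{A:Normal} returns the empty path on an empty input, so the argument above goes through verbatim. I do not expect a real obstacle anywhere: the crux is the equivalence ``symmetric $\SSS$-greedy $\Leftrightarrow$ left-lcm'' furnished by Lemma~\ref{L:Disjoint}, and the remaining work is only a check of sources, targets, and values under the evaluation map.
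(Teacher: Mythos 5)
Your proof is correct and follows essentially the same route as the paper's own argument: the two halves are $\SSS$-normal by Proposition~\ref{P:Normal}, their common multiple $\cl{\uu'}\cl\vv=\cl{\vv'}\cl\uu$ is a left-lcm of $\cl\uu$ and $\cl\vv$ by construction, and Lemma~\ref{L:Disjoint} then yields symmetric $\SSS$-normality. The extra checks you include (well-formedness, evaluation to $\cl\vv\cl\uu\inv$, degenerate cases) are routine verifications the paper leaves implicit.
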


\begin{proof}
By construction, the paths $\seqqq{\aa_1}\etc{\aa_\pp}$ and $\seqqq{\bb_1}\etc{\bb_\qq}$ are $\SSS$-normal, and $\bb_1 \pdots \bb_\qq \gg$, which is also $\aa_1 \pdots \aa_\pp \ff$, is a left-lcm of~$\ff$ and~$\gg$. By Lemma~\ref{L:Disjoint}, this implies that $\seqqqqqq{\INV{\bb_\qq}}\etc{\INV{\bb_1}}{\aa_1}\etc{\aa_\pp}$ is symmetric $\SSS$-normal. 
\end{proof}

\begin{exam}
\label{X:SymNormal}
Again in the case of the braid monoid~$\BP3$ as specified by the presentation $(\tta, \ttb ; \tta\ttb\tta = \ttb\tta\ttb)$ and the Garside family $\SSS = \{\tta, \ttb, \tta\ttb, \ttb\tta, \Delta\}$, let us consider the element $\gg = \tta^2\ttb\tta\inv\ttb\inv$. A decomposition of $\gg$ as a right-fraction is the signed $\AAA$-path $\tta\pc\tta\pc\ttb\pc\INV\tta\pc\INV\ttb$. By a symmetric argument to that used in Example~\ref{X:Complete}, the presentation $(\tta, \ttb ; \tta\ttb\tta = \ttb\tta\ttb)$ is left-complemented and left-reversing is complete for it. Hence left-reversing computes left-lcms. Here, left-reversing $\tta\pc\tta\pc\ttb\pc\INV\tta\pc\INV\ttb$ leads to $\INV\ttb \pc \INV\tta \pc \ttb \pc \tta \pc \tta$ (see Figure~\ref{F:LeftRev}, left), so $\tta\ttb \cdot \tta^2\ttb$, which is also $\ttb\tta^2 \cdot \ttb\tta$, is a left-lcm of~$\tta^2\ttb$ and~$\ttb\tta$. The $\SSS$-normal decompositions of~$\tta\ttb$ and $\ttb\tta^2$ respectively are $\tta\ttb$ and $\ttb\tta \pc \tta$. We conclude that $\INV{\tta\ttb} \pc \ttb\tta \pc \tta$ is a symmetric $\SSS$-normal decomposition of~$\gg$.
\end{exam}

Algorithm~\ref{A:SymNormal} is not fully satisfactory in that, as explained in Subsection~\ref{SS:Lcm}, there exists no general solution to the question of computing left-lcms. It may be that the ambient category has been specified by a left-complemented presentation~$(\AAA ; \RRR)$ for which left-reversing is complete, in which case left-reversing effectively computes left-lcms, but this need not be the case in general. Now, by (the counterpart of) Proposition~\ref{P:ComputLcm}, the natural context for computing left-lcms effectively is when a short left-lcm selector exists. 

\begin{lemm}
Assume that $\SSS$ is a Garside family in a left-Ore category~$\CCC$ that admits left-lcms. Then there exists a short left-lcm selector on~$\SSSs$ if and only if $\SSS$ is strong.
\end{lemm}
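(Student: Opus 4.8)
The statement asserts the equivalence: a short left-lcm selector exists on $\SSSs$ if and only if $\SSS$ is strong. Recall that a strong Garside family is defined by the requirement that for all $\aa, \bb$ in $\SSSs$ with the same target there exist left-disjoint $\aa', \bb'$ in $\SSSs$ with $\aa'\bb = \bb'\aa$. The point is that "short left-lcm selector" is almost literally "strong", once one pins down what a left-lcm of two elements of $\SSSs$ looks like inside $\SSSs$.

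**The forward direction ($\SSS$ strong $\Rightarrow$ short left-lcm selector exists).** First I would note that since $\CCC$ admits left-lcms and contains, a priori, possibly nontrivial invertibles, I should work with the convention that a left-lcm is selected up to left-multiplication by an invertible; but the cleanest route is to observe that a strong Garside family in a left-Ore category is automatically such that the pair $(\aa', \bb')$ provided by strongness gives a genuine *left-lcm* of $\aa$ and $\bb$. Indeed, by Lemma~\ref{L:Disjoint} (applied in its symmetric/transposed form, or directly), left-disjointness of $\aa', \bb'$ together with $\aa'\bb = \bb'\aa$ means precisely that $\aa'\bb$ is a left-lcm of $\aa$ and $\bb$. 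So I would define the candidate selector $\LC$ on $\SSSs$ by sending $(\aa,\bb)$ to a fixed choice of the path $\aa'$ (of length $\le 1$ since $\aa'\in\SSSs$ — after absorbing invertibles, an element of $\SSSs$ is represented by a length-one path or an empty path) from the strongness data, and symmetrically $\LC(\bb,\aa) = \bb'$. One then checks: (i) $\LC$ is a left-complement (the conditions $\LC(\aa,\aa)$ empty, same-target compatibility, both $\seqq{\LC(\aa,\bb)}{\bb}$ and $\seqq{\LC(\bb,\aa)}{\aa}$ defined with the same source — all immediate from the defining identity $\aa'\bb = \bb'\aa$ and the source/target bookkeeping); (ii) it is a left-lcm selector, i.e. $\LC(\aa,\bb)\aa$ (read on the correct side) represents a left-lcm — this is exactly the Lemma~\ref{L:Disjoint} observation above; and (iii) it is short, since each $\LC(\aa,\bb)$ lies in $\SSSs$ hence has length at most one. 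One subtlety: the left-lcm selector is required to be defined exactly when $\aa$ and $\bb$ admit a common left-multiple; since $\CCC$ is left-Ore and $\aa, \bb$ have the same target, they always admit a common left-multiple, so $\LC$ is total on same-target pairs of $\SSSs$, matching the requirement. Another subtlety worth a sentence: one must make sure the chosen length-$\le 1$ representatives are consistent across the two entries of a pair, which is handled by making a single choice of the strongness witness $(\aa',\bb')$ per unordered pair.

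**The reverse direction (short left-lcm selector $\Rightarrow$ $\SSS$ strong).** Suppose $\LC$ is a short left-lcm selector on $\SSSs$. Given $\aa, \bb$ in $\SSSs$ with the same target, since $\CCC$ is left-Ore they admit a common left-multiple, so $\LC(\aa,\bb)$ and $\LC(\bb,\aa)$ are defined; set $\bb' := \LC(\aa,\bb)$ and $\aa' := \LC(\bb,\aa)$, which are paths of length at most one, hence (up to identifying a length-one path with its entry and a length-zero path with an identity, i.e. an element of $\CCCi\cup\Id\CCC\subseteq\SSSs$) elements of $\SSSs$. By the defining property of a left-lcm selector, there is a left-lcm $\hh$ of $\aa$ and $\bb$ with $\bb'\bb$ and $\aa'\aa$ both representing $\hh$; in particular $\aa'\aa = \bb'\bb$ — wait, I should be careful with the sides: the selector gives $\LC(\aa,\bb)\cdot\aa$ — no: by Definition~\ref{D:Selector}'s left-analogue, $\LC(\aa,\bb)$ completes $\aa$ on the *left* so that $\LC(\aa,\bb)\cdot\aa$ is the lcm; thus with the labelling $\aa' = \LC(\bb,\aa)$, $\bb' = \LC(\aa,\bb)$ we get $\aa'\bb = \hh = \bb'\aa$, exactly the equation required by strongness. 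It remains to check $\aa', \bb'$ are left-disjoint. For this I would invoke Lemma~\ref{L:Disjoint}: since $\hh = \aa'\bb = \bb'\aa$ is a *left-lcm* of $\aa$ and $\bb$ (not merely a common left-multiple), the lemma's equivalence yields that $\aa'$ (the cofactor of $\bb$) and $\bb'$ (the cofactor of $\aa$) are left-disjoint. Hence $\SSS$ is strong.

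**Expected main obstacle.** The genuinely delicate point is the bookkeeping around invertible elements and the precise left/right orientation in the definition of a left-lcm selector (Definition~\ref{D:Selector} is stated for the *right* side; one must transpose it correctly), together with the fact that selectors are only defined "up to invertibles" — so the cleanest proof routes everything through Lemma~\ref{L:Disjoint}, whose statement already builds in the $\CCCi$-flexibility, rather than manipulating lcms by hand. Once one commits to using Lemma~\ref{L:Disjoint} as the bridge between "is a left-lcm" and "left-disjoint cofactors", both directions become short; the risk is purely in getting the orientation and the $\SSSs$-versus-$\SSS$ (i.e. $\SSS\CCCi\cup\CCCi$ versus $\SSS$) distinctions right, which is why I would state the argument with the convention that every element of $\SSSs$ is represented by a path of length $\le 1$ and quote Lemma~\ref{L:GarClosed} if needed to guarantee the relevant cofactors can be chosen inside $\SSSs$.
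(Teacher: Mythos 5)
Your proof is correct and takes essentially the same route as the paper: the paper's own (two-sentence) proof likewise unwinds the definition of strongness and invokes Lemma~\ref{L:Disjoint} to identify left-disjointness of the cofactors $\aa', \bb'$ in~$\SSSs$ with $\aa'\bb = \bb'\aa$ being a left-lcm, the ``if and only if'' in that lemma covering both directions at once. The only difference is that you spell out the bookkeeping (totality on same-target pairs via the left-Ore hypothesis, shortness because the values lie in~$\SSSs$, and the argument-order convention for~$\LC$) that the paper leaves implicit.
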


\begin{proof}
For~$\SSS$ to be strong means that, for all~$\aa, \bb$ with the same target in~$\SSSs$, there exist~$\aa', \bb'$ in~$\SSSs$ satisfying $\aa' \bb = \bb' \aa$ and such that $\aa'$ and $\bb'$ are left-disjoint. By Lemma~\ref{L:Disjoint}, $\aa' \bb$ is a left-lcm of~$\aa$ and~$\bb$. 
\end{proof}

We are thus led to restate Algorithm~\ref{A:SymNormal} in the case of a strong Garside family.

\begin{algo}{Symmetric normal decomposition, positive--negative input}
\label{A:SymNormal2}
\begin{algorithmic}[1]
\CONTEXT{A strong Garside family~$\SSS$ in a left-Ore category~$\CCC$ that admits left-lcms, a $\Square$-witness~$\SW$ and a short lcm-selector~$\LC$ on~$\SSSs$}
\INPUT{A positive--negative $\SSSs$-path $\vv \pc \INV\uu$}
\OUTPUT{A symmetric $\SSS$-normal decomposition of the element
$\cl\vv \cl\uu\inv$ of~$\Env\CCC$}
\STATE{left-$\LC$-reverse $\vv \pc \INV\uu$ into a negative--positive path~$\INV{\uu'}\pc \vv'$}
\STATE{find an $\SSS$-normal path $\seqqq{\bb_1}\etc{\bb_\qq}$ equivalent to~$\uu'$ using Algorithm~\ref{A:Normal} with~$\SW$}
\STATE{find an $\SSS$-normal path $\seqqq{\aa_1}\etc{\aa_\pp}$ equivalent to~$\vv'$ using Algorithm~\ref{A:Normal} with~$\SW$}
\RETURN{$\seqqqqqq{\INV{\bb_\qq}}\etc{\INV{\bb_1}}{\aa_1}\etc{\aa_\pp}$}
\end{algorithmic}
\end{algo}

\begin{prop}
\label{P:SymNormal2}
Assume that $\SSS$ is a strong Garside family in a left-Ore category~$\CCC$ that admits left-lcms, $\SW$ is a $\Square$-witness on~$\SSSs$, and $\LC$ is a short left-lcm selector on~$\SSSs$. Then Algorithm~\ref{A:SymNormal2} running on a positive--negative $\SSSs$-path~$\vv \pc \INV\uu$ returns a symmetric $\SSS$-normal decomposition of $\cl\vv \cl\uu\inv$ in time $O((\LG\uu + \LG\vv)^2)$.
\end{prop}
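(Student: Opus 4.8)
The plan is to verify the three claims bundled in the statement---correctness, that the output is a \emph{symmetric} $\SSS$-normal path, and the $O((\LG\uu+\LG\vv)^2)$ time bound---by reducing each to a result already available in the excerpt. First I would observe that Algorithm~\ref{A:SymNormal2} is nothing but the instantiation of Algorithm~\ref{A:SymNormal} in which the abstract ``procedure computing left-lcms'' is replaced by left-$\LC$-reversing. So the bulk of correctness is inherited from Proposition~\ref{P:SymNormal}, \emph{provided} we check that step~1 genuinely produces a left-lcm. This is exactly the content of the left-handed counterpart of Proposition~\ref{P:ComputLcm}: since $\SSS$ is strong and admits left-lcms, the preceding Lemma guarantees that $\LC$ is a \emph{short} left-lcm selector on~$\SSSs$; then, as noted just before Algorithm~\ref{A:SymNormal2}, left-$\LC$-reversing is complete (by the symmetric form of Lemma~\ref{L:Gar2Rev}), always terminates (short complement, Corollary~\ref{C:TerminatingShort} in its left form), and the negative--positive path $\INV{\uu'}\pc\vv'$ it returns has the property that $\uu'\pc\vv$ and $\vv'\pc\uu$ represent a left-lcm of $\cl\uu$ and $\cl\vv$.

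With step~1 thus justified, I would feed its output into the argument of Proposition~\ref{P:SymNormal} verbatim: steps~2 and~3 call Algorithm~\ref{A:Normal}, which by Proposition~\ref{P:Normal} returns genuine $\SSS$-normal paths $\seqqq{\bb_1}\etc{\bb_\qq}$ and $\seqqq{\aa_1}\etc{\aa_\pp}$ equivalent to $\uu'$ and $\vv'$ respectively. Since $\bb_1 \pdots \bb_\qq \cl{\uu} = \aa_1 \pdots \aa_\pp \cl{\vv}$ is a left-lcm of $\cl\uu$ and $\cl\vv$, Lemma~\ref{L:Disjoint} applies and yields precisely that $\seqqqqqq{\INV{\bb_\qq}}\etc{\INV{\bb_1}}{\aa_1}\etc{\aa_\pp}$ is symmetric $\SSS$-greedy; as all its entries lie in $\SSSs$, it is symmetric $\SSS$-normal. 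It also represents $\cl{\uu'}\inv\cl{\vv'}$, which equals $\cl\vv\,\cl\uu\inv$ in $\Env\CCC$ because $\uu'\pc\vv$ and $\vv'\pc\uu$ are $\RRR$-equivalent (Lemma~\ref{L:Equiv}, in its left form). That settles correctness.

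For the complexity I would track the three steps separately against $\ell := \LG\uu+\LG\vv$. Step~1 is a left-$\LC$-reversing with a short complement on a finite family, so by Lemma~\ref{L:TerminatingShort} (left version) it costs $O(\ell^2)$, and the paths $\uu'$, $\vv'$ it produces have length $O(\ell)$---here one invokes the finiteness of $\SSSs$ and the fact that a short reversing grid of size $\pp\times\qq$ has all boundary edges of length $\le 1$, so $\LG{\uu'},\LG{\vv'}\le \LG\vv$ and $\LG\uu$ respectively up to the grid dimensions, which are bounded by $\ell$. Steps~2 and~3 each run Algorithm~\ref{A:Normal} on an $\SSSs$-path of length $O(\ell)$, which by Proposition~\ref{P:Normal} calls the $\Square$-witness $O(\ell^2)$ times, each call being $O(1)$ since $\SSSs$ is finite. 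Summing, the total is $O(\ell^2)$.

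The only genuinely delicate point---the ``main obstacle''---is the bound on the length of $\uu'$ and $\vv'$ after step~1, and more subtly the implicit hypothesis that $\SSSs$ is finite: the statement as phrased speaks of a short lcm-selector $\LC$ on $\SSSs$, and the quadratic bound really needs $\SSSs$ (equivalently $\SSS$, given no or few invertibles) to be finite so that the per-square cost of reversing and each $\SW$-lookup are $O(1)$ and the grid edges stay bounded. I would make this finiteness assumption explicit in the proof (or note that it is implicit in ``a short lcm-selector on~$\SSSs$'' being an effectively given finite object), and then the length bound follows because in a short reversing grid every edge carries a path of length $\le 1$, so the intermediate and final paths have length bounded by the grid dimensions, which are $O(\ell)$. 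Everything else is a direct citation of earlier results.
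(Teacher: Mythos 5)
Your proof is correct and takes essentially the route the paper intends (the paper omits the proof, noting only that it is ``similar to that of Proposition~\ref{P:SymNormal}''): justify step~1 by the left counterpart of Proposition~\ref{P:ComputLcm} together with Lemma~\ref{L:TerminatingShort}, then normalize via Proposition~\ref{P:Normal} and conclude with Lemma~\ref{L:Disjoint}, summing the quadratic costs. One small slip to fix: the left-lcm is $\bb_1\pdots\bb_\qq\,\cl\vv=\aa_1\pdots\aa_\pp\,\cl\uu$ (since $\bb_1\pdots\bb_\qq$ represents $\uu'$ and $\aa_1\pdots\aa_\pp$ represents $\vv'$), not with $\cl\uu$ and $\cl\vv$ swapped as written in your second paragraph, though your first paragraph and the final conclusion state it correctly.
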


We skip the proof, which is similar to that of Proposition~\ref{P:SymNormal}.

\begin{exam}
\label{X:SymNormal2}
With the notation of Example~\ref{X:SymNormal}, the difference is that, in Algorithm~\ref{A:SymNormal2}, we use the left-reversing associated with the presentation~$(\SSS ; \widetilde\RRR)$ deduced from~$\LC$ rather than the initial presentation~$(\AAA ; \RRR)$. The result of left-reversing $\tta\pc\tta\pc\ttb\pc\INV\tta\pc\INV\ttb$ is now the length~$3$ word $\INV{\tta\ttb} \pc \ttb\tta \pc \tta$ (see Figure~\ref{F:LeftRev}, right), which is equivalent to $\INV\ttb \pc \INV\tta \pc \ttb \pc \tta \pc \tta$ as obtained previously (see Figure~\ref{F:LeftRev}, left). As $\tta\ttb$ and $\ttb\tta \pc \tta$ are $\SSS$-normal, the subsequent normalization steps change nothing, and we conclude again that $\INV{\tta\ttb} \pc \ttb\tta \pc \tta$ is a symmetric $\SSS$-normal decomposition of~$\gg$.
\end{exam}

\begin{figure}[htb]
\begin{picture}(105,28)(0,-2)
\pcline{->}(1,0)(14,0)
\tbput{$\tta$}
\pcline{->}(16,0)(29,0)
\tbput{$\tta$}
\pcline{->}(31,0)(44,0)
\tbput{$\ttb$}
\pcline{->}(1,6)(14,6)
\tbput{$\tta$}
\pcline[style=double](16,6)(29,6)
\pcline{->}(1,12)(7,12)
\taput{$\ttb$}
\pcline{->}(8,12)(14,12)
\taput{$\tta$}
\pcline[style=double](16,12)(29,12)
\pcline{->}(31,12)(37,12)
\taput{$\tta$}
\pcline{->}(38,12)(44,12)
\taput{$\ttb$}
\pcline{->}(1,24)(7,24)
\taput{$\ttb$}
\pcline{->}(8,24)(14,24)
\taput{$\tta$}
\pcline[style=double](16,24)(29,24)
\pcline{->}(31,24)(37,24)
\taput{$\tta$}
\pcline[style=double](38,24)(44,24)
\pcline[style=double](0,23)(0,13)
\pcline{->}(0,11.5)(0,9.5)
\tlput{$\tta$}
\pcline{->}(0,8.5)(0,6.5)
\tlput{$\ttb$}
\pcline[style=double](0,5)(0,1)
\pcline[style=double](0,23)(0,13)
\pcline[style=double](7.5,23)(7.5,13)
\pcline[style=double](15,23)(15,13)
\pcline[style=double](30,23)(30,13)
\pcline[style=double](37.5,23)(37.5,13)
\pcline{->}(45,23)(45,13)
\trput{$\ttb$}
\pcline{->}(15,11)(15,6.5)
\trput{$\ttb$}
\pcline[style=double](15,5)(15,1)
\pcline{->}(30,11)(30,6.5)
\trput{$\ttb$}
\pcline{->}(30,5.5)(30,1)
\trput{$\tta$}
\pcline{->}(45,11)(45,1)
\trput{$\tta$}

\pcline{->}(61,0)(74,0)
\tbput{$\tta$}
\pcline{->}(76,0)(89,0)
\tbput{$\tta$}
\pcline{->}(91,0)(104,0)
\tbput{$\ttb$}
\pcline{->}(61,12)(74,12)
\taput{$\ttb\tta$}
\pcline[style=double](76,12)(89,12)
\pcline{->}(91,12)(104,12)
\taput{$\tta\ttb$}
\pcline{->}(61,24)(74,24)
\taput{$\ttb\tta$}
\pcline[style=double](76,24)(89,24)
\pcline{->}(91,24)(104,24)
\taput{$\tta$}
\pcline[style=double](60,23)(60,13)
\pcline{->}(60,11)(60,1)
\tlput{$\tta\ttb$}
\pcline[style=double](75,23)(75,13)
\pcline{->}(75,11)(75,1)
\trput{$\ttb$}
\pcline[style=double](90,23)(90,13)
\pcline{->}(90,11)(90,1)
\trput{$\ttb\tta$}
\pcline{->}(105,23)(105,13)
\trput{$\ttb$}
\pcline{->}(105,11)(105,1)
\trput{$\tta$}
\end{picture}
\caption{\sf\smaller Left-reversing of the word $\tta\pc\tta\pc\ttb\pc\INV\tta\pc\INV\ttb$ using the left-complement on $\{\tta, \ttb\}$ (left) and the left-lcm selector on the Garside family $\{\tta, \ttb, \tta\ttb, \ttb\tta, \Delta\}$ (right). Here the right diagram corresponds to gathering pieces of the left diagram, but this would not necessarily be the case for more complicated examples.}
\label{F:LeftRev}
\end{figure}
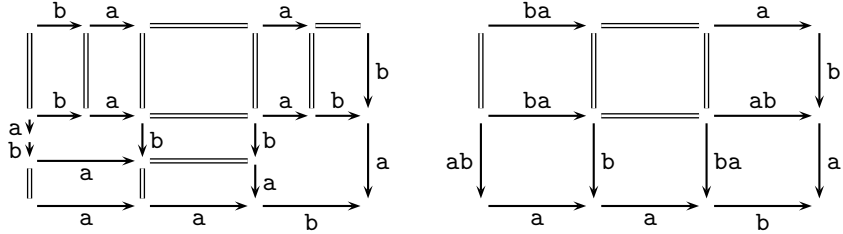

\begin{rema}
Algorithm~\ref{A:SymNormal2} (as well as Algorithm~\ref{A:SymNormal}) obeys the scheme ``left-reverse, then normalize''. We could as well use switch the operations into ``normalize, then left-reverse'', as suggested in Figure~\ref{F:Switch}. The property that guarantees the correctness of the switched version is the third domino rule (Lemma~\ref{L:Domino3}): left-reversing $\SSS$-normal paths always produces $\SSS$-normal paths. 

\rightskip45mm
For instance, in the context of Example~\ref{X:SymNormal2}, we could first find $\SSS$-normal words equivalent to~$\tta \pc \tta \pc \ttb$ and $\ttb \pc \tta$, namely $\tta \pc \tta\ttb$ and $\ttb\tta$, and then left-reverse the (length~$3$) signed word~$\tta \pc \tta\ttb \pc \INV{\ttb\tta}$. Of course, the result is $\INV{\tta\ttb} \pc \ttb\tta \pc \tta$ once more, but the involved grid is now the one given on the right.
\hfill\begin{picture}(0,0)(-10,-4)
\pcline{->}(1,0)(14,0)
\tbput{$\tta$}
\pcline{->}(16,0)(29,0)
\tbput{$\tta\ttb$}
\pcline{->}(1,12)(14,12)
\taput{$\ttb\tta$}
\pcline{->}(16,12)(29,12)
\taput{$\tta$}
\pcline{->}(0,11)(0,1)
\tlput{$\tta\ttb$}
\pcline{->}(15,11)(15,1)
\trput{$\ttb$}
\pcline{->}(30,11)(30,1)
\trput{$\ttb\tta$}
\end{picture}
\end{rema}

\begin{figure}[htb]
\begin{picture}(105,37)(0,0)
\pcline[style=double](7,36)(14,36)
\pcline{->}(16,36)(44,36)
\taput{$\vv''$}
\pcline[style=double](1,24)(14,24)
\pcline{->}(16,24)(44,24)
\tbput{$\vv'$}
\pcline[style=double](1,0)(14,0)
\pcline{->}(16,0)(44,0)
\tbput{$\vv$}
\psarc[style=double](7,29){7}{90}{180}
\pcline[style=double](0,29)(0,25)
\pcline{->}(0,23)(0,1)
\tlput{$\uu''$}
\pcline[style=double](15,35)(15,25)
\pcline{->}(15,23)(15,1)
\trput{$\uu'$}
\pcline[style=double](45,35)(45,25)
\pcline{->}(45,23)(45,1)
\trput{$\uu$}
\psarc[style=thin](15,24){3}{270}{360}
\put(28,11){$\revLinv$}
\put(20,29){Algorithm~\ref{A:Normal}}
\put(6,1.5){\rotatebox{90}{\hbox{Algorithm~\ref{A:Normal}}}}

\pcline{->}(61,36)(89,36)
\taput{$\vv''$}
\pcline[style=double](91,36)(104,36)
\pcline{->}(61,12)(89,12)
\taput{$\vv'$}
\pcline[style=double](91,12)(104,12)
\pcline{->}(61,0)(89,0)
\tbput{$\vv$}
\pcline[style=double](91,0)(98,0)
\psarc[style=double](98,7){7}{270}{360}
\pcline[style=double](105,11)(105,7)
\pcline{->}(60,35)(60,13)
\tlput{$\uu''$}
\pcline{->}(90,35)(90,13)
\tlput{$\uu'$}
\pcline{->}(105,35)(105,13)
\trput{$\uu$}
\pcline[style=double](90,11)(90,1)
\pcline[style=double](60,11)(60,1)
\pcline{->}(45,23)(45,1)
\trput{$\uu$}
\psarc[style=thin](60,36){3}{270}{360}
\put(73,23){$\revLinv$}
\put(65,5){Algorithm~\ref{A:Normal}}
\put(96,13.5){\rotatebox{90}{\hbox{Algorithm~\ref{A:Normal}}}}
\end{picture}
\caption{\sf\smaller Switching normalization and left-reversing in Algorithm~\ref{A:SymNormal2}: on the left, we first left-reverse $\vv \pc \INV\uu$ into~$\INV{\uu'}\pc \vv'$ and then normalize~$\uu'$ into~$\uu''$ and $\vv'$ into~$\vv''$; on the right, we first normalize $\uu$ into~$\uu'$ and~$\vv$ into~$\vv'$ and then left-reverse $\vv' \pc \INV{\uu'}$ into $\INV{\uu''} \pc \vv''$.}
\label{F:Switch}
\end{figure}
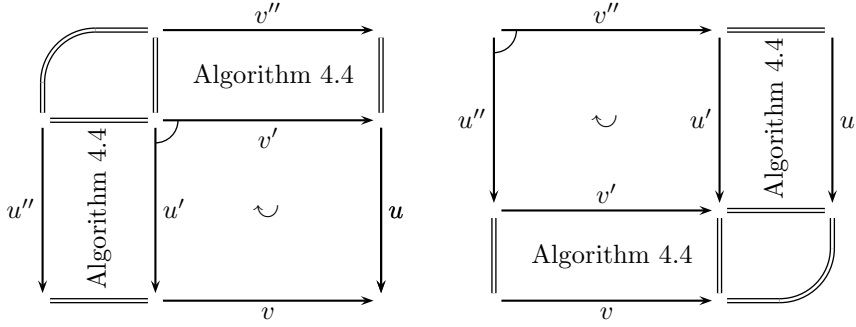

\subsubsection*{Starting from an arbitrary path}

Extending Algorithm~\ref{A:SymNormal2} to start with an arbitrary path rather than with a positive--negative path is easy. Indeed, we saw that every Garside family provides a short right-lcm selector and using the associated right-reversing transforms an arbitrary path into an equivalent positive--negative path, if some exists.

\enlargethispage{3mm}

\begin{algo}{Symmetric normal decomposition, general input}
\label{A:SymNormalGen}
\begin{algorithmic}[1]
\CONTEXT{A strong Garside family~$\SSS$ in a left-Ore category~$\CCC$ that admits left-lcms, a $\Square$-witness~$\SW\!$, a short right-lcm selector~$\RC$ on~$\SSSs$, a short left-lcm selector~$\LC$ on~$\SSSs$}
\INPUT{A signed $\SSSs$-path $\ww$}
\OUTPUT{A symmetric $\SSSs$-normal decomposition of~$\cl\ww$}
\STATE{$O := $ the return value of Algorithm~\ref{A:RightRev} with~$\RC$ running on~$\ww$}
\IF{$O$ is a positive--negative path $\vv \pc \INV\uu$}
\STATE{compute a symmetric $\SSS$-normal decomposition $\INV{\uu'} \pc \vv'$ of $\uu \pc \INV\vv$ (Algorithm~\ref{A:SymNormal2})}
\RETURN{$\INV{\uu'} \pc \vv'$}
\ELSE
\COMMENT{$O$ is $\mathtt{fail}$}
\RETURN{$\mathtt{fail}$}
\COMMENT{$\cl\ww$ has no symmetric $\SSS$-normal decomposition}
\ENDIF
\end{algorithmic}
\end{algo}

\begin{prop}
\label{P:SymNormalGen}
Assume that $\SSS$ is a strong Garside family in an Ore category~$\CCC$, $\SW$ is a $\Square$-witness, $\RC$ is a short right-lcm selector on~$\SSSs$, and $\LC$ is a short left-lcm selector on~$\SSSs$. Then Algorithm~\ref{A:SymNormalGen} running on a signed $\SSSs$-path~$\ww$ returns a symmetric $\SSS$-normal decomposition of~$\cl\ww$ in time $O(\LG\ww^2)$.
\end{prop}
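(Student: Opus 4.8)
The plan is to assemble the statement from components that are already in place, verifying correctness by following the two branches of Algorithm~\ref{A:SymNormalGen} and adding up the complexities. The core observation is that this algorithm is a two-stage composition: first a right-reversing step that turns an arbitrary signed path into an equivalent positive--negative path (or reports failure), and then an application of Algorithm~\ref{A:SymNormal2} to that positive--negative path. So the correctness and complexity both reduce to facts established earlier.

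First I would handle correctness. Since $\CCC$ is an Ore category and $\SSS$ is strong (hence, by the discussion around Proposition~\ref{P:SymExist}, every element of $\Env\CCC$ is a right-fraction and admits a symmetric $\SSS$-normal decomposition), I want to argue that the right-reversing stage never genuinely fails: by Lemma~\ref{L:Gar2Rev} the right-lcm selector $\RC$ on $\SSSs$ gives a short right-complement for which right-reversing is complete, and since $\CCC$ is an Ore category any two elements with the same source have a common right-multiple, so by Lemma~\ref{L:Terminating}\ITEM2 right-$\RC$-reversing of $\ww$ terminates successfully, producing a positive--negative path $\vv \pc \INV\uu$ with $\cl{\vv\pc\INV\uu} = \cl\ww$ by Lemma~\ref{L:Equiv} (applied to the negative--positive rearrangement, or directly tracking the grid). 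Hence the $\mathtt{fail}$ branch is not taken, and the output is whatever Algorithm~\ref{A:SymNormal2} returns on input $\vv \pc \INV\uu$. I need to be slightly careful about the transposition here: Algorithm~\ref{A:SymNormalGen} as written feeds $\uu \pc \INV\vv$ into Algorithm~\ref{A:SymNormal2}, which outputs a symmetric $\SSS$-normal decomposition of $\cl\uu\cl\vv\inv$; one checks that taking the bar of the resulting negative--positive path yields a symmetric $\SSS$-normal decomposition of $\cl\vv\cl\uu\inv = \cl\ww$ — this uses that the bar map exchanges negative--positive and positive--negative forms and that left-disjointness together with $\SSS$-greediness are the defining conditions, which are symmetric under the bar in the expected way. (If in fact the intent is that the output is $\INV{\uu'}\pc\vv'$ directly, then the input should be read as $\vv\pc\INV\uu$ and the argument is even more direct via Proposition~\ref{P:SymNormal2}.) Either way, by Proposition~\ref{P:SymNormal2} the inner call returns a symmetric $\SSS$-normal decomposition of the correct groupoid element, and since $\cl{\vv\pc\INV\uu}=\cl\ww$, this is a symmetric $\SSS$-normal decomposition of $\cl\ww$.

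Next, the complexity. Write $\ell = \LG\ww$. By Corollary~\ref{C:TerminatingShort} (or Lemma~\ref{L:Terminating}\ITEM2, noting $\SSSs$ is finite since $\SSS$ is — wait, here $\SSS$ need not be assumed finite, but $\RC$ short suffices for Corollary~\ref{C:TerminatingShort}) the right-reversing in line~1 runs in $O(\ell^2)$ steps and the resulting positive--negative path $\vv\pc\INV\uu$ has $\LG\uu + \LG\vv \in O(\ell)$. Then Proposition~\ref{P:SymNormal2} bounds the cost of the inner call by $O((\LG\uu+\LG\vv)^2) = O(\ell^2)$. Adding the two stages gives total time $O(\ell^2) = O(\LG\ww^2)$, as claimed.

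The main obstacle I anticipate is the bookkeeping around the bar map and the exact relation between the element processed by Algorithm~\ref{A:SymNormal2} and the element $\cl\ww$ we want — i.e., making sure the transposition $\vv\pc\INV\uu \leftrightarrow \uu\pc\INV\vv$ is handled so that the returned path is genuinely a symmetric $\SSS$-normal decomposition of $\cl\ww$ and not of $\cl\ww\inv$. This is routine once one writes out $\INV{\ww_1\pc\ww_2} = \INV{\ww_2}\pc\INV{\ww_1}$ and checks that $\SSS$-greediness of a path and left-disjointness of its first entries are preserved under the appropriate bar/transpose operation, but it is the only place where a sign error could creep in. Everything else is a direct citation of Lemma~\ref{L:Gar2Rev}, Corollary~\ref{C:TerminatingShort}, Lemma~\ref{L:Terminating}, and Proposition~\ref{P:SymNormal2}. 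I would therefore keep the written proof short: state that failure cannot occur (Ore $+$ completeness $+$ termination), that the output is correct by Proposition~\ref{P:SymNormal2} applied to the positive--negative path produced in line~1, and that the complexity is the sum $O(\ell^2)+O(\ell^2)$.

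\begin{proof}
Write $\ell = \LG\ww$. By Lemma~\ref{L:Gar2Rev}, the right-lcm selector~$\RC$ is a short right-complement for which right-reversing is complete; by Corollary~\ref{C:TerminatingShort}, Algorithm~\ref{A:RightRev} running on~$\ww$ terminates in $O(\ell^2)$ steps, and, since $\CCC$ is an Ore category, any two elements with the same source admit a common right-multiple, so the reversing terminates \emph{successfully}, returning a positive--negative path $\vv \pc \INV\uu$ rather than $\mathtt{fail}$. Moreover all paths involved have length in $O(\ell)$, so $\LG\uu + \LG\vv \in O(\ell)$, and, by Lemma~\ref{L:Equiv}, $\cl{\vv \pc \INV\uu} = \cl\ww$ holds in~$\Env\CCC$.

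Thus line~3 is executed, and, by Proposition~\ref{P:SymNormal2}, the call to Algorithm~\ref{A:SymNormal2} returns a symmetric $\SSS$-normal path $\INV{\uu'}\pc\vv'$ representing $\cl\uu \cl\vv\inv$, in time $O((\LG\uu + \LG\vv)^2) = O(\ell^2)$. Taking the image under the bar map exchanges a negative--positive with a positive--negative path and, since $\SSS$-greediness of $\seqqq{\aa_1}\etc{\aa_\pp}$ together with left-disjointness of its first entries is preserved, $\INV{\uu'}\pc\vv'$, read as the output of Algorithm~\ref{A:SymNormalGen}, is a symmetric $\SSS$-normal decomposition of $\cl{\vv\pc\INV\uu} = \cl\ww$.

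Finally, the total running time is the cost of the right-reversing in line~1, which is $O(\ell^2)$, plus the cost of the inner call, which is $O(\ell^2)$, whence $O(\LG\ww^2)$ overall.
\end{proof}
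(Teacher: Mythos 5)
Your proof is correct and takes essentially the same route as the paper: the paper's proof simply cites Proposition~\ref{P:SymNormal2} for the correctness of the inner call (read as applied to $\vv \pc \INV\uu$ --- the ``$\uu \pc \INV\vv$'' in the algorithm statement is a typo, so your bar-map digression, whose phrasing is slightly off since the bar of a negative--positive path is again negative--positive, is not actually needed) and Lemma~\ref{L:Terminating} together with Propositions~\ref{P:Normal} and~\ref{P:SymNormal2} for the quadratic bound. Your extra argument that the reversing stage cannot return $\mathtt{fail}$ in an Ore category is a harmless elaboration of a point the paper leaves implicit.
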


\begin{proof}
By Proposition~\ref{P:SymNormal2}, the final sequence $\INV{\uu'} \pc \vv'$ is a symmetric $\SSS$-normal decomposition of the element $\cl{\vv \pc \INV\uu}$ of~$\Env\CCC$. Now, by construction, $\vv \pc \INV\uu$ represents the same element of~$\Env\CCC$ as the initial path~$\ww$.

The complexity follows from Lemma~\ref{L:Terminating} and Propositions~\ref{P:Normal} and~\ref{P:SymNormal2}.
\end{proof}

\begin{exam}
Again in the $\BP3$ context, let $\ww$ be the signed word $\ww = \INV\ttb \pc \INV\tta \pc \ttb \pc \tta \pc \tta$. Right-reversing~$\ww$ using the right-lcm selector on~$\{1, \tta, \ttb, \tta\ttb, \ttb\tta, \Delta\}$ yields the posi\-tive--negative word $\tta \pc \tta\ttb \pc \INV{\ttb\tta}$. Left-reversing the latter using the left-lcm selector yields $\INV{\tta\ttb} \pc \ttb\tta \pc \tta$. Normalizing $\tta\ttb$ respectively $\ttb\tta \pc \tta$ do not change these words, and we conclude (once again) that $\ww$ is symmetric normal.
\end{exam}

\begin{rema}
\label{R:NoRightFraction}
The Baumslag--Solitar presentation $(\tta, \ttb ; \tta = \ttb^2\tta\ttb)$ gives an example of a monoid that is left-Ore with left-lcms but not Ore. For instance, as follows from Example~\ref{X:BaumslagSolitar}, the elements~$\tta$ and $\ttb\tta$ admit no common right-multiple. As a consequence, the element~$\tta\inv \ttb\tta$ of the group $\PRES{\tta, \ttb}{\tta = \ttb^2\tta\ttb}$ admits no expression as a right-fraction, and no expression as a left-fraction~$\ff\inv \gg$ where $\ff, \gg$ are left-disjoint.
\end{rema}

\subsubsection*{Incremental method}
\label{SS:Quotient}

Besides Algorithms~\ref{A:SymNormal2} and~\ref{A:SymNormalGen}, which are global, we can also look for local methods in the spirit of Algorithms~\ref{A:LeftMult} and~\ref{A:Normal}, that is, methods for computing, say, a symmetric normal decomposition of~$\cc\gg$ from a symmetric normal decomposition of~$\gg$ when $\cc$ lies in the reference Garside family. This is easy.

\enlargethispage{4mm}

\begin{algo}{Left-multiplication---see Figure~\ref{F:SymLeftMult}}
\label{A:SymLeftMult}
\begin{algorithmic}[1]
\CONTEXT{A strong Garside family~$\SSS$ in a left-Ore category~$\CCC$ that admits left-lcms, a $\Square$-witness~$\SW\!$, a short left-lcm selector~$\LC$ on~$\SSSs$}
\INPUT{A symmetric $\SSS$-normal decomposition $\seqqqqqq{\INV{\bb_\qq}}\etc{\INV{\bb_1}}{\aa_1}\etc{\aa_\pp}$ of an element~$\gg$ of~$\Env\CCC$ and an element~$\cc$ of~$\SSSs$}
\OUTPUT{A symmetric $\SSS$-normal decomposition of $\bb \gg$, if the latter is defined}
\STATE{$\cc_{-\qq} := \cc$}
\FOR{$\ii$ decreasing from~$\qq$ to~$1$}
\STATE{$\bb'_\ii := \LC(\cc_{-\ii}, \bb_\ii)$\,; $\cc_{-\ii+1} := \LC(\bb_\ii, \cc_{-\ii})$}
\ENDFOR
\FOR{$\ii$ increasing from~$1$ to~$\pp$}
\STATE{$(\aa'_\ii, \cc_\ii) := \SW(\cc_{\ii-1}, \aa_\ii)$}
\ENDFOR
\RETURN{$\seqqqqqqq{\INV{\bb'_\qq}}\etc{\INV{\bb'_1}}{\aa'_1}\etc{\aa'_\pp}{\cc_\pp}$}
\end{algorithmic}
\end{algo}

\begin{figure}[htb]
\begin{picture}(105,18)(0,-3)
\pcline{->}(1,0)(14,0)
\tbput{$\cc$}
\pcline{<-}(16,0)(29,0)
\tbput{$\bb_\qq$}
\pcline[style=etc](32,0)(43,0)
\pcline{<-}(46,0)(59,0)
\tbput{$\bb_1$}
\pcline{->}(61,0)(74,0)
\tbput{$\aa_1$}
\pcline[style=etc](77,0)(88,0)
\pcline{->}(91,0)(104,0)
\tbput{$\aa_\pp$}

\psarc[style=double](7,5){7}{90}{180}
\pcline[style=double](7,12)(14,12)
\pcline{<-}(16,12)(29,12)
\taput{$\bb'_\qq$}
\pcline[style=etc](32,12)(43,12)
\pcline{<-}(46,12)(59,12)
\taput{$\bb'_1$}
\pcline{->}(61,12)(74,12)
\taput{$\aa'_1$}
\pcline[style=etc](77,12)(88,12)
\pcline{->}(91,12)(104,12)
\taput{$\aa'_\pp$}

\pcline[style=double](0,5)(0,1)
\pcline{->}(15,11)(15,1)
\tlput{$\cc_{-\qq}$}
\pcline{->}(30,11)(30,1)
\tlput{$\cc_{-\qq+1}$}
\pcline{->}(45,11)(45,1)
\tlput{$\cc_{-1}$}
\pcline{->}(60,11)(60,1)
\tlput{$\cc_{0}$}
\pcline{->}(75,11)(75,1)
\trput{$\cc_1$}
\pcline{->}(90,11)(90,1)
\trput{$\cc_{\pp-1}$}
\pcline{->}(105,11)(105,1)
\trput{$\cc_\pp$}

\psarc[style=thin](30,12){3}{180}{270}
\psarc[style=thin](60,12){3}{180}{270}
\psarc[style=thin](75,12){3}{180}{270}
\psarc[style=thin](105,12){3}{180}{270}
\end{picture}
\caption[]{\sf\smaller Algorithm~\ref{A:SymLeftMult}: starting from $\cc$ in~$\SSSs$ and a symmetric $\SSS$-normal decomposition $\seqqqqqq{\INV{\bb_\qq}}\etc{\INV{\bb_1}}{\aa_1}\etc{\aa_\pp}$ of~$\gg$, one obtains a symmetric $\SSS$-normal decomposition of~$\cc\gg$.}
\label{F:SymLeftMult}
\end{figure}

\begin{prop}
\label{P:SymLeftMult}
Assume that $\SSS$ is a strong Garside family in a left-Ore category~$\CCC$ that admits left-lcms, $\SW$ is a $\Square$-witness on~$\SSSs$, and $\LC$ is a short left-lcm selector on~$\SSSs$. Then Algorithm~\ref{A:SymLeftMult} running on a symmetric $\SSS$-normal decomposition of an element~$\gg$ of~$\Env\CCC$ and $\cc$ in~$\SSSs$ returns a symmetric $\SSS$-normal decomposition of~$\cc\gg$ if the latter is defined. The map $\LC$ is invoked $2\qq$ times and the map $\SW$ is invoked $\pp$ times.
\end{prop}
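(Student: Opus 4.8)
The plan is to check that the returned path $\ww' = \seqqqqqqq{\INV{\bb'_\qq}}\etc{\INV{\bb'_1}}{\aa'_1}\etc{\aa'_\pp}{\cc_\pp}$ meets the three requirements of Definition~\ref{D:GreedySym}\ITEM2 — that $\seqqq{\bb'_1}\etc{\bb'_\qq}$ and $\seqqqq{\aa'_1}\etc{\aa'_\pp}{\cc_\pp}$ are $\SSS$-normal, and that $\bb'_1$ and $\aa'_1$ are left-disjoint — and that $\cl{\ww'} = \cc\gg$. Assuming $\cc\gg$ is defined, I would first record a routine check of sources and targets (using that $\LC$ and $\SW$ preserve the relevant endpoints) showing that every call is legal: the pairs $(\cc_{-\ii},\bb_\ii)$ have equal target, hence admit a left-lcm since $\CCC$ admits left-lcms, and each product $\cc_{\ii-1}\aa_\ii$ is composable; so the algorithm terminates without failure, $\ww'$ is a signed $\SSSs$-path, and the counts $2\qq$ for $\LC$ and $\pp$ for $\SW$ are immediate from the pseudocode. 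For the evaluation, the defining property of the left-lcm selector gives $\bb'_\ii\cc_{-\ii} = \cc_{-\ii+1}\bb_\ii$ for $1\le\ii\le\qq$, which telescope to $\cc\,(\bb_1\pdots\bb_\qq)\inv = (\bb'_1\pdots\bb'_\qq)\inv\cc_0$ in $\Env\CCC$, while the defining property of the $\Square$-witness gives $\aa'_\ii\cc_\ii = \cc_{\ii-1}\aa_\ii$, whence $\cc_0\,\aa_1\pdots\aa_\pp = \aa'_1\pdots\aa'_\pp\,\cc_\pp$; combined with $\gg = (\bb_1\pdots\bb_\qq)\inv\aa_1\pdots\aa_\pp$ this yields $\cc\gg = \cl{\ww'}$.

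Next I would dispose of the positive part: the second \textbf{for}-loop is exactly Algorithm~\ref{A:LeftMult} applied to the element $\cc_0$ of~$\SSSs$ and to the $\SSS$-normal decomposition $\seqqq{\aa_1}\etc{\aa_\pp}$ (which is $\SSS$-normal, being the positive part of a symmetric $\SSS$-normal path), with $\cc_0\cdot\cl{\aa_1\pdots\aa_\pp}$ defined; so Proposition~\ref{P:LeftMult} gives directly that $\seqqqq{\aa'_1}\etc{\aa'_\pp}{\cc_\pp}$ is $\SSS$-normal.

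For the negative part and the disjointness condition, the idea is to read the first loop as the grid of Figure~\ref{F:SymLeftMult}: its $\ii$-th square has top edge $\bb'_\ii$, bottom edge $\bb_\ii$, left edge $\cc_{-\ii+1}$ and right edge $\cc_{-\ii}$, it commutes because $\bb'_\ii\cc_{-\ii} = \cc_{-\ii+1}\bb_\ii$ is the left-lcm of $\cc_{-\ii}$ and $\bb_\ii$, and its bottom row is the $\SSS$-normal path $\seqqq{\bb_1}\etc{\bb_\qq}$. The one auxiliary fact I would use repeatedly is that the two cofactors of a left-lcm are left-disjoint: applying Lemma~\ref{L:Disjoint} to the (trivially $\SSS$-normal) length-one paths $\bb'_\ii$ and $\cc_{-\ii+1}$, which satisfy $\bb'_\ii\cc_{-\ii} = \cc_{-\ii+1}\bb_\ii =$ the left-lcm of $\cc_{-\ii}$ and $\bb_\ii$, shows that $\bb'_\ii$ and $\cc_{-\ii+1}$ are left-disjoint (this uses that a left-Ore category is in particular right-cancellative). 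Then for each $\ii<\qq$ I would invoke the third domino rule (Lemma~\ref{L:Domino3}\ITEM1) on the two-square subdiagram made of squares $\ii$ and $\ii+1$: its bottom pair $\seqq{\bb_\ii}{\bb_{\ii+1}}$ is $\SSS$-greedy, and its middle vertical edge $\cc_{-\ii}$ is left-disjoint from its top-right edge $\bb'_{\ii+1}$ by the auxiliary fact at index $\ii+1$, so $\seqq{\bb'_\ii}{\bb'_{\ii+1}}$ is $\SSS$-greedy; since all the $\bb'_\ii$ lie in~$\SSSs$, the path $\seqqq{\bb'_1}\etc{\bb'_\qq}$ is $\SSS$-normal.

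Finally, to obtain that $\bb'_1$ and $\aa'_1$ are left-disjoint, I would glue the first square of this grid — which gives $\bb'_1\cc_{-1} = \cc_0\bb_1$ with $\cc_0$ left-disjoint from $\bb'_1$ (auxiliary fact at $\ii=1$) — to the first square $\cc_0\aa_1 = \aa'_1\cc_1$ of the $\SW$-grid, along their shared edge $\cc_0$, drawing the latter so that $\aa_1,\aa'_1$ run on the opposite side of $\cc_0$ from $\bb_1,\bb'_1$. In the resulting two-square diagram the bottom pair $\aa_1,\bb_1$ is left-disjoint because $\seqqqqqq{\INV{\bb_\qq}}\etc{\INV{\bb_1}}{\aa_1}\etc{\aa_\pp}$ is symmetric $\SSS$-normal, and the middle edge $\cc_0$ is left-disjoint from the top edge $\bb'_1$; hence the fourth domino rule (Lemma~\ref{L:Domino3}\ITEM2) gives that the top pair $\aa'_1,\bb'_1$ is left-disjoint. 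Together with the previous paragraphs this shows $\ww'$ is symmetric $\SSS$-normal and represents $\cc\gg$. I expect this last step to be the main obstacle: one must choose the orientation of the fourth domino rule so that the two available scraps of left-disjointness — one produced while processing the denominator ($\cc_0$ against $\bb'_1$), one inherited from the hypothesis on $\gg$ ($\aa_1$ against $\bb_1$) — land in its hypotheses and yield the required conclusion; everything else is endpoint bookkeeping on the two grids together with the observation about cofactors of left-lcms.
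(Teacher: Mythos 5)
Your proposal is correct and takes essentially the same route as the paper's proof: the third domino rule for the negative part, the fourth domino rule for the left-disjointness of $\bb'_1$ and $\aa'_1$, the first domino rule (packaged here as Proposition~\ref{P:LeftMult}) for the positive part, and the commutativity of the grid of Figure~\ref{F:SymLeftMult} for the evaluation. The only difference is that you spell out, via Lemma~\ref{L:Disjoint} applied to length-one paths, the left-disjointness of the left-lcm cofactors $\bb'_\ii$ and $\cc_{-\ii+1}$ that the paper's one-line invocations of Lemma~\ref{L:Domino3} leave implicit, which is added detail rather than a different argument.
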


\begin{proof}
The third domino rule (Lemma~\ref{L:Domino3}) implies that $\seqqq{\bb'_1}\etc{\bb'_\qq}$ is $\SSS$-greedy. The fourth domino rule implies that $\bb'_1$ and~$\aa'_1$ are left-disjoint. Next, the first domino rule implies that $\seqqq{\aa'_1}\etc{\aa'_\pp}$ is $\SSS$-normal. Finally, $\seqq{\aa'_\pp}{\cc_\pp}$ is $\SSS$-normal by construction. Hence $\seqqqqqqq{\INV{\bb'_\qq}}\etc{\INV{\bb'_1}}{\aa'_1}\etc{\aa'_\pp}{\cc_\pp}$ is symmetric $\SSS$-normal. The commutativity of the diagram in Figure~\ref{F:SymLeftMult} then guarantees that the latter sequence is a decomposition of~$\cc\bb_\qq\inv \pdots \bb_1\inv \aa_1 \pdots \aa_\pp$, hence of~$\cc\gg$.
\end{proof}

The symmetry of the definition of symmetric normal paths implies that there exists a counterpart of Algorithm~\ref{A:SymLeftMult} computing a symmetric normal decomposition of~$\gg \cc\inv$ from one of~$\gg$ for $\cc$ in the reference Garside family~$\SSS$. By contrast, there exists no simple general method for left-division (or right-multiplication). However, when $\SSS$ happens to be bounded, such a method exists, because left-dividing by an element~$\Delta(\xx)$ is easy. Now, if $\SSS$ is bounded by~$\Delta$ and $\aa$ belongs to~$\SSS(\xx, \ud)$, then $\aa \der(\aa) = \Delta(\xx)$ holds, so that left-dividing by~$\aa$ can be decomposed into left-dividing by~$\Delta(\xx)$ and then left-multiplying by~$\der(\aa)$. Hereafter, in order to simplify notation, we shall often skip the source of the considered $\Delta$-element, thus writing~$\Delta(\ud)$ instead of~$\Delta(\xx)$ when mentioning~$\xx$ explicitly is not necessary.

\enlargethispage{3mm}

\begin{algo}{Left-division---see Figure~\ref{F:SymLeftDiv}}
\label{A:SymLeftDiv}
\begin{algorithmic}[1]
\CONTEXT{A Garside family~$\SSS$ bounded by a map~$\Delta$ in an Ore category~$\CCC$, a $\Square$-witness~$\SW$ for~$\SSSs$}
\INPUT{A symmetric $\SSS$-normal decomposition $\seqqqqqq{\INV{\bb_\qq}}\etc{\INV{\bb_1}}{\aa_1}\etc{\aa_\pp}$ of~$\gg$ in~$\Env\CCC$, an element~$\cc$ of~$\SSSs$}
\OUTPUT{A symmetric $\SSS$-normal decomposition of $\cc\inv \gg$, if the latter is defined}
\STATE{$\cc_{-\qq} := \cc$}
\FOR{$\ii$ decreasing from~$\qq$ to~$1$}
\STATE{$(\cc_{-\ii+1}, \bb'_\ii) := \SW(\bb_\ii, \cc_{-\ii})$}
\ENDFOR
\STATE{$\cc'_0 := \derL(\cc_0)$}
\FOR{$\ii$ increasing from~$1$ to~$\pp$}
\STATE{$(\aa'_\ii, \cc'_\ii) := \SW(\cc'_{\ii-1}, \aa_\ii)$}
\ENDFOR
\STATE{$\bb'_0:= \der(\aa'_1)$}
\RETURN{$\seqqqqqqqq{\INV{\bb'_\qq}}\etc{\INV{\bb'_1}}{\INV{\bb'_0}}{\aa'_2}\etc{\aa'_\pp}{\cc'_\pp}$}
\end{algorithmic}
\end{algo}

\begin{figure}[htb]
\begin{picture}(120,29)(0,-2)
\psarc[style=double](7,7){7}{180}{270}
\pcline[style=double](7,0)(14,0)

\pcline{<-}(16,0)(29,0)
\tbput{$\bb'_\qq$}
\pcline[style=etc](32,0)(43,0)
\pcline{<-}(46,0)(59,0)
\tbput{$\bb'_1$}

\pcline{<-}(1,12)(14,12)
\taput{$\cc$}
\pcline{<-}(16,12)(29,12)
\taput{$\bb_\qq$}
\pcline[style=etc](32,12)(43,12)
\pcline{<-}(46,12)(59,12)
\tbput{$\bb_1$}
\pcline{->}(61,12)(74,12)
\put(64,9.5){$\aa_1$}
\pcline{->}(76,12)(89,12)
\tbput{$\aa_2$}
\pcline[style=etc](92,12)(103,12)
\pcline{->}(106,12)(119,12)
\tbput{$\aa_\pp$}

\pcline{<-}(46,24)(59,24)
\taput{$\phi\inv(\bb'_1)$}
\pcline{->}(61,24)(74,24)
\taput{$\aa'_1$}
\pcline{->}(76,24)(89,24)
\taput{$\aa'_2$}
\pcline[style=etc](92,24)(103,24)
\pcline{->}(106,24)(119,24)
\taput{$\aa'_\pp$}

\pcline[style=double](0,11)(0,7)
\pcline{->}(15,11)(15,1)
\trput{$\cc_{-\qq}$}
\pcline{->}(30,11)(30,1)
\trput{$\cc_{-\qq+1}$}
\pcline{->}(45,11)(45,1)
\trput{$\cc_{-1}$}
\pcline{->}(60,11)(60,1)
\tlput{$\cc_0$}

\pcline{->}(45,23)(45,13)
\tlput{$\derL(\cc_{-1})$}
\pcline{->}(60,23)(60,13)
\tlput{$\derL(\cc_0)$}
\trput{$\cc'_0$}
\pcline{->}(75,23)(75,13)
\trput{$\cc'_1$}
\pcline{->}(90,23)(90,13)
\trput{$\cc'_2$}
\pcline{->}(105,23)(105,13)
\trput{$\cc'_{\pp-1}$}
\pcline{->}(120,23)(120,13)
\trput{$\cc'_\pp$}

\psbezier(74,23.5)(70,20)(70,20)(70,13)
\pcline(70,11)(70,7)
\psarc(63,7){7}{270}{360}
\pcline{<-}(61,0)(63,0)
\put(71,5){$\bb'_0$}

\psarc[style=thin](30,12){3}{0}{180}
\psarc[style=thin](45,12){3}{0}{180}
\psarc[style=thin](60,12){3}{0}{180}
\psarc[style=thin](75,12){3}{0}{180}
\psarc[style=thin](90,12){3}{0}{180}
\psarc[style=thin](105,12){3}{0}{180}

\psarc[style=thin](30,0){3}{90}{180}
\psarc[style=thin](60,0){3}{90}{180}

\psarc[style=thin](75,24){3}{180}{270}
\psarc[style=thin](90,24){3}{180}{270}
\psarc[style=thin](120,24){3}{180}{270}

\psarc[style=thinexist](30,0){2.5}{180}{360}
\psarc[style=thinexist](45,0){2.5}{180}{360}
\psarc[style=thinexist](60.5,0){2.5}{180}{360}

\psarc[style=thinexist](75,24){3.5}{0}{210}
\psarc[style=thinexist](90,24){2.5}{0}{180}
\psarc[style=thinexist](105,24){2.5}{0}{180}
\end{picture}
\caption[]{\sf\smaller Left-division by an element of~$\SSSs$ starting from a symmetric $\SSS$-normal path $\seqqqqqq{\INV{\bb_\qq}}\etc{\INV{\bb_1}}{\aa_1}\etc{\aa_\pp}$.}
\label{F:SymLeftDiv}
\end{figure}

\begin{prop}
\label{P:SymLeftDiv}
Assume that $\SSS$ is a Garside family that is bounded by a map~$\Delta$ in an Ore category~$\CCC$. Then Algorithm~\ref{A:SymLeftDiv} running on a symmetric $\SSS$-normal decomposition of an element~$\gg$ of~$\Env\CCC$ and $\cc$ in~$\SSSs$ returns a symmetric $\SSS$-normal decomposition of~$\cc\inv \gg$ if the latter is defined. 
\end{prop}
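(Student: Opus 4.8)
Here is how I would approach the proof.

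\smallskip

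The plan is to verify, in turn, that the signed $\SSSs$-path returned by Algorithm~\ref{A:SymLeftDiv} represents $\cc\inv\gg$ and that it is symmetric $\SSS$-normal. The guiding observation I would exploit is that the two \textbf{for}-loops of the algorithm are, up to renaming, instances of procedures already analysed: the first loop runs the right-multiplication scheme of Algorithm~\ref{A:RightMult} on the element $\cc$ and the $\SSS$-normal path $\seqqq{\bb_1}\etc{\bb_\qq}$, which is legitimate because $\SSS$, being bounded, satisfies the second domino rule (Lemma~\ref{L:Domino2}) and $\bb_1 \pdots \bb_\qq \cc$ is defined; the second loop runs the left-multiplication scheme of Algorithm~\ref{A:LeftMult} on the element $\cc'_0 := \derL(\cc_0)$ and the $\SSS$-normal path $\seqqq{\aa_1}\etc{\aa_\pp}$. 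So I would first invoke Propositions~\ref{P:RightMult} and~\ref{P:LeftMult} to get that $\seqqqq{\cc_0}{\bb'_1}\etc{\bb'_\qq}$ is an $\SSS$-normal decomposition of $\bb_1 \pdots \bb_\qq \cc$ and $\seqqqq{\aa'_1}\etc{\aa'_\pp}{\cc'_\pp}$ is an $\SSS$-normal decomposition of $\cc'_0 \aa_1 \pdots \aa_\pp$; in particular every $\bb'_\ii$, $\aa'_\ii$, $\cc_\ii$, $\cc'_\ii$ lies in $\SSSs$, and so do $\cc'_0 = \derL(\cc_0)$ and $\bb'_0 = \der(\aa'_1)$, by Definition~\ref{D:Bounded}, so the two multiplication steps are indeed applicable.

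\smallskip

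Next I would check the value of the output. Telescoping the square relations $\cc_{-\ii+1}\bb'_\ii = \bb_\ii\cc_{-\ii}$ of the first loop gives $\cc\inv\gg = \bb'_\qq{}\inv \pdots \bb'_1{}\inv \cc_0\inv \aa_1 \pdots \aa_\pp$. The heart of the verification is that $\cc_0$ and $\aa'_1$ are complementary factors of one and the same $\Delta$-element: if $\zz$ denotes the common source of $\cc'_0$ and $\aa'_1$, then $\cc'_0\cc_0 = \Delta(\zz) = \aa'_1\bb'_0$, whence $\cc_0\inv = \Delta(\zz)\inv\cc'_0$ and $\Delta(\zz)\inv\aa'_1 = \bb'_0{}\inv$. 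Telescoping the square relations $\aa'_\ii\cc'_\ii = \cc'_{\ii-1}\aa_\ii$ of the second loop rewrites $\cc'_0\aa_1 \pdots \aa_\pp$ as $\aa'_1\aa'_2 \pdots \aa'_\pp\cc'_\pp$, and substituting,
\[
\cc\inv\gg = \bb'_\qq{}\inv \pdots \bb'_1{}\inv\,\Delta(\zz)\inv\,\aa'_1\aa'_2 \pdots \aa'_\pp\cc'_\pp = \bb'_\qq{}\inv \pdots \bb'_1{}\inv\,\bb'_0{}\inv\,\aa'_2 \pdots \aa'_\pp\cc'_\pp,
\]
which is precisely the returned path. (The degenerate case $\pp = 0$, where $\aa'_1$ is not produced, should be dealt with by a separate convention; $\qq = 0$ causes no difficulty.)

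\smallskip

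Then I would turn to normality of the returned path, i.e.\ to showing that its negative part $\seqqqq{\bb'_0}{\bb'_1}\etc{\bb'_\qq}$ and positive part $\seqqqq{\aa'_2}\etc{\aa'_\pp}{\cc'_\pp}$ are $\SSS$-normal and that $\bb'_0$ and $\aa'_2$ are left-disjoint. The positive part is a suffix of the $\SSS$-normal path $\seqqqq{\aa'_1}\etc{\aa'_\pp}{\cc'_\pp}$, and $\seqqq{\bb'_1}\etc{\bb'_\qq}$ is a suffix of the $\SSS$-normal path $\seqqqq{\cc_0}{\bb'_1}\etc{\bb'_\qq}$; both are therefore $\SSS$-normal. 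For the left-disjointness of $\bb'_0$ and $\aa'_2$ (of $\bb'_0$ and $\cc'_1$ when $\pp=1$) I would use that, since $\SSS$ is bounded, Lemma~\ref{L:DeltaDisjoint} applies: the two-term path $\seqq{\aa'_1}{\aa'_2}$ is a prefix of the $\SSS$-normal positive part above, hence is $\SSS$-normal, which by Lemma~\ref{L:DeltaDisjoint} means exactly that $\der(\aa'_1) = \bb'_0$ and $\aa'_2$ are left-disjoint.

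\smallskip

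The one step I expect to cause real work is the $\SSS$-greediness of the interface pair $\seqq{\bb'_0}{\bb'_1}$, which together with the above gives $\SSS$-normality of the whole negative part. Here I would again lean on Lemma~\ref{L:DeltaDisjoint}: since $\der^2 = \phi$ (Lemma~\ref{L:Auto}) and $\bb'_0 = \der(\aa'_1)$, greediness of $\seqq{\bb'_0}{\bb'_1}$ is equivalent to $\phi(\aa'_1)$ and $\bb'_1$ having no nontrivial common left-divisor. From $\seqq{\cc_0}{\bb'_1}$ being $\SSS$-normal one gets, via Lemma~\ref{L:DeltaDisjoint}, that $\der(\cc_0)$ and $\bb'_1$ have no nontrivial common left-divisor, and a short computation with the bounding map shows $\der(\cc_0) = \phi(\derL(\cc_0)) = \phi(\cc'_0)$, while the greediness of $\seqq{\aa'_1}{\cc'_1}$ forces $\cc'_0 \dive \aa'_1$, so $\der(\cc_0) \dive \phi(\aa'_1)$. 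To turn this into the desired statement I would set up, as in Figure~\ref{F:SymLeftDiv}, the commutative squares linking each $\bb_\ii$ and $\bb'_\ii$ to their $\Delta$-shifts $\phi\inv(\bb'_\ii)$ and $\derL(\cc_{-\ii})$ --- these squares commute by Lemma~\ref{L:Auto} --- and feed them into the second domino rule (Lemma~\ref{L:Domino2}). The delicate part is purely the bookkeeping of objects, sources, and the various $\Delta$- and $\phi$-shifts needed to match the hypotheses of that rule; once $\seqq{\bb'_0}{\bb'_1}$ is known $\SSS$-greedy, all three requirements are in place and the proof is finished.
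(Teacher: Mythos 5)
Most of your outline is sound and close to the paper's own argument: the telescoping computation of the value, the normality of the two wings $\seqqq{\bb'_1}\etc{\bb'_\qq}$ and $\seqqqq{\aa'_2}\etc{\aa'_\pp}{\cc'_\pp}$ (your route through Propositions~\ref{P:RightMult} and~\ref{P:LeftMult}, legitimate since boundedness gives the second domino rule, is fine and in fact tidier than a bare domino-rule citation), and the left-disjointness of $\bb'_0$ and $\aa'_2$ via Lemma~\ref{L:DeltaDisjoint} are all correct.

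The gap is exactly at the step you flag, the $\SSS$-greediness of $\seqq{\bb'_0}{\bb'_1}$, and it is not ``purely bookkeeping''. The facts you collect --- $\der(\cc_0)$ and $\bb'_1$ have no nontrivial common left-divisor, $\der(\cc_0)=\phi(\cc'_0)$, and $\cc'_0\dive\aa'_1$, hence $\der(\cc_0)\dive\phi(\aa'_1)$ --- do not imply that $\der(\bb'_0)=\phi(\aa'_1)$ and $\bb'_1$ have no nontrivial common left-divisor: a common left-divisor of $\phi(\aa'_1)$ and $\bb'_1$ need not left-divide $\der(\cc_0)$ (already in $\BP3$, $\tta$ and $\ttb$ are coprime and $\tta\dive\Delta$, yet $\Delta$ and $\ttb$ are not coprime). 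Feeding things into the second domino rule does not close this either: to apply Lemma~\ref{L:Domino2} with bottom path $\seqq{\bb'_0}{\bb'_1}$ you are forced, up to invertible elements, to take $\cc_0$ as middle vertical (it carries the only available greedy pair ending in $\bb'_1$), and then the left square that commutes has a top edge of type $\cc'_0$, whose pair with $\bb_1$ is not greedy in general --- indeed greediness of $\seqq{\cc_0}{\bb'_1}$ forces $\bb_1\dive\cc_0=\der(\cc'_0)$, so $\der(\cc'_0)$ and $\bb_1$ are not coprime unless $\bb_1$ is invertible. What your sketch never uses, and what is indispensable here, is the hinge of the \emph{input} path: $\bb_1$ and $\aa_1$ are left-disjoint. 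The paper's argument takes an arbitrary $\aa$ in $\SSSs$ left-dividing both $\phi\inv(\bb'_1)$ and $\aa'_1$, deduces $\aa\dive\cc'_0\bb_1$ and $\aa\dive\cc'_0\aa_1$ from the commutativity of the $\phi\inv$-shifted square, uses the left-disjointness of $\bb_1$ and $\aa_1$ together with cancellativity to get $\aa\dive\cc'_0$, then kills $\aa$ by the coprimality of $\cc'_0=\phi\inv(\der(\cc_0))$ with $\phi\inv(\bb'_1)$ coming from $\seqq{\cc_0}{\bb'_1}$ being normal, and finally transports back through $\phi$ and Lemma~\ref{L:DeltaDisjoint}. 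Until you incorporate that use of the left-disjointness of $\bb_1$ and $\aa_1$, the crucial interface step remains unproved.
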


\begin{proof}
By construction, the diagram of Figure~\ref{F:SymLeftDiv} is commutative, so the returned path is equivalent to $\seqqqqqqqq{\INV{\cc}}{\INV{\bb_\qq}}\etc{\INV{\bb_1}}{\aa_1}\etc{\aa_\pp}$, hence it is a decomposition of~$\cc\inv \gg$, and its entries are elements of~$\SSSs$. So the point is to check that the path is symmetric $\SSS$-greedy.

First, as $\seqq{\bb_\ii}{\bb_{\ii+1}}$ is $\SSS$-normal for $\ii = 1 \wdots \qq-1$, the first domino rule (Lemma~\ref{L:Domino1}) implies that $\seqq{\bb'_\ii}{\bb'_{\ii+1}}$ is $\SSS$-normal as well. Similarly, as $\seqq{\aa_\ii}{\aa_{\ii+1}}$ is $\SSS$-normal for $\ii = 1 \wdots \pp-1$, the first domino rule (Lemma~\ref{L:Domino1}) implies that $\seqq{\aa'_\ii}{\aa'_{\ii+1}}$ is $\SSS$-normal as well. 

So it remains to check that $\seqq{\bb'_0}{\bb'_1}$ is $\SSS$-normal and that $\bb'_0$ and $\aa'_2$ are left-disjoint. Consider $\seqq{\bb'_0}{\bb'_1}$. Assume that $\aa$ is an element of~$\SSSs$ satisfying $\aa \dive \phi\inv(\bb'_1)$ and~$\aa \dive \aa'_1$. A fortiori, we have $\aa \dive \phi\inv(\bb'_1) \derL(\cc_{-1})$ and~$\aa \dive \aa'_1 \cc'_1$, that is, $\aa\dive \cc'_0 \bb_1$ and~$\aa \dive \cc'_0 \aa_1$. By assumption, $\bb_1$ and $\aa_1$ are left-disjoint, hence we deduce $\aa \dive \cc'_0$. Now, by assumption, $\seqq{\cc_0}{\bb'_1}$ is $\SSS$-normal, hence, by Lemma~\ref{L:DeltaDisjoint}, $\der(\cc_0)$ and~$\bb'_1$ have no nontrivial common left-divisor. As $\phi$ and, therefore, $\phi\inv$ is an automorphism (Lemma~\ref{L:Auto}), it follows that $\phi\inv(\der(\cc_0))$ and~$\phi\inv(\bb'_1)$ have no nontrivial common left-divisor. Now, by definition, $\phi\inv(\der(\cc_0))$ is~$\cc'_0$. As we have $\aa \dive \phi\inv(\bb'_1)$ and $\aa \dive \cc'_0$, we deduce that $\aa$ is invertible and, therefore, $\phi\inv(\bb'_1)$ and~$\aa'_1$ have no nontrivial common left-divisor. Applying the automorphism~$\phi$, we deduce that $\bb'_1$ and~$\phi(\aa'_1)$, that is, $\bb'_1$ and $\der(\bb'_0)$, have no nontrivial common left-divisor. By Lemma~\ref{L:DeltaDisjoint} again, this implies that $\seqq{\bb'_0}{\bb'_1}$ is $\SSS$-normal.

Finally, again by Lemma~\ref{L:DeltaDisjoint}, the assumption that $\seqq{\aa'_1}{\aa'_2}$ is $\SSS$-normal implies that $\der(\aa'_1)$ and $\aa'_2$ are left-disjoint: by definition, $\der(\aa'_1)$ is~$\bb'_0$, so $\bb'_0$ and~$\aa'_2$ are left-disjoint. So $\seqqqqqqqq{\INV{\bb'_\qq}}\etc{\INV{\bb'_1}}{\INV{\bb'_0}}{\aa'_2}\etc{\aa'_\pp}{\cc'_\pp}$ is indeed a symmetric $\SSS$-normal path.
\end{proof}

\begin{exam}
\label{X:SymLeftDiv}
\rightskip54mm
Consider $(\tta\ttb)\inv(\ttb\tta^2)$ in~$\BP3$ once more. Suppose that we found that $\ttb\tta \pc \tta$ is an $\SSS$-normal decomposition of~$\ttb\tta^2$, and we wish to find a (the) $\SSS$-normal decomposition of $(\tta\ttb)\inv(\ttb\tta^2)$. Then applying Algorithm~\ref{A:SymLeftDiv} amounts to completing the diagram on the right, in which of course we read the expected symmetric normal decomposition $\INV{\tta\ttb} \pc \ttb\tta \pc \tta$.
\hfill\begin{picture}(0,0)(-6,-0)
\psarc[style=double](7,7){7}{180}{270}
\psline[style=double](7,0)(14,0)
\pcline{<-}(1,12)(14,12)
\taput{$\tta\ttb$}
\pcline{->}(16,12)(29,12)
\put(19,13){$\ttb\tta$}
\pcline{->}(31,12)(44,12)
\taput{$\tta$}
\pcline{->}(16,24)(29,24)
\taput{$\ttb$}
\pcline{->}(31,24)(44,24)
\taput{$\ttb\tta$}
\psline[style=double](0,11)(0,7)
\pcline{->}(15,11)(15,1)
\tlput{$\tta\ttb$}
\pcline{->}(15,23)(15,13)
\tlput{$\ttb$}
\pcline{->}(30,23)(30,13)
\trput{$\ttb\tta$}
\pcline{->}(45,23)(45,13)
\trput{$\tta$}
\psbezier(29,23.5)(25,20)(25,20)(25,13)
\pcline(25,11)(25,7)
\psarc(18,7){7}{270}{360}
\pcline{<-}(16,0)(18,0)
\put(26,5){$\tta\ttb$}
\psarc[style=thin](30,12){3}{0}{180}
\psarc[style=thin](30,24){3}{180}{270}
\psarc[style=thin](45,24){3}{180}{270}
\end{picture}
\end{exam}

If, in Algorithm~\ref{A:SymLeftDiv}, $\cc$ is~$\Delta(\xx)$, then each element~$\cc_{-\ii}$ is of the form~$\Delta(\ud)$, so that $\bb'_\ii$ is simply~$\phi(\bb_\ii)$, so that $\cc'_0$ is trivial, and so are all elements~$\cc_\ii$ with $\ii \ge 0$. We immediately deduce

\begin{coro}
Assume that $\SSS$ is a Garside family that is bounded by a map~$\Delta$ in an Ore category~$\CCC$. If $\seqqqqqq{\INV{\bb_\qq}}\etc{\INV{\bb_1}}{\aa_1}\etc{\aa_\pp}$ is a symmetric $\SSS$-normal decomposition of~$\gg$, then a symmetric $\SSS$-normal decomposition of~$\Delta(\xx)\inv \gg$ (where $\xx$ is the source of~$\gg$) is $\seqqqqqq{\INV{\phi(\bb_\qq)}}\etc{\INV{\phi(\bb_1)}}{\INV{\der(\aa_1)}}{\aa_2}\etc{\aa_\pp}$. 
\end{coro}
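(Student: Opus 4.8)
The plan is to read this corollary as the specialization of Algorithm~\ref{A:SymLeftDiv}, hence of Proposition~\ref{P:SymLeftDiv}, to the case $\cc = \Delta(\xx)$, and then to verify the resulting closed formula directly rather than re-run the algorithm. Write $\ww$ for the given symmetric $\SSS$-normal path $\seqqqqqq{\INV{\bb_\qq}}\etc{\INV{\bb_1}}{\aa_1}\etc{\aa_\pp}$ and $\ww'$ for the candidate path $\seqqqqqq{\INV{\phi(\bb_\qq)}}\etc{\INV{\phi(\bb_1)}}{\INV{\der(\aa_1)}}{\aa_2}\etc{\aa_\pp}$. By Proposition~\ref{P:SymUnique} it suffices to show that $\ww'$ is a symmetric $\SSS$-normal decomposition of $\Delta(\xx)\inv\gg$. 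First I would dispatch the cheap structural points: every entry of $\ww'$ lies in $\SSSs$ --- the $\aa_\ii$ are entries of a symmetric $\SSS$-normal path, $\der(\aa_1)$ lies in $\SSSs$ because $\SSS$ is bounded, and $\phi(\bb_\ii)$ lies in $\SSSs$ because by Lemma~\ref{L:Auto} $\phi = \der^2$ is an automorphism of $\CCC$ restricting to a permutation of $\SSSs$ --- and the source/target data match automatically, using that $\phi$ is a functor together with $\aa_1\der(\aa_1) = \Delta(\src{\aa_1})$ and the relation $\gg'\Delta(\trg{\gg'}) = \Delta(\src{\gg'})\phi(\gg')$ supplied by Lemma~\ref{L:Auto}.

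Next I would compute the value. Setting $\xx = \src{\gg} = \trg{\bb_\qq}$ and $\ww_0 = \src{\bb_1} = \src{\aa_1}$, I would apply the commutation relation $\gg'\Delta(\trg{\gg'}) = \Delta(\src{\gg'})\phi(\gg')$ of Lemma~\ref{L:Auto} successively to $\gg' = \bb_\qq, \bb_{\qq-1} \wdots \bb_1$; each application slides $\Delta(\ud)\inv$ one step to the right past $\bb_\ii\inv$, turning it into $\phi(\bb_\ii)\inv$, which telescopes to
\[
\Delta(\xx)\inv\bb_\qq\inv \pdots \bb_1\inv = \phi(\bb_\qq)\inv \pdots \phi(\bb_1)\inv \Delta(\ww_0)\inv
\]
in $\Env\CCC$. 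Since $\aa_1\der(\aa_1) = \Delta(\ww_0)$ gives $\Delta(\ww_0)\inv\aa_1 = \der(\aa_1)\inv$, multiplying the displayed identity on the right by $\aa_1 \pdots \aa_\pp$ yields $\Delta(\xx)\inv\gg = \phi(\bb_\qq)\inv \pdots \phi(\bb_1)\inv \der(\aa_1)\inv \aa_2 \pdots \aa_\pp = \cl{\ww'}$. This is a routine telescoping step.

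Finally I would verify that $\ww'$ is symmetric $\SSS$-normal, which by Definition~\ref{D:GreedySym} requires that $\seqqqq{\der(\aa_1)}{\phi(\bb_1)}\etc{\phi(\bb_\qq)}$ and $\seqqq{\aa_2}\etc{\aa_\pp}$ be $\SSS$-normal and that $\der(\aa_1)$ and $\aa_2$ be left-disjoint. The last two are immediate: $\seqqq{\aa_2}\etc{\aa_\pp}$ is a terminal factor of the $\SSS$-normal path $\seqqq{\aa_1}\etc{\aa_\pp}$, and $\der(\aa_1)$, $\aa_2$ are left-disjoint by Lemma~\ref{L:DeltaDisjoint} because $\seqq{\aa_1}{\aa_2}$ is $\SSS$-normal. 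For the negative part, I would treat the consecutive pairs $\seqq{\phi(\bb_\ii)}{\phi(\bb_{\ii+1})}$ ($1 \le \ii < \qq$) by transporting $\seqq{\bb_\ii}{\bb_{\ii+1}}$ along $\phi$: Lemma~\ref{L:DeltaDisjoint} rephrases $\SSS$-normality of $\seqq{\bb_\ii}{\bb_{\ii+1}}$ as the absence of a nontrivial common left-divisor of $\der(\bb_\ii)$ and $\bb_{\ii+1}$, applying the automorphism $\phi$ and using $\phi\circ\der = \der\circ\phi$ moves this to $\der(\phi(\bb_\ii))$ and $\phi(\bb_{\ii+1})$, and Lemma~\ref{L:DeltaDisjoint} converts it back to $\SSS$-normality of $\seqq{\phi(\bb_\ii)}{\phi(\bb_{\ii+1})}$. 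The remaining pair $\seqq{\der(\aa_1)}{\phi(\bb_1)}$ is the real point: by Lemma~\ref{L:DeltaDisjoint} it is $\SSS$-normal iff $\der^2(\aa_1) = \phi(\aa_1)$ and $\phi(\bb_1)$ share no nontrivial common left-divisor, iff (applying $\phi\inv$) $\aa_1$ and $\bb_1$ share none; and this is exactly where I would use that $\ww$ is symmetric $\SSS$-normal, so $\aa_1$ and $\bb_1$ are left-disjoint, together with the elementary observation that left-disjoint elements admit no nontrivial common left-divisor (immediate from Definition~\ref{D:GreedySym}\ITEM1 and left-cancellativity). The main obstacle, such as it is, is bookkeeping rather than substance --- keeping sources and targets straight and, above all, justifying the back-and-forth transport of $\SSS$-normality and left-disjointness along $\phi$, which is legitimate precisely because $\phi$ restricts to a permutation of $\SSSs$ and commutes with $\der$. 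The degenerate cases $\pp \le 1$ and $\qq = 0$ are absorbed by the same computation.
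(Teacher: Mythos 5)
Your proof is correct, but it does not follow the paper's actual route: the paper obtains this corollary in one line, by running Algorithm~\ref{A:SymLeftDiv} with $\cc = \Delta(\xx)$ and observing that then every $\cc_{-\ii}$ is of the form $\Delta(\ud)$, so $\bb'_\ii = \phi(\bb_\ii)$, $\cc'_0$ is trivial, and all $\cc_\ii$ with $\ii \ge 0$ are trivial; correctness is inherited wholesale from Proposition~\ref{P:SymLeftDiv}. You instead verify the closed formula from scratch: the value by telescoping $\Delta(\ud)$ through the $\bb_\ii\inv$ via the commutation relation of Lemma~\ref{L:Auto} and absorbing $\Delta(\ww_0)\inv\aa_1 = \der(\aa_1)\inv$, and symmetric normality by transporting $\SSS$-normality of $\seqq{\bb_\ii}{\bb_{\ii+1}}$ along $\phi$ through the two-way use of Lemma~\ref{L:DeltaDisjoint}, with the junction $\seqq{\der(\aa_1)}{\phi(\bb_1)}$ handled by the left-disjointness of $\bb_1$ and $\aa_1$. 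This reproduces, in the special case, exactly what the proof of Proposition~\ref{P:SymLeftDiv} does (your $\phi$-transport plays the role its appeal to the first domino rule, Lemma~\ref{L:Domino1}, plays there), so the gain is a self-contained argument that bypasses the algorithmic machinery, at the cost of redoing work the paper already has. Two small points to tidy: the fact that left-disjoint elements of a cancellative category admit no nontrivial common left-divisor uses right-cancellation (or the embedding of $\CCC$ into $\Env\CCC$), not left-cancellativity as you wrote; and the fact that $\phi$ maps $\SSSs$ into $\SSSs$ is not part of Lemma~\ref{L:Auto} but follows from boundedness (condition \ITEM1 of Definition~\ref{D:Bounded} gives $\der(\SSSs)\subseteq\SSSs$, with $\derL$ as inverse by condition \ITEM2), which is all you actually need, since your applications of $\phi\inv$ only use that it is an automorphism of~$\CCC$. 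Neither affects the validity of the argument under the Ore hypothesis.
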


\subsection{Computing $\Delta$-normal decompositions}
\label{SS:Delta}

When a Garside family is bound\-ed, alternative distinguished decompositions for the elements of the ambient category and its groupoid of fractions arise, namely the $\Delta$-normal decompositions. We now describe incremental methods for computing such decompositions, namely algorithms that, when starting from a $\Delta$-normal decomposition of an element~$\gg$ and an element~$\cc$ of the considered Garside family, return $\Delta$-normal decompositions of~$\cc \gg$ and~$\cc\inv \gg$, respectively. 

\begin{algo}{Left-multiplication for $\Delta$-normal---see Figure~\ref{F:DeltaLeftMult}}
\label{A:DeltaLeftMult}
\begin{algorithmic}[1]
\CONTEXT{A Garside family~$\SSS$ that is bounded by a map~$\Delta$ in an Ore category~$\CCC$, a $\Square$-witness~$\SW$ for~$\SSSs$, an $\eqir$-test~$\EE$ in~$\CCC$}
\INPUT{A $\Delta$-normal decomposition~$\seqqqq{\DELTA\nn(\xx)}{\aa_1}\etc{\aa_\pp}$ of an element~$\gg$ of~$\Env\CCC$, an element~$\cc$ of~$\SSSs$}
\OUTPUT{A $\Delta$-normal decomposition of $\cc\gg$, if the latter is defined}
\STATE{$\cc_0 := \phi^\nn(\cc)$}
\FOR{$\ii$ increasing from~$1$ to~$\pp$}
\STATE{$(\aa'_\ii, \cc_\ii) := \SW(\cc_{\ii-1}, \aa_\ii)$}
\ENDFOR
\IF{$\EE(\Delta(\ud), \aa'_1) \not= \bot$}
\COMMENT{$\aa'_1$ is $\Delta$-like}
\STATE{$\aa'_2:= \EE(\Delta(\ud), \aa'_1) \aa'_2$}
\RETURN{$\seqqqqq{\DELTA{\nn+1}(\ud)}{\aa'_2}\etc{\aa'_{\pp}}{\cc_\pp}$}
\ELSE
\COMMENT{$\aa'_1$ is not $\Delta$-like}
\RETURN{$\seqqqqq{\DELTA{\nn}(\ud)}{\aa'_1}\etc{\aa'_{\pp}}{\cc_\pp}$}
\ENDIF
\end{algorithmic}
\end{algo}

\begin{figure}[htb]
\begin{picture}(45,25)(0,-2)
\pcline{->}(-14,0)(-1,0)
\tbput{$\cc$}
\pcline{<->}(1,0)(14,0)
\tbput{$\DELTA\nn(\ud)$}
\pcline{->}(16,0)(29,0)
\tbput{$\aa_1$}
\psline[style=etc](33,0)(41,0)
\pcline{->}(46,0)(59,0)
\tbput{$\aa_\pp$}

\pcline{<->}(1,12)(14,12)
\taput{$\DELTA\nn(\ud)$}
\pcline{->}(16,12)(29,12)
\taput{$\aa'_1$}
\psline[style=etc](33,12)(41,12)
\pcline{->}(46,12)(59,12)
\taput{$\aa'_\pp$}

\pcline{->}(0,11)(0,1)
\tlput{$\cc$}
\pcline{->}(15,11)(15,1)
\tlput{$\phi^\nn(\cc)$}
\trput{$\cc_0$}
\pcline{->}(30,11)(30,1)
\trput{$\cc_1$}
\pcline{->}(45,11)(45,1)
\trput{$\cc_{\pp-1}$}
\pcline{->}(60,11)(60,1)
\trput{$\cc_\pp$}

\psarc[style=thin](30,12){3}{180}{270}
\psarc[style=thin](60,12){3}{180}{270}

\psline[style=double](-15,1)(-15,5)
\psarc[style=double](-8,5){7}{90}{180}
\psline[style=double](-8,12)(-1,12)

\pcline[style=thinexist](0,18)(30,18)
\taput{$\DELTA{\nn+1}(\ud) \EE(\Delta(\ud), \aa'_1)$ if $\aa'_1$ is $\Delta$-like}
\psline[style=thinexist](0,18)(0,16)
\psline[style=thinexist](30,18)(30,16)
\end{picture}
\caption[]{\sf\smaller Computing a $\Delta$-normal decomposition of~$\cc \gg$ starting from a $\Delta$-normal decomposition of~$\gg$: the $\vert\nn\vert$~entries of the type~$\Delta(\ud)$ are treated directly using~$\phi^\nn$, and then it just remains to left-multiply a (positive) $\Delta$-normal sequence by an element 
of~$\SSS$ using the standard method. Note that, depending on the sign of~$\nn$, the diagrams for~$\DELTAA\nn(\ud)$ consists either of $\nn$~right-oriented arrows (case $\nn \ge 0$) or of $\vert\nn\vert$~left-oriented arrows (case $\nn \le 0$).}
\label{F:DeltaLeftMult}
\end{figure}

\begin{algo}{Left-division for $\Delta$-normal---see Figure~\ref{F:DeltaLeftDiv}}
\label{A:DeltaLeftDiv}
\begin{algorithmic}[1]
\CONTEXT{A Garside family~$\SSS$ that is bounded by a map~$\Delta$ in an Ore category~$\CCC$, a $\Square$-witness~$\SW$ for~$\SSSs$, an $\eqir$-test~$\EE$ in~$\CCC$}
\INPUT{A $\Delta$-normal decomposition~$\seqqqq{\DELTA\nn(\xx)}{\aa_1}\etc{\aa_\pp}$ for an element~$\cc$ of~$\Env\CCC$ and an element~$\cc$ of~$\SSSs$ with source~$\xx$}
\OUTPUT{An $\Delta$-normal decomposition of $\cc\inv \gg$}
\STATE{$\cc_0 := \phi^\nn(\derL(\cc))$}
\FOR{$\ii$ increasing from~$1$ to~$\pp$}
\STATE{$(\aa'_\ii, \cc_\ii) := \SW(\cc_{\ii-1}, \aa_\ii)$}
\ENDFOR
\IF{$\EE(\Delta(\ud), \aa'_1) \not= \bot$}
\COMMENT{$\aa'_1$ is $\Delta$-like}
\STATE{$\aa'_2:= \EE(\Delta(\ud), \aa'_1) \aa'_2$}
\RETURN{$\seqqqqq{\DELTA{\nn}(\ud)}{\aa'_2}\etc{\aa'_{\pp}}{\cc_\pp}$}
\ELSE
\COMMENT{$\aa'_1$ is not $\Delta$-like}
\RETURN{$\seqqqqq{\DELTA{\nn-1}(\ud)}{\aa'_1}\etc{\aa'_{\pp}}{\cc_\pp}$}
\ENDIF
\end{algorithmic}
\end{algo}

\begin{figure}[htb]
\begin{picture}(45,28)(0,-2)
\pcline{<-}(-14,0)(-1,0)
\tbput{$\cc$}
\pcline{<->}(1,0)(14,0)
\tbput{$\DELTA\nn(\xx)$}
\pcline{->}(16,0)(29,0)
\tbput{$\aa_1$}
\psline[style=etc](33,0)(41,0)
\pcline{->}(46,0)(59,0)
\tbput{$\aa_\pp$}

\pcline{<-}(-14,12)(-1,12)
\taput{$\Delta(\ud)$}
\pcline{<->}(1,12)(14,12)
\taput{$\DELTA\nn(\ud)$}
\pcline{->}(16,12)(29,12)
\taput{$\aa'_1$}
\psline[style=etc](33,12)(41,12)
\pcline{->}(46,12)(59,12)
\taput{$\aa'_\pp$}

\pcline{->}(0,11)(0,1)
\tlput{$\derL(\cc)$}
\pcline{->}(15,11)(15,1)
\trput{$\cc_0$}
\pcline{->}(30,11)(30,1)
\trput{$\cc_1$}
\pcline{->}(45,11)(45,1)
\trput{$\cc_{\pp-1}$}
\pcline{->}(60,11)(60,1)
\trput{$\cc_\pp$}

\psarc[style=thin](30,12){3}{180}{270}
\psarc[style=thin](60,12){3}{180}{270}
\psline[style=double](-15,11)(-15,1)

\pcline[style=thin](-15,18)(15,18)
\taput{$\DELTA{\nn-1}(\ud)$}
\psline[style=thin](-15,18)(-15,16)
\psline[style=thin](15,18)(15,16)

\pcline[style=thinexist](-15,24)(30,24)
\taput{$\DELTA\nn(\ud) \EE(\Delta(\ud), \aa'_1)$ if $\aa'_1$ is $\Delta$-like}
\psline[style=thinexist](-15,24)(-15,22)
\psline[style=thinexist](30,24)(30,22)
\end{picture}
\caption[]{\sf\smaller Algorithm~\ref{A:DeltaLeftDiv} : Computing a $\Delta$-normal decomposition for $\cc\inv \gg$ from a $\Delta$-normal decomposition of~$\gg$.}
\label{F:DeltaLeftDiv}
\end{figure}
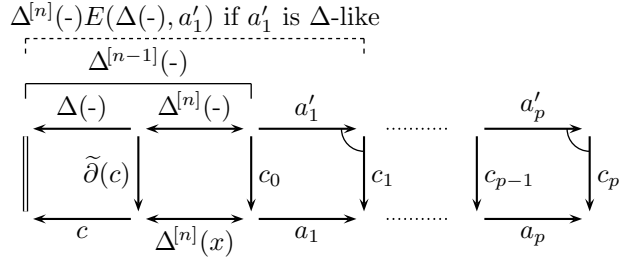

\begin{prop}
\label{P:DeltaLeftMult}
Assume that $\SSS$ is a Garside family that is bounded by a map~$\Delta$ in an Ore category~$\CCC$, $\SW$ is a $\Square$-witness on~$\SSS$, and $\EE$ is an $\eqir$-test in~$\CCC$. Then Algorithm~\ref{A:DeltaLeftMult} (\resp Algorithm~\ref{A:DeltaLeftDiv}) running on a $\Delta$-normal decomposition of an element~$\gg$ of~$\Env\CCC$ and $\cc$ in~$\SSSs$ returns a $\Delta$-normal decomposition of~$\cc \gg$ (\resp $\cc\inv \gg$) (when the latter is defined). 
\end{prop}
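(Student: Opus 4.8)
The proof of Proposition~\ref{P:DeltaLeftMult} will follow the same general pattern as that of Proposition~\ref{P:SymLeftDiv}, namely: first check that the returned path represents the correct element of~$\Env\CCC$ by invoking commutativity of the associated diagram (Figures~\ref{F:DeltaLeftMult} and~\ref{F:DeltaLeftDiv}), then verify that the returned path is genuinely $\Delta$-normal, that is, that its positive part is $\SSS$-normal and that the handling of a possibly $\Delta$-like first entry is correct. The plan is to treat the two algorithms in parallel, pointing out where they differ.

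\medskip

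First I would observe that, in both algorithms, the $\vert\nn\vert$ entries of the denominator, namely the entries of~$\DELTA\nn(\ud)$, are dealt with formally. Since $\cc$ lies in~$\SSSs$ and $\phi$ is an automorphism of~$\CCC$ (Lemma~\ref{L:Auto}), in Algorithm~\ref{A:DeltaLeftMult} the commutation relation $\DELTAA\nn(\xx) \, \phi^\nn(\cc) = \cc \, \DELTAA\nn(\xx)$ holds in~$\CCC$ (equivalently in~$\Env\CCC$); this is exactly what makes the leftmost square of Figure~\ref{F:DeltaLeftMult} commute and lets us ``push $\cc$ through $\DELTA\nn$''. Similarly, in Algorithm~\ref{A:DeltaLeftDiv}, writing $\cc \derL(\cc) = \Delta(\ud)$ and $\Delta(\ud) \, \DELTAA\nn(\ud) \eqir \DELTAA{\nn-1}(\ud) \cdot (\text{something})$ — more precisely, using $\cc\inv = \derL(\cc) \Delta(\xx)\inv$ and $\Delta(\xx)\inv \DELTAA\nn(\xx) = \DELTAA{-1}(\phi(\xx)) \DELTAA\nn(\xx) = \DELTAA{\nn-1}(\phi(\xx))$ together with the commutation of~$\phi^\nn(\derL(\cc))$ past~$\DELTA\nn$ — one checks that the top-left part of Figure~\ref{F:DeltaLeftDiv} commutes, so the returned path represents $\cc\inv\gg$. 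Once the leftmost block is handled, what remains on the right of both diagrams is literally a left-multiplication of a positive $\SSS$-normal path~$\seqqq{\aa_1}\etc{\aa_\pp}$ by the element $\phi^\nn(\cc)$ (\resp $\phi^\nn(\derL(\cc))$) of~$\SSSs$, which is exactly Algorithm~\ref{A:LeftMult}; by Proposition~\ref{P:LeftMult}, the path $\seqqqq{\aa'_1}\etc{\aa'_\pp}{\cc_\pp}$ is $\SSS$-normal.

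\medskip

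The second and main part is to argue that the output is not merely an $\SSS$-normal path prefixed by a power of~$\Delta$, but a \emph{$\Delta$-normal} path, that is, the first positive entry must not be $\Delta$-like (Definition~\ref{D:DeltaNormal}). This is where the $\eqir$-test~$\EE$ intervenes. If $\aa'_1$ is $\Delta$-like, then $\EE(\Delta(\ud), \aa'_1)$ is the invertible element~$\ew$ with $\Delta(\ud)\ew = \aa'_1$, and absorbing $\aa'_1$ into the $\Delta$-block raises (\resp, in the division algorithm, does not lower, but actually cancels against the removal) the exponent: in Algorithm~\ref{A:DeltaLeftMult} the exponent becomes $\nn+1$ and the first entry becomes $\EE(\Delta(\ud),\aa'_1)\,\aa'_2$; in Algorithm~\ref{A:DeltaLeftDiv} the exponent stays~$\nn$ (because one factor~$\Delta$ from the denominator is used up) and again the first entry becomes $\EE(\Delta(\ud),\aa'_1)\,\aa'_2$. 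Here I need to check two things: (i) the new path still represents the right element — immediate, since $\DELTA{\nn+1}(\ud) \equivp_\ROP \DELTA\nn(\ud) \pc \Delta(\ud)$ and $\Delta(\ud)\,\EE(\Delta(\ud),\aa'_1)\,\aa'_2 = \aa'_1\aa'_2$; and (ii) the new positive part $\seqqqq{\EE(\Delta(\ud),\aa'_1)\aa'_2}{\aa'_3}\etc{\aa'_\pp}{\cc_\pp}$ is still $\SSS$-normal — this follows because $\SSS$-normality is invariant under multiplying the first entry on the left by an invertible element (a $\CCCi$-deformation of length one at the left end), a fact that is implicit in Proposition~\ref{P:NormalUnique}. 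Finally, if $\EE(\Delta(\ud),\aa'_1)=\bot$, then $\aa'_1$ is not $\Delta$-like by definition of an $\eqir$-test, so the returned path with unchanged exponent is already $\Delta$-normal. One also has to note the degenerate cases $\pp=0$ (the input is $\DELTA\nn(\xx)$ itself, and the algorithms reduce to the formal manipulations of the first paragraph) and the case where, after absorption, the normal part becomes empty or reduces further; these are routine.

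\medskip

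The step I expect to be the real obstacle — or at least the one requiring genuine care rather than bookkeeping — is (ii) above, the stability of $\SSS$-normality of the positive part after the exponent adjustment, because one is modifying the leftmost entry and, in the presence of nontrivial invertible elements, ``$\SSS$-normal'' is only defined up to $\CCCi$-deformation; I would make this precise by appealing to the fact (from the proof of Proposition~\ref{P:NormalUnique}, or directly from Definition~\ref{D:Greedy}, noting that condition~\eqref{E:Greedy} at position~$1$ involves $\aa \dive \ff \gg_1 \gg_2$ and is visibly unaffected by replacing $\gg_1$ with $\ee \gg_1$ for $\ee$ invertible, after absorbing $\ee$ into $\ff$) that left-multiplying the first entry of an $\SSS$-greedy path by an invertible element yields an $\SSS$-greedy path. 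A secondary subtlety worth a sentence is making sure the exponent in the output is the \emph{correct} one, i.e. the unique one dictated by Proposition~\ref{P:DeltaUnique}: this is forced once we know the output is $\Delta$-normal and represents $\cc\gg$ (\resp $\cc\inv\gg$), since the exponent in a $\Delta$-normal decomposition is uniquely determined.
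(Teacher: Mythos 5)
Your outline reproduces the easy parts of the paper's argument (commutativity of the diagrams in Figures~\ref{F:DeltaLeftMult} and~\ref{F:DeltaLeftDiv}, and $\SSS$-normality of $\seqqqq{\aa'_1}\etc{\aa'_\pp}{\cc_\pp}$ via the first domino rule / Proposition~\ref{P:LeftMult}), but it has a genuine gap at the decisive point. When $\aa'_1$ is $\Delta$-like, the algorithm performs a \emph{single} absorption step, replacing $\aa'_2$ by $\EE(\Delta(\ud),\aa'_1)\,\aa'_2$ and adjusting the exponent, and then stops; for the output to satisfy Definition~\ref{D:DeltaNormal} one must prove that this new first entry is itself \emph{not} $\Delta$-like, i.e.\ that no further absorption can be needed. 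You never verify this, and your closing remark that ``the case where, after absorption, the normal part \dots reduces further'' is ``routine'' is exactly backwards: the correct claim is that this case cannot occur, and proving it is the one substantive step of the paper's proof. The paper argues by contradiction: if $\Delta(\ud)$ left-divided the new $\aa'_2$, then $\DELTAA{2}(\ud) \dive \aa'_1\aa'_2 \dive \aa'_1\aa'_2\cc_2 = \cc_0\aa_1\aa_2$; writing $\DELTAA{2}(\ud) = \cc_0\,\der(\cc_0)\,\Delta(\ud) = \cc_0\,\Delta(\ud)\,\phi(\der(\cc_0))$ and cancelling $\cc_0$ yields $\Delta(\ud) \dive \aa_1\aa_2$, which by greediness of $\seqq{\aa_1}{\aa_2}$ would make $\aa_1$ $\Delta$-like, contradicting the hypothesis that the input was $\Delta$-normal. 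Note also that your appeal to Proposition~\ref{P:DeltaUnique} to see that the exponent is ``forced'' is circular for this purpose: uniqueness of the exponent applies only once one knows the returned path is $\Delta$-normal, which is precisely what the no-cascade argument establishes.

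By contrast, the step you single out as the main obstacle---stability of $\SSS$-greediness when the first entry is multiplied on the left by an invertible element---is indeed used but is immediate from condition~\eqref{E:Greedy} (absorb the invertible element into the universally quantified $\ff$) together with $\CCCi\SSSs \subseteq \SSSs$; the paper dispatches it in half a sentence. Your handling of the commutativity in the division case (via $\derL(\cc)\,\cc = \Delta(\ud)$) agrees with the paper.
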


\begin{proof}
Consider first Algorithm~\ref{A:DeltaLeftMult}. Then the diagram of Figure~\ref{F:DeltaLeftMult} is commutative as, in particular, $\cc \DELTAA\nn(\ud) = \DELTAA\nn(\uu) \phi^\nn(\cc)$ holds whenever defined. As usual, the first domino rule implies that $\seqqq{\aa'_1}\etc{\aa'_\pp}$ is $\SSS$-normal and, by construction, $\seqq{\aa'_\pp}{\cc_\pp}$ is $\SSS$-normal. So it just remains to consider the relation between $\DELTA\nn(\ud)$ and~$\aa'_1$. Now, if $\aa'_1$ is not $\Delta$-like, $\seqq{\DELTA\nn(\ud)}{\aa'_1}$ is $\Delta$-normal. On the other hand, if $\aa'_1$ is $\Delta$-like, then, by definition, we have $\aa'_1 = \Delta(\ud) \EE(\Delta(\ud), \aa'_1)$, so that $\Delta(\ud)$ can be incorporated to~$\DELTA\nn(\ud)$ to form~$\DELTA{\nn+1}(\ud)$ and $\EE(\Delta(\ud), \aa'_1)$ can be incorporated to~$\aa'_2$ to form a new element of~$\SSSs$ as $\CCCi \SSSs \subseteq \SSSs$ holds. Then $\seqq{\DELTA{\nn+1}(\ud)}{\aa'_2}$ must be $\Delta$-normal because the new element~$\aa'_2$ may not be $\Delta$-like. Indeed, $\Delta(\ud) \dive \aa'_2$ would imply $\DELTAA2(\ud) \dive \aa'_1 \aa'_2$, whence $\DELTAA2(\ud) \dive \aa'_1 \aa'_2 \cc_2 = \cc_0 \aa_1 \aa_2$. Now, as $\cc_0$ lies in~$\SSSs$, we can write $\DELTAA2(\ud) = \cc_0 \der(\cc_0) \Delta(\ud) = \cc_0 \Delta(\ud) \phi(\der(\cc_0))$, so $\DELTAA2(\ud) \dive \cc_0 \aa_1 \aa_2$ implies $\cc_0 \Delta(\ud) \phi(\der(\cc_0)) \dive \cc_0 \aa_1 \aa_2$, whence $\Delta(\ud) \dive \aa_1 \aa_2$, contradicting the assumption that $\seqq{\DELTA\nn(\ud)}{\aa_1}$ is $\Delta$-normal. So no cascade may occur here.

The argument for Algorithm~\ref{A:DeltaLeftDiv} is entirely similar. The only new point is that, in the diagram of Figure~\ref{F:DeltaLeftDiv}, the left square is commutative because $\derL(\cc) \cc = \Delta(\ud)$ holds for every~$\cc$ in~$\SSSs$. \end{proof}

\begin{exam}
\label{X:DeltaLeftDiv}
\rightskip55mm
As in Example~\ref{X:SymLeftDiv}, assume that we know that $\seqq{\ttb\tta}{\tta}$ is a $\Delta$-normal decomposition of~$\ttb^2\tta$ and we wish to find a (the) $\Delta$-normal decomposition of $(\tta\ttb)\inv (\ttb^2\tta)$. Applying Algorithm~\ref{A:DeltaLeftDiv} amounts to completing the diagram on the right, and we read that the expected decomposition is $\DELTA{-1} \pc \ttb \pc \ttb\tta \pc \tta$.
\hfill\begin{picture}(0,0)(-7,-6)
\pcline{<-}(1,0)(14,0)
\tbput{$\tta\ttb$}
\pcline{->}(16,0)(29,0)
\tbput{$\ttb\tta$}
\pcline{->}(31,0)(44,0)
\tbput{$\tta$}
\pcline{<-}(1,12)(14,12)
\taput{$\Delta$}
\pcline{->}(16,12)(29,12)
\put(19,13){$\ttb$}
\pcline{->}(31,12)(44,12)
\taput{$\ttb\tta$}
\psline[style=double](0,11)(0,1)
\pcline{->}(15,11)(15,1)
\trput{$\ttb$}
\pcline{->}(30,11)(30,1)
\trput{$\ttb\tta$}
\pcline{->}(45,11)(45,1)
\trput{$\tta$}
\psarc[style=thin](29.5,0){3}{180}{360}
\psarc[style=thin](30,12){3}{180}{270}
\psarc[style=thin](45,12){3}{180}{270}
\end{picture}
\end{exam}

\subsection{Solving the Word Problem}
\label{SS:SignedWordPb}

As in the positive case, we shall describe several solutions for the Word Problem in the involved groupoid of fractions

\subsubsection*{Using symmetric normal decompositions} 

First, symmetric normal decompositions give an direct solution extending Algorithm~\ref{A:WordPb1} to the signed case.

\begin{algo}{Word Problem, general case I}
\label{A:SignedWordPb1}
\begin{algorithmic}[1]
\CONTEXT{A strong Garside family~$\SSS$ in an Ore category~$\CCC$ that admits left-lcms, a $\Square$-witness~$\SW$, a short left-lcm selector~$\LC$, and an $\eqir$-test~$\EE$ on~$\SSSs$}
\INPUT{A signed $\SSSs$-path $\ww$}
\OUTPUT{$\mathtt{true}$ if $\ww$ represents an element~$\id\xx$ in~$\Env\CCC$, and $\mathtt{false}$ otherwise}
\STATE{$\xx := \src\ww$\,; $\yy := \trg\ww$}
\IF{$\xx \not= \yy$}
\STATE{return $\mathtt{false}$}
\ELSE
\STATE{use Algorithm~\ref{A:SymNormalGen} to find a symmetric $\SSS$-normal path $\INV\uu \pc \vv$ that represents~$\cl\ww$}\label{A:SignedWordPb1:SNF}\RETURN{the value of \CALL{CompareNormalPaths}{$\uu$, $\vv$} \quad(Algorithm~\ref{A:WordPb1})}
\ENDIF
\end{algorithmic}
\end{algo}

\begin{prop}
\label{P:SignedWordPb1}
Assume that $\SSS$ is a strong Garside family~$\SSS$ in an Ore category~$\CCC$ that admits left-lcms, $\SW$ is a $\Square$-witness on~$\SSS$, $\LC$ is a short left-lcm selector on~$\SSSs$, and $\EE$ is an $\eqir$-test on~$\SSSs$. Then Algorithm~\ref{A:SignedWordPb1} running on a signed $\SSSs$-path~$\ww$of length at most~$\ell$ decides in time~$O(\ell^2)$ whether $\cl\ww$ is an identity-element in~$\Env\CCC$.
\end{prop}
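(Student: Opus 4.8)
The plan is to reduce the problem, after a trivial endpoint check, to comparing two $\SSS$-normal paths, for which Proposition~\ref{P:WordPb1}\ITEM1 already supplies a fast routine.

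First I would dispose of the easy case: if $\src\ww\neq\trg\ww$, then $\cl\ww$ lies in $\Env\CCC(\src\ww,\trg\ww)$, a hom-set with no identity-element, so returning $\mathtt{false}$ is correct, and locating and comparing the endpoints costs $O(\ell)$. Assume henceforth $\src\ww=\trg\ww=\xx$. As recalled before Algorithm~\ref{A:SymNormalGen}, the Garside family~$\SSS$ provides a short right-lcm selector on~$\SSSs$, and, $\SSS$ being strong, the short left-lcm selector~$\LC$ is at hand; moreover, since $\CCC$ is an Ore category, $\cl\ww$ is expressible as a right-fraction. Hence Proposition~\ref{P:SymNormalGen} applies to line~\ref{A:SignedWordPb1:SNF}: Algorithm~\ref{A:SymNormalGen} returns, in time $O(\LG\ww^2)=O(\ell^2)$, a symmetric $\SSS$-normal path $\INV\uu\pc\vv$ with $\cl{\INV\uu\pc\vv}=\cl\ww$, and by Definition~\ref{D:GreedySym} the two paths $\uu,\vv$ are $\SSS$-normal. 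Following sources and targets along $\INV\uu\pc\vv$ shows that $\uu$ and $\vv$ have a common source and both have target~$\xx$, so they are legitimate inputs for \textsc{CompareNormalPaths}.

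The correctness then comes down to the identity $\cl{\INV\uu\pc\vv}=\cl\uu\inv\cl\vv$: the element $\cl\ww$ is an identity-element if and only if $\cl\uu=\cl\vv$, and by Proposition~\ref{P:WordPb1}\ITEM1 the call \textsc{CompareNormalPaths}$(\uu,\vv)$ returns $\mathtt{true}$ exactly in that case, so Algorithm~\ref{A:SignedWordPb1} answers correctly. For the running time I would just add up the contributions: $O(\ell^2)$ for line~\ref{A:SignedWordPb1:SNF}, which by the same proposition also bounds $\LG\uu$ and $\LG\vv$ by $O(\ell^2)$; then $O(\ell^2)$ for \textsc{CompareNormalPaths} via Proposition~\ref{P:WordPb1}\ITEM1; and $O(\ell)$ for the preliminary endpoint comparison; total $O(\ell^2)$. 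I do not expect a genuine obstacle; the only point that deserves an explicit word is that the hypotheses of Proposition~\ref{P:SymNormalGen}—in particular the availability of a short right-lcm selector on~$\SSSs$—are actually met in this context, which rests on $\CCC$ being Ore together with Lemma~\ref{L:GarClosed}.
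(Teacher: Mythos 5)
Your argument is correct and takes essentially the same route as the paper: both apply Proposition~\ref{P:SymNormalGen} to line~\ref{A:SignedWordPb1:SNF} to get a symmetric $\SSS$-normal path $\INV\uu\pc\vv$ in time $O(\ell^2)$, observe that $\cl\ww$ is an identity-element if and only if $\cl\uu=\cl\vv$ holds in~$\CCC$, and conclude with Proposition~\ref{P:WordPb1}. Your additional remarks (the endpoint check, the availability of a short right-lcm selector on~$\SSSs$ via Lemma~\ref{L:GarClosed}, and the admittedly loose $O(\ell^2)$ bound on $\LG\uu,\LG\vv$, which still yields total time $O(\ell^2)$) merely make explicit points the paper leaves implicit.
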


\begin{proof}
By Proposition~\ref{P:SymNormalGen}, line~\ref{A:SignedWordPb1:SNF} of Algorithm~\ref{A:SignedWordPb1} computes a symmetric $\SSS$-normal path~$\INV\uu \pc \vv$ representing~$\cl\ww$ in time~$O(\ell^2)$.
The signed path~$\ww$ represents an identity-element in~$\Env\CCC$ if and only if the (positive) paths~$\uu$ and~$\vv$ represent the same element in~$\CCC$. The claim then follows with Proposition~\ref{P:WordPb1}.
\end{proof}

\subsubsection*{Using $\Delta$-normal decompositions}

According to Proposition~\ref{P:DeltaUnique}, when they exist, $\Delta$-normal decompositions enjoy the same uniqueness property as symmetric normal decompositions. Therefore, in the case when the reference Garside family is not only strong but even bounded, Algorithm~\ref{A:SignedWordPb1} and Proposition~\ref{P:SignedWordPb1} admit exact counterparts where symmetric normal is replaced with $\Delta$-normal. We skip the details.

\subsubsection*{Using reversing}

As in the positive case, at least when the ambient category contains no nontrivial invertible element and admits lcms, we can also solve the Word Problem by using reversing and thus avoiding to compute distinguished decompositions. To stick to the context described in Subsection~\ref{SS:Rev} we assume here that the ambient category admits left- and right-lcms and contains no nontrivial invertible element, which amounts to requiring that the lcms are unique.

\begin{algo}{Word Problem, general case II}
\label{A:SignedWordPb2}
\begin{algorithmic}[1]
\CONTEXT{A strong Garside family~$\SSS$ in an Ore category~$\CCC$ that admits unique right- and left-lcms}
\INPUT{A signed $\SSS$-path $\ww$}
\OUTPUT{$\mathtt{true}$ if $\ww$ represents an element~$\id\xx$ in~$\Env\CCC$, and $\mathtt{false}$ otherwise}
\STATE{$\RC$:= the right-lcm selector on~$\SSS \cup \Id\CCC$}
\STATE{$\LC$:= the left-lcm selector on~$\SSS \cup \Id\CCC$}
\STATE{right-$\RC$-reverse~$\ww$ into a positive--negative path~$\uu \pc \INV\vv$ using Algorithm~\ref{A:RightRev}}
\STATE{left-$\LC$-reverse~$\uu \pc \INV\vv$ into a negative--positive path~$\INV{\uu'} \pc \vv'$ using (the left counterpart of) Algorithm~\ref{A:RightRevShort}}
\RETURN{$\TV{\uu' = \vv' = \ew_{\ud}}$}
\end{algorithmic}
\end{algo}

Lemma~\ref{L:Terminating}, Proposition~\ref{P:Gar2Pres}, and their left counterparts imply:

\begin{prop}
\label{P:SignedWordPb2}
Assume that $\SSS$ is a strong Garside family~$\SSS$ in an Ore category~$\CCC$ that admits unique right- and left-lcms.

\ITEM1 Algorithm~\ref{A:SignedWordPb2} solves the Word Problem of~$\Env\CCC$ with respect to~$\SSS$. 

\ITEM2 If $\SSS$ is finite, the complexity of Algorithm~\ref{A:SignedWordPb2} is quadratic in the length of the input path.
\end{prop}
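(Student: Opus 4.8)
The plan is to verify correctness and complexity of Algorithm~\ref{A:SignedWordPb2} by tracing through the two reversing phases and appealing to the results already established. First I would observe that, by Proposition~\ref{P:Gar2Pres} applied to~$\CCC$ (which is eligible, being cancellative with unique right-lcms and hence no nontrivial invertible element, and admitting local right-lcms), the family~$\SSS \cup \Id\CCC$ together with the relations $\aa(\aa \under\bb) = \bb(\bb \under\aa)$ gives a presentation of~$\CCC$, and, by Lemma~\ref{L:Gar2Rev}, this presentation is right-complemented with a \emph{short} right-complement~$\RC$ for which right-reversing is complete. The symmetric counterpart (working with the opposite category, which is right-Ore with unique left-lcms, hence admits the analogous presentation) gives a short left-complement~$\LC$ for which left-reversing is complete. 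These two facts are exactly what the Context of the algorithm invokes when it sets $\RC$ and~$\LC$ to the respective lcm selectors.

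Next I would establish termination. Since $\RC$ is short and $\SSS \cup \Id\CCC$ is a precategory on which $\RC$ is a short right-complement, Corollary~\ref{C:TerminatingShort} guarantees that right-$\RC$-reversing of the signed $(\SSS \cup \Id\CCC)$-path~$\ww$ terminates, in $O(\LG\ww^2)$ steps, with a positive--negative path~$\uu \pc \INV\vv$; by Lemma~\ref{L:Equiv} (really its iterated use through the reversing grid) $\uu \pc \INV\vv$ represents the same element of~$\Env\CCC$ as~$\ww$, and moreover all intermediate paths have length $O(\LG\ww)$ by the short case of Lemma~\ref{L:Terminating}\ITEM2. Then the left counterpart of Corollary~\ref{C:TerminatingShort} applied to~$\LC$ shows that left-$\LC$-reversing of~$\uu \pc \INV\vv$ terminates in $O(\LG\ww^2)$ further steps with a negative--positive path~$\INV{\uu'} \pc \vv'$ representing the same element, again with all paths of length $O(\LG\ww)$. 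Composing, the total running time is $O(\LG\ww^2)$, which is claim~\ITEM2 once $\SSS$ is finite (so that the constant hidden in the short-reversing bound, which depends on~$|\SSS|$, is legitimate).

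For correctness, claim~\ITEM1, I would argue that $\cl\ww$ is an identity-element of~$\Env\CCC$ if and only if $\uu' = \vv' = \ew_\ud$. One direction is clear: if both are empty, then $\INV{\uu'} \pc \vv'$ is an empty path, so $\cl\ww = \cl{\INV{\uu'} \pc \vv'}$ is an identity. Conversely, suppose $\cl\ww = \id\xx$ in~$\Env\CCC$. Since $\uu \pc \INV\vv$ represents~$\id\xx$, the positive paths~$\uu$ and~$\vv$ represent the same element of~$\CCC$ (here we use that $\CCC$ embeds in~$\Env\CCC$, which holds because $\CCC$ is Ore by Proposition~\ref{P:Ore}, and that $\cl\uu \cl\vv\inv = \id\xx$ forces $\cl\uu = \cl\vv$). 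Because left-reversing is complete for the left-complemented presentation, the equality $\cl\uu = \cl\vv$ implies that the positive--negative path $\uu \pc \INV\vv$ left-$\LC$-reverses to an empty path; but reversing is deterministic, so its output $\INV{\uu'} \pc \vv'$ must be that empty path, i.e.\ $\uu' = \vv' = \ew_\ud$.

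The main obstacle I expect is the last correctness step: one must be careful that completeness of left-reversing is being used in the correct direction, namely that $\cl\uu = \cl\vv$ in~$\CCC$ \emph{implies} the reversing of $\uu \pc \INV\vv$ reaches an empty path (the converse implication, that reaching $\ew$ forces equality, is the easy half from Lemma~\ref{L:Equiv}). This hinges on having the \emph{opposite} presentation genuinely complete for \emph{left}-reversing, which is the left--right mirror of the hypothesis of Lemma~\ref{L:Gar2Rev}; stating this mirror explicitly (opposite of an Ore category with unique left-lcms is an Ore category with unique right-lcms, to which Proposition~\ref{P:Gar2Pres} and Lemma~\ref{L:Gar2Rev} apply) is the one place where the argument needs care rather than a routine citation. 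Everything else—determinism of reversing, the $O(\ell^2)$ bounds, the embedding $\CCC \hookrightarrow \Env\CCC$—follows directly from results already in hand.
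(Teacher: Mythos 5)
Your overall architecture is the same as the paper's (which disposes of this proposition by citing Lemma~\ref{L:Terminating}, Proposition~\ref{P:Gar2Pres} and their left counterparts): right-reverse with the right-lcm selector, left-reverse with the left-lcm selector, conclude via completeness plus determinism, and get the quadratic bound from the short-reversing estimates. The genuine gap sits exactly at the step you single out as delicate. You obtain the short left-complement~$\LC$ and the completeness of left-$\LC$-reversing by applying Proposition~\ref{P:Gar2Pres} and Lemma~\ref{L:Gar2Rev} to the opposite category. Both results require $\SSS$ to be a Garside family \emph{in the category they are applied to}, and being a Garside family is not a left--right symmetric notion (the paper says so explicitly in Subsection~\ref{SS:Further}); a Garside family of~$\CCC$ need not be a Garside family of the opposite category, so the mirror citation does not go through. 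Symptomatically, your proof never uses the hypothesis that $\SSS$ is \emph{strong}, yet that hypothesis is precisely what makes the left phase legitimate: by the lemma in Subsection~\ref{SS:Sym}, a short left-lcm selector on~$\SSSs$ exists if and only if $\SSS$ is strong (Definition~\ref{D:Strong}). Without it, line~4 of Algorithm~\ref{A:SignedWordPb2} (left-reversing via the short counterpart of Algorithm~\ref{A:RightRevShort}) is not even well posed, the $O(\ell^2)$ bound for that phase does not follow, and the direction of completeness you actually need---$\cl\uu = \cl\vv$ in~$\CCC$ forces reversal to an empty path---is unsupported, since it requires the left-lcm relations on~$\SSSs$ to form a presentation of~$\CCC$ (the left counterpart of Proposition~\ref{P:Gar2Pres} that the paper invokes), not a statement about a family that may fail to be Garside in the opposite category.

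Apart from this, the argument is sound and matches the intended proof: the right phase is covered by Proposition~\ref{P:Gar2Pres}, Lemma~\ref{L:Gar2Rev} and Corollary~\ref{C:TerminatingShort} (adding only the remark that the reversing cannot return $\mathtt{fail}$, because $\CCC$ admits right-lcms, so $\RC$ is defined on every pair with a common source); the reduction of ``$\cl\ww$ is an identity'' to ``$\cl\uu = \cl\vv$ in~$\CCC$'' via the embedding of~$\CCC$ into~$\Env\CCC$ is correct; and the determinism point is fine. To close the gap, replace the opposite-category citation by an appeal to strongness: it yields the short left-lcm selector on~$\SSSs$, the left cube condition holds by associativity of left-lcms, and the left counterparts invoked by the paper then give completeness and termination of left-$\LC$-reversing for a presentation of~$\CCC$.
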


Appealing to a right- and a left-reversing, Algorithm~\ref{A:SignedWordPb2} is nicely symmetric. However, it requires the existence of right-lcms, which is not guaranteed in every Ore category that admits a strong Garside family. Actually, this assumption is superfluous, since a double left-reversing can be used instead.

\begin{algo}{Word Problem, general case III}
\label{A:SignedWordPb3}
\begin{algorithmic}[1]
\CONTEXT{A strong Garside family~$\SSS$ in an Ore category~$\CCC$ that admits unique left-lcms}
\INPUT{A signed $\SSS$-path $\ww$}
\OUTPUT{$\mathtt{true}$ if $\ww$ represent an element~$\id\xx$ in~$\Env\CCC$, and $\mathtt{false}$ otherwise}
\STATE{$\LC$:= the left-lcm selector on~$\SSS \cup \Id\CCC$}
\STATE{left-$\LC$-reverse~$\ww$ into a negative--positive~$\INV\uu \pc \vv$ using (the left counterpart of) Algorithm~\ref{A:RightRev}}
\STATE{left-$\LC$-reverse~$\vv \pc \INV\uu$ into a negative--positive path~$\INV{\uu'} \pc \vv'$ using (the left counterpart of) Algorithm~\ref{A:RightRevShort}}
\RETURN{$\TV{\uu' = \vv' = \ew_{\ud}}$}
\end{algorithmic}
\end{algo}

\begin{prop}
\label{P:SignedWordPb3}
Assume that $\SSS$ is a strong Garside family in an Ore category~$\CCC$ that admits unique left-lcms.

\ITEM1 Algorithm~\ref{A:SignedWordPb3} solves the Word Problem of~$\Env\CCC$ with respect to~$\SSS$. 

\ITEM2 If $\SSS$ is finite, the complexity of Algorithm~\ref{A:SignedWordPb3} is quadratic in the length of the input path.
\end{prop}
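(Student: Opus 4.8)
The plan is to read Algorithm~\ref{A:SignedWordPb3} as a ``double left-reversing'' implementation of the obvious strategy: the first left-reversing rewrites the arbitrary signed input~$\ww$ into a negative--positive fraction~$\INV\uu \pc \vv$ representing the same element of~$\Env\CCC$, and the second left-reversing then decides whether the numerator~$\vv$ and the denominator~$\uu$ represent the same element of~$\CCC$. The whole argument rests on the left counterpart of Lemma~\ref{L:Gar2Rev}, so the first task is to check that its hypotheses hold here.

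Next I would assemble the structural facts. An Ore category is cancellative, and the assumption of unique left-lcms amounts to~$\CCC$ containing no nontrivial invertible element (cf.\ the discussion preceding Algorithm~\ref{A:SignedWordPb2}); in particular $\SSSs = \SSS \cup \Id\CCC$, and $\CCC$ admits left-lcms. Strongness of~$\SSS$ then provides, by the (unlabelled) lemma preceding Algorithm~\ref{A:SymNormal2}, a \emph{short} left-lcm selector~$\LC$ on~$\SSSs$, so Step~1 of the algorithm is meaningful. By the left counterpart of Proposition~\ref{P:Gar2Pres}, the associated family of relations~$\RRR$ presents~$\CCC$, and by the left counterpart of Lemma~\ref{L:Gar2Rev} --- which uses the associativity of the left-lcm operation to verify the cube condition and then invokes Proposition~\ref{P:Complete} --- left-$\LC$-reversing is complete for~$(\SSS ; \RRR)$. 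Finally, since $\CCC$ is left-Ore, any two elements of~$\SSSs$ with the same target have a common left-multiple, so $\LC$ is defined on all such pairs and left-$\LC$-reversing never returns~$\mathtt{fail}$; and since~$\LC$ is short, the left counterpart of Lemma~\ref{L:Terminating}\ITEM1 (with witnessing family the paths of length at most~$1$ over~$\SSSs$, exactly as in the proof of Corollary~\ref{C:TerminatingShort}) shows that left-$\LC$-reversing terminates on every valid input, so both the general-case reversing of Step~2 and the short-case reversing of Step~3 halt.

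Claim~\ITEM1 is then a chain of equivalences. By the left counterpart of Lemma~\ref{L:Equiv}, every rewriting step of a left-$\LC$-reversing replaces a subpath by one representing the same element of~$\Env\CCC$ (each elementary square of the grid is a relation of~$\RRR$, and deleting~$\bb \pc \INV\bb$ is value-preserving), so after Step~2 the path~$\INV\uu \pc \vv$ satisfies $\cl\ww = \cl\uu\inv\,\cl\vv$ in~$\Env\CCC$; as $\CCC$ embeds in~$\Env\CCC$, the element~$\cl\ww$ is an identity-element if and only if $\cl\uu = \cl\vv$ holds in~$\CCC$. By completeness of left-$\LC$-reversing and its converse (which always holds, again by the left counterpart of Lemma~\ref{L:Equiv}), $\cl\uu = \cl\vv$ holds if and only if the positive--negative path~$\vv \pc \INV\uu$ left-$\LC$-reverses to an empty path; and the negative--positive output~$\INV{\uu'} \pc \vv'$ of Step~3 is an empty path precisely when $\uu' = \vv' = \ew_{\ud}$, which is exactly what the return line tests. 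Together with the termination established above, this proves claim~\ITEM1.

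For claim~\ITEM2, assume in addition that $\SSS$ is finite. Step~2 runs the general-case left-reversing with the short complement~$\LC$ on the finite precategory~$\SSSs$, hence costs $O(\LG\ww^2)$ and, by the left counterpart of Lemma~\ref{L:Terminating}\ITEM2, outputs paths with $\LG\uu + \LG\vv = O(\LG\ww)$; Step~3 is the short-case left-reversing on an input of that length, hence costs $O(\LG\ww^2)$ by the left counterpart of Lemma~\ref{L:TerminatingShort}; the two final equality tests are $O(\LG\ww)$. The total is $O(\LG\ww^2)$, proving claim~\ITEM2. I expect the one genuinely delicate ingredient to be the appeal to the left counterpart of Lemma~\ref{L:Gar2Rev}: one must make sure that strongness together with the existence of left-lcms really yields a \emph{short} left-lcm selector valued in~$\SSSs$ (failing which, termination of the general-case reversing in Step~2 would not be guaranteed) and that the resulting presentation is complete for left-reversing; everything else is routine bookkeeping about how reversing turns signed paths into fractions.
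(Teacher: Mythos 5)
Your proposal is correct and follows essentially the same route as the paper's proof: value preservation of left-reversing turns the test into deciding $\cl\uu=\cl\vv$ in~$\CCC$, completeness and termination of left-$\LC$-reversing (via the left counterparts of Lemma~\ref{L:Gar2Rev}, Proposition~\ref{P:Gar2Pres} and Lemma~\ref{L:Terminating}) settle correctness, and the same lemmas give the quadratic bound. You merely spell out in detail the hypotheses (shortness of the selector from strongness, totality from the Ore condition, no nontrivial invertible elements from uniqueness of lcms) that the paper's brief proof leaves implicit.
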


\begin{proof}
The signed path~$\ww$ represents an identity-element in~$\Env\CCC$ if and only if the paths~$\uu$ and~$\vv$ represent the same element in~$\CCC$. By the counterpart of Proposition~\ref{P:Gar2Pres}, left-$\LC$-reversing is complete and terminating, so $\uu$ and~$\vv$ represent the same element of~$\CCC$ if and only if $\vv \pc \INV\uu$ is left-$\LC$-reversible to an empty path. By Lemma~\ref{L:Terminating}, the complexity of Algorithm~\ref{A:SignedWordPb3} is in~$O(\LG\ww^2)$.
\end{proof}

\begin{exam}
Running on the signed word $\ww = \INV{\tta\ttb} \pc \ttb\tta \pc \tta$, Algorithm~\ref{A:SignedWordPb2} right-reverses~$\ww$ into $\tta \pc \tta\ttb \pc \INV{\ttb\tta}$, and then left-reverses the latter words back to $\INV{\tta\ttb} \pc \ttb\tta \pc \tta$. The final word is not empty, hence $\ww$ does not represent~$1$ in the group~$B_3$.

Running on~$\ww$, Algorithm~\ref{A:SignedWordPb3} first left-reverses~$\ww$ into itself (since $\ww$ is a negative--positive word), then switches the numerator and the denominator into $\ttb\tta \pc \tta \pc \INV{\tta\ttb}$, and finally left-reverses the latter word. One finds now $\INV\tta \pc \ttb \pc \ttb$, a nonempty word, and one concludes again that $\ww$ does not represent~$1$.
\end{exam}

\begin{rema}
As in Section~\ref{S:Pos}, we could also state a result referring to an arbitrary generating family~$\AAA$
that satisfies convenient properties but that is not necessarily a strong Garside family.
\end{rema}

\subsection{Computing decompositions for an inverse}
\label{SS:Inv}

We complete the analysis of our distinguished decompositions with explicit methods for finding a decomposition of~$\gg\inv$ from one of~$\gg$.

In the case of symmetric normal decompositions, the result is trivial:

\begin{prop}
Assume that $\CCC$ is a left-Ore category and $\SSS$ is a Garside family in~$\CCC$.
If $\seqqqqqq{\INV{\bb_\qq}}\etc{\INV{\bb_1}}{\aa_1}\etc{\aa_\pp}$ is a symmetric $\SSS$-normal decomposition for an element~$\gg$ of $\Env\CCC$, then $\seqqqqqq{\INV{\aa_\pp}}\etc{\INV{\aa_1}}{\bb_1}\etc{\bb_\qq}$ is a symmetric $\SSS$-normal decomposition for~$\gg\inv$. 
\end{prop}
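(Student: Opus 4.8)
The statement is just an unwinding of the definition of a symmetric $\SSS$-normal path, so the plan is short. The first step is to set $\ww := \seqqqqqq{\INV{\bb_\qq}}\etc{\INV{\bb_1}}{\aa_1}\etc{\aa_\pp}$ and $\ww' := \seqqqqqq{\INV{\aa_\pp}}\etc{\INV{\aa_1}}{\bb_1}\etc{\bb_\qq}$, and to observe that $\ww'$ is precisely the image of~$\ww$ under the bar map, i.e.\ $\ww' = \INV\ww$, which follows from the rules $\INV{\ww_1 \pc \ww_2} = \INV{\ww_2} \pc \INV{\ww_1}$ and $\INV{(\INV\aa)} = \aa$. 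Since the evaluation map $\uu \mapsto \cl\uu$ turns concatenation into product and sends each $\INV\cc$ to~$\cc\inv$, this gives $\cl{\ww'} = \cl{\INV\ww} = \cl\ww\inv = \gg\inv$; hence $\ww'$ is a signed $\SSSs$-decomposition of~$\gg\inv$, and it is a negative--positive path because $\ww$ is one.

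The second step is to check the three clauses of Definition~\ref{D:GreedySym}\ITEM2 for~$\ww'$. Matching $\ww'$ against the template $\seqqqqqq{\INV{\gg_\qq}}\etc{\INV{\gg_1}}{\ff_1}\etc{\ff_\pp}$, the denominator part of~$\ww'$, read with increasing indices, is the path $\seqqq{\aa_1}\etc{\aa_\pp}$ and its numerator part is $\seqqq{\bb_1}\etc{\bb_\qq}$; both are $\SSS$-normal by the hypothesis that $\ww$ is symmetric $\SSS$-normal. The only remaining clause is that the first denominator entry~$\aa_1$ and the first numerator entry~$\bb_1$ of~$\ww'$ be left-disjoint, whereas the hypothesis says only that $\bb_1$ and~$\aa_1$ are left-disjoint.

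So the single point that is not completely formal — and the step I would spell out — is that the relation ``left-disjoint'' of Definition~\ref{D:GreedySym}\ITEM1 is symmetric. Given $\xx, \yy$ left-disjoint and $\xx', \yy'$ in~$\CCC$ with $\yy\inv \xx = \yy'{}\inv \xx'$ in~$\Env\CCC$, taking inverses yields $\xx\inv \yy = \xx'{}\inv \yy'$, so applying the left-disjointness of~$\xx$ and~$\yy$ to the pair~$(\xx', \yy')$ produces~$\hh$ in~$\CCC$ with $\xx' = \hh \xx$ and $\yy' = \hh \yy$, which is exactly the condition witnessing that $\yy$ and~$\xx$ are left-disjoint. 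Thus $\bb_1, \aa_1$ left-disjoint implies $\aa_1, \bb_1$ left-disjoint, all clauses of Definition~\ref{D:GreedySym}\ITEM2 hold for~$\ww'$, and $\ww'$ is symmetric $\SSS$-normal. I do not expect any genuine obstacle here: the whole argument rests on this two-line symmetry observation, which merely reflects that inversion in~$\Env\CCC$ exchanges $\xx\inv\yy$ with $\yy\inv\xx$.
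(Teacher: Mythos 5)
Your proposal is correct and follows essentially the same route as the paper, which simply observes that the definition of symmetric $\SSS$-normality is invariant under swapping numerator and denominator and that inverting $\gg = \bb_\qq\inv \pdots \bb_1\inv \aa_1 \pdots \aa_\pp$ gives the desired decomposition of~$\gg\inv$. The only difference is that you spell out the one point the paper leaves implicit, namely the symmetry of the left-disjointness relation of Definition~\ref{D:GreedySym}, and your two-line verification of it is exactly right.
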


\begin{proof}
The definition makes it obvious that $\seqqqqqq{\INV{\aa_\pp}}\etc{\INV{\aa_1}}{\bb_1}\etc{\bb_\qq}$ is symmetric $\SSS$-normal whenever $\seqqqqqq{\INV{\bb_\qq}}\etc{\INV{\bb_1}}{\aa_1}\etc{\aa_\pp}$ is. On the other hand, $\gg = \bb_\qq\inv \pdots \bb_1\inv \aa_1 \pdots \aa_\pp$ implies $\gg\inv = \aa_\pp\inv \pdots \aa_1\inv \bb_1 \pdots \aa_\pp$ in~$\Env\CCC$.
\end{proof}

The case of $\Delta$-normal decompositions is more complicated, as the definition is not symmetric. However, finding an explicit formula is not difficult.

\begin{prop}
\label{P:Inverse}
Assume that $\SSS$ is a Garside family that is bounded by~$\Delta$ in a cancellative category~$\CCC$. If $\seqqqq{\DELTAA\nn}{\aa_1}\etc{\aa_\pp}$ is a $\Delta$-normal decomposition for an element~$\gg$ of~$\Env\CCC$, then $\seqqqq{\DELTAA{-\nn - \pp}}{\der(\phi^{-\nn-\pp}(\aa_\pp))}\etc{\der(\phi^{-\nn-1}(\aa_1))}$ is a 
$\Delta$-normal decomposition for~$\gg\inv$.
\end{prop}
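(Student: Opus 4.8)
The claim is purely a verification: I am handed a $\Delta$-normal decomposition $\seqqqq{\DELTAA\nn}{\aa_1}\etc{\aa_\pp}$ of $\gg$ and must check that the proposed signed path $\seqqqq{\DELTAA{-\nn-\pp}}{\der(\phi^{-\nn-\pp}(\aa_\pp))}\etc{\der(\phi^{-\nn-1}(\aa_1))}$ is (a) a decomposition of $\gg\inv$ in $\Env\CCC$, and (b) $\Delta$-normal in the sense of Definition~\ref{D:DeltaNormal}. So the proof splits into an ``evaluates correctly'' part and a ``has the right shape'' part, both of which should come out of the basic identities governing $\Delta$, $\der$, and $\phi$ from Lemma~\ref{L:Auto}.

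\emph{Evaluation.} First I would record the key telescoping identity. By definition of $\der$, for every $\aa$ in $\CCC(\xx,\ud)$ we have $\aa\,\der(\aa) = \Delta(\xx)$, and by Lemma~\ref{L:Auto} the diagram $\Delta(\xx)\,\phi(\aa) = \aa\,\Delta(\yy)$ commutes, i.e. $\Delta(\xx)\inv \aa \Delta(\yy) = \phi(\aa)$ in $\Env\CCC$, equivalently $\aa\inv = \der(\aa)\,\Delta(\yy)\inv$ and also $\aa\inv = \Delta(\xx)\inv\,\der(\phi\inv(\aa))$ after applying $\phi\inv$. Starting from $\gg = \DELTAA\nn(\xx)\,\aa_1\pdots\aa_\pp$ I get $\gg\inv = \aa_\pp\inv\pdots\aa_1\inv\,\DELTAA\nn(\xx)\inv$. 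Now I push each $\aa_i\inv$ to the right past a single $\Delta$-factor, using $\aa_i\inv = \der(\aa_i)\,\Delta(\ud)\inv$ and then conjugating by powers of $\phi$ via Lemma~\ref{L:Auto}; each move turns $\aa_i$ into $\der(\phi^{k}(\aa_i))$ for the appropriate shift $k$ and accumulates one more $\Delta\inv$. After doing this for all $\pp$ of the $\aa_i$ and combining with the $|\nn|$ copies of $\Delta\inv$ already present in $\DELTAA\nn(\xx)\inv$, I should arrive at exactly $\DELTAA{-\nn-\pp}\cdot\der(\phi^{-\nn-\pp}(\aa_\pp))\pdots\der(\phi^{-\nn-1}(\aa_1))$; bookkeeping of the $\phi$-exponents (noting $\src$ and $\trg$ of $\DELTAA\mm(\xx)$ are $\xx$ and $\phi^\mm(\xx)$, per the remark after Notation~\ref{N:PowerOfDelta}) is the only real work here, and it is routine. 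Cancellativity of $\CCC$ is not even needed for this part, only the group laws in $\Env\CCC$.

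\emph{Normality.} For the shape, I must check two things: the positive tail $\seqqq{\der(\phi^{-\nn-\pp}(\aa_\pp))}\etc{\der(\phi^{-\nn-1}(\aa_1))}$ is $\SSS$-normal, and its first entry $\der(\phi^{-\nn-\pp}(\aa_\pp))$ is not $\Delta$-like. The first point is where I would invoke a ``reversed'' domino-type fact: since $\seqqq{\aa_1}\etc{\aa_\pp}$ is $\SSS$-normal, each two-term block $\seqq{\aa_i}{\aa_{i+1}}$ is $\SSS$-greedy, and I want to conclude that $\seqq{\der(\aa_{i+1})}{\der(\aa_i)}$ (up to the automorphism $\phi$, which preserves normality since it is an automorphism of $\CCC$, again Lemma~\ref{L:Auto}) is $\SSS$-greedy. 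This should follow from Lemma~\ref{L:DeltaDisjoint}: $\seqq{\aa_i}{\aa_{i+1}}$ $\SSS$-normal is equivalent to $\der(\aa_i)$ and $\aa_{i+1}$ having no nontrivial common left-divisor, and I want that $\der(\der(\aa_{i+1})) = \phi(\aa_{i+1})$ and $\der(\aa_i)$ have no nontrivial common left-divisor — but $\phi$ of the original disjointness gives $\phi(\der(\aa_i)) = \der(\phi(\aa_i))$ disjoint from $\phi(\aa_{i+1})$, which after a further application of $\der(\cdot)$-bookkeeping is exactly the condition for $\seqq{\der(\aa_{i+1})}{\der(\aa_i)}$ to be normal. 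For the non-$\Delta$-like condition on the new first entry: $\der(\phi^{k}(\aa_\pp))$ being $\Delta$-like would mean $\Delta(\ud) \dive \der(\phi^k(\aa_\pp))$, i.e. $\DELTAA2 \dive \phi^k(\aa_\pp)\,\der(\phi^k(\aa_\pp)) = \Delta(\ud)$, forcing $\Delta(\ud)$ to left-divide an identity-element, impossible unless $\aa_\pp$ is an identity — and an $\SSS$-normal path ending in a nontrivial-but-identity entry can be trimmed, so I may assume $\aa_\pp$ is not an identity, or handle the degenerate cases separately (if $\pp = 0$ the statement reduces to $\DELTAA\nn$ inverting to $\DELTAA{-\nn}$, which is the last sentence of Notation~\ref{N:PowerOfDelta}).

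\emph{Main obstacle.} I expect the index gymnastics with the $\phi$-exponents to be the only genuinely fiddly part: making sure that when I slide $\aa_i\inv$ past $\pp - i$ remaining positive entries worth of $\Delta$'s plus the $\nn$ leading ones, the exponent that lands inside $\der(\phi^{?}(\aa_i))$ is precisely $-\nn-i$, and that the objects match up so that every concatenation in the proposed path is actually defined. The conceptual inputs — the commuting square of Lemma~\ref{L:Auto}, $\phi$ being an automorphism, and the left-divisibility reformulation of normality in Lemma~\ref{L:DeltaDisjoint} — are all already available, so no new lemma is needed; it is a matter of assembling them in the right order and tracking sources and targets carefully.
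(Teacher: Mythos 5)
Your proposal is correct and follows essentially the paper's own proof: the telescoping computation for $\gg\inv$ rests on the same identities from Lemma~\ref{L:Auto} (the paper merely organizes it as an induction on~$\pp$ after pushing $\DELTAA\nn$ to the left), and the normality check is the same double use of Lemma~\ref{L:DeltaDisjoint} combined with $\der\circ\der=\phi$ and the fact that $\phi$ is an automorphism commuting with~$\der$. When you carry out the deferred bookkeeping, note that the unshifted pair $\seqq{\der(\aa_{\ii+1})}{\der(\aa_\ii)}$ is \emph{not} $\SSS$-normal in general: it is precisely the relative shift by one power of~$\phi$ between the consecutive entries $\der(\phi^{-\nn-\ii-1}(\aa_{\ii+1}))$ and $\der(\phi^{-\nn-\ii}(\aa_\ii))$ that turns the left-disjointness supplied by Lemma~\ref{L:DeltaDisjoint} into exactly the condition needed, so your phrase ``up to the automorphism $\phi$'' must be read with that one-step offset, not with a common power of~$\phi$.
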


\begin{proof}
We first check that the sequence mentioned in the statement is $\Delta$-normal. So let~$\ii < \pp$. By hypothesis, $\seqq{\aa_\ii}{\aa_{\ii+1}}$ is 
$\SSS$-normal. As $\phi$ is an automorphism of~$\SSS$, the path $\seqq{\phi^{-\nn-\ii}(\aa_\ii)}{\phi^{-\nn-\ii}(\aa_{\ii+1})}$ is also $\SSS$-normal. By Lemma~\ref{L:DeltaDisjoint}, this implies that $\der(\phi^{-\nn-\ii}(\aa_\ii))$ and~$\phi^{-\nn-\ii}(\aa_{\ii+1})$ are left-disjoint. The latter element is $\phi(\phi^{-\nn-\ii-1}(\aa_{\ii+1}))$, hence it is $\der(\der(\phi^{-\nn-\ii-1}(\aa_{\ii+1})))$. Reading the above disjointness result in a symmetric way and applying Lemma~\ref{L:DeltaDisjoint} again, we deduce that $\seqq{\der(\phi^{-\nn-\ii-1}(\aa_{\ii+1}))}{\der(\phi^{-\nn-\ii}(\aa_\ii))}$ is $\SSS$-normal. Hence $\seqqq{\der(\phi^{-\nn-\pp}(\aa_\pp))}\etc{\der(\phi^{-\nn-1}(\aa_1))}$ is $\SSS$-normal. 

Moreover, the hypothesis that $\aa_1$ is not $\Delta$-like implies that $\phi^{-\nn-1}(\aa_1)$ is not $\Delta$-like either, and, therefore, $\der(\phi^{-\nn-1}(\aa_1))$ is not invertible. On the other hand, the hypothesis that $\aa_\pp$ is not invertible implies that $\phi^{-\nn-\pp}(\aa_\pp)$ is not invertible either, and, therefore, $\der(\phi^{-\nn-\pp}(\aa_\pp))$ is not $\Delta$-like. Hence, we conclude that the path $\seqqqq{\DELTAA{-\nn - \pp}}{\der(\phi^{-\nn-\pp}(\aa_\pp))}\etc{\der(\phi^{-\nn-1}(\aa_1))}$ is $\Delta$-normal.

It remains to check that the latter $\Delta$-normal path is a decomposition of~$\gg\inv$. Now, by construction, the maps $\phi$ and~$\der$ commute and, pushing the factor~$\DELTAA\nn$ to the left, we obtain the equality
\begin{multline}
\label{E:DEInverse}
\quad\DELTAA{-\nn - \pp}(\ud) \, \der(\phi^{-\nn-\pp}(\aa_\pp)) 
\pdots\der(\phi^{-\nn-1}(\aa_1))\cdot \DELTAA\nn(\ud) \, \aa_1 \pdots\aa_\pp\\
= \DELTAA{-\nn - \pp}(\ud) \, \DELTAA\nn(\ud) \, \der(\phi^{-\pp}(\aa_\pp)) 
\pdots\der(\phi^{-1}(\aa_1)) \, \aa_1 \pdots\aa_\pp,\quad
\end{multline}
and our goal is to prove that this element is~$\id\yy$. Let us call the right term of~\eqref{E:DEInverse} $E(\nn, \aa_1\wdots \aa_\pp)$. We shall prove using induction on~$\pp \ge 0$ that an expression of the form $E(\nn, \aa_1\wdots \aa_\pp)$ equals~$\id\yy$, where $\yy$ is the target of~$\aa_\pp$.

Assume first $\pp = 0$. What remains for $E(\nn)$ is then $\DELTAA{-\nn}(\ud) \, \DELTAA\nn(\ud)$, which, by the remark following Notation~\ref{N:PowerOfDelta}, is~$\id\yy$. Assume $\pp \ge 1$. The above result enables us to gather the first two entries of~$\EE$ into $\DELTAA{-\pp}(\ud)$. Then $\der(\phi\inv(\aa_1)) \, \aa_1$, which is also $\derL(\aa_1) \, \aa_1$, equals~$\Delta(\zz)$, where $\zz$ is the source of~$\derL(\aa_1)$. But, then, we can push this $\Delta(\ud)$-factor to the left through the $\der(\phi^{-\ii}(\aa_\ii))$ factors with the effect of diminishing the exponents of~$\phi$ by one. In this way, $E(\nn, \aa_1\wdots \aa_\pp)$ becomes
$$\DELTAA{-\pp}(\ud) \, \Delta(\ud) \, \der(\phi^{-\pp+1}(\aa_\pp)) \pdots\der(\phi^{-1}(\aa_2)) \, \aa_2 \pdots\aa_\pp,$$
which is $E(1, \aa_2\wdots \aa_\pp)$. By the induction hypothesis, its value is~$\id\yy$.
\end{proof}

\subsection{Computing upper and lower bounds}
\label{SS:Bounds}

If $\CCC$ is an Ore category, the left-divisibility relation of~$\CCC$ naturally extends into a relation on~$\Env\CCC$, namely the relation, still denoted by~$\dive$, such that $\ff \dive \gg$ holds if there exists~$\hh$ in~$\CCC$ satisfying $\ff \hh = \gg$. This relation is a partial preordering on~$\Env\CCC$, and it is a partial ordering if $\CCC$ admits no nontrivial invertible element. It directly follows from the definition that, if $\ff, \gg$ are elements of~$\CCC$, then a least common upper bound (\resp a greatest common lower bound) of~$\ff$ and~$\gg$ with respect to~$\dive$ is (when it exists) a right-lcm (\resp a left-gcd) of~$\ff$ and~$\gg$.

\begin{lemm}
Assume that $\CCC$ is an Ore category. Then any two elements of~$\Env\CCC$ with the same source admit a least common upper bound (\resp a greatest common lower bound) with respect to~$\dive$ if and only if any two elements of~$\CCC$ with the same source admit a right-lcm (\resp a left-gcd). 
\end{lemm}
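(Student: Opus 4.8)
The plan is to prove the two directions of the equivalence, treating the ``right-lcm / least common upper bound'' statement in detail and noting that the ``left-gcd / greatest common lower bound'' statement is entirely symmetric (reverse all arrows, i.e.\ work in the opposite category, which is again Ore). Since $\CCC$ embeds in $\Env\CCC$ by Proposition~\ref{P:Ore}, and the relation $\dive$ on $\Env\CCC$ restricts to the left-divisibility relation on $\CCC$, one direction is immediate: if any two elements of $\Env\CCC$ with the same source admit a least common upper bound for $\dive$, then in particular any two elements of~$\CCC$ do, and such a bound lies in~$\CCC$ because, as observed just before the statement, a least common upper bound in $\Env\CCC$ of two elements of~$\CCC$ is a right-lcm in~$\CCC$ (the point being that it must left-divide any common right-multiple lying in~$\CCC$, hence it is itself a right-multiple in~$\CCC$ of each of the two elements, so it lies in~$\CCC$).

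For the converse, suppose any two elements of~$\CCC$ with the same source admit a right-lcm. First I would take arbitrary $\ff_1, \ff_2$ in $\Env\CCC$ with the same source~$\xx$. Since $\CCC$ is left-Ore, every element of $\Env\CCC$ is a \emph{left}-fraction; but for computing common \emph{upper} bounds under $\dive$ (i.e.\ common right-multiples) the natural move is to clear a common \emph{left} denominator. Concretely, writing $\ff_\ii = \gg\inv \hh_\ii$ with a common~$\gg$ (obtained by putting the two left-fractions over a common left-denominator, using left-Ore once more), I get $\ff_\ii = \gg\inv\hh_\ii$ with $\hh_1, \hh_2$ in~$\CCC$ having the same source. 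Then for any $\kk$ in~$\Env\CCC$, one checks $\ff_\ii \dive \kk$ for $\ii=1,2$ is equivalent to $\hh_\ii \dive \gg\kk$ for $\ii=1,2$; so a least common upper bound of $\ff_1,\ff_2$ corresponds, via $\kk \mapsto \gg\kk$, to a least common upper bound of $\hh_1, \hh_2$. Now $\hh_1, \hh_2 \in \CCC$ have a right-lcm $\hh$ in~$\CCC$ by hypothesis; I then claim $\hh$ is also a least common upper bound of $\hh_1, \hh_2$ in $\Env\CCC$ with respect to~$\dive$. Indeed $\hh_\ii \dive \hh$ holds in~$\CCC$ hence in $\Env\CCC$; and if $\hh_\ii \dive \kk$ in $\Env\CCC$ for some $\kk = \pp\inv\qq$ with $\pp,\qq\in\CCC$, then $\pp\hh_\ii \dive \qq$ in~$\CCC$ (multiply on the left by~$\pp$ and use cancellativity to see the witness lies in~$\CCC$), so $\pp\hh_1$ and $\pp\hh_2$ are both left-divisors of~$\qq$ in~$\CCC$; as $\CCC$ is left-cancellative, $\pp\hh$ is a right-lcm of $\pp\hh_1$ and $\pp\hh_2$ (left-multiplication by the fixed element~$\pp$ is an order isomorphism of the relevant divisibility posets), whence $\pp\hh \dive \qq$ in~$\CCC$, i.e.\ $\hh \dive \kk$ in $\Env\CCC$. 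Unwinding the translation by~$\gg$, $\gg\inv\hh$ is then a least common upper bound of $\ff_1, \ff_2$.

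The main obstacle I expect is bookkeeping around the possible presence of nontrivial invertible elements: ``right-lcm'' and ``least common upper bound'' are only defined up to right-multiplication by an invertible element, so the statement should be read up to that ambiguity throughout, and I must be careful that ``$\hh$ is a right-lcm'' genuinely gives ``$\hh$ is a $\dive$-least upper bound'' rather than merely a minimal one. The key lemma doing the work is that for a fixed $\pp \in \CCC$, the map $\gg \mapsto \pp\gg$ is an isomorphism from $(\CCC(\trg\pp,-),\dive)$ onto the interval of $(\CCC(\src\pp,-),\dive)$ of right-multiples of~$\pp$; left-cancellativity gives injectivity and the fact that it preserves and reflects $\dive$, and this is what lets right-lcms be transported across left-multiplication. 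Finally I would remark that the parenthetical ``greatest common lower bound / left-gcd'' assertion follows by applying the already-proved statement to the opposite category $\CCC^{\mathrm{op}}$, which is Ore with $\Env(\CCC^{\mathrm{op}}) = (\Env\CCC)^{\mathrm{op}}$, so that common lower bounds and left-gcds in~$\CCC$ become common upper bounds and right-lcms in~$\CCC^{\mathrm{op}}$.
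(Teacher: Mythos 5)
Your proof of the right-lcm half is correct (the paper itself skips the proof as easy, so there is nothing to compare it with): the common-denominator reduction, the order isomorphism $\kk\mapsto\gg\kk$, and the transport of right-lcms under left-multiplication by a fixed $\pp$ all work, the last one precisely because every common right-multiple of $\pp\hh_1$ and $\pp\hh_2$ is automatically a right-multiple of~$\pp$. (A small slip: in the forward direction, the least upper bound lies in~$\CCC$ simply because the witness of $\ff\dive\hh$ lies in~$\CCC$ by the definition of~$\dive$ on~$\Env\CCC$, not because it left-divides a common right-multiple; being a $\dive$-left-divisor of an element of~$\CCC$ does not by itself place an element in~$\CCC$.) The genuine gap is your last paragraph. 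Passing to $\CCC^{\mathrm{op}}$ does not prove the parenthetical assertion: in $\CCC^{\mathrm{op}}$ the left-divisibility of~$\CCC$ becomes right-divisibility, so a left-gcd in~$\CCC$ becomes a greatest common \emph{right-divisor} in~$\CCC^{\mathrm{op}}$, not a right-lcm, and a greatest lower bound for~$\dive$ in~$\Env\CCC$ becomes a greatest lower bound for the extended right-divisibility relation of $(\Env\CCC)^{\mathrm{op}}$. What your proved statement applied to~$\CCC^{\mathrm{op}}$ actually yields is: any two elements of~$\Env\CCC$ with the same \emph{target} admit a least upper bound for extended \emph{right}-divisibility if and only if any two elements of~$\CCC$ with the same target admit a \emph{left-lcm} --- a different statement. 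The gcd assertion is the order-theoretic dual (reversing the order~$\dive$), not the categorical dual (reversing arrows), and these dualities do not coincide here.

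The repair is easy but needs its own argument, with two changes relative to the lcm case. For the forward direction, a greatest lower bound~$\hh$ of $\ff,\gg$ in~$\CCC$ is not visibly in~$\CCC$ from $\hh\dive\ff$ alone; use instead that $\id\xx$ (with $\xx$ the common source) is a common lower bound, so $\id\xx\dive\hh$ forces $\hh\in\CCC$, after which $\hh$ is a left-gcd exactly as in your lcm argument. For the converse, clear the common denominator as you did; then a common lower bound $\kk=\pp\inv\qq$ of $\hh_1,\hh_2$ satisfies $\qq\dive\pp\hh_\ii$ in~$\CCC$, and what you need is that $\pp\hh$ is a left-gcd of $\pp\hh_1$ and~$\pp\hh_2$, where $\hh$ is a left-gcd of $\hh_1,\hh_2$. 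Your interval isomorphism is not sufficient here, because a common left-divisor of $\pp\hh_1$ and $\pp\hh_2$ need not be a right-multiple of~$\pp$. Instead, invoke the hypothesis for the pair $(\pp\hh_1,\pp\hh_2)$: its left-gcd~$\dd$ admits $\pp$ as a left-divisor, say $\dd=\pp\dd'$; left-cancelling~$\pp$ gives $\dd'\dive\hh_\ii$, hence $\dd'\dive\hh$ and $\dd\dive\pp\hh$, so every common left-divisor of $\pp\hh_1,\pp\hh_2$ left-divides~$\pp\hh$. In particular $\qq\dive\pp\hh$, that is, $\kk\dive\hh$, and translating back by~$\gg\inv$ finishes the proof.
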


We skip the easy proof. Then a natural computation problem arises whenever the above bounds exist. The solution is easy.

\begin{algo}{Least common upper bound with respect to~$\dive$}
\label{A:Least}
\begin{algorithmic}[1]
\CONTEXT{A strong Garside family~$\SSS$ in an Ore category~$\CCC$ that admits unique left- and right-lcms}
\INPUT{Two signed $\SSS$-paths $\ww_1, \ww_2$ with the same source}
\OUTPUT{A signed $\SSS$-path representing the least common upper bound of~$\cl{\ww_1}$ and~$\cl{\ww_2}$}
\STATE{$\RC$:= the right-lcm selector on~$\SSS \cup \Id\CCC$}
\STATE{$\LC$:= the left-lcm witness on~$\SSS \cup \Id\CCC$}
\STATE{left-$\LC$-reverse~$\INV{\ww_1} \pc \ww_2$ into a negative--positive path~$\INV\uu \pc \vv$ using (the left counterpart of) Algorithm~\ref{A:RightRev}}\label{A:Greatest:rewrite}
\STATE{right-$\RC$-reverse~$\INV\uu \pc \vv$ into a positive--negative path~$\vv'\pc \INV{\uu'}$ using Algorithm~\ref{A:RightRevShort}}\label{A:Greatest:lcm}
\RETURN{$\ww_1 \pc \vv'$}
\end{algorithmic}
\end{algo}

The solution for greatest lower bound is entirely symmetric.

\begin{algo}{Greatest lower bound with respect to~$\dive$}
\label{A:Greatest}
\begin{algorithmic}[1]
\CONTEXT{A strong Garside family~$\SSS$ in an Ore category~$\CCC$ that admits unique left- and right-lcms}
\INPUT{Two signed $\SSS$-paths $\ww_1, \ww_2$ with the same source}
\OUTPUT{A signed $\SSS$-path representing the greatest lower bound of~$\cl{\ww_1}$ and~$\cl{\ww_2}$}
\STATE{$\RC$:= the right-lcm selector on~$\SSS \cup \Id\CCC$}
\STATE{$\LC$:= the left-lcm selector on~$\SSS \cup \Id\CCC$}
\STATE{right-$\RC$-reverse~$\INV{\ww_1} \pc \ww_2$ into a positive--negative path~$\uu \pc \INV{\vv}$ using Algorithm~\ref{A:RightRev}}
\STATE{left-$\LC$-reverse~$\uu \pc \INV{\vv}$ into a negative--positive path~$\INV{\vv'} \pc \uu'$ using (the left counterpart of) Algorithm~\ref{A:RightRevShort}}
\RETURN{$\ww_1 \pc \INV{\vv'}$}
\end{algorithmic}
\end{algo}

\begin{prop}\label{P:Bounds}
Assume that $\SSS$ is a strong Garside family in an Ore category~$\CCC$ that admits unique left- and right-lcms. Then Algorithm~\ref{A:Least} (\resp Algorithm~\ref{A:Greatest}) running on two signed $\SSS$-paths $\ww_1$ and $\ww_2$ of length at most $\ell$ returns the least upper bound (\resp the greatest lower bound) of $\cl{\ww_1}$ and $\cl{\ww_2}$ with respect to~$\dive$ in time $O(\ell^2)$.
\end{prop}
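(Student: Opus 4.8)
The plan is to prove correctness and the complexity bound for Algorithm~\ref{A:Least} in detail, and then to obtain Algorithm~\ref{A:Greatest} by the evident left--right symmetry. Two preliminary points: since $\CCC$ admits unique lcms it contains no nontrivial invertible element, so $\SSSs = \SSS \cup \Id\CCC$; and the selectors built inside the algorithms are genuine and short. Indeed, as $\CCC$ admits right-lcms, $\RC$ is, by Lemma~\ref{L:Gar2Rev}, a short right-lcm selector on $\SSS \cup \Id\CCC$ for which right-reversing is complete; symmetrically, as $\SSS$ is strong and $\CCC$ admits left-lcms, $\LC$ is a short left-lcm selector on $\SSS \cup \Id\CCC$ for which left-reversing is complete (this is the left counterpart of Lemma~\ref{L:Gar2Rev}, using Definition~\ref{D:Strong}). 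Moreover, any two elements of $\SSS \cup \Id\CCC$ with the same source, resp.\ target, admit a right-lcm, resp.\ a left-lcm, so the family $\SSS \cup \Id\CCC$, regarded as paths of length at most~$1$, is closed under~$\RC^*$ and under~$\LC^*$; hence, by Lemma~\ref{L:Terminating}, all the reversing transformations called in the two algorithms terminate successfully on every input, and, the complements being short, in $O(\ell^2)$ steps with outputs of length $O(\ell)$ (Corollary~\ref{C:TerminatingShort}, Lemma~\ref{L:TerminatingShort}).

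Now fix signed $\SSS$-paths $\ww_1, \ww_2$ with the same source and run Algorithm~\ref{A:Least}. Every elementary reversing step replaces a factor of the current signed path by one representing the same element of~$\Env\CCC$, so after the left-$\LC$-reversing we obtain positive $\SSSs$-paths $\uu, \vv$ with $\cl{\ww_1}\inv\cl{\ww_2} = \cl\uu\inv\cl\vv$ in~$\Env\CCC$, and after the right-$\RC$-reversing of $\INV\uu \pc \vv$ into $\vv' \pc \INV{\uu'}$, Lemma~\ref{L:Lcm} (completeness of right-reversing) shows that $\cc := \cl{\uu \pc \vv'} = \cl{\vv \pc \uu'}$ is a right-lcm of $\cl\uu$ and $\cl\vv$ in~$\CCC$. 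Put $\gg_0 := \cl{\ww_1}\,\cl{\vv'}$, the element of~$\Env\CCC$ represented by the returned path $\ww_1 \pc \vv'$. I would then verify three things: that $\cl{\ww_1} \dive \gg_0$ (clear, since $\cl{\vv'} \in \CCC$); that $\cl{\ww_2} \dive \gg_0$, from $\cl{\ww_2} = \cl{\ww_1}\cl\uu\inv\cl\vv$ and $\cl{\ww_2}\,\cl{\uu'} = \cl{\ww_1}\cl\uu\inv(\cl\vv\,\cl{\uu'}) = \cl{\ww_1}\cl\uu\inv\cc = \cl{\ww_1}\,\cl{\vv'} = \gg_0$ with $\cl{\uu'} \in \CCC$; and that $\gg_0$ is $\dive$-smallest among the common upper bounds. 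For the last point, if $\nn$ satisfies $\cl{\ww_1} \dive \nn$ and $\cl{\ww_2} \dive \nn$, write $\nn = \cl{\ww_1}\kk_1 = \cl{\ww_2}\kk_2$ with $\kk_1, \kk_2 \in \CCC$; then $\cl\uu\kk_1 = \cl\vv\kk_2$ holds in~$\CCC$ (using $\cl\uu\inv\cl\vv = \cl{\ww_1}\inv\cl{\ww_2}$) and is a common right-multiple of $\cl\uu$ and $\cl\vv$, hence equals $\cc\,\dd$ with $\dd \in \CCC$; left-cancelling $\cl\uu$ in $\cl\uu\kk_1 = \cl\uu\cl{\vv'}\dd$ gives $\kk_1 = \cl{\vv'}\dd$, whence $\nn = \cl{\ww_1}\,\cl{\vv'}\dd = \gg_0\,\dd$, i.e.\ $\gg_0 \dive \nn$. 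This shows that the output represents the least common upper bound of $\cl{\ww_1}$ and $\cl{\ww_2}$ with respect to~$\dive$.

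For Algorithm~\ref{A:Greatest} the computation is mirror-symmetric: the right-$\RC$-reversing produces positive $\SSSs$-paths $\uu, \vv$ with $\cl{\ww_1}\inv\cl{\ww_2} = \cl\uu\,\cl\vv\inv$, the subsequent left-$\LC$-reversing of $\uu \pc \INV\vv$ into $\INV{\vv'} \pc \uu'$ shows (completeness of left-reversing) that $\cc := \cl{\vv' \pc \uu} = \cl{\uu' \pc \vv}$ is the left-lcm of $\cl\uu$ and $\cl\vv$, and one checks, exactly as above but now right-cancelling $\cl\uu$, that $\gg_0 := \cl{\ww_1}\,\cl{\vv'}\inv$ --- the element represented by the returned path $\ww_1 \pc \INV{\vv'}$ --- is a common $\dive$-lower bound of $\cl{\ww_1}$ and $\cl{\ww_2}$ which every common lower bound left-divides. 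For the complexity, the first reversing costs $O(\ell^2)$ steps and returns paths of length $O(\ell)$, the second reversing then costs $O(\ell^2)$ steps, and the final concatenation costs $O(\ell)$; so each algorithm runs in time $O(\ell^2)$, an elementary reversing step being executed in constant time once $\RC$ and $\LC$ are at hand.

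The only non-routine point --- and the one I expect to be the main obstacle --- is the minimality, resp.\ maximality, clause: it requires mediating between the relation~$\dive$ on the groupoid~$\Env\CCC$, where only multipliers in~$\CCC$ are allowed, and the lcm computed by reversing inside~$\CCC$. The mediation works precisely because the intermediate identity $\cl\uu\kk_1 = \cl\vv\kk_2$, resp.\ its left analogue, lands back in~$\CCC$, so that the universal property of the right-lcm, resp.\ left-lcm, and a single cancellation in the cancellative category~$\CCC$ close the argument. Conceptually, left translation by $\cl{\ww_1}\cl\uu\inv \in \Env\CCC$ is an automorphism of the preordered set $(\Env\CCC, \dive)$ carrying $\cl\uu, \cl\vv$ to $\cl{\ww_1}, \cl{\ww_2}$, hence the right-lcm of $\cl\uu, \cl\vv$ to $\gg_0$; but I would keep the explicit computation above, so as not to have to isolate and prove the invariance of $\dive$ under translation as a separate statement.
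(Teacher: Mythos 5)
Your proposal is correct and follows essentially the same route as the paper: left-reverse to write $\cl{\ww_1} = \gg\cl\uu$, $\cl{\ww_2} = \gg\cl\vv$, right-reverse to get the right-lcm of $\cl\uu$ and~$\cl\vv$ (Lemma~\ref{L:Terminating}, Proposition~\ref{P:ComputLcm} and their left counterparts give termination, completeness and the $O(\ell^2)$ bound), and then transport by left-multiplication. The only difference is cosmetic: where the paper disposes of the final step by the one-line remark that $\dive$ is invariant under left-multiplication, you verify the upper-bound and minimality clauses explicitly via the universal property of the right-lcm and a single cancellation, which is the same argument written out.
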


\begin{proof}
First consider Algorithm~\ref{A:Least}. By the left counterpart of Lemma~\ref{L:Terminating}, the left-$\LC$-reversing in line~\ref{A:Greatest:rewrite} takes time $O(\ell^2)$ and produces two $\SSSs$-paths~$\uu, \vv$ of length $O(\ell)$ such that, in~$\Env\CCC$, we have $\cl{\INV\uu \pc \vv} = \cl{\INV{\ww_1} \pc \ww_2}$, hence $\cl{\ww_1} = \gg \cl\uu$ and $\cl{\ww_2} = \gg \cl\vv$ for some $\gg$, namely the common class of the signed paths~$\ww_1 \pc \INV\uu$ and~$\ww_2 \pc \INV\vv$. By Proposition~\ref{P:ComputLcm}, the right-$\RC$-reversing in line~\ref{A:Greatest:lcm} takes time $O(\ell^2)$ and produces two $\SSS$-paths~$\uu', \vv'$ such that $\uu \pc \vv'$ and $\vv \pc \uu'$ both represent the right-lcm of $\cl\uu$ and $\cl\vv$, hence their least upper bound with respect to~$\dive$. As the partial ordering $\dive$ is invariant under left-multiplication, the claim follows.

The claim for Algorithm~\ref{A:Greatest} follows in an analogous way from Lemma~\ref{L:Terminating} and the left counterpart of Proposition~\ref{P:ComputLcm}, noting that, for all $\ff, \gg$ in $\Env\CCC$ and $\hh$ in~$\CCC$, the conditions $\ff \hh=\gg$ and $\ff\inv = \hh \gg\inv$ are equivalent.
\end{proof}

When we apply Algorithms~\ref{A:Least} and~\ref{A:Greatest} to a pair of positive paths, we obtain algorithms that determine right-lcms and left-gcds. In this case, Algorithm~\ref{A:Least} is simply Algorithm~\ref{A:RightRevShort}, since the left-reversing step is trivial: the initial path~$\INV{\ww_1} \pc \ww_2$ is directly negative--positive. Algorithm~\ref{A:Greatest}, however, has no equivalent in Section~\ref{S:Pos}. Its output is a path representing the left-gcd but, in general, it need not be a positive path, although it must be equivalent to a positive path. In order to obtain a positive output, a third reversing step can be added.

\begin{algo}{Left-gcd}
\label{A:Gcd}
\begin{algorithmic}[1]
\CONTEXT{A strong Garside family~$\SSS$ in an Ore category~$\CCC$ that admits unique left- and right-lcms}
\INPUT{Two $\SSS$-paths $\uu, \vv$ with the same source}
\OUTPUT{An $\SSS$-path that represents the left-gcd of~$\cl\uu$ and~$\cl\vv$ in~$\CCC$}
\STATE{$\RC$:= the right-lcm selector on~$\SSS \cup \Id\CCC$}
\STATE{$\LC$:= the left-lcm selector on~$\SSS \cup \Id\CCC$}
\STATE{right-$\RC$-reverse~$\INV\uu \pc \vv$ into a positive--negative path~$\vv' \pc \INV{\uu'}$ using Algorithm~\ref{A:RightRevShort}}
\STATE{left-$\LC$-reverse~$\vv' \pc \INV{\uu'}$ into a negative--positive path~$\INV{\uu''} \pc \vv''$ using (the left counterpart of) Algorithm~\ref{A:RightRevShort}}\label{A:Gcd:gcd}
\STATE{left-$\LC$-reverse~$\uu \pc \INV{\uu''}$ into a positive path~$\ww$ using (the left counterpart of) Algorithm~\ref{A:RightRevShort}}\label{A:Gcd:rewrite}
\RETURN{$\ww$}
\end{algorithmic}
\end{algo}

\begin{prop}
\label{P:Gc}
Assume that $\SSS$ is a strong Garside family in an Ore-category that admits unique left- and right-lcms. Then Algorithm~\ref{A:Gcd} running on two $\SSS$-paths~$\uu, \vv$ of length at most~$\ell$ returns the left-gcd of $\cl{\uu}$ and $\cl{\vv}$ in time $O(\ell^2)$.
\end{prop}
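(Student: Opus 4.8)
The plan is to trace Algorithm~\ref{A:Gcd} through its five lines, recognising lines~1--4 as a positive‑input instance of Algorithm~\ref{A:Greatest} and then analysing the final reversing step; correctness then reduces to Proposition~\ref{P:Bounds}, and the time bound to the reversing estimates of Sections~\ref{S:Pres} and~\ref{S:Pos}.

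\textbf{Preliminaries.} Since~$\CCC$ admits unique right-lcms, the right-lcm selector~$\RC$ on~$\SSS\cup\Id\CCC$ is a \emph{short} right-complement: by Lemma~\ref{L:GarClosed}, a right-lcm of two elements of~$\SSSs$ can be chosen in~$\SSSs$ together with its cofactors, so $\aa\under\bb$ always lies in~$\SSS\cup\Id\CCC$. Symmetrically, as~$\SSS$ is strong, the left-lcm selector~$\LC$ on~$\SSS\cup\Id\CCC$ is a short left-complement (this is exactly what strongness provides). By Lemma~\ref{L:Gar2Rev} and its left counterpart, right-$\RC$-reversing and left-$\LC$-reversing are complete, and, being associated with short complements, they always terminate; moreover, by Lemma~\ref{L:Terminating} and Proposition~\ref{P:ComputLcm}, each reversing occurring in Algorithm~\ref{A:Gcd} runs in time~$O(\ell^2)$ on inputs of length~$O(\ell)$ and produces paths of length~$O(\ell)$. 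So all five lines are well-defined and the overall time is~$O(\ell^2)$; it remains to establish correctness.

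\textbf{Correctness.} Running Algorithm~\ref{A:Greatest} on the pair~$(\uu,\vv)$ and renaming its internal variables, one sees that lines~1--4 of Algorithm~\ref{A:Greatest} perform the very same steps as lines~1--4 of Algorithm~\ref{A:Gcd} (that $\INV\uu\pc\vv$ is reversed with Algorithm~\ref{A:RightRevShort} rather than Algorithm~\ref{A:RightRev} is immaterial, as~$\RC$ is short and the path is already negative--positive), and that the path~$\uu\pc\INV{\uu''}$ assembled inside Algorithm~\ref{A:Gcd} after line~4 is precisely the path returned by Algorithm~\ref{A:Greatest}. By Proposition~\ref{P:Bounds}, $\uu\pc\INV{\uu''}$ therefore represents the greatest lower bound of~$\cl\uu$ and~$\cl\vv$ with respect to the divisibility preordering~$\dive$ of~$\Env\CCC$; and since~$\cl\uu,\cl\vv$ lie in~$\CCC$, that bound is, as recalled at the start of Subsection~\ref{SS:Bounds}, a left-gcd $d:=\cl\uu\gcd\cl\vv$ of~$\cl\uu$ and~$\cl\vv$ in~$\CCC$. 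In particular $\cl\uu\,\cl{\uu''}\inv=d$ holds in~$\Env\CCC$, so~$\cl{\uu''}$ right-divides~$\cl\uu$ in~$\CCC$ and $\cl\uu=d\,\cl{\uu''}$. For line~5: left-$\LC$-reversing never fails (any two elements of~$\SSSs$ admit a left-lcm) and terminates, so the left-$\LC$-reversing of~$\uu\pc\INV{\uu''}$ returns a path~$\ww$; since left-$\LC$-reversing is complete and~$\cl{\uu''}$ right-divides~$\cl\uu$, that~$\ww$ has empty negative part, hence is a positive $\SSS$-path, and it represents~$\cl\uu\,\cl{\uu''}\inv=d$. Thus $\cl\ww=d=\cl\uu\gcd\cl\vv$, which is the asserted output, and the computation takes time~$O(\ell^2)$.

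The step I expect to be the main obstacle is the appeal to Proposition~\ref{P:Bounds}, namely confirming that the path formed after line~4 represents a genuine \emph{left-gcd} and not merely some common left-divisor. Should one wish to argue this directly, the needed lattice bookkeeping is: writing~$\cl\uu=dp$ and~$\cl\vv=dq$ with~$d$ the left-gcd, first check that~$p\lcm q$ exists and that $\cl\uu\lcm\cl\vv=d\,(p\lcm q)$, whence line~3 outputs cofactors $\cl{\vv'}=\cl\uu\under\cl\vv=p\under q$ and $\cl{\uu'}=\cl\vv\under\cl\uu=q\under p$; then, using that~$p\gcd q$ is trivial together with right-cancellativity of~$\CCC$, show that the left-lcm of~$\cl{\vv'}$ and~$\cl{\uu'}$ computed in line~4 has left-cofactors exactly~$p$ and~$q$, so that $\cl{\uu''}=p$ and $\cl\uu\,\cl{\uu''}\inv=d$. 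Everything else — shortness of the selectors, completeness and termination of the reversing variants, the quadratic bounds — is immediate from results already established.
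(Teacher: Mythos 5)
Your proposal is correct and follows essentially the same route as the paper: recognize lines 1--4 as Algorithm~\ref{A:Greatest} run on~$(\uu,\vv)$ so that Proposition~\ref{P:Bounds} shows $\uu\pc\INV{\uu''}$ represents the left-gcd, then argue that the final left-$\LC$-reversing yields a positive path representing it, with the quadratic bound coming from the termination results for short complements. The only difference is cosmetic: where you justify positivity of~$\ww$ by citing completeness together with the fact that $\cl{\uu''}$ right-divides~$\cl\uu$ (in effect the left counterpart of the divisibility test stated after Proposition~\ref{P:WordPb2}), the paper makes this explicit by left-reversing $\uu\pc\INV{\uu''}\pc\INV{\uu_0}$ (with $\uu_0$ a positive representative of the gcd) to an empty path and extracting the subgrid corresponding to $\uu\pc\INV{\uu''}$ --- same content, just spelled out.
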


\begin{proof}
It follows from Proposition~\ref{P:Bounds} that, for~$\uu''$ as computed in line~\ref{A:Gcd:gcd}, the path $\uu\pc\INV{\uu''}$ represents the left-gcd of $\cl{\uu}$ and $\cl{\vv}$. As $\uu$ and $\vv$ are both positive, this path $\uu\pc\INV{\uu''}$ must be equivalent to some positive path, say~$\uu_0$. Then the signed path $\uu\pc\INV{\uu''}\pc\INV{\uu_0}$ represents an identity-element in~$\Env\CCC$, so the positive paths~$\uu$ and $\uu_0 \uu''$ are equivalent. Since, by the counterpart of Proposition~\ref{P:Gar2Pres}, left-reversing is complete for the considered presentation, the path $\uu\pc\INV{\uu''}\pc\INV{\uu_0}$ is left-reversible to an empty path, that is, the left-reversing grid constructed from $\uu\pc\INV{\uu''}\pc\INV{\uu_0}$ has empty arrows everywhere on the left and on the top. This implies in particular that the subgrid corresponding to left-reversing~$\uu\pc\INV{\uu''}$ has empty arrows on the left, that is, with our current notation, that the word~$\ww$ computed at line~5 is positive (and equivalent to~$\uu_0$).

By Proposition~\ref{P:Bounds}, the computation time of~$\ww$ lies in~$O(\ell^2)$ since, by the left counterpart of Lemma~\ref{L:TerminatingShort}, the left-$\LC$-reversing in line \ref{A:Gcd:rewrite} takes time $O(\ell^2)$.
\end{proof}

\begin{exam}
Let us consider a last time the monoid~$\BP3$ and the elements represented by $\uu = \tta\pc\ttb\pc \ttb$ and $\vv = \ttb \pc \tta\pc\ttb\pc \ttb$. As seen in Figure~\ref{F:RightRevWord}, right-reversing $\INV\uu \pc \vv$ using the right-lcm selector leads to the positive--negative word $\tta\ttb \pc \INV\tta$. Then left-reversing the latter word using the left-lcm selector leads to the negative--positive word~$\INV\ttb \pc \tta\ttb$. Finally, left-reversing $\tta\pc\ttb\pc\ttb\pc\INV\ttb$ gives the positive word~$\tta\pc \ttb$, and we conclude that the left-gcd of~$\cl\uu$ and~$\cl\vv$ in~$\BP3$ is~$\tta\ttb$. Note that, instead of using $\SSS$-words and the associated lcm selector, we could instead use $\AAA$-words and the initial right-complement associated with the Artin presentation: the uniqueness of the final result guarantees that the successive involves words must be pairwise equivalent.
\end{exam}

\pagebreak


\end{document}